\DeclareMathAlphabet{\mathbbold}{U}{bbold}{m}{n}
\theoremstyle{definition}
\def\R{\mathbb R}
\def\frame#1{}
\newtheorem*{definition}{Definition}
\newtheorem{remark}{Remark}
\newtheorem{cor}{Corollary}
\newtheorem{conj}{Conjecture}
\newtheorem{thm}{Theorem}
\newtheorem{lemma}{Lemma}
\def\R{\mathbb R}
  \let\fiverm\fivrm
\def\@picture(#1,#2)(#3,#4){%
  \@picht #2\unitlength
  \setbox\@picbox\hbox to #1\unitlength\bgroup 
  \let\endpicture=\!latexendpicture
  \let\frame=\!latexframe
  \let\linethickness=\!latexlinethickness
  \let\multiput=\!latexmultiput
  \let\put=\!latexput
  \hskip -#3\unitlength \lower #4\unitlength \hbox\bgroup}
\def\PiC{P\kern-.12em\lower.5ex\hbox{I}\kern-.075emC}
\def\PiCTeX{\PiC\kern-.11em\TeX}
\def\!ifnextchar#1#2#3{%
  \let\!testchar=#1%
  \def\!first{#2}%
  \def\!second{#3}%
  \futurelet\!nextchar\!testnext}
\def\!testnext{%
  \ifx \!nextchar \!spacetoken 
    \let\!next=\!skipspacetestagain
  \else
    \ifx \!nextchar \!testchar
      \let\!next=\!first
    \else 
      \let\!next=\!second 
    \fi 
  \fi
  \!next}
\def\\{\!skipspacetestagain} 
\def\\ {\futurelet\!nextchar\!testnext} 
\def\\{\let\!spacetoken= } \\  
\def\!tfor#1:=#2\do#3{%
  \edef\!fortemp{#2}%
  \ifx\!fortemp\!empty 
    \else
    \!tforloop#2\!nil\!nil\!!#1{#3}%
  \fi}
\def\!tforloop#1#2\!!#3#4{%
  \def#3{#1}%
  \ifx #3\!nnil
    \let\!nextwhile=\!fornoop
  \else
    #4\relax
    \let\!nextwhile=\!tforloop
  \fi 
  \!nextwhile#2\!!#3{#4}}
\def\!etfor#1:=#2\do#3{%
  \def\!!tfor{\!tfor#1:=}%
  \edef\!!!tfor{#2}%
  \expandafter\!!tfor\!!!tfor\do{#3}}
\def\!cfor#1:=#2\do#3{%
  \edef\!fortemp{#2}%
  \ifx\!fortemp\!empty 
  \else
    \!cforloop#2,\!nil,\!nil\!!#1{#3}%
  \fi}
\def\!cforloop#1,#2\!!#3#4{%
  \def#3{#1}%
  \ifx #3\!nnil
    \let\!nextwhile=\!fornoop 
  \else
    #4\relax
    \let\!nextwhile=\!cforloop
  \fi
  \!nextwhile#2\!!#3{#4}}
\def\!ecfor#1:=#2\do#3{%
  \def\!!cfor{\!cfor#1:=}%
  \edef\!!!cfor{#2}%
  \expandafter\!!cfor\!!!cfor\do{#3}}
\def\!empty{}
\def\!nnil{\!nil}
\def\!fornoop#1\!!#2#3{}
\def\!ifempty#1#2#3{%
  \edef\!emptyarg{#1}%
  \ifx\!emptyarg\!empty
    #2%
  \else
    #3%
  \fi}
\def\!getnext#1\from#2{%
  \expandafter\!gnext#2\!#1#2}%
\def\!gnext\\#1#2\!#3#4{%
  \def#3{#1}%
  \def#4{#2\\{#1}}%
  \ignorespaces}
\def\!getnextvalueof#1\from#2{%
  \expandafter\!gnextv#2\!#1#2}%
\def\!gnextv\\#1#2\!#3#4{%
  #3=#1%
  \def#4{#2\\{#1}}%
  \ignorespaces}
\def\!copylist#1\to#2{%
  \expandafter\!!copylist#1\!#2}
\def\!!copylist#1\!#2{%
  \def#2{#1}\ignorespaces}
\def\!wlet#1=#2{%
  \let#1=#2 
  \wlog{\string#1=\string#2}}
\def\!listaddon#1#2{%
  \expandafter\!!listaddon#2\!{#1}#2}
\def\!!listaddon#1\!#2#3{%
  \def#3{#1\\#2}}
\def\!rightappend#1\withCS#2\to#3{\expandafter\!!rightappend#3\!#2{#1}#3}
\def\!!rightappend#1\!#2#3#4{\def#4{#1#2{#3}}}
\def\!leftappend#1\withCS#2\to#3{\expandafter\!!leftappend#3\!#2{#1}#3}
\def\!!leftappend#1\!#2#3#4{\def#4{#2{#3}#1}}
\def\!lop#1\to#2{\expandafter\!!lop#1\!#1#2}
\def\!!lop\\#1#2\!#3#4{\def#4{#1}\def#3{#2}}
\def\!loop#1\repeat{\def\!body{#1}\!iterate}
\def\!iterate{\!body\let\!next=\!iterate\else\let\!next=\relax\fi\!next}
\def\!!loop#1\repeat{\def\!!body{#1}\!!iterate}
\def\!!iterate{\!!body\let\!!next=\!!iterate\else\let\!!next=\relax\fi\!!next}
\def\!removept#1#2{\edef#2{\expandafter\!!removePT\the#1}}
{\catcode`p=12 \catcode`t=12 \gdef\!!removePT#1pt{#1}}
\def\placevalueinpts of <#1> in #2 {%
  \!removept{#1}{#2}}
\def\!mlap#1{\hbox to 0pt{\hss#1\hss}}
\def\!vmlap#1{\vbox to 0pt{\vss#1\vss}}
\def\!not#1{%
  #1\relax
    \!switchfalse
  \else
    \!switchtrue
  \fi
  \if!switch
  \ignorespaces}
\def\wlog#1{}    
\newdimen\headingtoplotskip     
\newdimen\linethickness         
\newdimen\longticklength        
\newdimen\plotsymbolspacing     
\newdimen\shortticklength       
\newdimen\stackleading          
\newdimen\tickstovaluesleading  
\newdimen\totalarclength        
\newdimen\valuestolabelleading  
\newbox\!boxA                   
\newbox\!boxB                   
\newbox\!picbox                 
\newbox\!plotsymbol             
\newbox\!putobject              
\newbox\!shadesymbol            
\newdimen\!Xleft                
\newdimen\!Xright               
\newdimen\!Xsave                
\newdimen\!Ybot                 
\newdimen\!Ysave                
\newdimen\!Ytop                 
\newdimen\!angle                
\newdimen\!arclength            
\newdimen\!areabloc             
\newdimen\!arealloc             
\newdimen\!arearloc             
\newdimen\!areatloc             
\newdimen\!bshrinkage           
\newdimen\!checkbot             
\newdimen\!checkleft            
\newdimen\!checkright           
\newdimen\!checktop             
\newdimen\!dimenA               
\newdimen\!dimenB               
\newdimen\!dimenC               
\newdimen\!dimenD               
\newdimen\!dimenE               
\newdimen\!dimenF               
\newdimen\!dimenG               
\newdimen\!dimenH               
\newdimen\!dimenI               
\newdimen\!distacross           
\newdimen\!downlength           
\newdimen\!dp                   
\newdimen\!dshade               
\newdimen\!dxpos                
\newdimen\!dxprime              
\newdimen\!dypos                
\newdimen\!dyprime              
\newdimen\!ht                   
\newdimen\!leaderlength         
\newdimen\!lshrinkage           
\newdimen\!midarclength         
\newdimen\!offset               
\newdimen\!plotheadingoffset    
\newdimen\!plotsymbolxshift     
\newdimen\!plotsymbolyshift     
\newdimen\!plotxorigin          
\newdimen\!plotyorigin          
\newdimen\!rootten              
\newdimen\!rshrinkage           
\newdimen\!shadesymbolxshift    
\newdimen\!shadesymbolyshift    
\newdimen\!tenAa                
\newdimen\!tenAc                
\newdimen\!tenAe                
\newdimen\!tshrinkage           
\newdimen\!uplength             
\newdimen\!wd                   
\newdimen\!wmax                 
\newdimen\!wmin                 
\newdimen\!xB                   
\newdimen\!xC                   
\newdimen\!xE                   
\newdimen\!xM                   
\newdimen\!xS                   
\newdimen\!xaxislength          
\newdimen\!xdiff                
\newdimen\!xleft                
\newdimen\!xloc                 
\newdimen\!xorigin              
\newdimen\!xpivot               
\newdimen\!xpos                 
\newdimen\!xprime               
\newdimen\!xright               
\newdimen\!xshade               
\newdimen\!xshift               
\newdimen\!xtemp                
\newdimen\!xunit                
\newdimen\!xxE                  
\newdimen\!xxM                  
\newdimen\!xxS                  
\newdimen\!xxloc                
\newdimen\!yB                   
\newdimen\!yC                   
\newdimen\!yE                   
\newdimen\!yM                   
\newdimen\!yS                   
\newdimen\!yaxislength          
\newdimen\!ybot                 
\newdimen\!ydiff                
\newdimen\!yloc                 
\newdimen\!yorigin              
\newdimen\!ypivot               
\newdimen\!ypos                 
\newdimen\!yprime               
\newdimen\!yshade               
\newdimen\!yshift               
\newdimen\!ytemp                
\newdimen\!ytop                 
\newdimen\!yunit                
\newdimen\!yyE                  
\newdimen\!yyM                  
\newdimen\!yyS                  
\newdimen\!yyloc                
\newdimen\!zpt                  
\newif\if!axisvisible           
\newif\if!gridlinestoo          
\newif\if!keepPO                
\newif\if!placeaxislabel        
\newif\if!switch                
\newif\if!xswitch               
\newtoks\!axisLaBeL             
\newtoks\!keywordtoks           
\newwrite\!replotfile           
\def\!cosrotationangle{1}      
\def\!sinrotationangle{0}      
\def\!xpivotcoord{0}           
\def\!xref{0}                  
\def\!xshadesave{0}            
\def\!ypivotcoord{0}           
\def\!yref{0}                  
\def\!yshadesave{0}            
\def\!zero{0}                  
\let\wlog=\!!!wlog
\def\normalgraphs{%
  \longticklength=.4\baselineskip
  \shortticklength=.25\baselineskip
  \tickstovaluesleading=.25\baselineskip
  \valuestolabelleading=.8\baselineskip
  \linethickness=.4pt
  \stackleading=.17\baselineskip
  \headingtoplotskip=1.5\baselineskip
  \visibleaxes
  \ticksout
  \nogridlines
  \unloggedticks}
\def\setplotarea x from #1 to #2, y from #3 to #4 {%
  \!arealloc=\!M{#1}\!xunit \advance \!arealloc -\!xorigin
  \!areabloc=\!M{#3}\!yunit \advance \!areabloc -\!yorigin
  \!arearloc=\!M{#2}\!xunit \advance \!arearloc -\!xorigin
  \!areatloc=\!M{#4}\!yunit \advance \!areatloc -\!yorigin
  \!initinboundscheck
  \!xaxislength=\!arearloc  \advance\!xaxislength -\!arealloc
  \!yaxislength=\!areatloc  \advance\!yaxislength -\!areabloc
  \!plotheadingoffset=\!zpt
  \!dimenput {{\setbox0=\hbox{}\wd0=\!xaxislength\ht0=\!yaxislength\box0}}
     [bl] (\!arealloc,\!areabloc)}
\def\visibleaxes{%
  \def\!axisvisibility{\!axisvisibletrue}}
\def\!fixkeyword#1{%
  \errhelp=\!keywordhelp
  \errmessage{Unrecognized keyword `#1': \the\!keywordtoks{NEW KEYWORD}'}}
\def\fixkeyword#1{%
  \!nextkeyword#1 }
\def\axis {%
  \def\!nextkeyword##1 {%
    \expandafter\ifx\csname !axis##1\endcsname \relax
      \def\!next{\!fixkeyword{##1}}%
    \else
      \def\!next{\csname !axis##1\endcsname}%
    \fi
    \!next}%
  \!offset=\!zpt
  \!axisvisibility
  \!placeaxislabelfalse
  \!nextkeyword}
\def\!axisbottom{%
  \!axisylevel=\!areabloc
  \def\!tickxsign{0}%
  \def\!tickysign{-}%
  \def\!axissetup{\!axisxsetup}%
  \def\!axislabeltbrl{t}%
  \!nextkeyword}
\def\!axistop{%
  \!axisylevel=\!areatloc
  \def\!tickxsign{0}%
  \def\!tickysign{+}%
  \def\!axissetup{\!axisxsetup}%
  \def\!axislabeltbrl{b}%
  \!nextkeyword}
\def\!axisleft{%
  \!axisxlevel=\!arealloc
  \def\!tickxsign{-}%
  \def\!tickysign{0}%
  \def\!axissetup{\!axisysetup}%
  \def\!axislabeltbrl{r}%
  \!nextkeyword}
\def\!axisright{%
  \!axisxlevel=\!arearloc
  \def\!tickxsign{+}%
  \def\!tickysign{0}%
  \def\!axissetup{\!axisysetup}%
  \def\!axislabeltbrl{l}%
  \!nextkeyword}
\def\!axisshiftedto#1=#2 {%
  \if 0\!tickxsign
    \!axisylevel=\!M{#2}\!yunit
    \advance\!axisylevel -\!yorigin
  \else
    \!axisxlevel=\!M{#2}\!xunit
    \advance\!axisxlevel -\!xorigin
  \fi
  \!nextkeyword}
\def\!axisvisible{%
  \!axisvisibletrue  
  \!nextkeyword}
\def\!axisinvisible{%
  \!axisvisiblefalse
  \!nextkeyword}
\def\!axislabel#1 {%
  \!axisLaBeL={#1}%
  \!placeaxislabeltrue
  \!nextkeyword}
\def\csname !axis/\endcsname{%
  \!axissetup 
  \if!placeaxislabel
    \!placeaxislabel
  \fi
  \if +\!tickysign 
    \!dimenA=\!axisylevel
    \advance\!dimenA \!offset 
    \advance\!dimenA -\!areatloc 
    \ifdim \!dimenA>\!plotheadingoffset
      \!plotheadingoffset=\!dimenA 
    \fi
  \fi}
\def\grid #1 #2 {%
  \!countA=#1\advance\!countA 1
  \axis bottom invisible ticks length <\!zpt> andacross quantity {\!countA} /
  \!countA=#2\advance\!countA 1
  \axis left   invisible ticks length <\!zpt> andacross quantity {\!countA} / }
\def\plotheading#1 {%
  \advance\!plotheadingoffset \headingtoplotskip
  \!dimenput {#1} [B] <.5\!xaxislength,\!plotheadingoffset>
    (\!arealloc,\!areatloc)}
\def\!axisxsetup{%
  \!axisxlevel=\!arealloc
  \!axisstart=\!arealloc
  \!axisend=\!arearloc
  \!axisLength=\!xaxislength
  \!!origin=\!xorigin
  \!!unit=\!xunit
  \!xswitchtrue
  \if!axisvisible 
    \!makeaxis
  \fi}
\def\!axisysetup{%
  \!axisylevel=\!areabloc
  \!axisstart=\!areabloc
  \!axisend=\!areatloc
  \!axisLength=\!yaxislength
  \!!origin=\!yorigin
  \!!unit=\!yunit
  \!xswitchfalse
  \if!axisvisible
    \!makeaxis
  \fi}
\def\!makeaxis{%
  \setbox\!boxA=\hbox{
    \beginpicture
      \!setdimenmode
      \setcoordinatesystem point at {\!zpt} {\!zpt}   
      \putrule from {\!zpt} {\!zpt} to
        {\!tickysign\!tickysign\!axisLength} 
        {\!tickxsign\!tickxsign\!axisLength}
    \endpicturesave <\!Xsave,\!Ysave>}%
    \wd\!boxA=\!zpt
    \!placetick\!axisstart}
\def\!placeaxislabel{%
  \advance\!offset \valuestolabelleading
  \if!xswitch
    \!dimenput {\the\!axisLaBeL} [\!axislabeltbrl]
      <.5\!axisLength,\!tickysign\!offset> (\!axisxlevel,\!axisylevel)
    \advance\!offset \!dp  
    \advance\!offset \!ht  
  \else
    \!dimenput {\the\!axisLaBeL} [\!axislabeltbrl]
      <\!tickxsign\!offset,.5\!axisLength> (\!axisxlevel,\!axisylevel)
  \fi
  \!axisLaBeL={}}
\def\arrow <#1> [#2,#3]{%
  \!ifnextchar<{\!arrow{#1}{#2}{#3}}{\!arrow{#1}{#2}{#3}<\!zpt,\!zpt> }}
\def\!arrow#1#2#3<#4,#5> from #6 #7 to #8 #9 {%
%
  \!xloc=\!M{#8}\!xunit   
  \!yloc=\!M{#9}\!yunit
  \!dxpos=\!xloc  \!dimenA=\!M{#6}\!xunit  \advance \!dxpos -\!dimenA
  \!dypos=\!yloc  \!dimenA=\!M{#7}\!yunit  \advance \!dypos -\!dimenA
  \let\!MAH=\!M
  \!setdimenmode
  \!xshift=#4\relax  \!yshift=#5\relax
  \!reverserotateonly\!xshift\!yshift
  \advance\!xshift\!xloc  \advance\!yshift\!yloc
%
  \!xS=-\!dxpos  \advance\!xS\!xshift
  \!yS=-\!dypos  \advance\!yS\!yshift
  \!start (\!xS,\!yS)
  \!ljoin (\!xshift,\!yshift)
%
  \!Pythag\!dxpos\!dypos\!arclength
  \!divide\!dxpos\!arclength\!dxpos  
  \!dxpos=32\!dxpos  \!removept\!dxpos\!!cos
  \!divide\!dypos\!arclength\!dypos  
  \!dypos=32\!dypos  \!removept\!dypos\!!sin
%
  \!halfhead{#1}{#2}{#3}
  \!halfhead{#1}{-#2}{-#3}
  \let\!M=\!MAH
  \ignorespaces}
  \def\!halfhead#1#2#3{%
    \!dimenC=-#1%
    \divide \!dimenC 2 
    \!dimenD=#2\!dimenC
    \!rotate(\!dimenC,\!dimenD)by(\!!cos,\!!sin)to(\!xM,\!yM)
    \!dimenC=-#1
    \!dimenD=#3\!dimenC
    \!dimenD=.5\!dimenD
    \!rotate(\!dimenC,\!dimenD)by(\!!cos,\!!sin)to(\!xE,\!yE)
    \!start (\!xshift,\!yshift)
    \advance\!xM\!xshift  \advance\!yM\!yshift
    \advance\!xE\!xshift  \advance\!yE\!yshift
    \!qjoin (\!xM,\!yM) (\!xE,\!yE) 
    \ignorespaces}
\def\betweenarrows #1#2 from #3 #4 to #5 #6 {%
  \!xloc=\!M{#3}\!xunit  \!xxloc=\!M{#5}\!xunit%
  \!yloc=\!M{#4}\!yunit  \!yyloc=\!M{#6}\!yunit%
  \!dxpos=\!xxloc  \advance\!dxpos by -\!xloc
  \!dypos=\!yyloc  \advance\!dypos by -\!yloc
  \advance\!xloc .5\!dxpos
  \advance\!yloc .5\!dypos
  \let\!MBA=\!M
  \!setdimenmode
  \ifdim\!dypos=\!zpt
    \ifdim\!dxpos<\!zpt \!dxpos=-\!dxpos \fi
    \put {\!lrarrows{\!dxpos}{#1}}#2{} at {\!xloc} {\!yloc}
  \else
    \ifdim\!dxpos=\!zpt
      \ifdim\!dypos<\!zpt \!dypos=-\!zpt \fi
      \put {\!udarrows{\!dypos}{#1}}#2{} at {\!xloc} {\!yloc}
    \fi
  \fi
  \let\!M=\!MBA
  \ignorespaces}
\def\!lrarrows#1#2{
  {\setbox\!boxA=\hbox{$\mkern-2mu\mathord-\mkern-2mu$}%
   \setbox\!boxB=\hbox{$\leftarrow$}\!dimenE=\ht\!boxB
   \setbox\!boxB=\hbox{}\ht\!boxB=2\!dimenE
   \hbox to #1{$\mathord\leftarrow\mkern-6mu
     \cleaders\copy\!boxA\hfil
     \mkern-6mu\mathord-$%
     \kern.4em $\vcenter{\box\!boxB}$$\vcenter{\hbox{#2}}$\kern.4em
     $\mathord-\mkern-6mu
     \cleaders\copy\!boxA\hfil
     \mkern-6mu\mathord\rightarrow$}}}
\def\!udarrows#1#2{
  {\setbox\!boxB=\hbox{#2}%
   \setbox\!boxA=\hbox to \wd\!boxB{\hss$\vert$\hss}%
   \!dimenE=\ht\!boxA \advance\!dimenE \dp\!boxA \divide\!dimenE 2
   \vbox to #1{\offinterlineskip
      \vskip .05556\!dimenE
      \hbox to \wd\!boxB{\hss$\mkern.4mu\uparrow$\hss}\vskip-\!dimenE
      \cleaders\copy\!boxA\vfil
      \vskip-\!dimenE\copy\!boxA
      \vskip\!dimenE\copy\!boxB\vskip.4em
      \copy\!boxA\vskip-\!dimenE
      \cleaders\copy\!boxA\vfil
      \vskip-\!dimenE \hbox to \wd\!boxB{\hss$\mkern.4mu\downarrow$\hss}
      \vskip .05556\!dimenE}}}
\def\putbar#1breadth <#2> from #3 #4 to #5 #6 {%
  \!xloc=\!M{#3}\!xunit  \!xxloc=\!M{#5}\!xunit%
  \!yloc=\!M{#4}\!yunit  \!yyloc=\!M{#6}\!yunit%
  \!dypos=\!yyloc  \advance\!dypos by -\!yloc
  \!dimenI=#2  
  \ifdim \!dimenI=\!zpt 
    \putrule#1from {#3} {#4} to {#5} {#6} 
  \else 
    \let\!MBar=\!M
    \!setdimenmode 
    \divide\!dimenI 2
    \ifdim \!dypos=\!zpt             
      \advance \!yloc -\!dimenI 
      \advance \!yyloc \!dimenI
    \else
      \advance \!xloc -\!dimenI 
      \advance \!xxloc \!dimenI
    \fi
    \putrectangle#1corners at {\!xloc} {\!yloc} and {\!xxloc} {\!yyloc}
    \let\!M=\!MBar 
  \fi
  \ignorespaces}
\def\setbars#1breadth <#2> baseline at #3 = #4 {%
  \edef\!barshift{#1}%
  \edef\!barbreadth{#2}%
  \edef\!barorientation{#3}%
  \edef\!barbaseline{#4}%
  \def\!bardobaselabel{\!bardoendlabel}%
  \def\!bardoendlabel{\!barfinish}%
  \let\!drawcurve=\!barcurve
  \!setbars}
\def\!setbars{%
  \futurelet\!nextchar\!!setbars}
\def\!!setbars{%
  \if b\!nextchar
    \def\!!!setbars{\!setbarsbget}%
  \else 
    \if e\!nextchar
      \def\!!!setbars{\!setbarseget}%
    \else
      \def\!!!setbars{\relax}%
    \fi
  \fi
  \!!!setbars}
\def\!setbarsbget baselabels (#1) {%
  \def\!barbaselabelorientation{#1}%
  \def\!bardobaselabel{\!!bardobaselabel}%
  \!setbars}
\def\!setbarseget endlabels (#1) {%
  \edef\!barendlabelorientation{#1}%
  \def\!bardoendlabel{\!!bardoendlabel}%
  \!setbars}
\def\!barcurve #1 #2 {%
  \if y\!barorientation
    \def\!basexarg{#1}%
    \def\!baseyarg{\!barbaseline}%
  \else
    \def\!basexarg{\!barbaseline}%
    \def\!baseyarg{#2}%
  \fi
  \expandafter\putbar\!barshift breadth <\!barbreadth> from {\!basexarg}
    {\!baseyarg} to {#1} {#2}
  \def\!endxarg{#1}%
  \def\!endyarg{#2}%
  \!bardobaselabel}
\def\!!bardobaselabel "#1" {%
  \put {#1}\!barbaselabelorientation{} at {\!basexarg} {\!baseyarg}
  \!bardoendlabel}
\def\!!bardoendlabel "#1" {%
  \put {#1}\!barendlabelorientation{} at {\!endxarg} {\!endyarg}
  \!barfinish}
\def\!barfinish{%
  \!ifnextchar/{\!finish}{\!barcurve}}
\def\putrectangle{%
  \!ifnextchar<{\!putrectangle}{\!putrectangle<\!zpt,\!zpt> }}
\def\!putrectangle<#1,#2> corners at #3 #4 and #5 #6 {%
%
  \!xone=\!M{#3}\!xunit  \!xtwo=\!M{#5}\!xunit%
  \!yone=\!M{#4}\!yunit  \!ytwo=\!M{#6}\!yunit%
  \ifdim \!xtwo<\!xone
    \!dimenI=\!xone  \!xone=\!xtwo  \!xtwo=\!dimenI
  \fi
  \ifdim \!ytwo<\!yone
    \!dimenI=\!yone  \!yone=\!ytwo  \!ytwo=\!dimenI
  \fi
  \!dimenI=#1\relax  \advance\!xone\!dimenI  \advance\!xtwo\!dimenI
  \!dimenI=#2\relax  \advance\!yone\!dimenI  \advance\!ytwo\!dimenI
  \let\!MRect=\!M
  \!setdimenmode
%
  \!shaderectangle
%
  \!dimenI=.5\linethickness
  \advance \!xone  -\!dimenI
  \advance \!xtwo   \!dimenI
  \putrule from {\!xone} {\!yone} to {\!xtwo} {\!yone} 
  \putrule from {\!xone} {\!ytwo} to {\!xtwo} {\!ytwo} 
%
  \advance \!xone   \!dimenI
  \advance \!xtwo  -\!dimenI%
  \advance \!yone  -\!dimenI
  \advance \!ytwo   \!dimenI
  \putrule from {\!xone} {\!yone} to {\!xone} {\!ytwo} 
  \putrule from {\!xtwo} {\!yone} to {\!xtwo} {\!ytwo} 
  \let\!M=\!MRect
  \ignorespaces}
\def\shaderectanglesoff{%
  \def\!shaderectangle{}%
  \ignorespaces}
\def\!!shaderectangle{%
  \!dimenA=\!xtwo  \advance \!dimenA -\!xone
  \!dimenB=\!ytwo  \advance \!dimenB -\!yone
  \ifdim \!dimenA<\!dimenB
    \!startvshade (\!xone,\!yone,\!ytwo)
    \!lshade      (\!xtwo,\!yone,\!ytwo)
  \else
    \!starthshade (\!yone,\!xone,\!xtwo)
    \!lshade      (\!ytwo,\!xone,\!xtwo)
  \fi
  \ignorespaces}
\def\frame{%
  \!ifnextchar<{\!frame}{\!frame<\!zpt> }}
\long\def\!frame<#1> #2{%
  \beginpicture
    \setcoordinatesystem units <1pt,1pt> point at 0 0 
    \put {#2} [Bl] at 0 0 
    \!dimenA=#1\relax
    \!dimenB=\!wd \advance \!dimenB \!dimenA
    \!dimenC=\!ht \advance \!dimenC \!dimenA
    \!dimenD=\!dp \advance \!dimenD \!dimenA
    \let\!MFr=\!M
    \!setdimenmode
    \putrectangle corners at {-\!dimenA} {-\!dimenD} and {\!dimenB} {\!dimenC}
    \!setcoordmode
    \let\!M=\!MFr
  \endpicture
  \ignorespaces}
\def\rectangle <#1> <#2> {%
  \setbox0=\hbox{}\wd0=#1\ht0=#2\frame {\box0}}
\def\plot{%
  \!ifnextchar"{\!plotfromfile}{\!drawcurve}}
\def\!plotfromfile"#1"{%
  \expandafter\!drawcurve \input #1 /}
\def\setquadratic{%
  \let\!drawcurve=\!qcurve
  \let\!!Shade=\!!qShade
  \let\!!!Shade=\!!!qShade}
\def\setlinear{%
  \let\!drawcurve=\!lcurve
  \let\!!Shade=\!!lShade
  \let\!!!Shade=\!!!lShade}
\def\sethistograms{%
  \let\!drawcurve=\!hcurve}
\def\!qcurve #1 #2 {%
  \!start (#1,#2)
  \!Qjoin}
\def\!Qjoin#1 #2 #3 #4 {%
  \!qjoin (#1,#2) (#3,#4)             
  \!ifnextchar/{\!finish}{\!Qjoin}}
\def\!lcurve #1 #2 {%
  \!start (#1,#2)
  \!Ljoin}
\def\!Ljoin#1 #2 {%
  \!ljoin (#1,#2)                    
  \!ifnextchar/{\!finish}{\!Ljoin}}
\def\!finish/{\ignorespaces}
\def\!hcurve #1 #2 {%
  \edef\!hxS{#1}%
  \edef\!hyS{#2}%
  \!hjoin}
\def\!hjoin#1 #2 {%
  \putrectangle corners at {\!hxS} {\!hyS} and {#1} {#2}
  \edef\!hxS{#1}%
  \!ifnextchar/{\!finish}{\!hjoin}}
\def\vshade #1 #2 #3 {%
  \!startvshade (#1,#2,#3)
  \!Shadewhat}
\def\hshade #1 #2 #3 {%
  \!starthshade (#1,#2,#3)
  \!Shadewhat}
\def\!Shadewhat{%
  \futurelet\!nextchar\!Shade}
\def\!Shade{%
  \if <\!nextchar
    \def\!nextShade{\!!Shade}%
  \else
    \if /\!nextchar
      \def\!nextShade{\!finish}%
    \else
      \def\!nextShade{\!!!Shade}%
    \fi
  \fi
  \!nextShade}
\def\!!lShade<#1> #2 #3 #4 {%
  \!lshade <#1> (#2,#3,#4)                 
  \!Shadewhat}
\def\!!!lShade#1 #2 #3 {%
  \!lshade (#1,#2,#3)
  \!Shadewhat} 
\def\!!qShade<#1> #2 #3 #4 #5 #6 #7 {%
  \!qshade <#1> (#2,#3,#4) (#5,#6,#7)      
  \!Shadewhat}
\def\!!!qShade#1 #2 #3 #4 #5 #6 {%
  \!qshade (#1,#2,#3) (#4,#5,#6)
  \!Shadewhat} 
\def\setdashpattern <#1>{%
  \def\!Flist{}\def\!Blist{}\def\!UDlist{}%
  \!countA=0
  \!ecfor\!item:=#1\do{%
    \!dimenA=\!item\relax
    \expandafter\!rightappend\the\!dimenA\withCS{\\}\to\!UDlist%
    \advance\!countA  1
    \ifodd\!countA
      \expandafter\!rightappend\the\!dimenA\withCS{\!Rule}\to\!Flist%
      \expandafter\!leftappend\the\!dimenA\withCS{\!Rule}\to\!Blist%
    \else 
      \expandafter\!rightappend\the\!dimenA\withCS{\!Skip}\to\!Flist%
      \expandafter\!leftappend\the\!dimenA\withCS{\!Skip}\to\!Blist%
    \fi}%
  \!leaderlength=\!zpt
  \def\!Rule##1{\advance\!leaderlength  ##1}%
  \def\!Skip##1{\advance\!leaderlength  ##1}%
  \!Flist%
  \ifdim\!leaderlength>\!zpt 
  \else
    \def\!Flist{\!Skip{24in}}\def\!Blist{\!Skip{24in}}\ignorespaces
    \def\!UDlist{\\{\!zpt}\\{24in}}\ignorespaces
    \!leaderlength=24in
  \fi
  \!dashingon}
\def\!dashingon{%
  \def\!advancedashing{\!!advancedashing}%
  \def\!drawlinearsegment{\!lineardashed}%
  \def\!puthline{\!putdashedhline}%
  \def\!putvline{\!putdashedvline}%
  \ignorespaces}%
\def\!dashingoff{%
  \def\!advancedashing{\relax}%
  \def\!drawlinearsegment{\!linearsolid}%
  \def\!puthline{\!putsolidhline}%
  \def\!putvline{\!putsolidvline}%
  \ignorespaces}
\def\setdots{%
  \!ifnextchar<{\!setdots}{\!setdots<5pt>}}
\def\!setdots<#1>{%
  \!dimenB=#1\advance\!dimenB -\plotsymbolspacing
  \ifdim\!dimenB<\!zpt
    \!dimenB=\!zpt
  \fi
\setdashpattern <\plotsymbolspacing,\!dimenB>}
\def\setdotsnear <#1> for <#2>{%
  \!dimenB=#2\relax  \advance\!dimenB -.05pt  
  \!dimenC=#1\relax  \!countA=\!dimenC 
  \!dimenD=\!dimenB  \advance\!dimenD .5\!dimenC  \!countB=\!dimenD
  \divide \!countB  \!countA
  \ifnum 1>\!countB 
    \!countB=1
  \fi
  \divide\!dimenB  \!countB
  \setdots <\!dimenB>}
\def\setdashes{%
  \!ifnextchar<{\!setdashes}{\!setdashes<5pt>}}
\def\!setdashes<#1>{\setdashpattern <#1,#1>}
\def\setdashesnear <#1> for <#2>{%
  \!dimenB=#2\relax  
  \!dimenC=#1\relax  \!countA=\!dimenC 
  \!dimenD=\!dimenB  \advance\!dimenD .5\!dimenC  \!countB=\!dimenD
  \divide \!countB  \!countA
  \ifodd \!countB 
  \else 
    \advance \!countB  1
  \fi
  \divide\!dimenB  \!countB
  \setdashes <\!dimenB>}
\def\setsolid{%
  \def\!Flist{\!Rule{24in}}\def\!Blist{\!Rule{24in}}%
  \def\!UDlist{\\{24in}\\{\!zpt}}%
  \!dashingoff}  
\def\!divide#1#2#3{%
  \!dimenB=#1
  \!dimenC=#2
  \!dimenD=\!dimenB
  \divide \!dimenD \!dimenC
  \!dimenA=\!dimenD
  \multiply\!dimenD \!dimenC
  \advance\!dimenB -\!dimenD
  \!dimenD=\!dimenC
    \ifdim\!dimenD<\!zpt \!dimenD=-\!dimenD 
  \fi
  \ifdim\!dimenD<64pt
    \!divstep[\!tfs]\!divstep[\!tfs]%
  \else 
    \!!divide
  \fi
  #3=\!dimenA\ignorespaces}
\def\!!divide{%
  \ifdim\!dimenD<256pt
    \!divstep[64]\!divstep[32]\!divstep[32]%
  \else 
    \!divstep[8]\!divstep[8]\!divstep[8]\!divstep[8]\!divstep[8]%
    \!dimenA=2\!dimenA
  \fi}
\def\!divstep[#1]{
  \!dimenB=#1\!dimenB
  \!dimenD=\!dimenB
    \divide \!dimenD by \!dimenC
  \!dimenA=#1\!dimenA
    \advance\!dimenA by \!dimenD%
  \multiply\!dimenD by \!dimenC
    \advance\!dimenB by -\!dimenD}
\def\Divide <#1> by <#2> forming <#3> {%
  \!divide{#1}{#2}{#3}}
\def\ellipticalarc axes ratio #1:#2 #3 degrees from #4 #5 center at #6 #7 {%
  \!angle=#3pt\relax
  \ifdim\!angle>\!zpt 
    \def\!sign{}
  \else 
    \def\!sign{-}\!angle=-\!angle
  \fi
  \!xxloc=\!M{#6}\!xunit
  \!yyloc=\!M{#7}\!yunit     
  \!xxS=\!M{#4}\!xunit
  \!yyS=\!M{#5}\!yunit
  \advance\!xxS -\!xxloc
  \advance\!yyS -\!yyloc
  \!divide\!xxS{#1pt}\!xxS 
  \!divide\!yyS{#2pt}\!yyS 
  \let\!MC=\!M
  \!setdimenmode
  \!xS=#1\!xxS  \advance\!xS\!xxloc
  \!yS=#2\!yyS  \advance\!yS\!yyloc
  \!start (\!xS,\!yS)%
  \!loop\ifdim\!angle>14.9999pt
    \!rotate(\!xxS,\!yyS)by(\!cos,\!sign\!sin)to(\!xxM,\!yyM) 
    \!rotate(\!xxM,\!yyM)by(\!cos,\!sign\!sin)to(\!xxE,\!yyE)
    \!xM=#1\!xxM  \advance\!xM\!xxloc  \!yM=#2\!yyM  \advance\!yM\!yyloc
    \!xE=#1\!xxE  \advance\!xE\!xxloc  \!yE=#2\!yyE  \advance\!yE\!yyloc
    \!qjoin (\!xM,\!yM) (\!xE,\!yE)
    \!xxS=\!xxE  \!yyS=\!yyE 
    \advance \!angle -15pt
  \repeat
  \ifdim\!angle>\!zpt
    \!angle=100.53096\!angle
    \divide \!angle 360 
    \!sinandcos\!angle\!!sin\!!cos
    \!rotate(\!xxS,\!yyS)by(\!!cos,\!sign\!!sin)to(\!xxM,\!yyM) 
    \!rotate(\!xxM,\!yyM)by(\!!cos,\!sign\!!sin)to(\!xxE,\!yyE)
    \!xM=#1\!xxM  \advance\!xM\!xxloc  \!yM=#2\!yyM  \advance\!yM\!yyloc
    \!xE=#1\!xxE  \advance\!xE\!xxloc  \!yE=#2\!yyE  \advance\!yE\!yyloc
    \!qjoin (\!xM,\!yM) (\!xE,\!yE)
  \fi
  \let\!M=\!MC
  \ignorespaces}
\def\!rotate(#1,#2)by(#3,#4)to(#5,#6){%
  \!dimenA=#3#1\advance \!dimenA -#4#2
  \!dimenB=#3#2\advance \!dimenB  #4#1
  \divide \!dimenA 32  \divide \!dimenB 32 
  #5=\!dimenA  #6=\!dimenB
  \ignorespaces}
\def\!sin{4.17684}
\def\!cos{31.72624}
\def\!sinandcos#1#2#3{%
 \!dimenD=#1
 \!dimenA=\!dimenD
 \!dimenB=32pt
 \!removept\!dimenD\!value
 \!dimenC=\!dimenD
 \!dimenC=\!value\!dimenC \divide\!dimenC by 64 
 \advance\!dimenB by -\!dimenC
 \!dimenC=\!value\!dimenC \divide\!dimenC by 96 
 \advance\!dimenA by -\!dimenC
 \!dimenC=\!value\!dimenC \divide\!dimenC by 128 
 \advance\!dimenB by \!dimenC%
 \!removept\!dimenA#2
 \!removept\!dimenB#3
 \ignorespaces}
\def\putrule#1from #2 #3 to #4 #5 {%
  \!xloc=\!M{#2}\!xunit  \!xxloc=\!M{#4}\!xunit%
  \!yloc=\!M{#3}\!yunit  \!yyloc=\!M{#5}\!yunit%
  \!dxpos=\!xxloc  \advance\!dxpos by -\!xloc
  \!dypos=\!yyloc  \advance\!dypos by -\!yloc
  \ifdim\!dypos=\!zpt
    \def\!!Line{\!puthline{#1}}\ignorespaces
  \else
    \ifdim\!dxpos=\!zpt
      \def\!!Line{\!putvline{#1}}\ignorespaces
    \else 
       \def\!!Line{}
    \fi
  \fi
  \let\!ML=\!M
  \!setdimenmode
  \!!Line%
  \let\!M=\!ML
  \ignorespaces}
\def\!putsolidhline#1{%
  \ifdim\!dxpos>\!zpt 
    \put{\!hline\!dxpos}#1[l] at {\!xloc} {\!yloc}
  \else 
    \put{\!hline{-\!dxpos}}#1[l] at {\!xxloc} {\!yyloc}
  \fi
  \ignorespaces}
\def\!putsolidvline#1{%
  \ifdim\!dypos>\!zpt 
    \put{\!vline\!dypos}#1[b] at {\!xloc} {\!yloc}
  \else 
    \put{\!vline{-\!dypos}}#1[b] at {\!xxloc} {\!yyloc}
  \fi
  \ignorespaces}
\def\!hline#1{\hbox to #1{\leaders \hrule height\linethickness\hfill}}
\def\!vline#1{\vbox to #1{\leaders \vrule width\linethickness\vfill}}
\def\!putdashedhline#1{%
  \ifdim\!dxpos>\!zpt 
    \!DLsetup\!Flist\!dxpos
    \put{\hbox to \!totalleaderlength{\!hleaders}\!hpartialpattern\!Rtrunc}
      #1[l] at {\!xloc} {\!yloc} 
  \else 
    \!DLsetup\!Blist{-\!dxpos}
    \put{\!hpartialpattern\!Ltrunc\hbox to \!totalleaderlength{\!hleaders}}
      #1[r] at {\!xloc} {\!yloc} 
  \fi
  \ignorespaces}
\def\!putdashedvline#1{%
  \!dypos=-\!dypos
  \ifdim\!dypos>\!zpt 
    \!DLsetup\!Flist\!dypos 
    \put{\vbox{\vbox to \!totalleaderlength{\!vleaders}
      \!vpartialpattern\!Rtrunc}}#1[t] at {\!xloc} {\!yloc} 
  \else 
    \!DLsetup\!Blist{-\!dypos}
    \put{\vbox{\!vpartialpattern\!Ltrunc
      \vbox to \!totalleaderlength{\!vleaders}}}#1[b] at {\!xloc} {\!yloc} 
  \fi
  \ignorespaces}
\def\!DLsetup#1#2{
  \let\!RSlist=#1
  \!countB=#2
  \!countA=\!leaderlength
  \divide\!countB by \!countA
  \!totalleaderlength=\!countB\!leaderlength
  \!Rresiduallength=#2%
  \advance \!Rresiduallength by -\!totalleaderlength
  \!Lresiduallength=\!leaderlength
  \advance \!Lresiduallength by -\!Rresiduallength
  \ignorespaces}
\def\!hleaders{%
  \def\!Rule##1{\vrule height\linethickness width##1}%
  \def\!Skip##1{\hskip##1}%
  \leaders\hbox{\!RSlist}\hfill}
\def\!hpartialpattern#1{%
  \!dimenA=\!zpt \!dimenB=\!zpt 
  \def\!Rule##1{#1{##1}\vrule height\linethickness width\!dimenD}%
  \def\!Skip##1{#1{##1}\hskip\!dimenD}%
  \!RSlist}
\def\!vleaders{%
  \def\!Rule##1{\hrule width\linethickness height##1}%
  \def\!Skip##1{\vskip##1}%
  \leaders\vbox{\!RSlist}\vfill}
\def\!vpartialpattern#1{%
  \!dimenA=\!zpt \!dimenB=\!zpt 
  \def\!Rule##1{#1{##1}\hrule width\linethickness height\!dimenD}%
  \def\!Skip##1{#1{##1}\vskip\!dimenD}%
  \!RSlist}
\def\!Rtrunc#1{\!trunc{#1}>\!Rresiduallength}
\def\!Ltrunc#1{\!trunc{#1}<\!Lresiduallength}
\def\!trunc#1#2#3{%
  \!dimenA=\!dimenB         
  \advance\!dimenB by #1%
  \!dimenD=\!dimenB  \ifdim\!dimenD#2#3\!dimenD=#3\fi
  \!dimenC=\!dimenA  \ifdim\!dimenC#2#3\!dimenC=#3\fi
  \advance \!dimenD by -\!dimenC}
\def\!start (#1,#2){%
  \!plotxorigin=\!xorigin  \advance \!plotxorigin by \!plotsymbolxshift
  \!plotyorigin=\!yorigin  \advance \!plotyorigin by \!plotsymbolyshift
  \!xS=\!M{#1}\!xunit \!yS=\!M{#2}\!yunit
  \!rotateaboutpivot\!xS\!yS
  \!copylist\!UDlist\to\!!UDlist
  \!getnextvalueof\!downlength\from\!!UDlist
  \!distacross=\!zpt
  \!intervalno=0 
  \global\totalarclength=\!zpt
  \ignorespaces}
\def\!ljoin (#1,#2){%
  \advance\!intervalno by 1
  \!xE=\!M{#1}\!xunit \!yE=\!M{#2}\!yunit
  \!rotateaboutpivot\!xE\!yE
  \!xdiff=\!xE \advance \!xdiff by -\!xS
  \!ydiff=\!yE \advance \!ydiff by -\!yS
  \!Pythag\!xdiff\!ydiff\!arclength
  \global\advance \totalarclength by \!arclength%
  \!drawlinearsegment
  \!xS=\!xE \!yS=\!yE
  \ignorespaces}
\def\!linearsolid{%
  \!npoints=\!arclength
  \!countA=\plotsymbolspacing
  \divide\!npoints by \!countA
  \ifnum \!npoints<1 
    \!npoints=1 
  \fi
  \divide\!xdiff by \!npoints
  \divide\!ydiff by \!npoints
  \!xpos=\!xS \!ypos=\!yS
  \loop\ifnum\!npoints>-1
    \!plotifinbounds
    \advance \!xpos by \!xdiff
    \advance \!ypos by \!ydiff
    \advance \!npoints by -1
  \repeat
  \ignorespaces}
\def\!lineardashed{%
  \ifdim\!distacross>\!arclength
    \advance \!distacross by -\!arclength  
  \else
    \loop\ifdim\!distacross<\!arclength
      \!divide\!distacross\!arclength\!dimenA
      \!removept\!dimenA\!t
      \!xpos=\!t\!xdiff \advance \!xpos by \!xS
      \!ypos=\!t\!ydiff \advance \!ypos by \!yS
      \!plotifinbounds
      \advance\!distacross by \plotsymbolspacing
      \!advancedashing
    \repeat  
    \advance \!distacross by -\!arclength
  \fi
  \ignorespaces}
\def\!!advancedashing{%
  \advance\!downlength by -\plotsymbolspacing
  \ifdim \!downlength>\!zpt
  \else
    \advance\!distacross by \!downlength
    \!getnextvalueof\!uplength\from\!!UDlist
    \advance\!distacross by \!uplength
    \!getnextvalueof\!downlength\from\!!UDlist
  \fi}
\def\inboundscheckoff{%
  \def\!plotifinbounds{\!plot(\!xpos,\!ypos)}%
  \def\!initinboundscheck{\relax}\ignorespaces}
\def\!!plotifinbounds{%
  \ifdim \!xpos<\!checkleft
  \else
    \ifdim \!xpos>\!checkright
    \else
      \ifdim \!ypos<\!checkbot
      \else
         \ifdim \!ypos>\!checktop
         \else
           \!plot(\!xpos,\!ypos)
         \fi 
      \fi
    \fi
  \fi}
\def\!!initinboundscheck{%
  \!checkleft=\!arealloc     \advance\!checkleft by \!xorigin
  \!checkright=\!arearloc    \advance\!checkright by \!xorigin
  \!checkbot=\!areabloc      \advance\!checkbot by \!yorigin
  \!checktop=\!areatloc      \advance\!checktop by \!yorigin}
\def\!logten#1#2{%
  \expandafter\!!logten#1\!nil
  \!removept\!dimenF#2%
  \ignorespaces}
\def\!!logten#1#2\!nil{%
  \if -#1%
    \!dimenF=\!zpt
    \def\!next{\ignorespaces}%
  \else
    \if +#1%
      \def\!next{\!!logten#2\!nil}%
    \else
      \if .#1%
        \def\!next{\!!logten0.#2\!nil}%
      \else
        \def\!next{\!!!logten#1#2..\!nil}%
      \fi
    \fi
  \fi
  \!next}
\def\!!!logten#1#2.#3.#4\!nil{%
  \!dimenF=1pt 
  \if 0#1%
    \!!logshift#3pt 
  \else 
    \!logshift#2/
    \!dimenE=#1.#2#3pt 
  \fi 
  \ifdim \!dimenE<\!rootten
    \multiply \!dimenE 10 
    \advance  \!dimenF -1pt
  \fi
  \!dimenG=\!dimenE
    \advance\!dimenG 10pt
  \advance\!dimenE -10pt 
  \multiply\!dimenE 10 
  \!divide\!dimenE\!dimenG\!dimenE
  \!removept\!dimenE\!t
  \!dimenG=\!t\!dimenE
  \!removept\!dimenG\!tt
  \!dimenH=\!tt\!tenAe
    \divide\!dimenH 100
  \advance\!dimenH \!tenAc
  \!dimenH=\!tt\!dimenH
    \divide\!dimenH 100   
  \advance\!dimenH \!tenAa
  \!dimenH=\!t\!dimenH
    \divide\!dimenH 100 
  \advance\!dimenF \!dimenH}
\def\!logshift#1{%
  \if #1/%
    \def\!next{\ignorespaces}%
  \else
    \advance\!dimenF 1pt 
    \def\!next{\!logshift}%
  \fi 
  \!next}
 \def\!!logshift#1{%
   \advance\!dimenF -1pt
   \if 0#1%
     \def\!next{\!!logshift}%
   \else
     \if p#1%
       \!dimenF=1pt
       \def\!next{\!dimenE=1p}%
     \else
       \def\!next{\!dimenE=#1.}%
     \fi
   \fi
   \!next}
\def\beginpicture{%
  \setbox\!picbox=\hbox\bgroup%
  \!xleft=\maxdimen  
  \!xright=-\maxdimen
  \!ybot=\maxdimen
  \!ytop=-\maxdimen}
\def\endpicture{%
  \ifdim\!xleft=\maxdimen
    \!xleft=\!zpt \!xright=\!zpt \!ybot=\!zpt \!ytop=\!zpt 
  \fi
  \global\!Xleft=\!xleft \global\!Xright=\!xright
  \global\!Ybot=\!ybot \global\!Ytop=\!ytop
  \egroup%
  \ht\!picbox=\!Ytop  \dp\!picbox=-\!Ybot
  \ifdim\!Ybot>\!zpt
  \else 
    \ifdim\!Ytop<\!zpt
      \!Ybot=\!Ytop
    \else
      \!Ybot=\!zpt
    \fi
  \fi
  \hbox{\kern-\!Xleft\lower\!Ybot\box\!picbox\kern\!Xright}}
\def\endpicturesave <#1,#2>{%
  \endpicture \global #1=\!Xleft \global #2=\!Ybot \ignorespaces}
\def\setcoordinatesystem{%
  \!ifnextchar{u}{\!getlengths }
    {\!getlengths units <\!xunit,\!yunit>}}
\def\!getlengths units <#1,#2>{%
  \!xunit=#1\relax
  \!yunit=#2\relax
  \!ifcoordmode 
    \let\!SCnext=\!SCccheckforRP
  \else
    \let\!SCnext=\!SCdcheckforRP
  \fi
  \!SCnext}
\def\!SCccheckforRP{%
  \!ifnextchar{p}{\!cgetreference }
    {\!cgetreference point at {\!xref} {\!yref} }}
\def\!cgetreference point at #1 #2 {%
  \edef\!xref{#1}\edef\!yref{#2}%
  \!xorigin=\!xref\!xunit  \!yorigin=\!yref\!yunit  
  \!initinboundscheck 
  \ignorespaces}
\def\!SCdcheckforRP{%
  \!ifnextchar{p}{\!dgetreference}%
    {\ignorespaces}}
\def\!dgetreference point at #1 #2 {%
  \!xorigin=#1\relax  \!yorigin=#2\relax
  \ignorespaces}
\long\def\put#1#2 at #3 #4 {%
  \!setputobject{#1}{#2}%
  \!xpos=\!M{#3}\!xunit  \!ypos=\!M{#4}\!yunit  
  \!rotateaboutpivot\!xpos\!ypos%
  \advance\!xpos -\!xorigin  \advance\!xpos -\!xshift
  \advance\!ypos -\!yorigin  \advance\!ypos -\!yshift
  \kern\!xpos\raise\!ypos\box\!putobject\kern-\!xpos%
  \!doaccounting\ignorespaces}
\long\def\multiput #1#2 at {%
  \!setputobject{#1}{#2}%
  \!ifnextchar"{\!putfromfile}{\!multiput}}
\def\!putfromfile"#1"{%
  \expandafter\!multiput \input #1 /}
\def\!multiput{%
  \futurelet\!nextchar\!!multiput}
\def\!!multiput{%
  \if *\!nextchar
    \def\!nextput{\!alsoby}%
  \else
    \if /\!nextchar
      \def\!nextput{\!finishmultiput}%
    \else
      \def\!nextput{\!alsoat}%
    \fi
  \fi
  \!nextput}
\def\!finishmultiput/{%
  \setbox\!putobject=\hbox{}%
  \ignorespaces}
\def\!alsoat#1 #2 {%
  \!xpos=\!M{#1}\!xunit  \!ypos=\!M{#2}\!yunit  
  \!rotateaboutpivot\!xpos\!ypos%
  \advance\!xpos -\!xorigin  \advance\!xpos -\!xshift
  \advance\!ypos -\!yorigin  \advance\!ypos -\!yshift
  \kern\!xpos\raise\!ypos\copy\!putobject\kern-\!xpos%
  \!doaccounting
  \!multiput}
\def\!alsoby*#1 #2 #3 {%
  \!dxpos=\!M{#2}\!xunit \!dypos=\!M{#3}\!yunit 
  \!rotateonly\!dxpos\!dypos
  \!ntemp=#1%
  \!!loop\ifnum\!ntemp>0
    \advance\!xpos by \!dxpos  \advance\!ypos by \!dypos
    \kern\!xpos\raise\!ypos\copy\!putobject\kern-\!xpos%
    \advance\!ntemp by -1
  \repeat
  \!doaccounting 
  \!multiput}
\def\accountingon{\def\!doaccounting{\!!doaccounting}\ignorespaces}
\def\!!doaccounting{%
  \!xtemp=\!xpos  
  \!ytemp=\!ypos
  \ifdim\!xtemp<\!xleft 
     \!xleft=\!xtemp 
  \fi
  \advance\!xtemp by  \!wd 
  \ifdim\!xright<\!xtemp 
    \!xright=\!xtemp
  \fi
  \advance\!ytemp by -\!dp
  \ifdim\!ytemp<\!ybot  
    \!ybot=\!ytemp
  \fi
  \advance\!ytemp by  \!dp
  \advance\!ytemp by  \!ht 
  \ifdim\!ytemp>\!ytop  
    \!ytop=\!ytemp  
  \fi}
\long\def\!setputobject#1#2{%
  \setbox\!putobject=\hbox{#1}%
  \!ht=\ht\!putobject  \!dp=\dp\!putobject  \!wd=\wd\!putobject
  \wd\!putobject=\!zpt
  \!xshift=.5\!wd   \!yshift=.5\!ht   \advance\!yshift by -.5\!dp
  \edef\!putorientation{#2}%
  \expandafter\!SPOreadA\!putorientation[]\!nil%
  \expandafter\!SPOreadB\!putorientation<\!zpt,\!zpt>\!nil\ignorespaces}
\def\!SPOreadA#1[#2]#3\!nil{\!etfor\!orientation:=#2\do\!SPOreviseshift}
\def\!SPOreadB#1<#2,#3>#4\!nil{\advance\!xshift by -#2\advance\!yshift by -#3}
\def\!SPOreviseshift{%
  \if l\!orientation 
    \!xshift=\!zpt
  \else 
    \if r\!orientation 
      \!xshift=\!wd
    \else 
      \if b\!orientation
        \!yshift=-\!dp
      \else 
        \if B\!orientation 
          \!yshift=\!zpt
        \else 
          \if t\!orientation 
            \!yshift=\!ht
          \fi 
        \fi
      \fi
    \fi
  \fi}
\long\def\!dimenput#1#2(#3,#4){%
  \!setputobject{#1}{#2}%
  \!xpos=#3\advance\!xpos by -\!xshift
  \!ypos=#4\advance\!ypos by -\!yshift
  \kern\!xpos\raise\!ypos\box\!putobject\kern-\!xpos%
  \!doaccounting\ignorespaces}
\def\!setdimenmode{%
  \let\!M=\!M!!\ignorespaces}
\def\!setcoordmode{%
  \let\!M=\!M!\ignorespaces}
\def\!ifcoordmode{%
  \ifx \!M \!M!}
\def\!ifdimenmode{%
  \ifx \!M \!M!!}
\def\!M!#1#2{#1#2} 
\def\!M!!#1#2{#1}
\let\setdimensionmode=\!setdimenmode
\let\setcoordinatemode=\!setcoordmode
\def\!stack[#1]{%
  \let\!lglue=\hfill \let\!rglue=\hfill
  \expandafter\let\csname !#1glue\endcsname=\relax
  \!ifnextchar<{\!!stack}{\!!stack<\stackleading>}}
\def\!!stack<#1>#2{%
  \vbox{\def\!valueslist{}\!ecfor\!value:=#2\do{%
    \expandafter\!rightappend\!value\withCS{\\}\to\!valueslist}%
    \!lop\!valueslist\to\!value
    \let\\=\cr\lineskiplimit=\maxdimen\lineskip=#1%
    \baselineskip=-1000pt\halign{\!lglue##\!rglue\cr \!value\!valueslist\cr}}%
  \ignorespaces}
\def\!lines[#1]#2{%
  \let\!lglue=\hfill \let\!rglue=\hfill
  \expandafter\let\csname !#1glue\endcsname=\relax
  \vbox{\halign{\!lglue##\!rglue\cr #2\crcr}}%
  \ignorespaces}
\def\!Lines[#1]#2{%
  \let\!lglue=\hfill \let\!rglue=\hfill
  \expandafter\let\csname !#1glue\endcsname=\relax
  \vtop{\halign{\!lglue##\!rglue\cr #2\crcr}}%
  \ignorespaces}
\def\setplotsymbol(#1#2){%
  \!setputobject{#1}{#2}
  \setbox\!plotsymbol=\box\!putobject%
  \!plotsymbolxshift=\!xshift 
  \!plotsymbolyshift=\!yshift 
  \ignorespaces}
\font\fiverm=cmr5
\def\!!plot(#1,#2){%
  \!dimenA=-\!plotxorigin \advance \!dimenA by #1
  \!dimenB=-\!plotyorigin \advance \!dimenB by #2
  \kern\!dimenA\raise\!dimenB\copy\!plotsymbol\kern-\!dimenA%
  \ignorespaces}
\def\!!!plot(#1,#2){%
  \!dimenA=-\!plotxorigin \advance \!dimenA by #1
  \!dimenB=-\!plotyorigin \advance \!dimenB by #2
  \kern\!dimenA\raise\!dimenB\copy\!plotsymbol\kern-\!dimenA%
  \!countE=\!dimenA
  \!countF=\!dimenB
  \immediate\write\!replotfile{\the\!countE,\the\!countF.}%
  \ignorespaces}
\def\savelinesandcurves on "#1" {%
  \immediate\closeout\!replotfile
  \immediate\openout\!replotfile=#1%
  \let\!plot=\!!!plot}
\def\dontsavelinesandcurves {%
  \let\!plot=\!!plot}
\xdef\!Commentsignal{
\def\writesavefile#1 {%
  \immediate\write\!replotfile{\!Commentsignal #1}%
  \ignorespaces}

\def\replot"#1" {%
  \expandafter\!replot\input #1 /}
\def\!replot#1,#2. {%
  \!dimenA=#1sp
  \kern\!dimenA\raise#2sp\copy\!plotsymbol\kern-\!dimenA
  \futurelet\!nextchar\!!replot}
\def\!!replot{%
  \if /\!nextchar 
    \def\!next{\!finish}%
  \else
    \def\!next{\!replot}%
  \fi
  \!next}


 
 
\def\!Pythag#1#2#3{%
  \!dimenE=#1\relax                                     
  \ifdim\!dimenE<\!zpt 
    \!dimenE=-\!dimenE 
  \fi
  \!dimenF=#2\relax
  \ifdim\!dimenF<\!zpt 
    \!dimenF=-\!dimenF 
  \fi
  \advance \!dimenF by \!dimenE
  \ifdim\!dimenF=\!zpt 
    \!dimenG=\!zpt
  \else 
    \!divide{8\!dimenE}\!dimenF\!dimenE
    \advance\!dimenE by -4pt
      \!dimenE=2\!dimenE
    \!removept\!dimenE\!!t
    \!dimenE=\!!t\!dimenE
    \advance\!dimenE by 64pt
    \divide \!dimenE by 2
    \!dimenH=7pt
    \!!Pythag\!!Pythag\!!Pythag
    \!removept\!dimenH\!!t
    \!dimenG=\!!t\!dimenF
    \divide\!dimenG by 8
  \fi
  #3=\!dimenG
  \ignorespaces}

\def\!!Pythag{
  \!divide\!dimenE\!dimenH\!dimenI
  \advance\!dimenH by \!dimenI
    \divide\!dimenH by 2}

\def\placehypotenuse for <#1> and <#2> in <#3> {%
  \!Pythag{#1}{#2}{#3}}

 
 
 
\def\!qjoin (#1,#2) (#3,#4){%
  \advance\!intervalno by 1
  \!ifcoordmode
    \edef\!xmidpt{#1}\edef\!ymidpt{#2}%
  \else
    \!dimenA=#1\relax \edef\!xmidpt{\the\!dimenA}%
    \!dimenA=#2\relax \edef\!ymidpt{\the\!dimenA}%
  \fi
  \!xM=\!M{#1}\!xunit  \!yM=\!M{#2}\!yunit   \!rotateaboutpivot\!xM\!yM
  \!xE=\!M{#3}\!xunit  \!yE=\!M{#4}\!yunit   \!rotateaboutpivot\!xE\!yE
%
  \!dimenA=\!xM  \advance \!dimenA by -\!xS
  \!dimenB=\!xE  \advance \!dimenB by -\!xM
  \!xB=3\!dimenA \advance \!xB by -\!dimenB
  \!xC=2\!dimenB \advance \!xC by -2\!dimenA
%
  \!dimenA=\!yM  \advance \!dimenA by -\!yS%
  \!dimenB=\!yE  \advance \!dimenB by -\!yM%
  \!yB=3\!dimenA \advance \!yB by -\!dimenB%
  \!yC=2\!dimenB \advance \!yC by -2\!dimenA%
%
  \!xprime=\!xB  \!yprime=\!yB
  \!dxprime=.5\!xC  \!dyprime=.5\!yC
  \!getf \!midarclength=\!dimenA
  \!getf \advance \!midarclength by 4\!dimenA
  \!getf \advance \!midarclength by \!dimenA
  \divide \!midarclength by 12
%
  \!arclength=\!dimenA
  \!getf \advance \!arclength by 4\!dimenA
  \!getf \advance \!arclength by \!dimenA
  \divide \!arclength by 12
  \advance \!arclength by \!midarclength
  \global\advance \totalarclength by \!arclength
%
%
  \ifdim\!distacross>\!arclength 
    \advance \!distacross by -\!arclength
  \else
    \!initinverseinterp
    \loop\ifdim\!distacross<\!arclength
      \!inverseinterp
      \!xpos=\!t\!xC \advance\!xpos by \!xB
        \!xpos=\!t\!xpos \advance \!xpos by \!xS
      \!ypos=\!t\!yC \advance\!ypos by \!yB
        \!ypos=\!t\!ypos \advance \!ypos by \!yS
      \!plotifinbounds
      \advance\!distacross \plotsymbolspacing
      \!advancedashing
    \repeat  
    \advance \!distacross by -\!arclength
  \fi
  \!xS=\!xE
  \!yS=\!yE
  \ignorespaces}

\def\!getf{\!Pythag\!xprime\!yprime\!dimenA%
  \advance\!xprime by \!dxprime
  \advance\!yprime by \!dyprime}

\def\!initinverseinterp{%
  \ifdim\!arclength>\!zpt
    \!divide{8\!midarclength}\!arclength\!dimenE
    \ifdim\!dimenE<\!wmin \!setinverselinear
    \else 
      \ifdim\!dimenE>\!wmax \!setinverselinear
      \else
        \def\!inverseinterp{\!inversequad}\ignorespaces
%
%
         \!removept\!dimenE\!Ew
         \!dimenF=-\!Ew\!dimenE
         \advance\!dimenF by 32pt
         \!dimenG=8pt 
         \advance\!dimenG by -\!dimenE
         \!dimenG=\!Ew\!dimenG
         \!divide\!dimenF\!dimenG\!beta
         \!gamma=1pt
         \advance \!gamma by -\!beta
      \fi
    \fi
  \fi
  \ignorespaces}

\def\!inversequad{%
  \!divide\!distacross\!arclength\!dimenG
  \!removept\!dimenG\!v
  \!dimenG=\!v\!gamma
  \advance\!dimenG by \!beta
  \!dimenG=\!v\!dimenG
  \!removept\!dimenG\!t}

\def\!setinverselinear{%
  \def\!inverseinterp{\!inverselinear}%
  \divide\!dimenE by 8 \!removept\!dimenE\!t
  \!countC=\!intervalno \multiply \!countC 2
  \!countB=\!countC     \advance \!countB -1
  \!countA=\!countB     \advance \!countA -1
  \wlog{\the\!countB th point (\!xmidpt,\!ymidpt) being plotted 
    doesn't lie in the}%
  \wlog{ middle third of the arc between the \the\!countA th 
    and \the\!countC th points:}%
  \wlog{ [arc length \the\!countA\space to \the\!countB]/[arc length 
    \the \!countA\space to \the\!countC]=\!t.}%
  \ignorespaces}
 
\def\!inverselinear{%
  \!divide\!distacross\!arclength\!dimenG
  \!removept\!dimenG\!t}

 

\def\startrotation{%
  \let\!rotateaboutpivot=\!!rotateaboutpivot
  \let\!rotateonly=\!!rotateonly
  \!ifnextchar{b}{\!getsincos }%
    {\!getsincos by {\!cosrotationangle} {\!sinrotationangle} }}
\def\!getsincos by #1 #2 {%
  \edef\!cosrotationangle{#1}%
  \edef\!sinrotationangle{#2}%
  \!ifcoordmode 
    \let\!ROnext=\!ccheckforpivot
  \else
    \let\!ROnext=\!dcheckforpivot
  \fi
  \!ROnext}
\def\!ccheckforpivot{%
  \!ifnextchar{a}{\!cgetpivot}%
    {\!cgetpivot about {\!xpivotcoord} {\!ypivotcoord} }}
\def\!cgetpivot about #1 #2 {%
  \edef\!xpivotcoord{#1}%
  \edef\!ypivotcoord{#2}%
  \!xpivot=#1\!xunit  \!ypivot=#2\!yunit
  \ignorespaces}
\def\!dcheckforpivot{%
  \!ifnextchar{a}{\!dgetpivot}{\ignorespaces}}
\def\!dgetpivot about #1 #2 {%
  \!xpivot=#1\relax  \!ypivot=#2\relax
  \ignorespaces}

\def\stoprotation{%
  \let\!rotateaboutpivot=\!!!rotateaboutpivot
  \let\!rotateonly=\!!!rotateonly
  \ignorespaces}
 
\def\!!rotateaboutpivot#1#2{%
  \!dimenA=#1\relax  \advance\!dimenA -\!xpivot
  \!dimenB=#2\relax  \advance\!dimenB -\!ypivot
  \!dimenC=\!cosrotationangle\!dimenA
    \advance \!dimenC -\!sinrotationangle\!dimenB
  \!dimenD=\!cosrotationangle\!dimenB
    \advance \!dimenD  \!sinrotationangle\!dimenA
  \advance\!dimenC \!xpivot  \advance\!dimenD \!ypivot
  #1=\!dimenC  #2=\!dimenD
  \ignorespaces}

\def\!!rotateonly#1#2{%
  \!dimenA=#1\relax  \!dimenB=#2\relax 
  \!dimenC=\!cosrotationangle\!dimenA
    \advance \!dimenC -\!rotsign\!sinrotationangle\!dimenB
  \!dimenD=\!cosrotationangle\!dimenB
    \advance \!dimenD  \!rotsign\!sinrotationangle\!dimenA
  #1=\!dimenC  #2=\!dimenD
  \ignorespaces}
\def\!rotsign{}
\def\!!!rotateaboutpivot#1#2{\relax}
\def\!!!rotateonly#1#2{\relax}
\stoprotation

\def\!reverserotateonly#1#2{%
  \def\!rotsign{-}%
  \!rotateonly{#1}{#2}%
  \def\!rotsign{}%
  \ignorespaces}

\def\!getspan span <#1>{%
  \!dshade=#1\relax
  \!ifcoordmode 
    \let\!GRnext=\!GRccheckforAP
  \else
    \let\!GRnext=\!GRdcheckforAP
  \fi
  \!GRnext}
\def\!GRccheckforAP{%
  \!ifnextchar{p}{\!cgetanchor }
    {\!cgetanchor point at {\!xshadesave} {\!yshadesave} }}
\def\!cgetanchor point at #1 #2 {%
  \edef\!xshadesave{#1}\edef\!yshadesave{#2}%
  \!xshade=\!xshadesave\!xunit  \!yshade=\!yshadesave\!yunit
  \ignorespaces}
\def\!GRdcheckforAP{%
  \!ifnextchar{p}{\!dgetanchor}%
    {\ignorespaces}}
\def\!dgetanchor point at #1 #2 {%
  \!xshade=#1\relax  \!yshade=#2\relax
  \ignorespaces}

\def\setshadesymbol{%
  \!ifnextchar<{\!setshadesymbol}{\!setshadesymbol<,,,> }}

\def\!setshadesymbol <#1,#2,#3,#4> (#5#6){%
  \!setputobject{#5}{#6}%
  \setbox\!shadesymbol=\box\!putobject%
  \!shadesymbolxshift=\!xshift \!shadesymbolyshift=\!yshift
%
  \!dimenA=\!xshift \advance\!dimenA \!smidge
  \!override\!dimenA{#1}\!lshrinkage%
  \!dimenA=\!wd \advance \!dimenA -\!xshift
    \advance\!dimenA \!smidge
    \!override\!dimenA{#2}\!rshrinkage
  \!dimenA=\!dp \advance \!dimenA \!yshift
    \advance\!dimenA \!smidge
    \!override\!dimenA{#3}\!bshrinkage
  \!dimenA=\!ht \advance \!dimenA -\!yshift
    \advance\!dimenA \!smidge
    \!override\!dimenA{#4}\!tshrinkage
  \ignorespaces}
\def\!smidge{-.2pt}%

\def\!override#1#2#3{%
  \edef\!!override{#2}%
  \ifx \!!override\empty
    #3=#1\relax
  \else
    \if z\!!override
      #3=\!zpt
    \else
      \ifx \!!override\!blankz
        #3=\!zpt
      \else
        #3=#2\relax
      \fi
    \fi
  \fi
  \ignorespaces}
\def\!blankz{ z}

\setshadesymbol ({\fiverm .})

\def\!startvshade#1(#2,#3,#4){%
  \let\!!xunit=\!xunit%
  \let\!!yunit=\!yunit%
  \let\!!xshade=\!xshade%
  \let\!!yshade=\!yshade%
  \def\!getshrinkages{\!vgetshrinkages}%
  \let\!setshadelocation=\!vsetshadelocation%
  \!xS=\!M{#2}\!!xunit
  \!ybS=\!M{#3}\!!yunit
  \!ytS=\!M{#4}\!!yunit
  \!shadexorigin=\!xorigin  \advance \!shadexorigin \!shadesymbolxshift
  \!shadeyorigin=\!yorigin  \advance \!shadeyorigin \!shadesymbolyshift
  \ignorespaces}
 
\def\!starthshade#1(#2,#3,#4){%
  \let\!!xunit=\!yunit%
  \let\!!yunit=\!xunit%
  \let\!!xshade=\!yshade%
  \let\!!yshade=\!xshade%
  \def\!getshrinkages{\!hgetshrinkages}%
  \let\!setshadelocation=\!hsetshadelocation%
  \!xS=\!M{#2}\!!xunit
  \!ybS=\!M{#3}\!!yunit
  \!ytS=\!M{#4}\!!yunit
  \!shadexorigin=\!xorigin  \advance \!shadexorigin \!shadesymbolxshift
  \!shadeyorigin=\!yorigin  \advance \!shadeyorigin \!shadesymbolyshift
  \ignorespaces}

\def\!lattice#1#2#3#4#5{%
  \!dimenA=#1
  \!dimenB=#2
  \!countB=\!dimenB
%
  \!dimenC=#3
  \advance\!dimenC -\!dimenA
  \!countA=\!dimenC
  \divide\!countA \!countB
  \ifdim\!dimenC>\!zpt
    \!dimenD=\!countA\!dimenB
    \ifdim\!dimenD<\!dimenC
      \advance\!countA 1 
    \fi
  \fi
  \!dimenC=\!countA\!dimenB
    \advance\!dimenC \!dimenA
  #4=\!countA
  #5=\!dimenC
  \ignorespaces}

\def\!qshade#1(#2,#3,#4)#5(#6,#7,#8){%
  \!xM=\!M{#2}\!!xunit
  \!ybM=\!M{#3}\!!yunit
  \!ytM=\!M{#4}\!!yunit
  \!xE=\!M{#6}\!!xunit
  \!ybE=\!M{#7}\!!yunit
  \!ytE=\!M{#8}\!!yunit
  \!getcoeffs\!xS\!ybS\!xM\!ybM\!xE\!ybE\!ybB\!ybC
  \!getcoeffs\!xS\!ytS\!xM\!ytM\!xE\!ytE\!ytB\!ytC
  \def\!getylimits{\!qgetylimits}%
  \!shade{#1}\ignorespaces}
 
\def\!lshade#1(#2,#3,#4){%
  \!xE=\!M{#2}\!!xunit
  \!ybE=\!M{#3}\!!yunit
  \!ytE=\!M{#4}\!!yunit
  \!dimenE=\!xE  \advance \!dimenE -\!xS
  \!dimenC=\!ytE \advance \!dimenC -\!ytS
  \!divide\!dimenC\!dimenE\!ytB
  \!dimenC=\!ybE \advance \!dimenC -\!ybS
  \!divide\!dimenC\!dimenE\!ybB
  \def\!getylimits{\!lgetylimits}%
  \!shade{#1}\ignorespaces}
 
\def\!getcoeffs#1#2#3#4#5#6#7#8{%
  \!dimenC=#4\advance \!dimenC -#2
  \!dimenE=#3\advance \!dimenE -#1
  \!divide\!dimenC\!dimenE\!dimenF
  \!dimenC=#6\advance \!dimenC -#4
  \!dimenH=#5\advance \!dimenH -#3
  \!divide\!dimenC\!dimenH\!dimenG
  \advance\!dimenG -\!dimenF
  \advance \!dimenH \!dimenE
  \!divide\!dimenG\!dimenH#8
  \!removept#8\!t
  #7=-\!t\!dimenE
  \advance #7\!dimenF
  \ignorespaces}

\def\!shade#1{%
  \!getshrinkages#1<,,,>\!nil
  \advance \!dimenE \!xS
  \!lattice\!!xshade\!dshade\!dimenE
    \!parity\!xpos
  \!dimenF=-\!dimenF
    \advance\!dimenF \!xE
  \!loop\!not{\ifdim\!xpos>\!dimenF}
    \!shadecolumn%
    \advance\!xpos \!dshade
    \advance\!parity 1
  \repeat
  \!xS=\!xE
  \!ybS=\!ybE
  \!ytS=\!ytE
  \ignorespaces}

\def\!vgetshrinkages#1<#2,#3,#4,#5>#6\!nil{%
  \!override\!lshrinkage{#2}\!dimenE
  \!override\!rshrinkage{#3}\!dimenF
  \!override\!bshrinkage{#4}\!dimenG
  \!override\!tshrinkage{#5}\!dimenH
  \ignorespaces}
\def\!hgetshrinkages#1<#2,#3,#4,#5>#6\!nil{%
  \!override\!lshrinkage{#2}\!dimenG
  \!override\!rshrinkage{#3}\!dimenH
  \!override\!bshrinkage{#4}\!dimenE
  \!override\!tshrinkage{#5}\!dimenF
  \ignorespaces}

\def\!shadecolumn{%
  \!dxpos=\!xpos
  \advance\!dxpos -\!xS
  \!removept\!dxpos\!dx
  \!getylimits
  \advance\!ytpos -\!dimenH
  \advance\!ybpos \!dimenG
  \!yloc=\!!yshade
  \ifodd\!parity 
     \advance\!yloc \!dshade
  \fi
  \!lattice\!yloc{2\!dshade}\!ybpos%
    \!countA\!ypos
  \!dimenA=-\!shadexorigin \advance \!dimenA \!xpos
  \loop\!not{\ifdim\!ypos>\!ytpos}
    \!setshadelocation
    \!rotateaboutpivot\!xloc\!yloc%
    \!dimenA=-\!shadexorigin \advance \!dimenA \!xloc
    \!dimenB=-\!shadeyorigin \advance \!dimenB \!yloc
    \kern\!dimenA \raise\!dimenB\copy\!shadesymbol \kern-\!dimenA
    \advance\!ypos 2\!dshade
  \repeat
  \ignorespaces}
 
\def\!qgetylimits{%
  \!dimenA=\!dx\!ytC              
  \advance\!dimenA \!ytB
  \!ytpos=\!dx\!dimenA
  \advance\!ytpos \!ytS
  \!dimenA=\!dx\!ybC              
  \advance\!dimenA \!ybB
  \!ybpos=\!dx\!dimenA
  \advance\!ybpos \!ybS}
 
\def\!lgetylimits{%
  \!ytpos=\!dx\!ytB
  \advance\!ytpos \!ytS
  \!ybpos=\!dx\!ybB
  \advance\!ybpos \!ybS}
 
\def\!vsetshadelocation{
  \!xloc=\!xpos
  \!yloc=\!ypos}
\def\!hsetshadelocation{
  \!xloc=\!ypos
  \!yloc=\!xpos}





\def\!axisticks {%
  \def\!nextkeyword##1 {%
    \expandafter\ifx\csname !ticks##1\endcsname \relax
      \def\!next{\!fixkeyword{##1}}%
    \else
      \def\!next{\csname !ticks##1\endcsname}%
    \fi
    \!next}%
  \!axissetup
    \def\!axissetup{\relax}%
  \edef\!ticksinoutsign{\!ticksinoutSign}%
  \!ticklength=\longticklength
  \!tickwidth=\linethickness
  \!gridlinestatus
  \!setticktransform
  \!maketick
  \!tickcase=0
  \def\!LTlist{}%
  \!nextkeyword}

\def\ticksout{%
  \def\!ticksinoutSign{+}}

\ticksout

\def\nogridlines{%
  \def\!gridlinestatus{\!gridlinestoofalse}}
\nogridlines

\def\loggedticks{%
  \def\!setticktransform{\let\!ticktransform=\!logten}}
\def\unloggedticks{%
  \def\!setticktransform{\let\!ticktransform=\!donothing}}
\def\!donothing#1#2{\def#2{#1}}
\unloggedticks

\expandafter\def\csname !ticks/\endcsname{%
  \!not {\ifx \!LTlist\empty}
    \!placetickvalues
  \fi
  \def\!tickvalueslist{}%
  \def\!LTlist{}%
  \expandafter\csname !axis/\endcsname}

\def\!maketick{%
  \setbox\!boxA=\hbox{%
    \beginpicture
      \!setdimenmode
      \setcoordinatesystem point at {\!zpt} {\!zpt}   
      \linethickness=\!tickwidth
      \ifdim\!ticklength>\!zpt
        \putrule from {\!zpt} {\!zpt} to
          {\!ticksinoutsign\!tickxsign\!ticklength}
          {\!ticksinoutsign\!tickysign\!ticklength}
      \fi
      \if!gridlinestoo
        \putrule from {\!zpt} {\!zpt} to
          {-\!tickxsign\!xaxislength} {-\!tickysign\!yaxislength}
      \fi
    \endpicturesave <\!Xsave,\!Ysave>}%
    \wd\!boxA=\!zpt}
  
\def\!ticksin{%
  \def\!ticksinoutsign{-}%
  \!maketick
  \!nextkeyword}

\def\!ticksout{%
  \def\!ticksinoutsign{+}%
  \!maketick
  \!nextkeyword}

\def\!tickslength<#1> {%
  \!ticklength=#1\relax
  \!maketick
  \!nextkeyword}

\def\!tickslong{%
  \!tickslength<\longticklength> }

\def\!ticksshort{%
  \!tickslength<\shortticklength> }

\def\!tickswidth<#1> {%
  \!tickwidth=#1\relax
  \!maketick
  \!nextkeyword}

\def\!ticksandacross{%
  \!gridlinestootrue
  \!maketick
  \!nextkeyword}

\def\!ticksbutnotacross{%
  \!gridlinestoofalse
  \!maketick
  \!nextkeyword}

\def\!tickslogged{%
  \let\!ticktransform=\!logten
  \!nextkeyword}

\def\!ticksunlogged{%
  \let\!ticktransform=\!donothing
  \!nextkeyword}

\def\!ticksunlabeled{%
  \!tickcase=0
  \!nextkeyword}

\def\!ticksnumbered{%
  \!tickcase=1
  \!nextkeyword}

\def\!tickswithvalues#1/ {%
  \edef\!tickvalueslist{#1! /}%
  \!tickcase=2
  \!nextkeyword}

\def\!ticksquantity#1 {%
  \ifnum #1>1
    \!updatetickoffset
    \!countA=#1\relax
    \advance \!countA -1
    \!ticklocationincr=\!axisLength
      \divide \!ticklocationincr \!countA
    \!ticklocation=\!axisstart
    \loop \!not{\ifdim \!ticklocation>\!axisend}
      \!placetick\!ticklocation
      \ifcase\!tickcase
          \relax 
        \or
          \relax 
        \or
          \expandafter\!gettickvaluefrom\!tickvalueslist
          \edef\!tickfield{{\the\!ticklocation}{\!value}}%
          \expandafter\!listaddon\expandafter{\!tickfield}\!LTlist%
      \fi
      \advance \!ticklocation \!ticklocationincr
    \repeat
  \fi
  \!nextkeyword}

\def\!ticksat#1 {%
  \!updatetickoffset
  \edef\!Loc{#1}%
  \if /\!Loc
    \def\next{\!nextkeyword}%
  \else
    \!ticksincommon
    \def\next{\!ticksat}%
  \fi
  \next}    
      
\def\!ticksfrom#1 to #2 by #3 {%
  \!updatetickoffset
  \edef\!arg{#3}%
  \expandafter\!separate\!arg\!nil
  \!scalefactor=1
  \expandafter\!countfigures\!arg/
  \edef\!arg{#1}%
  \!scaleup\!arg by\!scalefactor to\!countE
  \edef\!arg{#2}%
  \!scaleup\!arg by\!scalefactor to\!countF
  \edef\!arg{#3}%
  \!scaleup\!arg by\!scalefactor to\!countG
  \loop \!not{\ifnum\!countE>\!countF}
    \ifnum\!scalefactor=1
      \edef\!Loc{\the\!countE}%
    \else
      \!scaledown\!countE by\!scalefactor to\!Loc
    \fi
    \!ticksincommon
    \advance \!countE \!countG
  \repeat
  \!nextkeyword}

\def\!updatetickoffset{%
  \!dimenA=\!ticksinoutsign\!ticklength
  \ifdim \!dimenA>\!offset
    \!offset=\!dimenA
  \fi}

\def\!placetick#1{%
  \if!xswitch
    \!xpos=#1\relax
    \!ypos=\!axisylevel
  \else
    \!xpos=\!axisxlevel
    \!ypos=#1\relax
  \fi
  \advance\!xpos \!Xsave
  \advance\!ypos \!Ysave
  \kern\!xpos\raise\!ypos\copy\!boxA\kern-\!xpos
  \ignorespaces}

\def\!gettickvaluefrom#1 #2 /{%
  \edef\!value{#1}%
  \edef\!tickvalueslist{#2 /}%
  \ifx \!tickvalueslist\!endtickvaluelist
    \!tickcase=0
  \fi}
\def\!endtickvaluelist{! /}

\def\!ticksincommon{%
  \!ticktransform\!Loc\!t
  \!ticklocation=\!t\!!unit
  \advance\!ticklocation -\!!origin
  \!placetick\!ticklocation
  \ifcase\!tickcase
    \relax 
  \or 
    \ifdim\!ticklocation<-\!!origin
      \edef\!Loc{$\!Loc$}%
    \fi
    \edef\!tickfield{{\the\!ticklocation}{\!Loc}}%
    \expandafter\!listaddon\expandafter{\!tickfield}\!LTlist%
  \or 
    \expandafter\!gettickvaluefrom\!tickvalueslist
    \edef\!tickfield{{\the\!ticklocation}{\!value}}%
    \expandafter\!listaddon\expandafter{\!tickfield}\!LTlist%
  \fi}

\def\!separate#1\!nil{%
  \!ifnextchar{-}{\!!separate}{\!!!separate}#1\!nil}
\def\!!separate-#1\!nil{%
  \def\!sign{-}%
  \!!!!separate#1..\!nil}
\def\!!!separate#1\!nil{%
  \def\!sign{+}%
  \!!!!separate#1..\!nil}
\def\!!!!separate#1.#2.#3\!nil{%
  \def\!arg{#1}%
  \ifx\!arg\!empty
    \!countA=0
  \else
    \!countA=\!arg
  \fi
  \def\!arg{#2}%
  \ifx\!arg\!empty
    \!countB=0
  \else
    \!countB=\!arg
  \fi}
 
\def\!countfigures#1{%
  \if #1/%
    \def\!next{\ignorespaces}%
  \else
    \multiply\!scalefactor 10
    \def\!next{\!countfigures}%
  \fi
  \!next}

\def\!scaleup#1by#2to#3{%
  \expandafter\!separate#1\!nil
  \multiply\!countA #2\relax
  \advance\!countA \!countB
  \if -\!sign
    \!countA=-\!countA
  \fi
  #3=\!countA
  \ignorespaces}

\def\!scaledown#1by#2to#3{%
  \!countA=#1\relax
  \ifnum \!countA<0 
    \def\!sign{-}
    \!countA=-\!countA
  \else
    \def\!sign{}%
  \fi
  \!countB=\!countA
  \divide\!countB #2\relax
  \!countC=\!countB
    \multiply\!countC #2\relax
  \advance \!countA -\!countC
  \edef#3{\!sign\the\!countB.}
  \!countC=\!countA 
  \ifnum\!countC=0 
    \!countC=1
  \fi
  \multiply\!countC 10
  \!loop \ifnum #2>\!countC
    \edef#3{#3\!zero}%
    \multiply\!countC 10
  \repeat
  \edef#3{#3\the\!countA}
  \ignorespaces}

\def\!placetickvalues{%
  \advance\!offset \tickstovaluesleading
  \if!xswitch
    \setbox\!boxA=\hbox{%
      \def\\##1##2{%
        \!dimenput {##2} [B] (##1,\!axisylevel)}%
      \beginpicture 
        \!LTlist
      \endpicturesave <\!Xsave,\!Ysave>}%
    \!dimenA=\!axisylevel
      \advance\!dimenA -\!Ysave
      \advance\!dimenA \!tickysign\!offset
      \if -\!tickysign
        \advance\!dimenA -\ht\!boxA
      \else
        \advance\!dimenA  \dp\!boxA
      \fi
    \advance\!offset \ht\!boxA 
      \advance\!offset \dp\!boxA
    \!dimenput {\box\!boxA} [Bl] <\!Xsave,\!Ysave> (\!zpt,\!dimenA)
  \else
    \setbox\!boxA=\hbox{%
      \def\\##1##2{%
        \!dimenput {##2} [r] (\!axisxlevel,##1)}%
      \beginpicture 
        \!LTlist
      \endpicturesave <\!Xsave,\!Ysave>}%
    \!dimenA=\!axisxlevel
      \advance\!dimenA -\!Xsave
      \advance\!dimenA \!tickxsign\!offset
      \if -\!tickxsign
        \advance\!dimenA -\wd\!boxA
      \fi
    \advance\!offset \wd\!boxA
    \!dimenput {\box\!boxA} [Bl] <\!Xsave,\!Ysave> (\!dimenA,\!zpt)
  \fi}

\normalgraphs
\catcode`!=12 


 
\catcode`@=11 \catcode`!=11
  
\let\!pictexendpicture=\endpicture 
\let\!pictexframe=\frame
\let\!pictexlinethickness=\linethickness
\let\!pictexmultiput=\multiput
\let\!pictexput=\put

\def\beginpicture{%
  \setbox\!picbox=\hbox\bgroup%
  \let\endpicture=\!pictexendpicture
  \let\frame=\!pictexframe
  \let\linethickness=\!pictexlinethickness
  \let\multiput=\!pictexmultiput
  \let\put=\!pictexput
  \let\input=\@@input   
  \!xleft=\maxdimen  
  \!xright=-\maxdimen
  \!ybot=\maxdimen
  \!ytop=-\maxdimen}

\let\frame=\!latexframe

\let\pictexframe=\!pictexframe

\let\linethickness=\!latexlinethickness
\let\pictexlinethickness=\!pictexlinethickness

\let\\=\@normalcr
\catcode`@=12 \catcode`!=12

\newcommand{\ie}{{\it i.e. }}
\newcommand{\eg}{{\it e.g. }}

\usepackage{changes}
\definechangesauthor[name={Oleg}, color=red]{OK}
\newcommand{\Do}{\deleted[id=]}

\renewcommand\Do[1]{}

\definechangesauthor[name={Seb}, color=red]{S}

\newcommand\Ds[1]{}

\colorlet{Changes@Color}{red}

\author{Oleg Kozlovski and Sebastian van Strien}
\title{Asymmetric unimodal maps with non-universal period-doubling scaling laws}

\date{\today}

\begin{document}

\maketitle

\begin{abstract} We consider a family of strongly-asymmetric unimodal maps $\{f_t\}_{t\in [0,1]}$ of the form 
$f_t=t\cdot f$ where $f\colon [0,1]\to [0,1]$ is unimodal, $f(0)=f(1)=0$, $f(c)=1$ is of the form and $$f(x)=\left\{ \begin{array}{ll} 1-K_-|x-c|+o(|x-c|)& \mbox{ for }x<c, \\
1-K_+|x-c|^\beta + o(|x-c|^\beta) &\mbox{ for }x>c, \end{array}\right. $$
where we assume that $\beta>1$.  We show that such a family contains a Feigenbaum-Coullet-Tresser $2^\infty$ map, and develop a renormalization theory for these maps. The scalings of the renormalization intervals  of the $2^\infty$ map turn out to be super-exponential and non-universal (i.e. to depend on the map) and the scaling-law is different for odd and even steps of the renormalization.   The conjugacy between the attracting Cantor sets of two such maps  is smooth if and only if some invariant is satisfied.  We also show that the Feigenbaum-Coullet-Tresser map does not  have wandering intervals, but surprisingly we were only able to prove this using our rather detailed scaling results. 
\end{abstract}

\setcounter{tocdepth}{1}
\tableofcontents

\section{Introduction}

The theory of one-dimensional dynamics is rather well
developed. Especially a lot is known for smooth one-dimensional
unimodal maps (\ie maps of an interval having just one critical
point): absence of wandering intervals, real bounds, convergence of
renormalizations, density of hyperbolic maps, various scaling
properties, etc... Most of these
 results are obtained
under some conditions on the order of the critical point, typically the map is assumed
to be smooth or even analytic, 
 and the critical 
point is assumed to be non-flat and 
in many results  the order is, additionally, assumed to be an 
even integer. Moreover,  in these studies, the order of the critical
point is assumed to be the same on  both sides, \ie in a small
neighbourhood of the critical point the map behaves as
$f(x)-f(c)  \sim -K|x-c|^\alpha$, where $c$ denotes the critical point
and $\alpha$ is its order. Here $\sim$ means that the left hand side
divided by the right hand side tends to $1$ as $x\to c$.

A natural generalisation and the next step in the theory of
one-dimensional maps is to consider maps which have different
critical orders on the two sides of the critical point. Specifically, to 
study maps such that near the critical point
the map $f$ takes the form 
$$f(x)-f(c) \sim
\left\{ \begin{array}{rl}  
           - K_-|x-c|^\alpha &\mbox{ for }x<c\\ 
           - K_+|x-c|^\beta &\mbox{ for }x>c\end{array} \right.
$$ 
where $1\le \alpha\le \beta$. 
Maps for which $\alpha<\beta$ deserve to be studied on their own merit and can appear in
applications, \eg the Poincare first return maps of smooth
two-dimensional flows or semi-flows can have singularities with different critical
order. We will call these maps {\it strongly asymmetric} when $\alpha<\beta$ and  {\em weakly symmetric} when $\alpha=\beta$.

The purpose of this  project is to ask the following question:  do strongly asymmetric maps have substantially
different properties when compared with {\lq}symmetric{\rq} unimodal maps?
In some cases the answer would be NO. For  example, hyperbolic maps will have
similar properties because the order of the critical point does not
play any role for such maps. A slightly  less trivial example is the case of
Misiurewicz maps (that is maps whose critical orbit does not
accumulate on the critical point) where the standard theory of
one-dimensional maps can be applied to  strongly asymmetric maps
without any  significant alteration.


At the start of this project on strongly asymmetric maps, the authors were not sure what  to expect in non-trivial cases. For example, 
could one expect universality? Could there be wandering intervals? 

In this paper 
we will make a first step towards a general theory for such maps by considering
one of the simplest non-trivial class of such maps, namely
infinitely renormalizable maps of the Feigenbaum-Coullet-Tresser
combinatorics and will show that the scaling properties and limits of
renormalizations are quite different compared to the classical
ones. Note that the theory of such infinitely renormalizable maps is
still far from  complete even in the case of maps with a
{\lq}symmetric{\rq} critical point when the order of the critical point is
not an even integer. Though it is generally believed that the
renormalizations should converge in this case, no proof is known. 

Before we formulate our results,  let us quickly discuss some obvious
differences between symmetric and asymmetric cases in the setting
of the Feigenbaum-Coullet-Tresser maps (which we will often call
{\em $2^\infty$ maps} or  {\em maps of $2^\infty$ combinatorics}). Recall that for
such a map one can construct a shrinking sequence of intervals
$[a_n, b_n]$   around the critical point such that the restrictions
$f^{2^n}|_{[a_n,b_n]}$ are unimodal maps (also with  $2^\infty$
combinatorics) for $n=0,1,\dots$.  Let $R_n: [a_n, b_n] \to [0,1]$ be linear  surjections  and let the
$n$-th renormalizations $\tilde f_n$ of $f$ be defined 
by the formula $\tilde f_n= R \circ f^{2^n} \circ R^{-1}$.

When the order of the  critical point of $f$ is an even integer it is known   that
the sequence of the renormalizations converges to some unimodal
real-analytic map which is universal in the sense that this limit map
depends only on the order of the critical point and not on the
particular choice of the initial map $f$,   for references see below.

Now let us check what happens with renormalizations when the map is
strongly asymmetric. First, note that the renormalization intervals
$[a_n,b_n]$ can be constructed in different ways. These differences
are non essential,  and we will find if convenient to assume  that $f(a_n)=f(b_n)$. Then
asymptotically we have
$|a_n-c| \sim (K_+/K_-)^{\frac 1\alpha} |b_n-c|^{\frac \beta\alpha}$, and
since $\alpha<\beta$ we see that $|a_n-c| \ll |b_n-c|$. Thus, the
critical point is located much closer to the left end of the
renormalization intervals and in the limit after rescaling the
critical point coincides with the left boundary point of the rescaled
interval. This means that the renormalizations cannot converge to a
unimodal map!  As we  will see 
 in the case we consider,
 when $\alpha=1<\beta$,
the limit of $\tilde f_n$ exists (even though it is degenerate), and moreover is universal  in the sense that it only depends on $\beta$.  There is even an
explicit formula for it!

\medskip 
To initiate this research direction we decided to focus on
strongly
asymmetric unimodal maps with  Feigenbaum-Coullet-Tresser
combinatorics. Though the authors believe that these results must hold
in the general case $1\le \alpha < \beta$, we were only able to prove them
under the assumption that $\alpha=1$   because in that case we are able to 
use the notion of  {\em semi-extension} which is defined in 
\S\ref{subsec:semiexten} and discussed a little more in the informal summary below.
 The precise definition of the class of
considered maps is given in Section~\ref{sec:settings}.

\subsubsection*{Informal summary of the the results in this paper.}

\begin{figure}
\centering
\begin{subfigure}{.3\textwidth}\includegraphics[width=4cm, height=5cm]{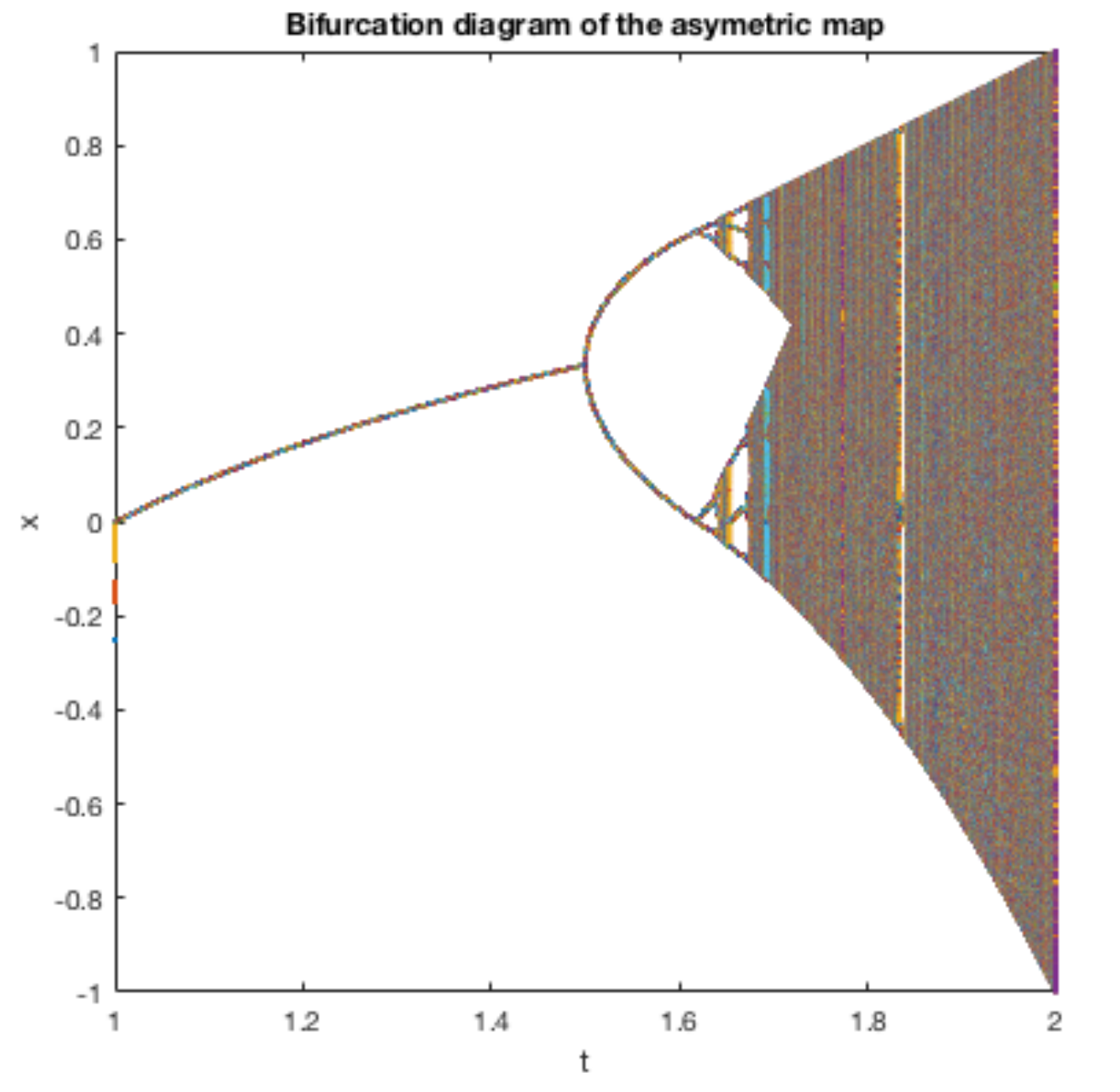}
\end{subfigure}%
 \centering
\begin{subfigure}{.3\textwidth}\includegraphics[width=4cm, height=5cm]{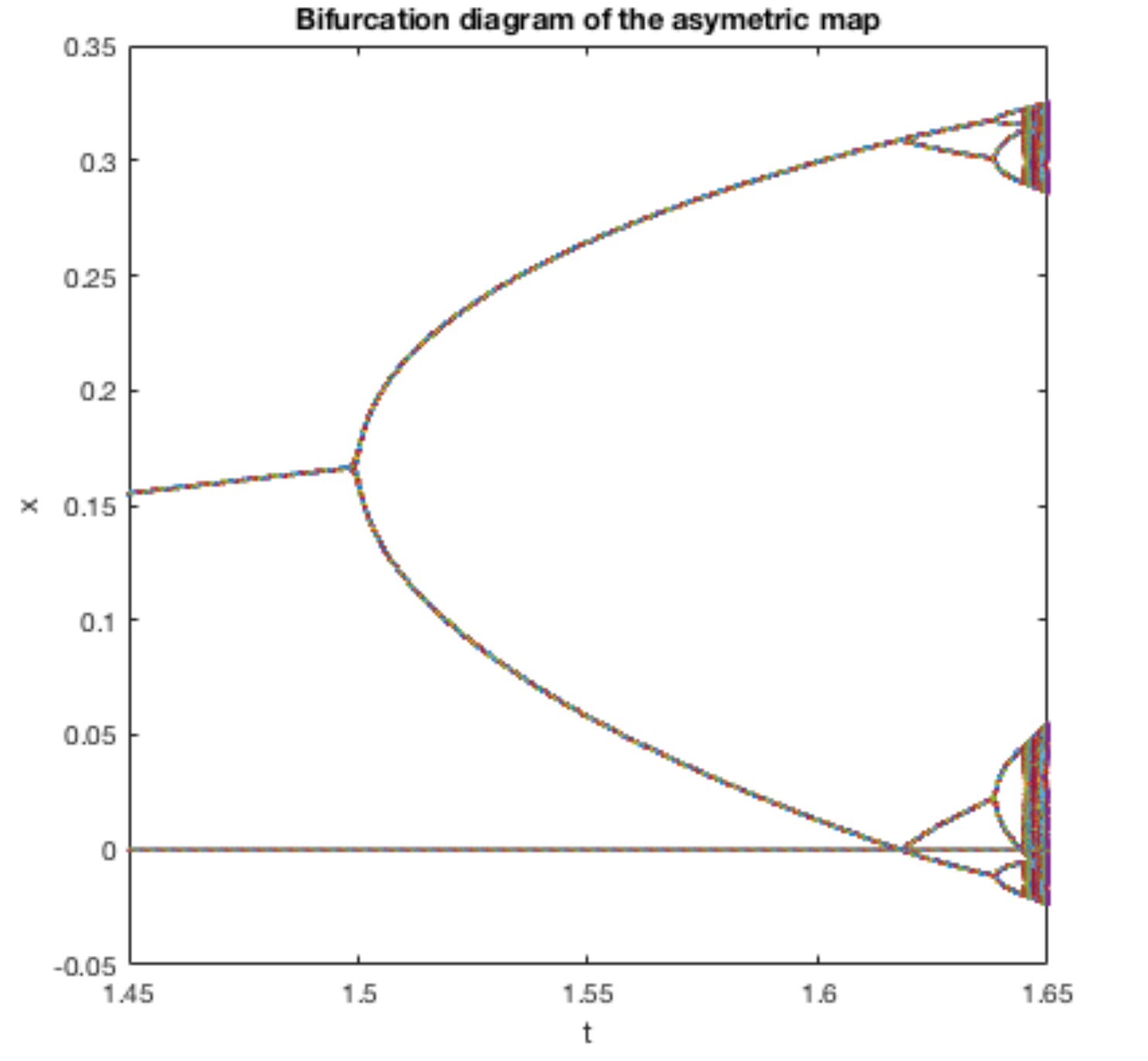}
\end{subfigure}
 \centering
\begin{subfigure}{.3\textwidth}\includegraphics[width=4cm, height=5cm]{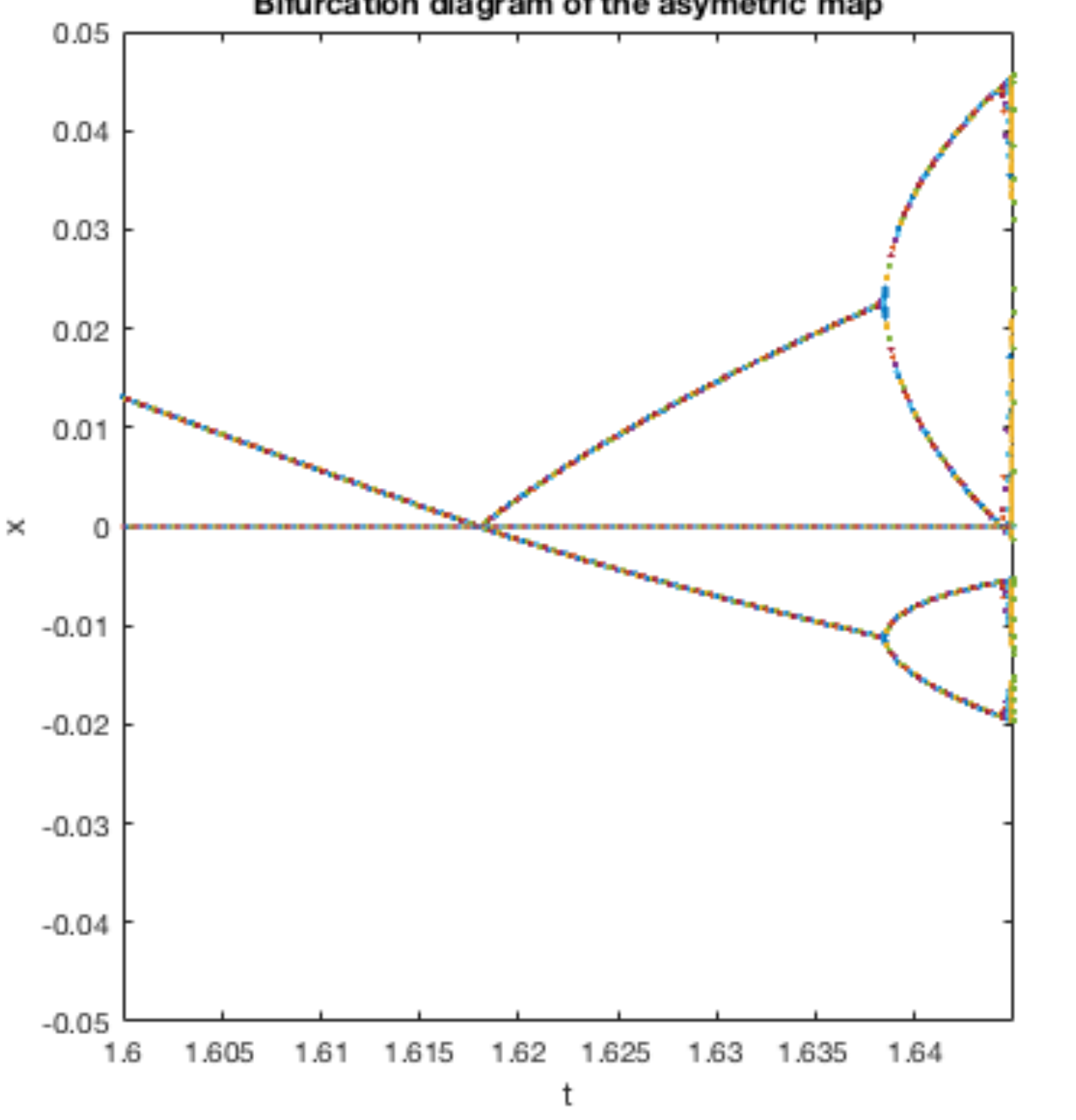}
\end{subfigure}
\caption{\label{fig1}\small The bifurcation diagram of the family of asymmetric maps $\{f_t\}_{t\in [1,2]}$, defined in (\ref{examplefamily}) together with two zoomed-in versions 
with the position of the critical point $x=0$ marked. Note that the  doubling bifurcation from period $2^n$ to period $2^{n+1}$ when $n$ is odd
is not the classical one; in the current asymmetric  case the period doubles precisely when $0$ is periodic (rather than 
when the multiplier is equal to $-1$), as is explained in Theorem~\ref{thm:existence}.
The parameter scalings also appears to be rather different than that for the quadratic family.}
\end{figure}

\begin{itemize}[leftmargin=*] 
\item We study bifurcations leading to a
    Feigenbaum-Coullet-Tresser map and prove the existence of such a
    map in our class (Theorem~\ref{thm:existence2}). The argument here
    will be rather soft.  Although the period doubling diagram, see
  Figure~\ref{fig1}, looks qualitatively the same as for the quadratic
  family, there are important differences when $1=\alpha<\beta$: when $n$ is odd, the
  periodic orbit of period $2^n$ doubles its period when it contains
  the critical point rather than when its multiplier is $-1$.
\item An initial crucial step in the theory of unimodal (or, more
    generally, one-dimensional) maps is to establish the existence of
   distortion bounds. This usually relies on  {\lq}real bounds{\rq} or {\lq}Koebe space{\rq}, by which we
    will mean, in this setting,  that the first entry map $f^{2^n-1}$ from the critical value
    to $[a_n,b_n]$ has a diffeomorphic extension whose range contains a definite intervals around $[a_n,b_n]$.
    Having this property gives distortion bounds on the first entry map. Surprisingly, as we will show   
     such extensions do NOT exist for $f^{2^n-1}$ (Theorem~\ref{thm:nousualbound}, also
    \ref{thm:absencekoebe}).             As far as we know this is the first type of unimodal map
    for which such bounds are known not to exist.
        In
    spite of the absence of Koebe space, we will be able to control
    the distortion of certain branches of the iterates of $f$
    (Theorem~\ref{thm:bounds}). This is the main step in this paper, 
    and the proof is involved and interesting. For an idea of the proof see \S\ref{subsec:sketch}. 
    Here we rely heavily on the 
    fact that $f$ is almost linear  on one side of  the critical
    point.  This lead us to invent the notion of a  {\em semi-extension}, see  \S\ref{subsec:semiexten}.
    This means that we consider a maximal diffeomorphic extension of $f^n$ which is obtained by 
    taking  an appropriate      composition of the right branch of $f$ and a 
    (diffeomorphic)     extension of the left branch of $f$ beyond the critical point. 
      Using this tool, we analyse various scenarios concerning the position of certain points which 
      as such have no dynamical interpretation. Thus we obtain increasingly precise information, and thus we   eventually obtain extremely good real bounds for these  semi-extensions. 
 
\item Using distortion properties mentioned above, we are able to obtain very precise
  scaling laws, see Theorem~\ref{thm:scalings}. These scaling laws are
  rather different than for the usual {\lq}symmetric{\rq}
  Feigenbaum-Coullet-Tresser case where the scalings are geometric and
  universal (the rates only depend on the order of the critical point)
  and so      we have
  $$|b_{k+1}-a_{k+1}| \sim \kappa |b_k- a_k|$$
  for some $0<\kappa<1$ which does not depend on which unimodal map
  one takes (provided its critical point is quadratic).  In our
  setting, the scalings of their lengths are quite different for even
  and odd steps, namely
  \begin{equation*}
    \begin{array}{rll}
      |b_{2k+2}-a_{2k+2}|   &\sim 
      & \beta^{\frac{-2}{\beta-1}} K_0^{\frac{1}{\beta-1}}  \lambda^{-2} 
        | b_{2k+1}-a_{2k+1}|^2  \\ 
                            & \\ 
      |b_{2k+1}-a_{2k+1}| & \sim  &\lambda |b_{2k}-a_{2k}|  
    \end{array}
  \end{equation*}
  where $\lambda$ is the root of $$\lambda^\beta+\lambda-1=0$$  and $K_0=K_+/K_-$. 
  Moreover, there exists $\Theta>0$ so that  
  \begin{equation} |b_{2k}-a_{2k}| \sim  \beta^{\frac{2}{\beta-1}} K_0^{\frac{-1}{\beta-1}}  \exp(-2^k \Theta).
    \label{eq:Thetaintro}\end{equation} 

\item In the classical Feigenbaum-Coullet-Tresser $2^\infty$ case, 
  maps with quadratic critical points   
  are necessarily differentiably
  conjugate along the closure of the forward iterates of the critical
  point. This phenomenon is usually referred to as {\em
    universality}. Here this universality no longer holds: two maps
  $f,\tilde f$ are Lipschitz (and even differentiably conjugate) if
  and only if
  $$\beta=\tilde \beta , \Theta = \tilde \Theta.$$
  This means that this case is rather more similar to \cite{LM,MP}
  where there are also necessary and sufficient conditions for these
  maps to be differentiably conjugate at the turning point, see 
  Theorems~\ref{thm:thetavaries} and \ref{thm:nouniversality}.

  One of the consequences of this fact is that $f$ and its
  renormalizations are not Lipschitz conjugate even at the critical
  point $c$.

\item In the {\lq}symmetric{\rq} case the $n$-th renormalization of
  the function converges to some analytic function with unknown closed
  formula. Here we obtain a degenerate limit, but whose form is
  entirely explicit, see Theorems~\ref{thm:renormalizationlimit} and
    \ref{thm:renormalizationlimit2}).

\item The $2^\infty$ maps we consider do not have wandering
  intervals, see Theorem~\ref{thm:wandering}.
   Absence of wandering interval for our class of maps
    implies that the maps we consider are all topologically conjugate
    to the quadratic Feigenbaum-Coullet-Tresser map.  
  
\end{itemize}  


\subsubsection*{History of the problem.}
Renormalisation and rigidity results were proved previously for circle
diffeomorphisms with Diophantine conditions on the rotation number
\cite{Her,Yoccoz2}. For circle maps with discontinuities of the
derivative (break type singularities) there are quite a few results,
see e.g. \cite{KK1, KKT, ANA, KKM, KK2, CS}.    For smooth homeomorphism of
the circle with a critical point, there are results by \cite{FM1,FM2,
  KY, Yam,Avila}.  For infinitely renormalizable unimodal interval
maps there is a rich history, starting with the conjectures of
Feigenbaum and Coullet-Tresser.  Rigorous proofs were finally provided
by \cite{sullivan, McM1, McM2, AL}, see also \cite{
  deFariadeMeloPinto, Smania2, Smania3, Smania4}. The weakly symmetric 
  case is considered in \cite{OM}.    Note that for
interval maps smooth rigidity is not possible, so the natural context
there is quasi-symmetric rigidity. This was proved in increasing
generality in \cite{GS2, Ly2, KSS1, CvS}, see also \cite{CdFvS, CvS,
  KSS1,LSw}.  For Lorenz maps there is another very interesting
phenomenon: in this case the renormalization operator can have several
(degenerate) fixed points even when the left and right critical
exponent at the discontinuity is the same.  This can happen even for
bounded combinatorics, and return maps can degenerate \cite{MW,W}.

For circle maps with
plateaus see for example \cite{MP, Swiatek,MMMS,  Palmisano, Palmisano2,TV1,TV2,Gra}.
Here it is also natural to explore the role of the orders of the critical points at the boundary points 
of a plateau $[a,b]$. Quite often it is assumed that these orders are the same, 
see \cite{Swiatek,MMMS,  Palmisano, Palmisano2} but not in the entire literature, 
see for example \cite{TV1,TV2,Gra}.
For such maps, super-exponential scaling was obtained in \cite{Gra}  under the assumption that 
  $f(x)-f(a)\sim -|x-a|$ to the left of $a$ and $f(x)-f(b)\sim |x-b|^\beta$ with $\beta>1$ to the right of 
  $b$.  Here the $q_n$-th iterates of the plateau are considered, and these iterates
  converge super-exponentially in terms of $n$. 
    In \cite{Palmisano, Palmisano2} it is assumed that $f(x)-f(a)\sim  -|x-a|^\alpha$ to the left of $a$ and 
$f(x)-f(b)\sim |x-b|^\alpha$ to the right of $b$ (so the orders on both sides are the same).  The main result
in \cite{Palmisano}  is that one  has bounded geometry (so the approach rate is at most exponential) in terms of $n$ if $\alpha>2$, and a  super-exponential approach is if $\alpha\le 2$. 
In \cite{Palmisano2} it  is shown that any two such maps  with bounded geometry and with the same  rotation number, 
are quasi-symmetrically conjugate. 

The question whether two maps which are combinatorially the same, are
in fact topologically conjugate hinges on absence of wandering
intervals.  The first results in this direction were obtained for
circle diffeomorphisms in the 1920's by Denjoy \cite{De}, 
for critical circle
maps in 
\cite{Yoccoz1} and for circle maps with plateaus in 
\cite{MMMS}.  For interval maps  there are results, in increasing 
generality, \cite{Mis,Gu,dMvS,BL,Ly1,MMS,vSV}. 
On the other hand, interval exchange
transformations can have wandering intervals, see e.g. \cite{MMY}.
Furthermore, it is  not known whether a 
circle homeomorphism with a strongly asymmetric critical point (which means that  
$f(x)-f(c)\sim -|x-c|^\alpha$ to the left of $c$ and $f(x)-f(c)\sim |x-c|^\beta$
to the right of $c$ where for example $1\le \alpha<\beta$) can have  wandering intervals.
It was for this reason that the authors were curious to find out whether one
can have wandering intervals in the strongly asymmetric case.

\subsubsection*{Open questions.}

Before stating our results rigorously, let us discuss questions and possible directions for further research. 

\paragraph{Super-exponential scaling when  $1<\alpha<\beta$.} 
In this paper we always assumed that the left critical order $\alpha$
of our map is equal to $1$.  We believe that the super-exponential
scaling of the points $a_n$ and $b_n$ that we have shown here, also
holds when $1<\alpha<\beta$. 
Indeed,  the strong asymmetry (and the fact that
the map is unimodal) forces there to be scalings of entirely different orders of magnitude:
the scaling on the left side of the critical point
 is a power of the scaling on the right side of the critical point.
Assuming suitable {\lq}real bounds{\rq}  (and that the map 
has $2^\infty$  dynamics) 
this implies  super-exponential scalings.
  However, it is very  unlikely that such real bounds hold when $\alpha<\beta$, 
  and this is one reason why our proof
  is delicate. But if what we suspect is true, then the case $\alpha=\beta$
  is completely different from when $\alpha<\beta$. 
The same phenomena should also hold for many  other combinatorics
provided, amongst other things, the critical point is accumulated from both sides under certain first return maps.  

\paragraph{Absence of wild attractors when  $1<\alpha<\beta$.} 
It is well-known that in the {\lq}symmetric{\rq} case, the so-called
Fibonacci map has a wild attractor provided the order of the critical
point is large.  Inspired by our belief that one has super-exponential
scaling, we believe that such attractors do not exist when
$1<\alpha<\beta$, even if these numbers are arbitrarily large.

\paragraph{Absence of wandering intervals.}  In this paper we only
proved absence of wandering intervals for the $2^\infty$ combinatorics
and when $1=\alpha\le \beta$. We believe one has absence of wandering
intervals without these assumptions.  In fact, we tried and failed to
prove this result in the case that $1<\alpha<\beta$.

\paragraph{Monotonicity of bifurcations.} Notice numerical simulations
suggest that the bifurcations from the family $f_t$ from equation
(\ref{examplefamily}) are monotone: no periodic orbit seems to
disappear when $t$ increases.  When instead we consider the family
\begin{equation}
f_t(x)=\begin{cases} t-1 -t|x|^\alpha &\mbox{ when }x<0, \\
t-1- t x^\beta &\mbox{ when }x\ge 0. \end{cases}
\label{examplefamilybis}
\end{equation}
with $\alpha,\beta>1$ large, then there are partial results towards
monotonicity in \cite{LSvS} see also \cite{LSvS2}. Monotonicity for this family is only 
known in full generality when $\alpha=\beta$ is an even integer. For references on the history
of results on monotonicity, see \cite{LSvS2}.

\paragraph{More precise rigidity results.} Consider continuous degree
one circle maps, which are smooth local diffeomorphisms outside a
single plateau and with $x^\beta$ behaviour at the boundary points of
this plateau. In earlier papers \cite{MMMS} it was shown that such
maps have no wandering intervals, and in \cite{Palmisano} it was shown
that one has super-exponential decay of scales when $\beta\in (1,2)$
when the rotation number is golden mean. In \cite{MP}, it is 
shown that there exist invariants for Lipschitz,
differentiable and $C^{1+\epsilon}$ conjugacy. For related results see
\cite{CG}.  A similar obstruction to differentiable conjugacy also
appears in \cite{LM}.

\paragraph{Parameter scaling.} Consider the family $f_t$ defined in (\ref{examplefamily})
and  let  $t_n$ be the parameter where the turning point $0$ has period $2^n$ for $f_{t_n}$ and let 
$t_*$ be so that $f_{t_*}$ has $2^\infty$ dynamics. 
Computer experiments suggest that the parameters $t_n$ scale also super-exponentially. 
We are hopeful that we will be able to elaborate the methods in this paper to prove the following 

\begin{conj}[Non-universality of parameter bifurcations]
\begin{equation} |t_{n+2}-t_*| \sim  \kappa |t_{n}-t_*|^2 
\label{eq:Theta-parameter} \end{equation} 
where $\kappa$ depends non-trivially on the two parameters $\beta,\Theta$  associate to 
the family $f_t$ and so is not a universal parameter,
where $\Theta$ is defined through equation (\ref{eq:Thetaintro}).
\end{conj}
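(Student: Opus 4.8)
\medskip
The plan is to run a \emph{parametrized} version of the renormalization analysis of this paper. Since odd and even renormalization steps behave very differently — by Theorem~\ref{thm:scalings} one step rescales lengths by the root $\lambda$ of $\lambda^\beta+\lambda-1=0$, the next by a genuinely quadratic law — I would work with the two-step operator $\mathcal{R}^2$. One first shows that, after an affine normalisation, $\mathcal{R}^2$ maps a neighbourhood of the degenerate limit of Theorems~\ref{thm:renormalizationlimit} and \ref{thm:renormalizationlimit2} into itself, and that the family $f_t$ from (\ref{examplefamily}), for $t$ near $t_*$, is a curve that meets the ``level-$n$ locus'' ($0$ periodic of period $2^n$) precisely at $t=t_n$ and the ``limit locus'' at $t=t_*$. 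Pulling this curve back by $\mathcal{R}^2$ identifies the level-$n$ locus with the level-$(n+2)$ locus, so the asymptotics of $|t_{n+2}-t_*|$ in terms of $|t_n-t_*|$ is exactly the local behaviour of $\mathcal{R}^2$ along this curve near $t_*$. Equivalently and more concretely, one can follow the sensitivity $\partial_t f_t^{2^n}(0)=\frac{1}{t}\sum_j Df_t^{2^n-1-j}(f_t^{j+1}(0))\,f_t^{j+1}(0)$ — here the special form $f_t=t\cdot f$, hence $\partial_t f_t=f_t/t$, is what makes the parameter dependence tractable — and estimate how two period-doubling steps change it; but identifying the true size of this sum, rather than merely bounding it above and below, already needs the renormalization picture.

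The heart of the matter is to show that the local behaviour of $\mathcal{R}^2$ along the parameter curve is \emph{quadratic}, not linear: in a coordinate $s$ on the curve with $s=0$ at $t_*$ and $s\asymp t-t_*$, the pull-back of the level-$n$ locus to the level-$(n+2)$ locus is $s\mapsto\kappa\,s^2$ (moving towards $t_*$). This is the exact parameter-space shadow of the phase-space identity $|b_{2k+2}-a_{2k+2}|\sim\beta^{\frac{-2}{\beta-1}}K_0^{\frac{1}{\beta-1}}\lambda^{-2}\,|b_{2k+1}-a_{2k+1}|^2$ of Theorem~\ref{thm:scalings}: the odd-to-even step squares the scale while the even-to-odd step only multiplies it by $\lambda$, and by the (parametrized) distortion and real-bounds control of Theorem~\ref{thm:bounds} this squaring is transferred faithfully to the parameter direction. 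Applying the local inverse of $\mathcal{R}^2$ at $s=t_n$ then gives $|t_{n+2}-t_*|\sim\kappa\,|t_n-t_*|^2$, which is (\ref{eq:Theta-parameter}). The constant $\kappa$ is assembled from the explicit scaling constants $\lambda$ and $\beta^{\frac{-2}{\beta-1}}K_0^{\frac{1}{\beta-1}}$ via the explicit limit maps of Theorems~\ref{thm:renormalizationlimit} and \ref{thm:renormalizationlimit2}, and from the limiting distortion constant of the first entry maps $f^{2^n-1}$ from the critical value to $[a_n,b_n]$; it is this last constant that relates $|t_n-t_*|$ to the phase scale $|b_n-a_n|$ by a power law rather than an affine one, and it is through it that the map-dependent number $\Theta$ re-enters $\kappa$, so that $\kappa$ is non-universal for the same reason (Theorems~\ref{thm:thetavaries} and \ref{thm:nouniversality}) that smooth conjugacy can fail.

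The principal obstacle is that none of the usual renormalization-hyperbolicity machinery applies: there is no Koebe space (Theorem~\ref{thm:nousualbound}) and no a priori compactness of renormalizations, so the statements ``$\mathcal{R}^2$ acts on a neighbourhood of the limit'', ``the parameter curve is transverse'' and ``the local behaviour is as claimed'' must all be proved by hand. In practice this means re-proving the distortion and real-bounds estimates of Theorem~\ref{thm:bounds}, and the scaling asymptotics of Theorem~\ref{thm:scalings}, with constants uniform in $t$ over a suitably shrinking neighbourhood of $t_*$ — in particular upgrading the semi-extension technology of \S\ref{subsec:semiexten} to parametrized families — and controlling how the combinatorial position of the critical orbit relative to $[a_n(t),b_n(t)]$ moves with $t$. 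One must then show the relevant comparison constant genuinely converges, so that $\asymp$ upgrades to $\sim$; ruling out oscillation of $|t_n-t_*|$ against $|b_n-a_n|$ along each parity class is the sharpest technical point and the one on which the precise form of $\kappa(\beta,\Theta)$ rests. Finally, because the period-doubling mechanism differs for odd and even $n$ (for odd $n$ the period doubles when $0$ becomes periodic rather than when the multiplier is $-1$, Theorem~\ref{thm:existence}), the level-$n$ loci are cut out by different equations in the two parities; these must be handled separately and then matched, and I expect this matching, and the resulting bookkeeping of constants — including the possibility that $\kappa$ must be recorded with a parity — to absorb most of the remaining effort.
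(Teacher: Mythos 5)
This statement is stated in the paper as a \emph{Conjecture}: the authors write only that they are ``hopeful'' of proving it by elaborating their methods, and no proof is given anywhere in the text. So there is no argument of the paper to compare yours against; the only question is whether what you wrote closes the conjecture, and it does not. Your text is a programme rather than a proof, and you say so yourself: the assertions that $\mathcal{R}^2$ acts on a neighbourhood of the degenerate limit, that the family (\ref{examplefamily}) meets the level-$n$ loci transversally, that the phase-space squaring of Theorem~\ref{thm:scalings} is ``transferred faithfully to the parameter direction'', and that the relevant comparison constants converge so that $\asymp$ upgrades to $\sim$, are each flagged by you as still needing to be ``proved by hand''. Those are not routine verifications; they are the entire content of the conjecture.

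The one place where a genuine idea is missing (as opposed to merely deferred work) is the mechanism converting phase-space scaling into parameter scaling. In the classical quadratic theory the parameter rate $\delta$ is a separate unstable eigenvalue of a hyperbolic renormalization operator and is \emph{not} equal to the phase rate $\lambda$; transversality to the stable foliation is what ties the two together. Here none of that machinery exists (no compactness, no hyperbolicity, no Koebe space by Theorem~\ref{thm:nousualbound}), so ``faithful transfer'' cannot be invoked --- it must be manufactured, most plausibly via the sensitivity sum you write down. Two concrete problems there: first, for the family (\ref{examplefamily}) one has $f_t=t\cdot g-1$, hence $\partial_t f_t=(f_t+1)/t$, not $f_t/t$, so your formula needs correcting; second, and more seriously, to get a genuine asymptotic $|t_n-t_*|\sim C_n\, b_n^{\gamma}$ (with identified $\gamma$ and convergent $C_n$) you need control of $Df_{t}^{2^n-1}$ along the critical orbit \emph{uniformly in $t$ near $t_*$}, whereas the real bounds of Theorem~\ref{thm:bounds} are proved only at the $2^\infty$ parameter and only for the semi-extension based at the critical value (Theorem~\ref{thm:absencekoebe} shows they fail elsewhere). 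Until that uniform, parametrized version of Sections~\ref{Sec:bigbounds}--\ref{sec:scaling} is carried out, the exponent $2$ in (\ref{eq:Theta-parameter}) is plausible but unproved, and the claimed dependence of $\kappa$ on $\Theta$ is heuristic. In short: your plan is consistent with what the authors themselves propose, but the conjecture remains open and your text does not prove it.
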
 

So we conjecture that, in our setting, the parameter scaling is super-exponential and non-universal. 
This is in contrast to the universality results for generic smooth families of unimodal maps  with a quadratic critical point
 (where the genericity assumption is that the family is assumed to be transversal to the stable manifold of the renormalization operator)
where one has the parameter scaling 
$$|t_{n+2}-t_*| \sim  \lambda  |t_{n}-t_*|$$
where $\lambda$ is universal and so does not depend on the family. 

\paragraph{Renormalisation theory in the smooth setting.} 
The renormalization theory we develop here is done by obtaining large bounds.
This is quite different from the renormalization theory obtained for real analytic unimodal maps, 
\cite{sullivan,McM1, McM2,Ly3,AL,deFariadeMeloPinto}, see also \cite{Smania3,Martens,GY}. 
It would be interesting to tie these approaches together.

\subsection*{Acknowledgement} 
The authors would like to thank Bj\"orn Winckler for carefully reading this manuscript, 
in particular Section~\ref{Sec:bigbounds} and Trevor Clark and Polina Vytnova for some helpful discussions. 
SvS was supported by ERC AdG RGDD No 339523. We are grateful for the referees for their helpful  comments.

 \section{The setting of this paper} \label{sec:settings} 

Consider the class $\mathcal{A}_{\alpha,\beta}$ of  continuous unimodal maps $f : [a_0,b_0] \to  [a_0,b_0]$ 
where $a_0<0<b_0$ and 
with the following properties:
\begin{enumerate}
\item[1]  $f(a_0)=f(b_0)=a_0$ 
and outside the turning point $c:=0$ the map $f$ is $C^3$ and has Schwarzian derivative
$Sf\le 0$. The authors believe that the results in this paper also
hold without the $Sf\le 0$ assumption. 
\item[2] $c=0$ is the unique extremal value of $f$ and  $f'(x)>0$ for $x<0$ and $f'(x)<0$ for $x>0$. 
\item[3] Near the critical point $c=0$ the map $f$ behaves as $f (x) \sim - K_- |x|^\alpha + f (0)$ for $x<0$
and $|x|$ small  and  $f (x)\sim - K_+ x^\beta + f (0)$ for small positive values of
$x$. The constants should satisfy $K_- >0, K_+ > 0$ and $\beta>\alpha\ge 1$.
\end{enumerate}
Almost everywhere in the paper we shall assume that $\alpha=1$, in
this case we will denote $\mathcal{A}_{1,\beta}$  just by $\mathcal{A}$.
We say that $f\in \mathcal A_{\alpha,\beta}(2^\infty)$ if in addition 
\begin{enumerate}
\item[4] The map $f$ has $2^\infty$  combinatorics, i.e. $f$ is
an  infinitely renormalizable  {\em Feigenbaum-Coullet-Tresser} period
  doubling  map. By definition this means that there exists a shrinking sequence of intervals
  $[a_k,b_k]\ni c$ so that the restriction of $f^{2^k}$ to $[a_k,b_k]$ is again unimodal,  mapping
$\{a_k,b_k\}$ into itself and so that the intervals $f^i[a_k,b_k]$, $i=0,\dots,2^k-1$ have pairwise disjoint interiors.
\end{enumerate}

 The sequence  $[a_k , b_k ], k = 0, 1, . . .$, is constructed in the
following way. Let $b_1$ be a fixed point of $f$ with negative
multiplier and $a_1$ be its preimage. Then
$c_2 := f^2 (0) \in [a_1 , b_1 ]$.  Notice that
$a_0 < a_1 < 0 < b_1 < b_0$.  The intervals $[a_0,b_0]$ and 
$[a_1,b_1]$ are drawn in Figure~\ref{fig2}.
Since the map $f$ is assumed to be of
Feigenbaum-Coullet-Tresser $2^\infty$ type, $f^2 |[a_1 ,b_1 ]$ is
again unimodal; it decreases on $[a_1 , 0]$ and increases on
$[0, b_1 ]$. The branch $f^2 |[a_1 ,0]$ has a fixed point which we
will denote by $a_2$ and $b_2$ will denote its preimage by
$f^2 |[0,b_1]$. Using again that $f$ is a $2^\infty$ map,
$f^4 |[a_2 ,b_2 ]$ is unimodal, and we can continue this process
indefinitely and obtain a sequence of points $a_k<0<b_k$ and unimodal
maps $f^{2^k}\colon [a_k,b_k]\to [a_k,b_k]$.

\medskip

As will be shown in  Theorem~\ref{thm:existence} in Subsection~\ref{subsec:existence}, 
there exist many maps within the class  $\mathcal A(2^\infty)$.  For example,  there exists $t_*\in (1,2)$ so that 
$f_{t_*}\in \mathcal A(2^\infty)$ where   $f_t\colon [-1,1]\to [-1,1]$, $t\in [1,2]$ is defined by  
\begin{equation}
f_t(x)=\begin{cases} t(1+x)\,\, -1 &\mbox{ when }x<0, \\
t(1-x^\beta)-1 &\mbox{ when }x\ge 0. \end{cases}
\label{examplefamily}
\end{equation}
As we will see in Subsection~\ref{subsec:existence}
this family $f_t$ undergoes  unusual period doubling bifurcations, see Figure~\ref{fig1}.


\begin{figure}
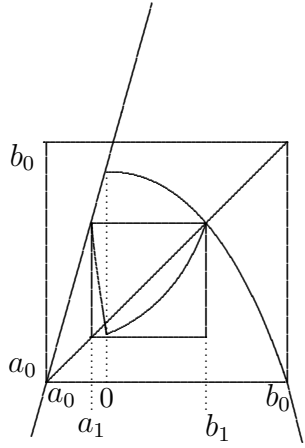
 \hfil
\beginpicture
\dimen0=0.4cm
\setcoordinatesystem units <\dimen0,\dimen0>  point at -15 0
\setplotarea x from -4 to 4, y from -6 to 2
\setlinear
\plot -4 -4 4 -4  4 4 -4 4 -4 -4 /
\plot -4 -4 4 4 / 
\setsolid
\put {$a_0$} at -3.5 -4.5 
\put {$b_0$} at 3.7 -4.5 
\put {$a_0$} at -4.8 -3.5 
\put {$b_0$} at -4.8 3.5 
\plot -2.5 1.3 1.3 1.3  1.3 -2.5 -2.5 -2.5 -2.5 1.3 /  
\setquadratic 
\plot -2 -2.4    0 -1      1.3 1.3  /
\plot  -2.5 1.3 -2.36  0 -2 -2.4  / 

\put {\small $0$} at -2 -4.5 
\setlinear 
\plot -4.5 -5.8 -4 -4 -1.5 5    -0.5 8.6 /  
\setquadratic 
\plot -2 3 1.3 1.3 4 -4 /
\setlinear 
\plot 4 -4 4.5 -6 / 
\setdots  <0.8mm>  \setlinear
\plot -2.5 -2.5 -2.5 -5 /
\plot 1.3 -2.5 1.3 -5 /  
\put {$a_1$} at -2.5 -5.5 
\put {$b_1$} at 1.7 -5.5 
\plot -2 -4 -2 3 /
\endpicture
\caption{\label{fig2}  $f$ together with its renormalization and its semi-extension.}
\end{figure}


\paragraph{Some notation.}

We say that the interval $T$ is a $\tau$-scaled neighbourhood  of $J
\subset T$
if both components of $T\setminus J$ have at least size $\tau \cdot |J|$. 
We shall also  use the notations 
$$\begin{array}{rl}
u_k\sim v_k   &\iff \frac{u_k}{v_k}\to 1 \mbox{ as }k\to \infty \\
& \\
$$u_k\approx v_k  &\iff 0<\liminf \frac{u_k}{v_k}\le \limsup  \frac{u_k}{v_k} < \infty \mbox{ as }k\to \infty.\end{array}$$
Given two intervals $U,V\subset \R$ we define $[U,V]$ to be the
smallest interval containing both.

\section{Statement of results}


\paragraph{Existence of infinitely renormalizable maps.}
Our first task is to show that the class $\mathcal A(2^\infty)$ is non-empty. 
In other words, we need to establish strongly asymmetric maps
with Feigenbaum-Coullet-Tresser dynamics. For maps which are differentiable 
at the extremal point, this follows from an analysis how kneading sequences
depend on the parameter, see \cite{MT} or from some fixed point argument \cite{dMvS}. 
When $1=\alpha<\beta$ these proofs break down. In fact, if $\alpha=\beta=1$ holds 
(this corresponds to a family of tent maps) then there are no Feigenbaum-Coullet-Tresser maps.

Nevertheless we have the following theorem, showing that 
every family such as the one defined in (\ref{examplefamily}) contains
a map in $\mathcal A(2^\infty)$. 

\begin{thm}\label{thm:existence2}
For the family defined in (\ref{examplefamily}) there exists a parameter $t_*$ so that $f_{t_*}\in \mathcal A(2^\infty)$. 
\end{thm}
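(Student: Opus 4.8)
The plan is to exhibit $t_*$ via a ``first bifurcation'' argument: track, as $t$ increases through $[1,2]$, the combinatorial type of $f_t$ and show that the parameters at which the turning point acquires periodic orbits of period $2^n$ form a monotone sequence whose limit has exactly $2^\infty$ combinatorics. Concretely, for each $n$ let $t_n$ be the infimum of parameters $t$ for which $f_t$ has a (necessarily attracting, by $Sf\le 0$) periodic orbit whose combinatorics agrees with the period-$2^n$ Feigenbaum orbit — equivalently, following the description in Section~\ref{sec:settings}, the parameter at which the branch $f_t^{2^n}$ restricted to the candidate renormalization interval $[a_n,b_n]$ first develops the relevant fixed point. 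The key structural point, already flagged in the excerpt and in Figure~\ref{fig1}, is that because $\alpha=1$ the left branch is affine, so the period-doubling from $2^n$ to $2^{n+1}$ for $n$ odd happens precisely when the critical point $0$ becomes periodic, not when a multiplier passes through $-1$; this makes the bifurcation parameters $t_n$ easy to locate as solutions of explicit finite-dimensional equations (the orbit of $0$ under $f_t^{2^n}$ closing up), and one checks $1<t_1<t_2<\cdots<2$ and that the $t_n$ are increasing and bounded, hence convergent to some $t_*$.

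First I would set up the renormalization intervals $[a_k,b_k]$ as in Section~\ref{sec:settings} as functions of $t$ on the parameter range where they are defined, and record their basic monotonicity/continuity in $t$. Then I would argue by induction on $n$: assuming $f_{t_n}$ is exactly $n$ times renormalizable with the turning point periodic of period $2^n$ (so that $f_{t_n}^{2^n}|_{[a_n,b_n]}$ is a unimodal map sending $0$ to a boundary point and back), I would show that for $t$ slightly larger this restricted map is still unimodal and its own turning-point orbit behaves, as a one-parameter family of unimodal maps in its own right, exactly like the original family near $t_1$ — so it undergoes the same kind of period-doubling at some $t_{n+1}>t_n$. Here the affine left branch is doing the work: the rescaled return map $f_t^{2^n}|_{[a_n,b_n]}$ is again (up to the affine coordinate change $R_n$) a map that is essentially affine on the left of its critical point, so the inductive step is genuinely a self-similar repetition of the base case rather than something that degrades. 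Monotonicity ($t_{n+1}>t_n$) follows because at $t=t_n$ the deeper renormalization interval is just being born, and boundedness ($t_n<2$ for all $n$) follows because at $t=2$ the map $f_2$ is (conjugate to) the full tent map, which has no restrictive interval at all, so all the $t_n$ must stay strictly below $2$.

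Finally, at $t_*=\lim t_n$ I would verify the four defining properties of $\mathcal A(2^\infty)$: properties 1–3 hold for every $f_t$ in the family by construction (with $\alpha=1$, $K_-=K_+=t$ up to the stated normalization), so only property 4 needs checking. For that I need that $f_{t_*}$ is renormalizable to every depth: since $t_*>t_n$ for all $n$, the $n$-th renormalization interval exists and $f_{t_*}^{2^n}|_{[a_n,b_n]}$ is unimodal for every $n$ (the combinatorial conditions defining renormalizability are open, so they survive passing from $t_n$ to the nearby $t_*$), and since $t_*$ is the limit of the $t_n$ rather than lying strictly beyond some $t_{n+1}$, the turning point is \emph{not} periodic — the orbit of $0$ stays in the nested intersection $\bigcap_k [a_k,b_k]$ without closing up — so $f_{t_*}$ has exactly $2^\infty$ combinatorics and not more. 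The main obstacle, and the only place real care is needed, is the inductive step: making precise that the renormalized family inherits enough of the structure of the original family (almost-affine left branch, $Sf\le 0$, unimodality, the monotone dependence of the turning-point itinerary on the parameter) that the base-case bifurcation analysis applies verbatim at every level. This is exactly where the hypothesis $\alpha=1$ is essential and where a na\"\i ve argument for $1<\alpha<\beta$ would fail.
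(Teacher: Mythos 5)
Your overall strategy --- an inductive bifurcation analysis in the parameter, built on the observation that for $\alpha=1$ every other period-doubling occurs when $0$ itself becomes periodic rather than when a multiplier crosses $-1$ --- is the same as the paper's (this is Theorem~\ref{thm:existence}). But there is a genuine gap in how you pass to the limit. You define $t_n$ as the first parameter at which the period-$2^n$ orbit appears, set $t_*=\lim t_n$, and then assert that $f_{t_*}$ is $2^n$-renormalizable for every $n$ because ``$t_*>t_n$ and the combinatorial conditions defining renormalizability are open''. Openness only protects renormalizability for $t$ \emph{near} $t_n$; it says nothing about $t_*$, which sits at a definite distance from $t_n$ for each fixed $n$. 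Indeed $2^n$-renormalizability holds only on a parameter \emph{window}: your own boundedness argument (at $t=2$ the map is full and has no restrictive interval) shows that renormalizability is eventually destroyed as $t$ increases, so ``$t_*>t_n$'' by itself is perfectly compatible with $f_{t_*}$ failing to be $2^n$-renormalizable. What is needed --- and what the paper supplies --- is the upper endpoint $v_n$ of each window together with the proper nesting $u_n<u_{n+1}<v_{n+1}<v_n$; then $t_*\in\bigcap_n(u_n,v_n)$ and infinite renormalizability is immediate. Your sketch never constructs or controls these upper endpoints (and, absent a monotonicity theorem for this family, the set of parameters carrying a given combinatorial type is not a priori an interval, so the ``infimum'' definition of $t_n$ also hides work that the paper's continuation argument does explicitly). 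This is the half of the induction you cannot skip.

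Two smaller but real misstatements. First, the periodic orbit you track is \emph{not} ``necessarily attracting, by $Sf\le 0$'': negative Schwarzian only guarantees that an attracting orbit, if one exists, attracts the critical point. The paper stresses that in this class a periodic critical point need not be attracting (it can be one-sidedly repelling when $\alpha=1$), which is precisely why the local analysis of Section~\ref{sec:bifurcation} is needed. Second, only every other bifurcation parameter is located by ``the orbit of $0$ closing up''; the alternate ones are classical flip bifurcations where a multiplier passes through $-1$ and a parabolic orbit appears, so the $t_n$ are not all solutions of the single type of finite equation you describe --- the induction genuinely alternates between two different local analyses, as in cases (i) and (ii) of the paper's proof.
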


In fact, the proof of this theorem will show that any family similar
to (\ref{examplefamily}) (not necessarily with $\alpha=1$) is {\em
  full} in the sense that for each parameter $t$ there exists $t_*$ so
that $f_{t_*}$ has the same kneading invariant as $Q_t(x)=tx(1-x)$.

\paragraph{The issue of real bounds}

Since the power laws of $f$ at both sides of $0$ are different, most
proofs from the theory of one-dimensional dynamics do not apply. The
stumbling block appears already when trying to recover real bounds. 
   For example, for {\lq}symmetric{\rq} unimodal maps for which the 
power laws on both sides of $0$ are the same, one has the property that 
the first entry map from the critical value $f(0)$ to the interval $[a_n,b_n]$ has bounded distortion, see 
\cite{dMvS}. This kind of bound forms  the cornerstone for everything else in the theory 
of unimodal maps, and so this is the first
issue to overcome.  In the weakly symmetric unimodal case the
standard proof of such a real bound relies on the simple but powerful {\em smallest interval
argument}, see Lemma~\ref{lem1}. In the weakly symmetric case this argument
gives space on both sides of some interval, and in the strongly asymmetric case
only on one side, which prevents Koebe like distortion results. It
turns out that this is not just a technical issue as the most basic
real bounds do not hold. Indeed, the first entry map from the critical
value into a periodic renormalization interval around the critical
point does NOT have a diffeomorphic extension with Koebe space, see
for example Theorem~\ref{thm:nousualbound} below. 
 Moreover, entirely new
scaling phenomena appear as a result of this asymmetry.

The purpose of this paper is to make a step towards a theory 
for strongly asymmetric maps
obtaining results on real bounds, scaling laws and absence of
wandering intervals in this setting.  Indeed we believe that the results described
in this paper go through for all maps in $\mathcal A_{\alpha,\beta}$ with $1\le \alpha<\beta$, 
although we were only able to do this under the assumption that $\alpha=1$.  
For the case that $1=\alpha<\beta$ we were able to exploit the almost linearity of 
the left branch near the turning point $c=0$, but when $1\le \alpha<\beta$ one should be able to 
exploit the huge asymmetry to obtain good control on the first entry maps. This is certainly
what numerical simulations seem to suggest.

\paragraph{No diffeomorphic extensions}

The main source of difficulties lies in the following theorem, which
shows the difference with the {\lq}symmetric{\rq} case:

\begin{thm}
  \label{thm:nousualbound} For every $\tau>0$ there exists $k_0\ge 0$ so
  that if $T\ni f(0)$ is the maximal interval on which $f^{2^k -1}|T$
  is diffeomorphic, then $f^{2^k-1}(T)$ does {\bf not} contain a
  $\tau$-scaled neighbourhood of $[a_k,b_k]$ for any $k\ge k_0$.
\end{thm}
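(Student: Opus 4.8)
The plan is to exploit the strong asymmetry directly: the left branch of $f$ is (almost) linear while the right branch has order $\beta>1$, and this mismatch should obstruct the existence of a diffeomorphic extension of $f^{2^k-1}$ with definite space around $[a_k,b_k]$. First I would set up notation for the orbit of the critical value: write $c_1=f(0)$, $c_i=f^i(0)$, and track the sequence of intervals $[a_j,b_j]$ together with the fact (recalled in the ``setting'' section) that $f^{2^j}|_{[a_j,b_j]}$ is unimodal and $f(a_j)=f(b_j)$, so that $|a_j| \approx |b_j|^{\beta}$ asymptotically. The first entry map $f^{2^k-1}$ from $c_1$ to $[a_k,b_k]$ can be decomposed along the critical orbit, and I would locate the last moment before time $2^k-1$ at which the orbit of $c_1$ passes through the small interval $[a_{k-1},b_{k-1}]$ (or more precisely through one of the deepest renormalization windows). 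At that moment the relevant branch of an iterate of $f$ maps a tiny neighbourhood onto something of size comparable to $[a_{k-1},b_{k-1}]$, and the critical value $c_1$ itself sits near the boundary.

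The key mechanism is this. A diffeomorphic extension $T$ of $f^{2^k-1}$ containing $c_1$ cannot cross $0$, so $T$ lies on one side of the critical point, say $T=(p,q)$ with $0\le p<c_1<q$ (the case $q<c_1$ being symmetric and similar). Now consider the pullback of the extra ``Koebe space'' we are hoping for on the left side of $[a_k,b_k]$, i.e. on the side of $a_k$. Pulling the point $a_k$ back along $f^{2^k-1}$ to a point $p\in T$, and pulling back a $\tau$-scaled collar $[a_k-\tau|b_k-a_k|, a_k]$, I would show that somewhere along this pull-back the orbit must pass extremely close to the turning point $0$ from the \emph{right} (order $\beta$) side while the corresponding point on the orbit of $c_1$ passes close to $0$ from the same side but at a \emph{different scale} dictated by the linear left branch elsewhere in the orbit. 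Concretely, because $|a_j| \sim \mathrm{const}\cdot |b_j|^{\beta}$ and because the left branch contributes no contraction/expansion asymmetry, the ratio between the size of the hoped-for collar and the size of $[a_k,b_k]$, when pulled all the way back to $T$, would force $T$ to contain $0$ — contradicting diffeomorphy of $f^{2^k-1}|_T$. Quantitatively the obstruction sharpens as $k\to\infty$ precisely because the even/odd scaling laws (Theorem~\ref{thm:scalings}) make $|a_k|/|b_k|\to 0$ super-exponentially, so for any fixed $\tau$ there is a threshold $k_0$ beyond which no such extension can exist; this is exactly the statement. I would make this rigorous using the minimum/smallest-interval argument (Lemma~\ref{lem1}) applied to the chain of pull-backs, together with the negative Schwarzian derivative to get the monotone distortion control needed to compare the collar sizes after pulling back through the right branch of $f$.

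The main obstacle I anticipate is bookkeeping the combinatorics of the pull-back chain so as to identify \emph{which} passage near $0$ is responsible for destroying the extension — there are $2^k-1$ steps and several come close to the turning point, so one must argue that the deepest such passage (the one landing in $[a_{k-1},b_{k-1}]$ just before the final entry) is the dominant one and that the left-branch passages, being essentially affine, neither help nor hurt. A secondary technical point is handling the $o(|x|)$ and $o(|x|^\beta)$ error terms in the definition of the class $\mathcal A$: one needs the asymptotic relation $|a_j|\approx |b_j|^\beta$ with explicit enough constants (uniform in $j$) to conclude, and this should follow from the real bounds and scaling estimates established earlier (Theorems~\ref{thm:bounds} and~\ref{thm:scalings}), which I am free to assume here. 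Once the right passage is isolated, the contradiction is a one-line scale count: a $\tau$-collar on the $a_k$-side, pulled back, would have to be born out of a neighbourhood of $0$ on the right (order $\beta$) side that is simply too large to avoid $0$, because the ``target'' interval $[a_k,b_k]$ is itself essentially glued to the critical point on its left end in the rescaled picture.
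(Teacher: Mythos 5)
Your mechanism is the right one and is in the end the same as the paper's: the maximal diffeomorphic extension is clipped on the left by a passage of the critical orbit near $0$, and since $|a_k|\approx b_k^\beta\ll b_k$ the interval $[a_k,b_k]$ is ``glued'' to $0$ on its left, leaving no room for a $\tau$-collar. However, the decisive step is precisely the one you defer as ``the main obstacle I anticipate'', and it is not optional bookkeeping — it is the entire proof. By maximality of $T$, the endpoints of $f^{2^k-1}(T)=[\hat A_k,\hat B_k]$ are critical values of iterates of $f$, and Lemma~\ref{lem2} identifies them exactly: $\hat A_k=c_{2^{k-1}}$, $\hat B_k=c_{2^{k-2}}$ for $k$ even (roles swapped for $k$ odd). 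The scaling law of Theorem~\ref{thm:scalings} then gives $|\hat A_k|\approx b_{k-1}^{\beta+1}\approx b_k^{(\beta+1)/2}=o(b_k)$, while $|b_k-a_k|\sim b_k$; hence the space to the left of $[a_k,b_k]$ inside the range is $|\hat A_k-a_k|=o(|b_k-a_k|)$ and no fixed $\tau$ survives. Once these two inputs are in hand the argument is two lines, entirely on the image side, with no pull-backs and no distortion estimates.

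By contrast, the rigorization route you propose would not close as stated. You want to pull the collar $[a_k-\tau|b_k-a_k|,a_k]$ back through the chain and derive a contradiction using Lemma~\ref{lem1} and negative Schwarzian for ``monotone distortion control''. But the smallest-interval argument only yields definite space on the \emph{right} of $[a_k,b_k]$ (as the paper stresses), and bounded distortion for the diffeomorphic extension on the left side is exactly what this theorem denies — so invoking Koebe-type control on that extension to track the collar is circular. You would have to replace that step by the direct identification of $\hat A_k$ as a critical value and the estimate $|c_{2^{k-1}}|\approx b_{k-1}^{\beta+1}$, at which point the pull-back scaffolding becomes unnecessary.
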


\paragraph{Semi-extensions.}
To overcome this issue, we will introduce the notion of
semi-extension. Since $\alpha=1$, the derivative of $f$ near the critical point of the left branch of $f$ is non-zero
and we can extend this branch smoothly ($C^3$) and monotonically to $f_1 : [a_0,\epsilon_0] \to \R$
in such a way that $\epsilon_0>0$,  $f_1 |[a_0 ,0] = f$, the derivative of $f_1$ is strictly positive,
and the Schwarzian derivative of $f_1$ is $\le 0$.  For consistency, the right branch
of $f$  will be denoted by $f_2$, i.e. $f_2 = f |[0,b_0]$.

\begin{definition}[Semi-extensions]
 Let $J$ be an interval and $f^n |J$ be monotone. Then $F : T\to \R$ is
called {\em monotonic semi-extension} of $f^n |J$  if
\begin{itemize}
\item  $J\subset T$ and $F|J =f^n|J$;
\item  $F=f_{i_1} \circ \dots \circ f_{i_n},$ where $i_k\in \{1,2\}$ for $k=1,...,n$. 
\end{itemize}

We will call such an extension {\em maximal} if $T$ is the maximal
interval satisfying the above properties.
\end{definition}

\paragraph{Big bounds for  the first entry maps to $[a_k,b_k]$ when $k$ is even.}
It turns out that these semi-extensions are surprisingly useful
since the branch $f_1$ is essentially linear near $0$. 
Indeed, the semi-extension of the first entry map from an interval $J\ni f(0)$ 
to $[a_k,b_k]$ becomes almost linear for $k\to \infty$ and even.  
On the other hand, it turns out that as $k$ odd and $k\to \infty$ 
this first entry map does {\bf not} converge to a linear map.

\begin{thm}\label{thm:bounds}
Let $f^{2^k-1} \colon J\to [a_k,b_k]$ be the first entry map 
of $J\ni f(0)$ into $[a_k,b_k]$ and let  $F_k \colon T_k \to \R$
be the maximal monotonic semi-extension of $f^{2^k-1} \colon J\to [a_k,b_k]$.
Take $\tau_k>0$ be maximal so that  $F_k(T_k)$ is $\tau_k$-scaled neighbourhood of $[a_k,b_k]$.
Then 
\begin{itemize}
\item $\lim \tau_{2k-1}= \lambda$
where $\lambda\in (0,1)$ is the root of the equation $\lambda^\beta +\lambda=1$.
\item $\tau_{2k}\approx b_{2k}^{-1/2}$ grows super-exponentially with $k$. In fact, $\log \tau_{2k}$ grows exponentially,
see also equation  (\ref{eq:defTheta}) below. 
\end{itemize}
\end{thm}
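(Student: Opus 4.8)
The plan is to analyze the maximal monotonic semi-extension $F_k\colon T_k\to\R$ of the first-entry map $f^{2^k-1}\colon J\to[a_k,b_k]$ by unwinding it as a composition $f_{i_1}\circ\cdots\circ f_{i_{2^k-1}}$ and tracking, step by step, how the image grows past the endpoints $a_k$ and $b_k$. The key input is that $\alpha=1$, so the extended left branch $f_1$ has strictly positive derivative and is almost linear near $0$, while the right branch $f_2(x)=f(0)-K_+x^\beta+o(x^\beta)$ is genuinely nonlinear. First I would set up the dynamical combinatorics: the first-entry map from the critical value to $[a_k,b_k]$ passes through the nested renormalization intervals $[a_j,b_j]$ for $j<k$, and the obstruction to further diffeomorphic extension always comes from a point that is eventually mapped onto the critical point $0$ under some $f_{i}=f_2$ (the right branch), since that is the only place where monotonicity of an honest iterate of $f$ fails; extending via $f_1$ instead lets us push past that obstruction. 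So the maximal semi-extension $T_k$ is bounded by the preimage of the first such genuinely-critical obstruction that cannot be circumvented by replacing $f$ by $f_1$.

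Next I would compute the scaling $\tau_k$ recursively in terms of the geometry of the renormalization intervals. Using the real-bounds/distortion estimates available from Theorem~\ref{thm:bounds}'s companion statements (or bootstrapped from the smallest-interval argument of Lemma~\ref{lem1} applied to semi-extensions), the semi-extension $F_k$ has uniformly bounded distortion, so $\tau_k$ is comparable to the ratio of the length of the "missing" interval on each side of $[a_k,b_k]$ inside $F_k(T_k)$ to $|b_k-a_k|$. The recursion for $[a_k,b_k]$ that is being proved in Theorem~\ref{thm:scalings} — namely $|b_{2k+1}-a_{2k+1}|\sim\lambda|b_{2k}-a_{2k}|$ and $|b_{2k+2}-a_{2k+2}|\approx|b_{2k+1}-a_{2k+1}|^2$ — dictates a parity-dependent alternation. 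When $k=2j-1$ is odd, the extension past the endpoint is governed by the left fixed point $b_1$-type geometry of the linear branch: the amount of extra space is controlled by solving a one-step equation whose solution is exactly the root of $\lambda^\beta+\lambda=1$, because near the relevant obstruction $f$ looks like $x\mapsto 1-x^\beta$ composed with an almost-linear map and the fixed-point relation defining $a_2,b_2,\dots$ forces the ratio to converge to $\lambda$; hence $\tau_{2k-1}\to\lambda$. When $k=2j$ is even, the extension is limited instead by a point whose distance to $[a_k,b_k]$ is comparable to $|b_{k-1}-a_{k-1}|$, which by the quadratic scaling law is of order $|b_k-a_k|^{1/2}$, so $\tau_{2k}\approx|b_k-a_k|^{-1/2}\approx b_{2k}^{-1/2}$; combined with equation~(\ref{eq:Thetaintro}) giving $|b_{2k}-a_{2k}|\sim\beta^{2/(\beta-1)}K_0^{-1/(\beta-1)}\exp(-2^k\Theta)$, this yields $\log\tau_{2k}\approx 2^{k-1}\Theta$, i.e. exponential growth of $\log\tau_{2k}$ and super-exponential growth of $\tau_{2k}$.

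I would carry this out in the order: (i) identify the combinatorial structure of the maximal semi-extension and the precise obstruction point for each parity; (ii) establish uniformly bounded distortion of $F_k$ on $T_k$ (so that all ratios of lengths are comparable before and after applying $F_k$); (iii) derive the one-dimensional fixed-point equation in the odd case and show its solution converges to $\lambda$; (iv) in the even case, identify the missing space with a previous-scale interval and invoke the quadratic scaling law plus (\ref{eq:Thetaintro}). The main obstacle I expect is step (ii) together with the fine analysis in step (i): because Theorem~\ref{thm:nousualbound} shows honest diffeomorphic extensions have vanishing Koebe space, one cannot appeal to the classical machinery, and one must instead exploit the near-linearity of $f_1$ to show that the semi-extension accumulates enough space — this is exactly the delicate case analysis alluded to in \S\ref{subsec:sketch}, where one analyzes the positions of auxiliary points with no dynamical meaning and bootstraps successively sharper bounds. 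The odd/even dichotomy in the answer is ultimately forced by the fact that the renormalization alternates between an almost-linear step (left branch, giving geometric scaling with ratio $\lambda$) and a genuinely nonlinear step (right branch, giving the squaring that produces super-exponential decay).
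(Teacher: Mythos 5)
Your outline correctly identifies the answers and the mechanism behind them: the parity alternation, the near-linearity of the extended left branch $f_1$, the fixed-point equation $\lambda^\beta+\lambda=1$ governing the odd case, and the quadratic scaling $b_{k+2}\approx b_k^2$ producing $\tau_{2k}\approx b_{2k}^{-1/2}$ with $\log\tau_{2k}$ growing like $2^k\Theta$. Steps (i)--(iv) track the sketch in \S\ref{subsec:sketch}. But there is a genuine gap, and it is twofold. First, your derivation of both limits invokes the scaling laws of Theorem~\ref{thm:scalings}, whereas in the paper those scaling laws are \emph{consequences} of (a preliminary version of) Theorem~\ref{thm:bounds}: Section~\ref{sec:scaling} derives the asymptotic form (\ref{eq:f2k}) of $f^{2^k}$, and hence $b_{k+1}\sim\lambda b_k$ and $b_{k+2}\approx b_k^2$, from Lemma~\ref{lem10}. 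Using them as input at this stage is circular. (The paper does feed the scaling laws back in to obtain the sharp exponent $-1/2$ in Lemma~\ref{lem10'}, but only after an unconditional big-bounds statement has been proved first.)

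Second, the step you defer as ``the main obstacle'' --- showing that the semi-extension acquires \emph{any} definite Koebe space, so that $F_k$ has bounded distortion --- is precisely the content of the theorem and cannot be left as a black box. The paper's argument there is not a distortion computation but a combinatorial and metric bootstrap on the auxiliary points $A_k$, $B_k$, $e_k$, $d_k$, $b_k'$: the propagation rule that $e_{k+1}<b_{k+1}$ forces $A_{k+2}=A_{k+1}$ (Lemma~\ref{lem2}), a cross-ratio estimate $d_k\le C b_{k+1}^{\beta-1}b_k$ (Lemma~\ref{lem3}), the dichotomy ``either $|A_k|>Cb_{k+1}$ or $e_k<b_{k+1}$'' (Lemma~\ref{lem5}), and the observation that if the first alternative fails for all large even $k$ then $A_k$ stabilizes while $b_k\to 0$, so space appears anyway (Lemma~\ref{lem7}); only then is the space amplified step by step (Lemmas~\ref{lem8} and~\ref{lem9}) and iterated to give superexponential growth (Lemma~\ref{lem10}). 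None of this machinery appears in your proposal, and without it neither the bounded distortion in your step (ii) nor the identification of the obstruction points in step (i) can be justified --- indeed Theorem~\ref{thm:nousualbound} shows that the naive version of (ii) is false for the honest diffeomorphic extension, so the burden of proof sits exactly where your proposal is silent.
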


\begin{remark}\label{remark1}  As we will show in 
Theorem~\ref{thm:absencekoebe}  and  Section~\ref{sec:nokoebe}, 
this theorem does not hold when we drop the assumption that  $J\ni f(0)$. 
This will complicate for example the proof of Theorem~\ref{thm:wandering} (on absence of wandering intervals). 
\end{remark}

\medskip

\paragraph{Scaling laws.} 
From this theorem we will obtain that the geometry of the $\omega$-limit set
is quite different from the one found in smooth unimodal maps with $2^\infty$
combinatorics. In the next theorem we describe this scaling.
By definition $f(a_k) = f(b_k)$ and therefore 
\begin{equation} a_k \sim - K_0b_k^\beta,\mbox{ where }K_0 = K_+/K_-.
\label{akbk} 
\end{equation}
Thus the scaling properties of the renormalization intervals
can be described just by the scaling properties of $b_k$.

\begin{thm} \label{thm:scalings}
The following scaling properties hold for $b_k$:
\begin{itemize}
\item For large even values of $k$ one has
\begin{equation}
\begin{array}{rll}
b_{k+1} & \sim  &\lambda b_k  \\
c_{2^k} &\sim & b_k,
\end{array}
\label{eq3}
\end{equation}
where as before $\lambda\in (0,1)$ is the root of the equation $\lambda^\beta +\lambda=1$.
\item For large odd values of $k$ one has
\begin{equation}
\begin{array}{rll}
b_{k+1}  &\sim 
& \beta^{\frac{-2}{\beta-1}} K_0^{\frac{1}{\beta-1}}  \lambda^{-2} 
 b_{k}^2 \\ 
 c_{2^{k}} &\sim  &
 -\beta^{-\frac{\beta+1}{\beta-1}} K_0^{\frac{\beta}{\beta-1}}  \lambda^{-\beta-1} 
 b_{k}^{\beta+1} 
 \end{array}
 \label{eq3"}\end{equation}
\item The length of the renormalization intervals decays super-exponentially
fast: there exists $\Theta >0$ so that 
\begin{equation} \log \left(\frac{1}{b_{2k}}\right) \sim \log \left(\frac{1}{|b_{2k}-a_{2k} |}\right) \sim \Theta \cdot 2^k . 
\label{eq:Theta} \end{equation} 
More precisely, 
\begin{equation}  1/b_{2k} \sim  \beta^{\frac{-2}{\beta-1}} K_0^{\frac{1}{\beta-1}}  \exp(2^k \Theta). \label{eq:defTheta}
  \end{equation} 
 \end{itemize}

%
In (\ref{eq3})  the convergence is super-exponentially:   $b_{k+1}/b_k$ 
converges to $\lambda$  super-exponentially fast. 
\end{thm}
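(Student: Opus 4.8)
\textbf{Proof proposal for Theorem~\ref{thm:scalings}.}

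The plan is to bootstrap from the real-bound information in Theorem~\ref{thm:bounds} to extract the asymptotics of $b_k$ and $c_{2^k}$, treating the odd and even steps separately and then iterating. First I would set up the two elementary local relations near the critical point. On the left branch, almost-linearity ($\alpha=1$) gives $f(x)\sim f(0)+K_-x$ for $x<0$ small; on the right branch, $f(x)\sim f(0)-K_+x^\beta$ for $x>0$ small. Combined with the defining normalization $f(a_k)=f(b_k)$, this yields $a_k\sim -K_0 b_k^\beta$ (equation~(\ref{akbk})), so everything reduces to tracking $b_k$ and the critical-value orbit point $c_{2^k}=f^{2^k}(0)$. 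The key structural fact is that $[a_{k+1},b_{k+1}]\subset[a_k,b_k]$ is the renormalization interval for $f^{2^k}$, with $a_{k+1}$ a fixed point of the orientation-reversing branch $f^{2^k}|[a_k,0]$ and $b_{k+1}$ its preimage under $f^{2^k}|[0,b_k]$; and $c_{2^k}=f^{2^k}(0)$ is the image of the turning point, which is the deepest point of the renormalization interval.

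The heart of the argument is to compute $f^{2^k}$ near $0$ using the semi-extension $F_k$ and the nearly-linear structure Theorem~\ref{thm:bounds} provides. Write $f^{2^k}=f^{2^k-1}\circ f$ and split off the last (critical) application of $f$. For $x$ near $0$, $f(x)\approx f(0)-K_+x^\beta$ on the right and $f(0)+K_-x$ on the left; then $f^{2^k-1}$ maps $f(0)$ into $[a_k,b_k]$, and by Theorem~\ref{thm:bounds} its maximal semi-extension $F_k$ has range which is a $\tau_k$-scaled neighbourhood of $[a_k,b_k]$. When $k$ is \emph{even}, $\tau_{2k}$ is super-exponentially large, so $F_{2k}$ has an enormous Koebe space and therefore (via the Koebe/Schwarzian distortion estimate available because $Sf\le 0$) is essentially an affine map on the relevant domain; that forces $f^{2^{2k}}|[a_{2k},b_{2k}]$ to be, to leading order, the composition of a power-$\beta$ map with an affine map, and matching the fixed-point and preimage conditions for $a_{2k+1},b_{2k+1}$ gives $b_{2k+1}\sim\lambda b_{2k}$ with $\lambda^\beta+\lambda=1$ and $c_{2^{2k}}\sim b_{2k}$ — this is (\ref{eq3}). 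When $k$ is \emph{odd}, $\tau_{2k-1}\to\lambda$ is only a definite (not large) Koebe space, so the semi-extension is genuinely nonlinear; here one must carry the next-order term. The map $f^{2^k-1}$ restricted near $f(0)$ then looks like an affine map composed with a correction coming from one more $f_2$-type (power-$\beta$) factor that does not get absorbed, and a careful bookkeeping of the constants $K_0,\beta,\lambda$ and of where $c_{2^k}$ lands yields the quadratic recursion $b_{k+1}\sim\beta^{-2/(\beta-1)}K_0^{1/(\beta-1)}\lambda^{-2}b_k^2$ and the stated formula for $c_{2^k}$ — this is (\ref{eq3"}). I would derive these two asymptotic recursions cleanly first, keeping track of errors as $o(1)$ multiplicative factors.

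Once the two alternating recursions are in hand, (\ref{eq:Theta}) and (\ref{eq:defTheta}) follow by a standard telescoping argument. Set $\ell_k=\log(1/b_k)$. The even$\to$odd step contributes $\ell_{k+1}=\ell_k-\log\lambda+o(1)$, i.e. a bounded additive change; the odd$\to$even step contributes $\ell_{k+1}=2\ell_k+C+o(1)$ with $C=-\log(\beta^{-2/(\beta-1)}K_0^{1/(\beta-1)}\lambda^{-2})$, i.e. an (almost) doubling. Composing one even step and one odd step, $\ell_{2k+2}=2\ell_{2k}+C'+o(1)$ for an explicit constant $C'$, so $\ell_{2k}+C'$ behaves like $2^k$ times a constant: $\ell_{2k}/2^k$ converges, and the limit is the quantity we name $\Theta>0$ (positivity because the doubling dominates and $b_k\to 0$). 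Feeding this back through the even step and the relation $a_k\sim-K_0b_k^\beta$ (which shows $\log(1/|b_{2k}-a_{2k}|)\sim\log(1/b_{2k})$ since $a_k$ is a lower-order correction to $b_k$) gives the two displayed forms, with the explicit prefactor $\beta^{-2/(\beta-1)}K_0^{1/(\beta-1)}$ emerging from the residual additive constant in the telescoping. Finally, the closing remark that $b_{k+1}/b_k\to\lambda$ \emph{super-exponentially} in (\ref{eq3}) is immediate from the even-step recursion together with super-exponential control of the error term, which itself comes from the super-exponential growth of $\tau_{2k}$ in Theorem~\ref{thm:bounds}: the Koebe space is so large that the affine approximation error is of order $1/\tau_{2k}$, hence super-exponentially small.

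The main obstacle I anticipate is the \emph{odd-step computation}: because there is no large Koebe space there (only the definite amount $\lambda$), one cannot simply approximate the first-entry semi-extension by an affine map, and one must instead identify exactly which "extra" factor of the right branch survives and compute its contribution to leading and next-to-leading order, keeping all constants $K_0,\beta,\lambda$ correct. This is where the delicate analysis of the position of the auxiliary points (the non-dynamical points referred to in the introduction's proof sketch) feeds in, and where a sign or exponent error would propagate into wrong formulas for $c_{2^k}$ and $\Theta$. A secondary, more routine difficulty is making the "$o(1)$ error terms telescope" argument rigorous — one needs the errors to be summable (indeed super-exponentially small on even steps, and at worst bounded-ratio on odd steps) so that $\ell_{2k}/2^k$ genuinely converges rather than merely staying bounded; this again is supplied by the quantitative form of Theorem~\ref{thm:bounds}.
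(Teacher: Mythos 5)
Your proposal follows the paper's proof essentially step for step: the even-level big bounds from Theorem~\ref{thm:bounds} turn $f^{2^k}$ into an explicit affine-plus-power-$\beta$ map whose fixed-point equation at $b_{k+1}$ yields $\lambda^\beta+\lambda=1$, the odd-level recursion is obtained by composing the two well-controlled even-level branches $f_r\circ f_l$ so that exactly one extra power-$\beta$ factor survives, and $\Theta$ comes from telescoping $\mu_k=\log(1/b_{2k})$ under the doubling recursion $\mu_{k+1}=2\mu_k+D_k$ with $D_k$ convergent. The one part you leave schematic — the constant bookkeeping in the odd step — is executed in the paper via the self-consistent ansatz $c_{2^{k+1}}=-C_k b_k^{\delta}$ (forcing $\delta=\beta+1$ and the stated $C_k$), together with the refined estimates $c_{2^k}=b_k+O(b_k^\beta)$ and $f_r'\sim-\beta$ on the relevant range.
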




\paragraph{The parameter $\Theta$ can be arbitrarily large.}

The parameter $\Theta$ is determined by the asymptotic behaviour of $1/b_{2k}$. In the next corollary
we show that $\Theta$ indeed varies within the space $\mathcal{A}(2^\infty)$: 

\begin{cor}\label{thm:thetavaries}
For each $\Theta_0>0$ there exists a map $f\in \mathcal{A}_{1,\beta}(2^\infty)$ so that $\Theta(f)>\Theta_0$. 
\end{cor}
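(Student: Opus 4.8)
\textbf{Proof proposal for Corollary~\ref{thm:thetavaries}.}

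The plan is to exhibit a one-parameter family of maps inside $\mathcal{A}_{1,\beta}(2^\infty)$ along which the invariant $\Theta$ is unbounded, exploiting the scaling relation (\ref{eq:defTheta}), which identifies $\Theta$ with the exponential growth rate of $\log(1/b_{2k})$. The key point is that $\Theta$ is a global quantity that is sensitive to the size of the renormalization interval $[a_1,b_1]$ relative to the full dynamical interval $[a_0,b_0]$: if one can force $b_1$ (equivalently $|b_1-a_1|$) to be very small already at the first renormalization level, then all subsequent $b_{2k}$ are correspondingly smaller, and since the super-exponential growth rate $\Theta$ compounds these effects multiplicatively at each doubling, a small enough $b_1$ should push $\Theta$ above any prescribed $\Theta_0$.

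First I would set up a convenient sub-family: for instance, following (\ref{examplefamily}) but allowing the left slope and the right coefficient to be tuned independently, consider maps $f$ whose left branch is $x\mapsto K_-(x - a_0) + a_0$ with $K_-$ large (or, equivalently, rescaling so that the left branch is very steep relative to the size of the interval) and whose right branch is $x \mapsto f(0) - K_+ x^\beta$, and then invoke Theorem~\ref{thm:existence2} (in the form stated there, which applies to any family ``similar to (\ref{examplefamily})'') to select, within such a family, a parameter at which the map lies in $\mathcal{A}(2^\infty)$. Second, for the resulting $2^\infty$ map I would track how $b_1$ depends on the tuning parameter: since $b_1$ is the fixed point of $f$ with negative multiplier, a very steep left branch forces this fixed point — and hence $[a_1,b_1]$ — to be a small interval, and one gets an explicit upper bound $b_1 \le \varepsilon$ where $\varepsilon \to 0$ as the steepness increases. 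Third, I would feed this into the scaling machinery: by the even-step relation in (\ref{eq3}) and the odd-step relation in (\ref{eq3"}) of Theorem~\ref{thm:scalings}, starting the recursion from a smaller $b_1$ (hence smaller $b_2$) and iterating the super-exponential recursion $b_{2k+2} \approx \beta^{-2/(\beta-1)}K_0^{1/(\beta-1)}\lambda^{-2} b_{2k}^2$ (obtained by composing the odd and even steps), one sees that $\log(1/b_{2k}) \ge 2^{k}\cdot(\text{something that increases as }b_1\to 0)$, and in the limit (\ref{eq:defTheta}) this yields $\Theta \ge \log(1/b_1) + O(1)$, which exceeds $\Theta_0$ once $b_1$ is small enough.

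The main obstacle, and the step requiring genuine care, is the third one: making the dependence of $\Theta$ on the initial datum $b_1$ quantitative and, in particular, monotone-enough to conclude $\Theta \to \infty$. The relations in Theorem~\ref{thm:scalings} are asymptotic ($\sim$ and $\approx$), so a priori they only pin down $\Theta$ up to the behaviour of the recursion for large $k$, and one must argue that the ``error terms'' in the finitely many initial steps do not swamp the gain coming from a small $b_1$ — equivalently, that the constant $\beta^{-2/(\beta-1)}K_0^{1/(\beta-1)}$ appearing in (\ref{eq:defTheta}) is controlled uniformly over the sub-family. I expect this is handled by noting that composing two levels of the recursion gives $1/b_{2k+2} \approx C\,(1/b_{2k})^2$ with $C$ depending only on $\beta$ and $K_0$ (which we keep fixed), so that $2^{-k}\log(1/b_{2k})$ is, up to a bounded additive error, an increasing sequence whose limit $\Theta$ satisfies $\Theta = \lim 2^{-k}\log(1/b_{2k}) \ge \log(1/b_{2k_0})/2^{k_0}$ for any fixed reference level $k_0$; choosing $k_0$ small and using the $b_1 \le \varepsilon$ bound then closes the argument. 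A secondary, more bookkeeping-type obstacle is ensuring that the parameter-selection step (Theorem~\ref{thm:existence2}) can be carried out \emph{simultaneously} with making $b_1$ small — i.e. that the fullness of the family is not destroyed by the reparametrization — but this should follow from the same kneading-sequence argument that underlies Theorem~\ref{thm:existence2}, applied to the modified family.
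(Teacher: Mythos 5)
Your overall strategy (build a sub-family on which $\Theta$ can be made large by shrinking $b_1$) is genuinely different from the paper's argument, but it has a gap precisely at the step you flag as delicate, and the fix you sketch does not close it. The identity $\Theta=\lim_k 2^{-k}\log(1/b_{2k})$ gives, writing $\mu_k=\log(1/b_{2k})$ and $D_k=\mu_{k+1}-2\mu_k$, the exact formula $\Theta=\mu_{k_0}/2^{k_0}+\sum_{i\ge k_0}D_i/2^{i+1}$. Theorem~\ref{thm:scalings} only tells you $D_k\to D$ \emph{asymptotically, for each fixed map}; it gives no uniform control, over your tuned sub-family, of the $D_i$ at the early levels, and nothing prevents these from being very negative (the only a priori constraint is $D_i>-\mu_i$, which is useless when $\mu_i$ is large). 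So the inequality $\Theta\ge \log(1/b_{2k_0})/2^{k_0}+O(1)$ with a \emph{family-uniform} $O(1)$ is exactly what is missing, and "the constant depends only on $\beta$ and $K_0$" is not something Theorem~\ref{thm:scalings} provides for finite $k$. (A sanity check that some normalization hypothesis is essential: $\Theta$ is invariant under affine rescaling of $[a_0,b_0]$, while $b_1$ is not, so "$b_1$ small $\Rightarrow$ $\Theta$ large" cannot be true without fixing the ambient interval and controlling much more of the map.) Your second step is also shaky: $b_1$ is the fixed point of the \emph{decreasing} (right) branch, so steepening the left branch does not obviously move it, and no argument is given that the tuned family still admits a $2^\infty$ parameter with $b_1\le\varepsilon$.

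For contrast, the paper's proof is a one-line renormalization argument: from (\ref{eq:defTheta}), $\Theta(R^2(f))=2\,\Theta(f)$, because the renormalization intervals of $R^2f$ are affine images of those of $f$ shifted by two levels and the affine factor contributes only an additive constant to $\log(1/b_{2k})$, which dies under division by $2^k$. Iterating, $\Theta(R^{2m}f)=2^m\Theta(f)\to\infty$, and each $R^{2m}f$ lies in $\mathcal{A}_{1,\beta}(2^\infty)$. This sidesteps entirely the need for uniform estimates over a family, which is the crux your proposal does not supply. If you want to salvage your approach, you would need a quantitative, family-uniform version of the recursion $1/b_{2k+2}\approx C(1/b_{2k})^2$ valid from a level $k_0$ independent of the family parameter — which is substantially harder than the corollary itself.
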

\begin{proof} From formula  (\ref{eq:defTheta}) it follows immediately that
 $\Theta(R^2(f))=2\cdot \Theta(f)$.
\end{proof} 

\paragraph{Renormalisation limits.} 

The above scaling laws make it possible to compute the renormalization map $R^k$ for $k$ even with  quite a lot of accuracy:


\begin{thm} \label{thm:renormalizationlimit} 
For $k$ even we have 

 \begin{equation}
f^{2^k} (x) = 
\begin{cases}
c_{2^k} - s_k |x| +O(b_k^{\frac{3}{2}}) & \mbox{ when }x\in [a_k,0]\\ 
c_{2^k} - t_k x^\beta +O(b_k^{\frac{3}{2}}) & \mbox{ when }x\in [0,b_k] 
\end{cases}
\label{eq:f2k-thm}
\end{equation}
where \begin{equation}s_k\sim  \dfrac{b_k^{1-\beta}}{K_0}\mbox{ and }
t_k\sim b_k^{1-\beta}.\label{eq:sk-thm} \end{equation}
\end{thm}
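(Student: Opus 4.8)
The plan is to derive the formula \eqref{eq:f2k-thm} by decomposing $f^{2^k}$ on $[a_k,b_k]$ (for $k$ even) into the composition of the first iterate $f|_{[a_k,b_k]}$ with the first return map $f^{2^k-1}$ from $f([a_k,b_k])\ni f(0)=c_2$ back into $[a_k,b_k]$, and then to use the big-bounds Theorem~\ref{thm:bounds} together with the scaling laws of Theorem~\ref{thm:scalings} to show that the latter return map is essentially linear up to a controlled error. Concretely: near $x=0$ we have $f(x)=c_2-K_-|x|+o(|x|)$ for $x<0$ and $f(x)=c_2-K_+x^\beta+o(x^\beta)$ for $x>0$, so $f([a_k,b_k])$ is an interval with one endpoint at the critical value $c_2=f(0)$ and the other at $f(a_k)=f(b_k)$; its length is $\approx K_+b_k^\beta\approx |a_k|$, which by \eqref{akbk} is super-exponentially smaller than $b_k$. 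Composing with the first entry map $g_k:=f^{2^k-1}$ that sends this small interval onto $[a_k,b_k]$, I would write $f^{2^k}(x)=g_k(f(x))$ and expand $g_k$ around $c_2$.

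The key step is to control $g_k$ and its derivative. By Theorem~\ref{thm:bounds}, for $k$ even the maximal monotonic semi-extension $F_k\colon T_k\to\R$ of $g_k$ has range which is a $\tau_{2k}$-scaled neighbourhood of $[a_k,b_k]$ with $\tau_{2k}\approx b_{2k}^{-1/2}\to\infty$ super-exponentially. Since $F_k$ has negative Schwarzian derivative (being a composition of the $f_i$'s, each with $Sf_i\le0$), the Koebe Principle applies with distortion bound governed by $\tau_{2k}^{-1}\approx b_k^{1/2}$: on the domain $J$ mapped onto $[a_k,b_k]$, the nonlinearity of $g_k$ is $O(b_k^{1/2})$ in the appropriate sense, hence $g_k$ is affine up to a multiplicative factor $1+O(b_k^{1/2})$. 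Writing $g_k(y)=a_k + m_k(y-f(b_k)) + (\text{error})$ with slope $m_k$, the length comparison $|b_k-a_k|=m_k\cdot|J| \cdot(1+o(1))$ and $|J|=|f([a_k,b_k])|\sim K_+b_k^\beta$ give $m_k\sim b_k/(K_+b_k^\beta)= K_+^{-1}b_k^{1-\beta}$ (up to the constant $K_0$ bookkeeping). Substituting the local form of $f$: for $x\in[a_k,0]$, $g_k(f(x))= g_k(c_2-K_-|x|+o(|x|))$, and since $|x|\le b_k$ and the error term in $f$ is $o(|x|)$ which is absorbed, this yields $c_{2^k}-m_kK_-|x|+(\text{error})$, so $s_k=m_kK_-\sim b_k^{1-\beta}/K_0$; similarly for $x\in[0,b_k]$ one gets $t_k=m_kK_+\sim b_k^{1-\beta}$. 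The error term: the distortion of $g_k$ contributes a relative error $O(b_k^{1/2})$ to a quantity of size $O(b_k^{1-\beta}\cdot b_k)=O(b_k^{2-\beta})$, wait — one must check that after multiplying out, the absolute error is $O(b_k^{3/2})$; this requires $b_k^{1/2}\cdot(b_k) \cdot b_k^{1-\beta}$-type bookkeeping to collapse correctly, using that the dominant scale on $[a_k,b_k]$ in the rescaled picture is order $1$, so the unrescaled error is $O(b_k^{3/2})$ as claimed.

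The main obstacle I anticipate is the precise error analysis rather than the leading-order asymptotics. Three issues need care: first, the error in the local expansion of $f$ at $0$ is only $o(|x-c|^\beta)$ (respectively $o(|x-c|)$), which is weaker than a power-saving bound, so one must check it does not contaminate the $O(b_k^{3/2})$ claim — presumably one uses that the $C^3$ smoothness of $f$ away from $0$ plus the specific form gives an effective bound like $O(b_k^{\beta+\epsilon})$ or that the theorem's $O(b_k^{3/2})$ is genuinely coming from the distortion of $g_k$ and dominates the $f$-expansion error for $\beta$ in the relevant range. Second, translating the Koebe space $\tau_{2k}$ into a distortion bound of the right order requires knowing $\tau_{2k}\approx b_{2k}^{-1/2}$ sharply, which is exactly the content of \eqref{eq:defTheta}. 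Third, one must verify that the semi-extension $F_k$ actually agrees with $g_k$ on a neighbourhood large enough that the Koebe estimate applies to the \emph{genuine} branch $g_k$ on $J$, not merely to $F_k$; this is where the almost-linearity of the left branch $f_1$ is used to guarantee the semi-extension does not "cheat". I would handle the first issue by a direct Taylor estimate, the second by quoting Theorems~\ref{thm:bounds} and~\ref{thm:scalings}, and the third by unwinding the definition of maximal monotonic semi-extension together with the disjointness of the intervals $f^i[a_k,b_k]$.
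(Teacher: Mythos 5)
Your proposal is correct and follows essentially the same route as the paper: decompose $f^{2^k}=E_k\circ f$ with $E_k$ the first entry map from a neighbourhood of $f(0)$ into $[a_k,b_k]$, apply the Koebe corollary to its monotonic semi-extension using the sharp Koebe space $\tau_k\approx b_k^{-1/2}$ to get $E_k=L_k+O(b_k/\tau_k)=L_k+O(b_k^{3/2})$ with $L_k$ affine of slope $\approx b_k^{1-\beta}$, and then read off $s_k\sim b_k^{1-\beta}/K_0$ and $t_k\sim b_k^{1-\beta}$ from the local form of $f$ at $0$. The error bookkeeping and the constants all match the paper's argument.
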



As usual we can state the renormalization results by rescaling the intervals
to a fixed interval. So let $R^k f$ denote the $k$-th renormalization of $f$. In other words, let 
$l_k : [0, 1] \to [a_k , b_k ]$ be the linear map such that $l(0) = a_k$ and $l(1) = b_k$
and define  $R^kf:=l_k^{-1} \circ f ^{2^k}\circ l_k$. 
Let  $\hat c_k$ denote the the critical point of $R^k f$. 
From (\ref{akbk})  it is clear
that $\hat c_k \to 0$ as $k \to \infty$. Therefore, the left branch of $R^k f$ gets more and more
degenerate and disappears in the limit.

\begin{thm}\label{thm:renormalizationlimit2} 
 The right branch of the renormalizations of $f$ converge super
exponentially fast in the $C^1$ norm to
$$\lim_{k\to \infty}  (R^{2k} f )|[\hat c_k ,1] = 1-x^\beta$$
$$\lim_{k\to \infty}  (R^{2k+1}f )|[\hat c_k ,1] = x^\beta.$$
Let $m_k : [-1, 0] \to [0, \hat c_k ]$ be the linear orientation preserving maps mapping the
boundary to the boundary. Then in the $C^1$ norm
$$\lim_{k\to \infty}  (R^{2k} f ) \circ  m_{2k} = x+1$$
$$\lim_{k\to \infty}  (R^{2k+1} f ) \circ  m_{2k+1} = -\lambda^{\beta^2-1} (x+\lambda^{-\beta})^\beta + \lambda^{-1}.$$
 Here the convergence is super exponentially fast as well and $\lambda\in (0, 1) $ is the
root of $\lambda^\beta + \lambda = 1$ as before.
\end{thm}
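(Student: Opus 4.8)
The plan is to derive Theorem~\ref{thm:renormalizationlimit2} as a straightforward consequence of the precise scaling laws in Theorem~\ref{thm:scalings} together with the near-linearity statement for the left branch that is packaged in Theorem~\ref{thm:renormalizationlimit} (and, through it, in the semi-extension bounds of Theorem~\ref{thm:bounds}). The key point is that all the content is already present; what remains is the bookkeeping of changing coordinates by the linear maps $l_k$, $m_k$ and tracking error terms.

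\medskip

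First I would record the coordinate change. Since $l_k(y)=a_k+(b_k-a_k)y$ and $a_k\sim -K_0 b_k^\beta$ by (\ref{akbk}), we have $b_k-a_k\sim b_k$ and $\hat c_k = l_k^{-1}(0) = -a_k/(b_k-a_k)\sim K_0 b_k^{\beta-1}\to 0$. For the right branch, write $x=l_k(y)$ with $y\in[\hat c_k,1]$, so $x\in[0,b_k]$, and plug into (\ref{eq:f2k-thm}): $f^{2^k}(x)=c_{2^k}-t_k x^\beta + O(b_k^{3/2})$ with $t_k\sim b_k^{1-\beta}$. Then $(R^k f)(y)=l_k^{-1}(f^{2^k}(l_k(y))) = (c_{2^k}-a_k)/(b_k-a_k) - t_k (b_k y - (-a_k))^\beta/(b_k-a_k) + O(b_k^{1/2})$. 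Since $-a_k/b_k = O(b_k^{\beta-1})\to 0$, one has $(b_k y +a_k)^\beta = b_k^\beta y^\beta(1+O(b_k^{\beta-1}))$, so the $x^\beta$-term becomes $t_k b_k^{\beta-1} y^\beta(1+o(1))\sim y^\beta$ using $t_k\sim b_k^{1-\beta}$; and the constant term $(c_{2^k}-a_k)/(b_k-a_k)$ equals $1$ for $k$ even and $0$ for $k$ odd, because in the even case $c_{2^k}\sim b_k$ by (\ref{eq3}) while in the odd case $c_{2^k}\sim -\beta^{-(\beta+1)/(\beta-1)}K_0^{\beta/(\beta-1)}\lambda^{-\beta-1}b_k^{\beta+1}=o(b_k)$ by (\ref{eq3"}), and in each case the precise combinatorial identity $f^{2^k}(b_k)\in\{a_k,b_k\}$ pins down whether the value is $a_k$ or $b_k$ at the endpoint. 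Differentiating in $y$ shows $C^1$ convergence, not merely $C^0$, since the derivative of the right branch is $-\beta t_k b_k^{\beta}(b_k y+a_k)^{\beta-1}/(b_k-a_k)(1+o(1))\to -\beta y^{\beta-1}$ (resp.\ $+\beta y^{\beta-1}$), uniformly on $[\hat c_k,1]$ away from the shrinking endpoint, and the $O(b_k^{3/2})$ error in (\ref{eq:f2k-thm}) should be upgraded to a $C^1$-small error using the $Sf\le 0$ distortion control behind Theorem~\ref{thm:bounds}. The super-exponential rate of convergence is inherited directly from the super-exponential convergence asserted in Theorem~\ref{thm:scalings}.

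\medskip

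Next I would handle the left branch via $m_k:[-1,0]\to[0,\hat c_k]$, $m_k(z)=\hat c_k(z+1)$. For $k$ even, (\ref{eq:f2k-thm}) gives $f^{2^k}(x)=c_{2^k}-s_k|x|+O(b_k^{3/2})$ on $[a_k,0]$ with $s_k\sim b_k^{1-\beta}/K_0$. With $x=l_k(y)$, $y\in[0,\hat c_k]$, we have $|x|=-a_k-(b_k-a_k)y+\dots$ hmm --- more cleanly, compose $R^{2k}f$ with $m_{2k}$ directly: $R^{2k}f(m_{2k}(z))$ is, up to $C^1$-small errors, an affine function of $z$ because $f^{2^k}$ restricted to $[a_k,0]$ is affine up to error $O(b_k^{3/2})$ and $l_k^{-1}$, $m_k$ are affine; its value at $z=-1$ (\ie $y=0$, \ie $x=a_k$) is $l_k^{-1}(f^{2^k}(a_k))\in\{0,1\}$ and at $z=0$ (\ie $x=0$) is $l_k^{-1}(c_{2^k})=\hat c_k^{-1}\cdot 0 +\dots$, so one reads off the limit $z+1$ after checking the endpoints match the $2^\infty$ combinatorics and that the slope $s_k(b_k-a_k)/(b_k-a_k)\cdot\hat c_k = s_k\hat c_k\sim (b_k^{1-\beta}/K_0)(K_0 b_k^{\beta-1})=1$. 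For $k$ odd the left branch is genuinely not affine: here I would instead use the finer description behind Theorem~\ref{thm:bounds} for odd $k$ (the first entry map does not converge to linear) to show that on $[a_k,0]$ the map $f^{2^k}$ is, after the relevant rescaling, asymptotic to $x\mapsto -K_+\lambda^{\text{(something)}}|\dots|^\beta+\text{const}$; chasing the constants $\lambda^\beta+\lambda=1$, the offset $\lambda^{-\beta}$ and the prefactor $\lambda^{\beta^2-1}$ should fall out of composing the right branch of the previous renormalization level with itself, which is where the extra factor $\beta^{\pm 2/(\beta-1)}K_0^{\mp 1/(\beta-1)}$ in (\ref{eq3"}) and (\ref{eq:sk-thm}) gets absorbed into a clean power of $\lambda$.

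\medskip

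The main obstacle I anticipate is the odd-$k$ left branch: unlike every other piece, it is not a rescaled affine or pure-power map, and obtaining the exact formula $-\lambda^{\beta^2-1}(x+\lambda^{-\beta})^\beta+\lambda^{-1}$ --- including the precise constants --- requires tracking how the \emph{non-linear} semi-extension from Theorem~\ref{thm:bounds} composes through one more period-doubling, rather than just applying a scaling estimate. Concretely, the difficulty is that $f^{2^{2k+1}}|[a_{2k+1},0]$ factors as $f^{2^{2k}}$ applied to (a near-affine image of) the interval, and then once more; one must verify that the errors $O(b_k^{3/2})$ accumulated over these compositions remain $C^1$-small after rescaling by $1/\hat c_k$ (which blows up polynomially), so the real work is showing the error terms in (\ref{eq:f2k-thm}) are small enough relative to $\hat c_k\approx b_k^{\beta-1}$ --- this is exactly the point where the super-exponential decay of $b_k$ and the sharp form (\ref{eq:defTheta}) are needed, so that $b_k^{3/2}/\hat c_k\to 0$ and even $\to 0$ super-exponentially. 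Everything else (endpoint matching via the $2^\infty$ combinatorics, the constant and slope computations) is routine algebra with the relation $\lambda^\beta+\lambda=1$.
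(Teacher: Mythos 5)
Your overall plan --- read everything off from Theorem~\ref{thm:renormalizationlimit} and the scaling laws and then do the affine bookkeeping --- is exactly the paper's route, and your treatment of the two \emph{even}-$k$ branches is essentially the intended computation. The genuine gaps are in the odd-$k$ branches, which is where the content of the theorem lies. For the right branch with $k$ odd you substitute directly into (\ref{eq:f2k-thm}), but that formula is established only for $k$ even; for odd $k$ the right branch of $f^{2^k}$ is orientation-preserving, so it cannot be of the form $c_{2^k}-t_kx^\beta$ with $t_k>0$, and indeed your own computation outputs $0-y^\beta=-y^\beta$, which has the wrong sign and the wrong monotonicity compared with the claimed limit $+x^\beta$ --- a discrepancy you do not notice. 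The only available derivation is the compositional one: $f^{2^{2k+1}}|[0,b_{2k+1}]=f_{r,2k}\circ f_{r,2k}$, since $f_{r,2k}$ maps $[0,b_{2k+1}]$ onto $[b_{2k+1},c_{2^{2k}}]\subset(0,b_{2k}]$, and this composition must be rescaled and analysed explicitly (it is not a pure power map). Similarly, for the odd left branch you misidentify the composition as ``the right branch of the previous renormalization level with itself'': it is $f_{r,2k}\circ f_{l,2k}$, i.e.\ the \emph{affine left} branch of level $2k$ (which sends $[a_{2k+1},0]$ onto $[b_{2k+1},c_{2^{2k}}]$, with $s_{2k}|a_{2k+1}|\sim\lambda^\beta b_{2k}$) followed by one application of the right branch; dividing $b_{2k}\bigl[1-(1+\lambda^\beta z)^\beta\bigr]$ by $b_{2k+1}\sim\lambda b_{2k}$ and factoring $\lambda^{\beta^2}$ out of the bracket is precisely what produces $\lambda^{-1}-\lambda^{\beta^2-1}(z+\lambda^{-\beta})^\beta$. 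This short explicit computation is the whole theorem; saying the constants ``should fall out'' of an unspecified composition is not a proof, and the composition you name is the wrong one.

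The error analysis also contains a mistake. The quantity $b_k^{3/2}/\hat c_k$ that you single out as the crux is not small: since $\hat c_k\approx b_k^{\beta-1}$ it equals $b_k^{5/2-\beta}$, which for $\beta>5/2$ blows up, and the super-exponential decay of $b_k$ makes this divergence faster, not slower. It is also not the relevant quantity. The point is that the bounds behind (\ref{eq:f2k-thm}), namely (\ref{eq:distortionE_k}) and Lemma~\ref{cor:koebe}, are \emph{relative}: $DE_k=DL_k\bigl(1+O(1/\tau_k)\bigr)$ with $1/\tau_k\approx b_k^{1/2}$ by Lemma~\ref{lem10'}. Relative errors are invariant under the affine changes of variable $l_k$ and $m_k$ (whose derivatives cancel exactly against the factors $s_k$, $t_k$) and multiply under the two-fold compositions above, so each normalized branch converges in $C^1$ with relative error $O(b_{2k}^{1/2})$, which is super-exponentially small by (\ref{eq:defTheta}). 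You should replace the $b_k^{3/2}/\hat c_k$ argument by this one.
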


It is easy to see that  $\lambda^\beta + \lambda = 1$ implies that
$ -\lambda^{\beta^2-1} (x+\lambda^{-\beta})^\beta + \lambda^{-1}$ is equal to $1$ when $x=-1$
and equal to $0$ when $x=0$.
Note that the asymptotic expression for the  left branch of $R^{2k+1}f$ is an explicit but non-trivial expression. 

\begin{remark}
One can prove also convergence in the $C^N$ norm in the above theorem
if $f$ is a smooth function outside of zero. 
If the map $f$ is only assumed to have finite smoothness this can be done as in \cite{KS}
or following the approach in \cite{CdFvS}.
If $f$ is real analytic (on each side of $0$) then this can be done by complex tools:
then $f^{2^k}=E_k\circ f$ where $E_k$ extends holomorphically to a diffeomorphism
whose range is $B(0,\tau_k|b_k|)$. Using the Koebe Lemma (in the complex case)
we then obtain that, for $k$ even, $DE_k=DE_k(c_1) + o(k)$  and 
$D^iE_k=o_i(k)$ for each $i\ge 2$. The speeds of convergence can be obtained from Koebe 
and from the speed of $\tau_k$.
\end{remark}

\paragraph{Metric invariants and universality.} 

Theorem~\ref{thm:scalings} implies that two maps  $f,\tilde f\in \mathcal{A}(2^\infty)$
are not necessarily differentiably conjugate on their postcritical sets.  In fact, 
there are necessary and sufficient conditions which are needed for universality:

\begin{thm}[Complete invariants for $C^1$ universality] \label{thm:nouniversality} 
Take two maps $f\in \mathcal{A}_{1,\beta}(2^\infty)$ and $\tilde f\in \mathcal{A}_{1,\tilde \beta}(2^\infty)$, with as before
$\beta,\tilde\beta>1$.  Then there exists a 
homeomorphism $h$ which is a conjugacy between the 
 postcritical sets of $f,\tilde f$ and 
 \begin{enumerate}
 \item $h$ is H\"older at $0$;
 \item $h$ is Lipschitz at $0$  $\iff$ $h$ is differentiable at $0$  
$ \iff$ $\Theta=\tilde \Theta$ and $\beta=\tilde \beta$. 
\end{enumerate}
Here $\Theta$ is defined 
through equation (\ref{eq:Theta}) 
in Theorem~\ref{thm:scalings}. 
 
Moreover, let $\Lambda=\overline{\cup_n f^n(0)}$ be the attracting Cantor set and $\tilde \Lambda$ be the corresponding
set for $\tilde f$. Then $\Theta=\tilde \Theta$ and $\beta=\tilde \beta$ implies that 
the conjugacy $h\colon \Lambda\to \tilde \Lambda$ is differentiable in the sense that the following limit exists
$$\lim_{y\in \Lambda, y\to x} \dfrac{h(y)-h(x)}{y-x} \ne 0$$ 
and depends continuously on $x\in \Lambda$. 
\end{thm}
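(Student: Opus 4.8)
The plan is to build the conjugacy $h$ explicitly from the combinatorial structure and the scaling data supplied by Theorem~\ref{thm:scalings}, and then to read off its regularity at $0$ and on $\Lambda$ directly from the super-exponential asymptotics. First I would set up the standard dynamical partition: for each $k$ the $2^k$ intervals $f^i[a_k,b_k]$, $i=0,\dots,2^k-1$, tile a neighbourhood of $\Lambda$ and are nested as $k$ increases, and $\Lambda$ is precisely the intersection of the unions of these tiles. Since $f$ and $\tilde f$ have the same $2^\infty$ combinatorics, there is a canonical bijection between the level-$k$ tiles of $f$ and those of $\tilde f$, and this bijection respects inclusion; the map $h$ is defined as the induced homeomorphism on the intersections. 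The content is entirely in estimating, for $x\in\Lambda$ and $y\in\Lambda$ close to $x$, the ratio $(h(y)-h(x))/(y-x)$ in terms of ratios of lengths of corresponding tiles.

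The second step is to reduce the behaviour of $h$ at a general point $x\in\Lambda$ to its behaviour at $0$. Every point of $\Lambda$ is, for each $k$, in a unique level-$k$ tile $f^{i(k)}[a_k,b_k]$; the first-entry (transfer) map carrying $[a_k,b_k]$ diffeomorphically onto that tile has, by Theorem~\ref{thm:bounds}, a monotonic semi-extension with definite (indeed, for even $k$, enormous) scaled neighbourhood, hence uniformly bounded distortion on the even levels. Because the combinatorics are the same, the analogous transfer maps for $\tilde f$ have the same property, and the distortions are comparable. Consequently the ratio $(h(y)-h(x))/(y-x)$ for $x,y$ in a common level-$2k$ tile differs from the corresponding ratio computed at the critical point (i.e. for the pullback of $y,x$ into $[a_{2k},b_{2k}]$, or equivalently into a fixed reference scale at $0$) by a factor tending to $1$. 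So it suffices to prove the claimed differentiability of $h$ at $x=0$; the value of the derivative at a general $x$ is then the derivative at $0$ multiplied by the limiting ratio of transfer-map derivatives, which varies continuously in $x$ because the even-level transfer maps converge (Theorems~\ref{thm:renormalizationlimit}, \ref{thm:renormalizationlimit2}) and their derivatives depend continuously on the address of $x$.

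The third step is the computation at $0$. A point $y\in\Lambda$ near $0$ lies in $[a_m,b_m]$ but not in $[a_{m+1},b_{m+1}]$ for some large $m=m(y)$, so $|y|\approx b_m$ (using $a_m\sim -K_0 b_m^\beta$ and $|y|$ being comparable to the length $|b_m-a_m|\sim b_m$), and likewise $|h(y)|\approx \tilde b_m$. Thus $(h(y)-h(0))/(y-0) = h(y)/y \approx \tilde b_m/b_m$. By (\ref{eq:defTheta}), $1/b_{2k}\sim \beta^{-2/(\beta-1)}K_0^{1/(\beta-1)}\exp(2^k\Theta)$ and similarly for $\tilde b_{2k}$; combined with the odd-step relations in (\ref{eq3}) and (\ref{eq3"}) this pins down $b_m$ on all levels. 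If $\Theta=\tilde\Theta$ and $\beta=\tilde\beta$ (so also $K_0$ is matched through the formula, or at worst contributes a bounded factor that one checks cancels), the exponential terms cancel exactly and $\tilde b_m/b_m\to 1$, while the bounded-distortion reduction of the previous paragraph upgrades the "$\approx$" to "$\sim$", giving the limit $=1\ne 0$; if instead $\Theta\ne\tilde\Theta$, then $\log(\tilde b_m/b_m)\approx (\tilde\Theta-\Theta)2^{\lfloor m/2\rfloor}$ blows up along even $m$ (and to the other sign along the complementary subsequence), so $h(y)/y$ has no finite nonzero limit and $h$ is not Lipschitz at $0$; and if $\beta\ne\tilde\beta$ one gets a mismatch already in the odd-to-even ratios. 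The Hölder bound at $0$ in part~(1), and the Lipschitz $\Leftrightarrow$ differentiable equivalence, follow from the same estimates: Hölder because $\log(1/b_{2k})$ and $\log(1/\tilde b_{2k})$ both grow like $2^k$, so $\log|h(y)|/\log|y|$ stays between two positive constants; and differentiability at $0$ is not stronger than Lipschitz there because once $\Theta=\tilde\Theta,\beta=\tilde\beta$ the ratio actually converges to $1$, not merely stays bounded.

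The main obstacle I expect is the odd-level control. Theorem~\ref{thm:bounds} gives large scaled neighbourhoods only on even levels; on odd levels $\tau_{2k-1}\to\lambda\in(0,1)$ is merely definite, and the odd renormalization limit (the left branch $-\lambda^{\beta^2-1}(x+\lambda^{-\beta})^\beta+\lambda^{-1}$) is genuinely nonlinear, so the transfer maps do not become affine and one must carry the bounded-distortion constants (rather than asymptotically trivial ones) through the odd steps. The point is to check that these $O(1)$ distortions, accumulated only at the odd levels, do not destroy the cancellation of the super-exponential terms coming from $\Theta$: because they contribute a factor that is comparable to a constant (in fact, asymptotically a constant determined by $\beta$, by Theorem~\ref{thm:renormalizationlimit2}) and the same constant appears for $\tilde f$ when $\beta=\tilde\beta$, they cancel in the ratio $\tilde b_m/b_m$. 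Making this cancellation precise — i.e. verifying that the two "error" sequences for $f$ and $\tilde f$ have the same limit and not merely the same order — is the delicate part, and it is exactly where the sharp asymptotics "$\sim$" in Theorem~\ref{thm:scalings} (as opposed to "$\approx$") are needed.
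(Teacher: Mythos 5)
Your overall strategy --- compare the scaling sequences of $f$ and $\tilde f$ via Theorem~\ref{thm:scalings}, get necessity of $\beta=\tilde\beta$, $\Theta=\tilde\Theta$ from the blow-up of $\tilde b_{2k}/b_{2k}$ when $\Theta\ne\tilde\Theta$ and from the exponent mismatch in the odd steps when $\beta\ne\tilde\beta$, then reduce differentiability along $\Lambda$ to differentiability at $0$ by bounded distortion of the (semi-extended) transfer maps --- is the same as the paper's. But your third step contains a genuine gap. You claim that if $y\in\Lambda$ lies in $[a_m,b_m]$ but not in $[a_{m+1},b_{m+1}]$ then $|y|\approx b_m$, so that $h(y)/y\approx \tilde b_m/b_m$. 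This is false: the postcritical set inside $[a_k,b_k]\setminus[a_{k+2},b_{k+2}]$ is not concentrated at distance $\approx b_k$ from $0$. It splits into pieces sitting at wildly different scales: points near $c_{2^k}$ and $c_{3\cdot 2^k}$ are at distance $\sim b_k$, but $c_{2^{k+1}}$ is at distance $\approx b_k^{\beta+1}$, and the piece the paper calls $W_k^+$ (containing the preimage $z_k$ of $0$ under $f_r\circ f_l$) is at distance $\approx b_k^{2\beta}$, i.e.\ comparable to $|a_{k+2}|$ while still lying outside $[a_{k+2},b_{k+2}]$. These scales differ by factors like $b_k^{\beta-1}\to 0$, so no uniform constant makes $|y|\approx b_m$ true, and the single ratio $\tilde b_m/b_m$ does not control $h(y)/y$. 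The bounded-distortion reduction you invoke cannot repair this, because it concerns the transfer map onto a whole level-$m$ tile, not the position of $\Lambda$-points \emph{within} the annulus $[a_m,b_m]\setminus[a_{m+1},b_{m+1}]$.

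What the paper actually does at this point is decompose $\Lambda\cap[a_k,b_k]$ into $U_k\cup W_k^-\cup W_k^+\cup X_k\cup V_k$, derive a separate asymptotic for a representative point of each piece (namely $c_{2^k}\sim b_k$, $c_{2^{k+1}}\sim -[K_0^\beta/\beta^{\beta+1}]^{1/(\beta-1)}b_k^{\beta+1}$, and $z_k\sim -[K_0^{2\beta-1}/\beta^{2\beta}]^{1/(\beta-1)}b_k^{2\beta}$), and then check that for \emph{each} piece the ratio $\tilde u_k/u_k$ converges to the \emph{same} limit $\rho=(K_0/\tilde K_0)^{1/(\beta-1)}$ --- a cancellation between the powers of $K_0$ in the prefactor and the powers of $\tilde b_k/b_k\to\rho$ that is specific to each exponent and is not automatic. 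Your proposal skips exactly this verification, which is the substance of the sufficiency direction. A secondary inaccuracy: you assert the derivative at $0$ equals $1$; it equals $\rho$, which is generally $\ne 1$ since $K_0=K_+/K_-$ is not determined by $\beta$ and $\Theta$. This does not affect the truth of the statement (only existence and nonvanishing of the limit are claimed), but it signals that the constants were not tracked, and tracking them is precisely what makes the piecewise computation close up.
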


\medskip 

\begin{cor}  $f$ and $R^2(f)$ are not Lipschitz conjugate.
\end{cor}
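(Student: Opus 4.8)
The plan is to derive the non-Lipschitz-conjugacy of $f$ and $R^2(f)$ as an immediate consequence of Theorem~\ref{thm:nouniversality} together with the scaling relation $\Theta(R^2(f)) = 2\,\Theta(f)$ established in the proof of Corollary~\ref{thm:thetavaries}. First I would observe that $R^2(f)$ lies in $\mathcal{A}_{1,\beta}(2^\infty)$ with the same $\beta$ as $f$ — this is exactly the content of the renormalization theory developed (Theorems~\ref{thm:bounds} and \ref{thm:scalings} show that the renormalization of an $\mathcal A(2^\infty)$ map is again in $\mathcal A(2^\infty)$, and the exponent $\beta$ is a local invariant of the turning point that is unchanged under composing branches). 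So both $f$ and $\tilde f := R^2(f)$ fit the hypotheses of Theorem~\ref{thm:nouniversality} with $\beta = \tilde\beta$.

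Next I would apply Theorem~\ref{thm:nouniversality}: the conjugacy $h$ between the postcritical sets of $f$ and $R^2(f)$ is Lipschitz at $0$ if and only if $\beta = \tilde\beta$ \emph{and} $\Theta = \tilde\Theta$. Since $\beta = \tilde\beta$ already holds, the Lipschitz property is equivalent to $\Theta(f) = \Theta(R^2(f))$. But formula (\ref{eq:defTheta}) yields $\Theta(R^2(f)) = 2\,\Theta(f)$, and since $\Theta(f) > 0$ this gives $\Theta(R^2(f)) = 2\Theta(f) \neq \Theta(f)$. Hence $h$ is not Lipschitz at $0$. A Lipschitz conjugacy on the whole postcritical set (equivalently on the attracting Cantor set $\Lambda$) would in particular be Lipschitz at $0$, so no Lipschitz conjugacy can exist.

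One small point to make precise is the identification of the conjugacy: the topological conjugacy between $f$ and $R^2(f)$ on their attracting Cantor sets is unique (it is determined by matching the canonical binary/combinatorial addresses of the $2^\infty$ combinatorics), so there is no ambiguity about \emph{which} homeomorphism $h$ one is testing — it must be the $h$ of Theorem~\ref{thm:nouniversality}. I would also note that the turning point $0$ is a point of $\Lambda$ mapped to the turning point of $R^2(f)$ by $h$, so "Lipschitz at $0$" is a genuine restriction on a global Lipschitz conjugacy.

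I do not expect a real obstacle here — the statement is essentially a corollary packaging of Theorem~\ref{thm:nouniversality} and the doubling identity $\Theta \mapsto 2\Theta$. The only thing requiring a word of care is confirming that $R^2(f)\in\mathcal A_{1,\beta}(2^\infty)$ (same $\beta$, same $\alpha=1$), which follows because renormalization composes the left branch $f_1$ — whose derivative at $0$ is nonzero, so $\alpha$ stays $1$ — with iterates that are diffeomorphic near the relevant points, leaving the right-hand critical exponent $\beta$ intact; this is implicit in the semi-extension construction of \S\ref{subsec:semiexten} and in Theorems~\ref{thm:bounds}--\ref{thm:renormalizationlimit2}.
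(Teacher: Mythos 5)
Your proof is correct and follows exactly the paper's own route: the corollary is deduced from Theorem~\ref{thm:nouniversality} combined with the doubling identity $\Theta(R^2(f))=2\Theta(f)$ from Corollary~\ref{thm:thetavaries}. The extra care you take in verifying $R^2(f)\in\mathcal A_{1,\beta}(2^\infty)$ and the uniqueness of the conjugacy is sensible but not a departure from the paper's argument.
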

\begin{proof} This follows from the previous theorem and Corollary 1. 
\end{proof} 
%

\paragraph{Hausdorff dimension of the Attracting Cantor set. }

As in the symmetric case the closure of the orbit of the critical
point of $f\in \mathcal{A}(2^\infty)$ is a Cantor set which we denote
as $\Lambda(f)$.

\begin{thm} \label{thm:hdim}
  The Hausdorff dimension of the Cantor set $\Lambda(f)$, where $f
  \in \mathcal{A}(2^\infty)$,  is zero.
\end{thm}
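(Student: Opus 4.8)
The plan is to exploit the super-exponential decay of the renormalization intervals established in Theorem~\ref{thm:scalings}, which forces the natural covering of $\Lambda(f)$ by dynamical intervals to be so efficient that every positive-exponent Hausdorff measure vanishes. Recall that $\Lambda(f)=\overline{\cup_n f^n(0)}$ is covered, for each $k$, by the $2^k$ intervals $f^i[a_k,b_k]$, $i=0,\dots,2^k-1$, which have pairwise disjoint interiors. Thus for any $s>0$,
$$
\mathcal H^s_{\delta_k}(\Lambda(f)) \le \sum_{i=0}^{2^k-1} |f^i[a_k,b_k]|^s,
$$
where $\delta_k=\max_i |f^i[a_k,b_k]|\to 0$. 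So it suffices to show that this sum tends to $0$ as $k\to\infty$ for every fixed $s>0$.

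First I would bound the lengths $|f^i[a_k,b_k]|$. Since $f$ is $C^3$ with $Sf\le 0$ off the critical point, and maps interval to interval, one has the crude a priori bound $|f^i[a_k,b_k]| \le |b_0-a_0|$ for all $i$; more usefully, the intervals $f^i[a_k,b_k]$ for $i=0,\dots,2^k-1$ are pairwise disjoint subsets of $[a_0,b_0]$, so $\sum_i |f^i[a_k,b_k]| \le |b_0-a_0|$. This already gives, for $s\le 1$,
$$
\sum_{i=0}^{2^k-1} |f^i[a_k,b_k]|^s \le 2^{k(1-s)} \Big(\sum_{i=0}^{2^k-1} |f^i[a_k,b_k]|\Big)^s \le 2^{k(1-s)} |b_0-a_0|^s
$$
by H\"older's inequality with exponent $1/s$ — but the right-hand side \emph{diverges} for $s<1$, so this naive estimate is not enough, and the real work lies in using the super-exponential smallness of the intervals around the critical point. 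Here is where Theorem~\ref{thm:scalings} enters: since $|b_{2k}-a_{2k}|\sim \beta^{2/(\beta-1)}K_0^{-1/(\beta-1)}\exp(-2^k\Theta)$ with $\Theta>0$, the central interval $[a_k,b_k]$ shrinks super-exponentially in $k$, i.e. faster than $C^{-2^k}$ for any $C$. The key geometric step is to propagate this: using that $f^{2^k-1}$ maps $f(0)$-side intervals into $[a_k,b_k]$ with the distortion control of Theorem~\ref{thm:bounds} (the semi-extension big bounds), and the fact that all branches $f^{2^k}|[a_k,b_k]$ are themselves unimodal maps in $\mathcal A(2^\infty)$, one shows that the \emph{total} length $\sum_{i=0}^{2^k-1}|f^i[a_k,b_k]|$ decays at least geometrically in $k$, say $\le C\rho^k$ for some $\rho<1$ — indeed each interval $f^i[a_k,b_k]$ for $i$ in the "central cascade" inherits comparability to a renormalization length of level roughly $k-\log_2(\text{something})$, and these are summable with a uniform geometric ratio because the renormalization intervals decay super-exponentially. (Concretely: group the $2^k$ indices $i$ according to the largest $j\le k$ with $i\equiv 0 \bmod 2^{k-j}$ "in the combinatorial tree"; the $2^{j}$ intervals at level $j$ each have length $\approx |b_{k-j}|$-controlled-pushforwards, and $2^{j}|b_{k-j}|^{s}$, with $|b_m|$ super-exponentially small, is summable over $j$.)

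Granting the geometric decay $\sum_{i=0}^{2^k-1}|f^i[a_k,b_k]| \le C\rho^k$, I would then combine it with the trivial bound $|f^i[a_k,b_k]|\le \delta_k$ where $\delta_k = \max_i|f^i[a_k,b_k]|$ also decays (geometrically suffices, super-exponentially for the central ones): for $0<s<1$,
$$
\sum_{i=0}^{2^k-1}|f^i[a_k,b_k]|^s \le \delta_k^{\,s-1}\sum_{i=0}^{2^k-1}|f^i[a_k,b_k]| \le \delta_k^{\,s-1}\, C\rho^k.
$$
This still requires $\delta_k^{s-1}$ not to blow up faster than $\rho^{-k}$; since $1-s>0$ is fixed and $\delta_k$ only decays geometrically in the worst case, one needs $\delta_k^{\,1-s}\ge c\,\rho^k$ eventually, which is false in general — so the honest argument must weigh the central, super-exponentially small intervals separately from the peripheral ones. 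The clean way: split the sum into indices whose orbit-interval is comparable to $|b_m|$ for various $m$, use $2^{k-m}$-fold multiplicity at "scale $m$" together with $|b_m|^s \le \exp(-s\Theta 2^{m/2}/2)$ (super-exponential), and observe $\sum_{m} 2^{k-m} \exp(-s\Theta 2^{m/2}/2) \to 0$ as $k\to\infty$ because for each fixed $k$ the dominant terms have $m$ of order $\log k$ and are crushed by the super-exponential factor. Thus $\mathcal H^s(\Lambda(f))=0$ for every $s>0$, giving $\dim_H\Lambda(f)=0$.

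\textbf{Main obstacle.} The delicate point is organizing the bookkeeping of the $2^k$ iterates $f^i[a_k,b_k]$ so that one genuinely harvests the super-exponential decay of the \emph{central} renormalization intervals rather than only the geometric decay available peripherally: one must show that "most of the $s$-mass" sits on intervals that are themselves renormalization windows of high level, which is exactly where the precise asymptotics of Theorem~\ref{thm:scalings} (both the $\lambda$-scaling on even steps and the quadratic scaling $b_{k+1}\sim c\,b_k^2$ on odd steps) and the distortion bounds of Theorem~\ref{thm:bounds} are indispensable — a soft argument using only "intervals shrink" would at best give finite positive dimension. I expect the proof to consist of a careful length estimate $|f^i[a_k,b_k]|\le C\,|b_{\,\mu(i)}|$ for an explicit combinatorial function $\mu$, followed by the summation above.
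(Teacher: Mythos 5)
Your overall framework --- cover $\Lambda$ by the $2^k$ intervals $\Delta_{k,i}=f^i[a_k,b_k]$ and control $\sum_i|\Delta_{k,i}|^s$ for every $s>0$ --- is the same as the paper's, and you correctly identify the two essential inputs (the scaling laws of Theorem~\ref{thm:scalings} and the distortion control of Theorem~\ref{thm:bounds}). But the concrete bookkeeping you propose does not work. Grouping the indices $i$ by their $2$-adic valuation only tells you that $\Delta_{k,i}$ lies in the central interval $[a_m,b_m]$ for $m=\mu(i)$; for the $2^{k-1}$ odd indices this gives nothing beyond $|\Delta_{k,i}|\le|b_0-a_0|$, and correspondingly your final sum $\sum_m 2^{k-m}\exp(-s\Theta 2^{m/2}/2)$ \emph{diverges} as $k\to\infty$ (its $m=1$ term alone is of order $2^k$), rather than tending to $0$ as you assert. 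The sizes of the peripheral intervals are not governed by containment in central intervals of some level; they are governed by pushing the central picture forward under maps of bounded distortion, and that is the idea your proposal is missing.

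The paper's argument is recursive rather than a direct per-interval estimate. Each $\Delta_{k,i}$ contains exactly four intervals of level $k+2$ meeting $\Lambda$. Inside the central interval $\Delta_{k,0}$ these four intervals all have length $\le C|\Delta_{k,0}|^2$, by the quadratic scaling laws (\ref{k3k}), (\ref{eq:33}), (\ref{bkquadratic}). Since $f^{2^k-i}\colon\Delta_{k,i}\to[a_k,c_{2^k}]$ has asymptotically small distortion (Theorem~\ref{thm:bounds} together with Lemma~\ref{thm:koebe}), the same holds in relative terms inside every $\Delta_{k,i}$: each of its four level-$(k+2)$ children has length $\le C|\Delta_{k,0}|\cdot|\Delta_{k,i}|$. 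Thus the child-to-parent ratio is $\le C|\Delta_{k,0}|\to 0$ \emph{uniformly in} $i$, so for any fixed $\gamma>0$ and all large even $k$ one has $4\,(C|\Delta_{k,0}|)^\gamma<1$ and hence $\sum_i|\Delta_{k+2,i}|^\gamma\le\sum_i|\Delta_{k,i}|^\gamma$. This yields $\mathcal H^\gamma(\Lambda)<\infty$ for every $\gamma>0$, which already forces $\dim_H\Lambda=0$; no comparison of individual $|\Delta_{k,i}|$ with specific $b_m$'s, and no claim that the $s$-sums tend to zero, is needed.
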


\paragraph{Absence of Koebe space.}
In Theorem~\ref{thm:bounds} we showed that there is a monotonic
  semi-extension of the branch of $f^{2^k-1}$ defined around the
  critical value with nice bounds. The next theorem shows that such a
  property does not hold for all points of the interval.

\begin{thm} \label{thm:absencekoebe} 
For each $\tau>0$ there exists $x$ and $k$ 
so that the maximal semi-extension of the first entry map of $f$ from $x$ into $[a_k,b_k]$
does {\bf not} contain a $\tau$-scaled neighbourhood of $[a_k,b_k]$. 
\end{thm}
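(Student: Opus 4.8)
The plan is to exploit the super-exponential disparity between the scales on the two sides of the critical point, exactly as quantified in Theorems~\ref{thm:bounds} and~\ref{thm:scalings}, to find a point whose orbit enters $[a_k,b_k]$ through the ``wrong'' side and hence picks up essentially no Koebe space on one side. First I would recall that the good bounds of Theorem~\ref{thm:bounds} are obtained for the entry map \emph{from a neighbourhood of the critical value} $f(0)$; the mechanism there is that the almost-linear left branch $f_1$ dominates the composition and the semi-extension of $f^{2^k-1}$ on the critical-value side has range whose scaled neighbourhood of $[a_k,b_k]$ is controlled by $\lambda$ (for odd $k$) or blows up (for even $k$). The key observation for the present theorem is that this is a very special feature of the orbit of $f(0)$: for a generic entry point $x$ into $[a_k,b_k]$, the preimage chain passes near $b_j$ (the right end, which is at scale $b_j$) at some intermediate level where $a_j \sim -K_0 b_j^\beta$ is super-exponentially smaller, and the corresponding monotone branch, followed backward, is squeezed on one side against the image of the point $c_{2^j}=f^2(0)$ or against $a_j$, leaving a gap on that side which is only of relative size $\approx b_j^{\beta-1}\to 0$, hence smaller than any fixed $\tau$.

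Concretely, the steps I would carry out are: (i) fix $\tau>0$; choose $k$ large and choose as the entry point $x$ a point of the postcritical set (or a suitable preimage of a point in $[a_k,b_k]$) whose itinerary enters $[a_j,b_j]$ for some $0<j<k$ close to the \emph{right} endpoint $b_j$ rather than near the critical value --- for instance take $x$ near $b_{j-1}$ or near a preimage of $b_j$ under the branch $f^{2^{j-1}}|[0,b_{j-1}]$. (ii) Track the maximal semi-extension $F$ of the first entry map of $x$ into $[a_k,b_k]$: each time the orbit of $x$ traverses a renormalization level $j'$, the semi-extension is obstructed by the point(s) of the postcritical set landing at that level, and the relevant obstruction on (say) the left side sits at distance $\approx |a_{j'}|\approx K_0 b_{j'}^\beta$ from the endpoint while the interval itself has size $\approx b_{j'}$; since $b_{j'}\to 0$, the ratio $|a_{j'}|/b_{j'}\approx b_{j'}^{\beta-1}\to 0$. (iii) Conclude that for $k$ large enough (so that some intermediate level $j'$ has $b_{j'}^{\beta-1}<\tau$) the image $F(T)$ cannot contain a $\tau$-scaled neighbourhood of $[a_k,b_k]$ on that side. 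One should phrase (ii)--(iii) using the explicit asymptotics (\ref{akbk}), (\ref{eq3}), (\ref{eq3"}), so that the choice of $k$ (and hence of $j'$) as a function of $\tau$ is effective.

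The main obstacle is bookkeeping the maximal semi-extension of a \emph{long} composition $f^{2^k-1}$ through all the intermediate renormalization levels and identifying, level by level, which branch endpoint actually obstructs the extension and on which side. Near the critical value the analysis in the proof of Theorem~\ref{thm:bounds} shows the obstruction is ``benign'' (it gives the $\lambda$-scaled neighbourhood); one has to argue that shifting the entry point away from $f(0)$ destroys this benign cancellation and instead the obstruction becomes the tiny interval on the left side of some $[a_{j'},b_{j'}]$. This requires a careful comparison: the semi-extension is built from $f_1$'s (monotone extension of the left branch past $0$) and $f_2$'s (the right branch), and one must check that along the chosen itinerary the maximal domain $T_k$ on one side is cut off by the image of $a_{j'}$ (or of $c_{2^{j'}}$) rather than continuing to grow. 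I expect that the cleanest route is to take $x$ to be a very specific preimage (e.g. the preimage of $a_k$ or $b_k$ under a branch that is \emph{not} the branch through the postcritical orbit of $f(0)$) so that the obstructing points can be pinned down exactly, and then to invoke the scaling relations of Theorem~\ref{thm:scalings} to make the quantitative estimate; the detailed verification that no other, larger, monotone extension exists is the technical heart and will presumably reuse the machinery of Section~\ref{Sec:bigbounds}.
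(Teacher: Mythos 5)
Your proposal follows essentially the same route as the paper: one picks $x$ whose orbit enters $[a_{2i-1},b_{2i-1}]$ in the right-hand fundamental domain $[b_{2i},b_{2i-1}]$ and is carried by the return map $R_{2i-1}$ into $[a_{2i},b_{2i}]$, so that the semi-extension is capped on the left at $c_{2^{2i-1}}\approx -b_{2i-1}^{\beta+1}$, which is $o(b_{2i})$ because $b_{2i}\approx b_{2i-1}^{2}$ and $\beta>1$ --- exactly the $b^{\beta-1}\to 0$ relative gap you identify. The one point worth noting is that the ``technical heart'' you anticipate (ruling out a larger monotone extension through all intermediate levels) is handled in one line in the paper: the maximal semi-extension of $f^{m}=R_{2i-1}\circ f^{n}$ has range contained in that of its final factor $R_{2i-1}$, whose branch on $[0,b_{2i-1}]$ begins with $f_{2}$ and hence admits no extension past the critical point, so no further bookkeeping is needed.
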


\paragraph{Absence of wandering intervals.} As usually, one says that $W$ is a wandering interval
if all iterates of $W$ are disjoint and if $W$ is not in the basin of a periodic attractor.
Existing proofs  for absence of wandering intervals do not go through. Indeed, we used an argument
which is quite different from anything we have seen in the literature 
showing that


\begin{thm}\label{thm:wandering}
No map $f\in \mathcal A_{1,\beta}(2^\infty)$  has  wandering intervals.
\end{thm}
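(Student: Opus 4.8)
The plan is to argue by contradiction: suppose $W$ is a wandering interval for $f\in\mathcal A_{1,\beta}(2^\infty)$. Since $f$ has $2^\infty$ combinatorics and (by Theorem~\ref{thm:wandering}'s sibling results) no periodic attractors absorbing $W$, the orbit of $W$ must shadow the attracting Cantor set $\Lambda(f)=\overline{\cup_n f^n(0)}$; in particular, for each $k$ there is an iterate of $W$ contained in one of the $2^k$ intervals $f^i[a_k,b_k]$, and by pulling back along the appropriate branch of $f^{2^k}$ one reduces to the case where $W$ accumulates on the critical point and its orbit enters $[a_k,b_k]$ for all $k$. The standard approach (via cross-ratio/Koebe distortion on the first entry map from the critical value, as in \cite{dMvS,MMS}) would now extract a definite amount of "space" around the nested intervals and force $\sum|f^n(W)|=\infty$, contradicting disjointness. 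The obstacle, flagged already in Remark~\ref{remark1} and Theorem~\ref{thm:absencekoebe}, is precisely that this space does \textbf{not} exist in general: the good semi-extension bounds of Theorem~\ref{thm:bounds} hold only for intervals $J\ni f(0)$, not for an arbitrary wandering interval.

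The key steps, in order. First I would set up the combinatorial skeleton: let $W_n=f^n(W)$, choose for each $k$ a moment $n_k$ with $W_{n_k}\subset [a_k,b_k]$, and track the position of $W_{n_k}$ relative to $c=0$ and relative to the dynamically defined subintervals of $[a_k,b_k]$ used to build $[a_{k+1},b_{k+1}]$. Second, I would separate two cases according to whether $W_{n_k}$ eventually always lies on the (almost-linear) left side of $0$ or visits the right ($x^\beta$) side infinitely often. On the left side the map is essentially affine with $Sf\le 0$, so there is no contraction of relative scale there at all — any loss of space must come from passages over the turning point or through the right branch. Third — and this is where the paper's machinery is essential — I would replace the nonexistent diffeomorphic extensions of $f^{2^k-1}$ by the maximal monotonic semi-extensions $F_k\colon T_k\to\R$ of Theorem~\ref{thm:bounds}, whose ranges are $\tau_k$-scaled neighbourhoods of $[a_k,b_k]$ with $\tau_{2k-1}\to\lambda$ and $\tau_{2k}\to\infty$ super-exponentially. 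The point is that although a wandering interval's orbit is not the orbit of $f(0)$, one can use the scaling laws of Theorem~\ref{thm:scalings} (the precise super-exponential decay of $|b_{2k}-a_{2k}|$ and the comparison $c_{2^k}\sim b_k$) to sandwich the relevant pieces of the orbit of $W$ between points whose orbits \emph{are} controlled, thereby transporting the semi-extension bounds to $W$'s orbit at the cost of only a bounded distortion factor.

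Fourth, having controlled distortion along the relevant entry maps, I would run the classical summability argument in the modified form: the intervals $W_0,W_1,\dots$ are pairwise disjoint, so $\sum_n |W_n|\le |[a_0,b_0]|<\infty$, hence $|W_n|\to 0$; but the real bounds just established give, for the specific return times $n_k$, a lower estimate of the form $|W_{n_{k+1}}|/|[a_{k+1},b_{k+1}]| \ge \theta\cdot |W_{n_k}|/|[a_k,b_k]|$ for some $\theta>0$ independent of $k$ (using that the semi-extension has definite Koebe space on at least one side and that the left branch is nonexpanding in relative terms). Iterating, the relative size $|W_{n_k}|/|b_k-a_k|$ stays bounded below, so by the super-exponential shrinking of $|b_k-a_k|$ from \eqref{eq:Theta} the absolute sizes $|W_{n_k}|$ cannot be summable unless they are eventually zero — contradiction, since $W$ is a nondegenerate interval. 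Finally, absence of wandering intervals combined with the non-emptiness of $\mathcal A(2^\infty)$ and the standard kneading-theory argument (Theorem~\ref{thm:existence2}) yields that every $f\in\mathcal A_{1,\beta}(2^\infty)$ is topologically conjugate to the quadratic Feigenbaum map, as remarked in the introduction.

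\textbf{Main obstacle.} The hard part will be Step three: making rigorous the transport of the semi-extension bounds from orbits of the critical value to the orbit of an arbitrary wandering interval, given that Theorem~\ref{thm:absencekoebe} shows the naive statement fails for general starting points. I expect this requires a careful case analysis of where $W_{n_k}$ sits inside $[a_k,b_k]$ — in particular whether it straddles or avoids the preimages of $0$ — and exploiting the almost-linearity of the left branch ($\alpha=1$) together with the explicit asymmetric scalings to show that the "bad" configurations of Theorem~\ref{thm:absencekoebe} simply cannot be visited infinitely often by a genuinely wandering orbit. This is presumably why the authors describe the argument as "quite different from anything we have seen in the literature" and why it depends on the detailed scaling results rather than on soft distortion estimates alone.
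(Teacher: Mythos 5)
Your setup (Ma\~n\'e's theorem to force accumulation on $0$, tracking the first entries $W_{n_k}$ into $[a_k,b_k]$, and using maximality of the monotonicity interval together with Koebe to rule out landings on the small left side of $0$) matches the first half of the paper's argument, Lemma~\ref{lem:wand1}. But there are two genuine gaps. First, the contradiction you propose in your Step four is not a contradiction: if $|W_{n_k}|/|b_k-a_k|$ stays bounded below while $|b_k-a_k|$ shrinks super-exponentially, then $|W_{n_k}|\to 0$ super-exponentially, and this is entirely compatible with $\sum_n|W_n|<\infty$ and with pairwise disjointness; nothing forces the $|W_{n_k}|$ to be ``eventually zero''. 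The classical route to a contradiction is not summability but maximality (producing a strictly larger interval on which all iterates are still monotone), and that route needs exactly the Koebe space which is absent here.

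Second, the crux you defer to your Step three --- transporting the semi-extension bounds of Theorem~\ref{thm:bounds} to the orbit of $W$ ``at the cost of a bounded distortion factor'', in the hope that the bad configurations of Theorem~\ref{thm:absencekoebe} ``cannot be visited infinitely often'' --- points in the wrong direction: Lemma~\ref{lem:wand1}(2,3) shows that the entries $W_{n(k)}$ are \emph{forced} into $\bigcup_i[b_{2i},\eta_i b_{2i-1}]$, i.e.\ precisely into the region where no Koebe space exists, and they remain there for all large $k$. The paper's resolution is not a distortion-transport argument at all. It passes to double-logarithmic coordinates $l_2(x)=\log\log(1/x)$, in which, by the scaling law (\ref{eq:Theta}), each block $[b_{2i},b_{2i-1}]$ has uniformly bounded length, and then shows by an explicit computation with $R_{2i-1}=\phi(x^\beta)$ that the first return map is uniformly expanding in these coordinates by a factor close to $2\beta/(1+\beta)>1$. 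Successive returns therefore stretch the double-logarithmic length of the iterates of $W$ beyond the uniform bound on the blocks, which is the contradiction. This expansion-in-$\log\log$ mechanism is the missing idea; without it, or a genuine substitute, your outline does not close.
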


\section{Some background material} 

In the proofs below we will need the well-known {\em Koebe Theorem}.

\begin{lemma}[Koebe Lemma]\label{thm:koebe} 
Let $g\colon T\to g(T)$ be a $C^3$ diffeomorphism with $Sg<0$. 
Assume that $J\subset T$ is an interval so that $g(T)$ contains a $\tau$-scaled
neighbourhood of $g(J)$, i.e. $g(T)\supset (1+\tau) g(J)$. 
Then for all $x,y\in J$, 
$$\dfrac{\tau^2}{(1+\tau)^2} \le \dfrac{Dg(x)}{Dg(y)}  \le \dfrac{(1+\tau)^2}{\tau^2}$$
and
$$\dfrac{\tau}{1+\tau} \dfrac{|g(J)|}{|J|} \le |Dg(y)|  \le \dfrac{1+\tau}{\tau} \dfrac{|g(J)|}{|J|}.$$
\end{lemma}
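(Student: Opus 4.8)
The final statement is the Koebe Lemma for maps with negative Schwarzian derivative. I'll sketch the standard proof. First, I would use the well-known fact that if $Sg < 0$, then $g$ has no positive local minimum of $|Dg|$ and no negative local maximum of $|Dg|$; equivalently, the function $x \mapsto |Dg(x)|^{-1/2}$ is convex wherever $Dg$ has constant sign. Actually the cleanest route is via the \emph{cross-ratio} (the Schwarzian distortion tool): for a map with $Sg \le 0$, the cross-ratio operator $\mathbf{B}(g;M,J) = \frac{|g(M)|\,|g(J)|}{|g(M')|\,|g(M'')|} \big/ \frac{|M|\,|J|}{|M'|\,|M''|} \ge 1$ is expanded, where for $J \subset M$ the intervals $M', M''$ are the two components of $M \setminus J$. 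I would cite this as the monotonicity-of-cross-ratio property for negative Schwarzian maps.

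The plan is then as follows. Write $T \supset (1+\tau)J$ in the sense that, pulling back, $g(T) \supset (1+\tau)g(J)$; set $J = [x,y]$ (or work with the subinterval of $J$ between two chosen points) and let $L, R$ be the left and right components of $T \setminus J$, so that $|g(L)|, |g(R)| \ge \tau|g(J)|$. Apply the cross-ratio inequality $\mathbf{B}(g; T, J) \ge 1$. Rearranging gives
$$\frac{|g(J)|}{|J|} \cdot \frac{|g(T)|}{|T|} \ge \frac{|g(L)|}{|L|}\cdot\frac{|g(R)|}{|R|}\cdot\frac{|J|\,|T|}{|g(J)|\,|g(T)|}\cdot(\cdots),$$
and after combining with the trivial bounds $|g(L)|/|g(T)| \le 1$, $|g(R)|/|g(T)| \le 1$ and the hypothesis $|g(L)|,|g(R)| \ge \tau|g(J)|$, one extracts that for any subinterval $[x,y] \subset J$ the ratio $\frac{|g([x,y])|/|[x,y]|}{|g(J)|/|J|}$ lies between $\frac{\tau}{1+\tau}$ and $\frac{1+\tau}{\tau}$. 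Letting $[x,y]$ shrink to a point and using that $g$ is $C^1$ (indeed $C^3$) with $Dg \ne 0$ on $T$ yields the pointwise bound $\frac{\tau}{1+\tau}\frac{|g(J)|}{|J|} \le |Dg(y)| \le \frac{1+\tau}{\tau}\frac{|g(J)|}{|J|}$ for all $y \in J$. Dividing the bound at $x$ by the bound at $y$ immediately gives the distortion estimate $\frac{\tau^2}{(1+\tau)^2} \le \frac{Dg(x)}{Dg(y)} \le \frac{(1+\tau)^2}{\tau^2}$ (the signs agree since $Dg$ has constant sign on the interval).

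The main obstacle — if one wants a genuinely self-contained argument rather than invoking the cross-ratio machinery as a black box — is establishing the cross-ratio expansion $\mathbf{B}(g; T, J) \ge 1$ for $Sg \le 0$. This is a standard computation: one reduces to the infinitesimal statement by the chain rule (the operator $\mathbf{B}$ is multiplicative under composition, so it suffices to treat $g$ on arbitrarily short intervals where $Sg \le 0$ forces the right inequality via a Taylor expansion), or one uses the classical fact that Möbius transformations preserve cross-ratios exactly and $Sg \le 0$ means $g$ distorts them in the favourable direction. Since the paper explicitly states it "will need the well-known Koebe Theorem", I expect the intended proof is simply to quote the cross-ratio inequality from \cite{dMvS} and carry out the short rearrangement above; I would present it that way, citing de Melo–van Strien for the Schwarzian cross-ratio tool and giving the few lines of algebra that convert it into the stated bounds.
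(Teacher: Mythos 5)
Your proposal is correct and matches the paper's approach: the paper simply cites the proof of Theorem IV.1.2 in de Melo--van Strien, which is exactly the standard cross-ratio argument you sketch. Your additional outline of how the cross-ratio inequality yields the stated bounds is a faithful summary of that reference.
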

\begin{proof} See the proof of Theorem IV.1.2 in \cite{MS}.
\end{proof}

Integrating the last inequalities immediately gives: 

\begin{lemma}[Corollary of Koebe] \label{cor:koebe} Let $g$ be as in the previous lemma and 
let $L\colon J\to g(J)$ be the affine surjective map with the same orientation as $g$. Then 
for all $x\in J$, 
$$Lx - \dfrac{1}{1+\tau} |g(J)| \le g(x)\le L x + \dfrac{1}{\tau} |g(J)|, \quad | \dfrac{Dg(x)}{DL(x)}-1| \le \dfrac{1}{\tau} .$$
\end{lemma}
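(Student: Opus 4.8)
We need to prove: given $g\colon T\to g(T)$ a $C^3$ diffeomorphism with $Sg<0$, $J\subset T$ with $g(T)\supset(1+\tau)g(J)$, and $L\colon J\to g(J)$ the affine surjection with the same orientation as $g$, then for all $x\in J$:
$$Lx - \tfrac{1}{1+\tau}|g(J)| \le g(x) \le Lx + \tfrac{1}{\tau}|g(J)|, \qquad \left|\tfrac{Dg(x)}{DL(x)}-1\right| \le \tfrac{1}{\tau}.$$

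The plan is to derive everything from the second inequality in the Koebe Lemma (Lemma~\ref{thm:koebe}), which gives a two-sided bound on $|Dg(y)|$ in terms of $|g(J)|/|J|$. First I would note that $DL$ is the constant $|g(J)|/|J|$ (up to sign, which matches $g$'s orientation, so WLOG take both positive). Then the Koebe bound $\tfrac{\tau}{1+\tau}\tfrac{|g(J)|}{|J|} \le Dg(y) \le \tfrac{1+\tau}{\tau}\tfrac{|g(J)|}{|J|}$ rewrites as $\tfrac{\tau}{1+\tau} \le \tfrac{Dg(y)}{DL(y)} \le \tfrac{1+\tau}{\tau}$ for all $y\in J$. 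This already yields $\tfrac{Dg(x)}{DL(x)}-1 \le \tfrac{1+\tau}{\tau}-1 = \tfrac{1}{\tau}$ and $1-\tfrac{Dg(x)}{DL(x)} \le 1-\tfrac{\tau}{1+\tau} = \tfrac{1}{1+\tau} \le \tfrac{1}{\tau}$, which gives the derivative estimate (with the sharper lower constant $\tfrac{1}{1+\tau}$ available if wanted).

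For the displacement bound I would integrate. Fix $x\in J$ and let $x_0\in J$ be the preimage of the appropriate endpoint: since $L$ and $g$ agree at the two endpoints of $J$ (both map $J$ onto $g(J)$), choose the endpoint $x_0$ of $J$ on the same side as needed. Then $g(x)-Lx = \int_{x_0}^{x}\big(Dg(t)-DL(t)\big)\,dt = \int_{x_0}^x DL(t)\big(\tfrac{Dg(t)}{DL(t)}-1\big)\,dt$. Bounding the integrand using the derivative ratio estimate above and using $\int_{x_0}^{x}|DL(t)|\,dt \le |L(J)| = |g(J)|$, I get $|g(x)-Lx|\le \tfrac{1}{\tau}|g(J)|$ when integrating from the endpoint that makes the error negative-leaning, and $\le \tfrac{1}{1+\tau}|g(J)|$ from the other, producing the asymmetric two-sided bound exactly as stated. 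Here the sign bookkeeping is the only subtlety: $g(x)-Lx$ is bounded above by integrating from the right endpoint (where the ratio can be as large as $\tfrac{1+\tau}{\tau}$, contributing at most $\tfrac{1}{\tau}|g(J)|$ on the short side) and below by integrating from the left, matching the claimed constants.

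I expect no real obstacle here — this is a routine integration of the Koebe derivative inequality. The only thing to be careful about is matching which constant ($\tfrac{1}{\tau}$ versus $\tfrac{1}{1+\tau}$) goes with which direction of the inequality; the asymmetry comes precisely from the asymmetry $\tfrac{\tau}{1+\tau}$ versus $\tfrac{1+\tau}{\tau}$ in the Koebe derivative bound, and one should pick the integration endpoint so that the interval of integration lies on the ``short'' side of the point where the affine and nonlinear graphs cross, so that the accumulated error is the small one. Since this is stated in the excerpt as an immediate corollary obtained by ``integrating the last inequalities,'' the write-up can be kept to a few lines.
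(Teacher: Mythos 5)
Your proposal is correct and is exactly the argument the paper intends: the paper's entire proof is the one line ``integrating the last inequalities immediately gives'', and you integrate the Koebe derivative bounds $\tfrac{\tau}{1+\tau}\le \tfrac{Dg}{DL}\le\tfrac{1+\tau}{\tau}$ against $\int DL \le |g(J)|$. One small simplification: both one-sided displacement estimates already follow by integrating from a \emph{single} endpoint of $J$ (where $g=L$), using $Dg-DL\le \tfrac1\tau DL$ for the upper bound and $Dg-DL\ge -\tfrac{1}{1+\tau}DL$ for the lower (when $DL>0$), so the asymmetry of the constants comes only from the asymmetry of the Koebe derivative bound and no choice of ``short side'' or second integration endpoint is needed.
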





\section{Unusual bifurcations of families of maps with strong asymmetries}
\label{sec:bifurcation}

In this section we will consider the local bifurcation of families of maps $g_t$ with strong 
asymmetries. For simplicity, take $\beta>1,A>1$ and let us consider a concrete example: 
$$g_t(x)=\left\{ \begin{array}{rl} A|x|+t &\mbox{ for }x\le 0\\ 
x^\beta+t&\mbox{  for }x\ge 0.\end{array}\right.$$ 
For $t>0$ this maps has an attracting fixed point, whereas for {\em any} $t<0$ near $0$
this has a repelling fixed point $p(t)$ and an attracting periodic orbit $\{q_1(t),q_2(t)\}$  with period $2$
with $q_1(t)<p(t)<0<q_2(t)$, see the left panel of  Figure~\ref{fig-I_n-t}. So periodic doubling occurs 
precisely when $0$ is a fixed point of $g_t$ . We will call this an asymmetric period doubling bifurcation.

Note that  if we take a map with the opposite orientation, say $\hat g_t(x)=-g_t(x)$, then
the attracting fixed point disappears as soon as $t<0$ (so this is the analogue of the saddle-node bifurcation). 

In the next section we will consider the analogue of 
the periodic doubling phenomena for a family of maps $f_t$ in $\mathcal A_{1,\beta}$.  During this parameter window
only  period doubling occurs. The usual period doubling occurs when an attracting periodic orbit of period $2^{2n}$
becomes repelling and creates an attracting periodic orbit of period $2^{2n+1}$ (when the multiplier is equal to $-1$). On the other hand, 
the asymmetric periodic doubling occurs when an attracting periodic orbit of period $2^{2n+1}$ looses stability 
as it goes through the turning point $0$.

\section{The existence of a $2^\infty$ map within the space of one-sided linear unimodal maps and a full family result}
\label{subsec:existence}

This section is the only one in this paper where we consider maps in
$\mathcal A_{\alpha,\beta}$ where we allow $\alpha\ge
1$.  In fact, in the proof of Theorem~\ref{thm:existence} below we assume $\alpha=1$,
because when $\alpha>1$ the proof is simpler:  in that case the proofs in \cite{MT}  
(for the unimodal setting)
and in \cite{dMvS}  (for the multimodal setting) go through.

We say that a non degenerate interval $I$ is {\em restrictive} of
period $d>0$ of a unimodal map $f$ if it contains the critical point
of $f$, the interiors of $I, f(I), \ldots, f^{d-1}(I)$ are disjoint
and $f^d(I)\subset I$, $f^d (\partial I) \subset \partial I$. If a map $f$
has a restrictive interval $I$ of period $d$ is called {\em
  renormalizable} and $f^d|I$ is called a renormalization of $f$. Note
that any renormalization of a unimodal map is unimodal.

The maps in class $\mathcal{A}_{\alpha,\beta}(2^\infty)$ we defined  are all infinitely
renormalizable, moreover all the restrictive intervals $I_1 \supset
I_2 \cdots \supset I_n \cdots$ are of periods $2,2^2,\ldots,2^n,\ldots$.

The following theorem implies Theorem~\ref{thm:existence2}:

\begin{thm}\label{thm:existence} 
  Consider a family $f_t\colon [a_0,b_0]$, $t\in [0,1]$  in  $\mathcal A_{\alpha,\beta}$ with $1\le \alpha<\beta$ so that
  $t\mapsto f_t|[a_0,0] \in C^1 $ and $t\mapsto f_t|[0,b_0] \in C^1 $
  are continuous and so that $f_0$ has a unique attracting fixed point
  and so that $f_1$ is surjective.  Then there exist two sequences of
  parameters $u_1<u_2<\cdots <v_2<v_1$ such that
  \begin{itemize}
  \item for $t \in (u_n, v_n]$ the map $f_t$ is $2^n$ renormalizable,
    more precisely, there exists a non degenerate restrictive interval
    $I_{n,t}$ of period $2$ of the map $f_t^{2^{n-1}}|I_{n-1,t}$
    continuously depending on the parameter $t\in (u_n, v_n]$ (here we set
    $I_{0,t}=[a_0,b_0]$);
  \item when $n$ is even then $f_{u_n}^{2^{n-1}}(0)=0$ and $\lim_{t\downarrow u_n} I_{n,t}=\{0\}$, 
 while for $n$ is
    odd $f_{u_n}$ has a parabolic periodic orbit of period $2^{n-1}$
    with multiplier $-1$ and  and $\lim_{t\downarrow u_n} I_{n,t}$ is non-degenerate;
  \item $f_{v_n}^{2^n}(I_{n,v_n}) = I_{n,v_n}$, that is
    $f_{v_n}^{2^n}|I_{n,v_n}$ is surjective.
\end{itemize}

Clearly, $f_t \in \mathcal{A}_{\alpha,\beta}$ for any $t \in \cap_n (u_n,v_n)$.
\end{thm}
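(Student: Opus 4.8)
The plan is to construct the parameter intervals $(u_n,v_n)$ inductively, mimicking the classical "full family" argument for the period-doubling cascade but replacing the half of the bifurcation structure that involves a multiplier $-1$ by the asymmetric period-doubling mechanism described in Section~\ref{sec:bifurcation}. First I would set $I_{0,t}=[a_0,b_0]$ and observe that by hypothesis $f_0$ has an attracting fixed point (so the turning point is in its immediate basin and no restrictive interval of period $2$ exists), while $f_1$ is surjective. The base case $n=1$ is the classical one: as $t$ increases from $0$ to $1$, the fixed point of $f_t$ loses stability through multiplier $-1$ at some parameter $u_1$ (here I use the $C^1$-dependence of the two branches on $t$ to get continuity of the fixed point and its multiplier), producing a non-degenerate restrictive interval $I_{1,t}$ of period $2$; letting $t$ increase further, the second iterate $f_t^2|I_{1,t}$ eventually becomes surjective at some $v_1$, by an intermediate-value argument on the position of the critical value relative to the boundary of $I_{1,t}$. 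This gives the asserted behaviour at $n=1$ (an odd index, parabolic orbit with multiplier $-1$).

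The inductive step splits into two cases according to the parity of $n$. Suppose we have $(u_{n-1},v_{n-1}]$ and the restrictive interval $I_{n-1,t}$ of period $2^{n-1}$ depending continuously on $t$, with $g_t:=f_t^{2^{n-1}}|I_{n-1,t}$ a continuously-varying unimodal family. If $n$ is even, then I want $g_t$ to undergo an \emph{asymmetric} period doubling: because the left branch of each $f_t$ near $0$ has critical order $\alpha$ (with $\alpha=1$ in the case actually needed), the appropriately-iterated map $g_t$ inherits, near its turning point, exactly the one-sided-linear/strongly-asymmetric local form studied in Section~\ref{sec:bifurcation}, so the fixed point of $g_t$ on the decreasing branch does not lose stability via multiplier $-1$ but rather the period doubles precisely when the turning point $0$ becomes periodic, i.e. when $g_t(0)$ equals the fixed point. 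I define $u_n$ to be the infimum of parameters $t>u_{n-1}$ for which $g_t$ has a non-degenerate restrictive interval $I_{n,t}$ of period $2$; by the local analysis of $g_t$ near $u_n$ (the asymmetric bifurcation) one gets $f_{u_n}^{2^{n-1}}(0)=0$ and $I_{n,t}\to\{0\}$ as $t\downarrow u_n$. If $n$ is odd, then the relevant iterate $g_t=f_t^{2^{n-1}}|I_{n-1,t}$ has its turning point on the \emph{increasing}-then-decreasing composition in such a way that the local form is the classical one (multiplier $-1$), so the standard period-doubling analysis applies, $f_{u_n}$ has a parabolic orbit of period $2^{n-1}$ with multiplier $-1$, and $\lim_{t\downarrow u_n}I_{n,t}$ is non-degenerate. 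In both cases, once $I_{n,t}$ exists I let $v_n$ be the first parameter at which $f_{v_n}^{2^n}|I_{n,v_n}$ is surjective; continuity and an intermediate-value argument on the signed distance between $f_t^{2^n}(0)$ and $\partial I_{n,t}$ give its existence, and one checks $u_n<v_n<v_{n-1}$ so the nesting $u_1<u_2<\cdots<v_2<v_1$ is preserved. Finally $\cap_n(u_n,v_n)$ is a nested intersection of non-empty closed-bounded sets hence non-empty, and any $t$ in it is infinitely renormalizable with restrictive intervals of periods $2^n$, i.e. $f_t\in\mathcal{A}_{\alpha,\beta}(2^\infty)$.

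The main obstacle, and the place requiring genuine care rather than routine bookkeeping, is the case distinction at $u_n$ and verifying that the \emph{parity} of $n$ really does control which of the two bifurcation mechanisms occurs. Concretely one must track, through the composition defining $g_t$, whether the turning point of the renormalized map sits at the "strongly asymmetric" corner (critical order $\alpha$ on one side, $\beta$ on the other, giving the $0$-is-periodic mechanism) or at a "folded" corner where the local model is the usual quadratic-like one with multiplier $-1$; this alternates because each further renormalization composes one more iterate of $f_t$ and flips the orientation with which the turning point is approached. Establishing this cleanly is where the assumption $\alpha=1$ is used (so that the non-asymmetric side is genuinely a local diffeomorphism and the asymmetric local model of Section~\ref{sec:bifurcation} applies verbatim), whereas for $\alpha>1$ one would instead invoke the arguments of \cite{MT} and \cite{dMvS}. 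The remaining ingredients—continuity of restrictive intervals in $t$ (which follows from $C^1$-dependence of the branches and hyperbolicity/transversality away from the bifurcation parameters), the intermediate-value arguments producing $u_n$ and $v_n$, and the final nested-intersection conclusion—are standard and I would only sketch them.
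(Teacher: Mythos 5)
Your proposal follows essentially the same route as the paper's proof: an induction in which the orientation type of $f_t^{2^{n-1}}|I_{n-1,t}$ alternates with the parity of $n$, so that odd levels are produced by the classical multiplier $-1$ bifurcation and even levels by the asymmetric doubling at the parameter where the turning point becomes periodic, with $v_n$ obtained as the first escape of the critical orbit from $I_{n,t}$ and the properly nested parameter intervals yielding the infinitely renormalizable map. The only slip is the phrase ``when $g_t(0)$ equals the fixed point'': the asymmetric doubling occurs when $g_t(0)=0$, i.e.\ when $0$ itself is the fixed point of $g_t$, as you correctly state elsewhere.
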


Note that $\cap_n (u_n,v_n)\ne \emptyset$ because the intervals $(u_n,v_n)$ 
are properly nested. In particular, the family (\ref{examplefamily}) (with $\beta>1$) contains a map in the class $\mathcal A_{\alpha,\beta}(2^\infty)$.

\begin{proof}
 The proof we will give of this theorem is almost the same as a proof
  based on a bifurcation analysis for smooth unimodal maps and
  will use the following two properties:

  (1) whenever $f_t$ has an attracting periodic orbit then $0$ is in the
  immediate basin of this attractor.  This holds since $f$ has negative
  Schwarzian derivative, and therefore the immediate basin of a periodic
  attractor contains a turning point of an iterate of $f$ and hence
  $0$ is also in the immediate basin of this periodic attractor.
  
  (2) whenever $0$ is a (topologically) {\em attracting} periodic point
  of $f_{t_0}$ of period $n$ then $f_t$ has a periodic attractor of
  period $n$ or period $2n$ for each $t$ near $t_0$. Note that within
  this class of maps it is no longer true that if $0$ is periodic then
  it is also attracting (it can be repelling on one side when $\alpha=1$).

  \begin{figure}
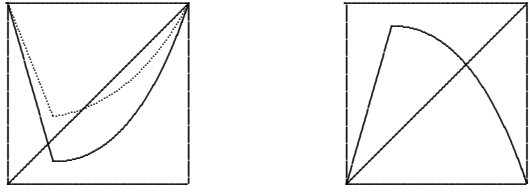
 \hfil
    \beginpicture
    \dimen0=0.3cm
    \setcoordinatesystem units <\dimen0,\dimen0>  point at -15 0
    \setplotarea x from -4 to 4, y from -6 to 6
    \setlinear
    \plot -4 -4 4 -4  4 4 -4 4 -4 -4 /
    \plot -4 -4 4 4 / 
    \setsolid
    \setlinear 
    \plot  -4 -4 -2 3  /
    \setquadratic 
    \plot -2 3 1.3 1.3 4 -4 /
    \setcoordinatesystem units <\dimen0,\dimen0>  point at  0 0
    \setplotarea x from -4 to 4, y from -6 to 6
    \setlinear
    \plot -4 -4 4 -4  4 4 -4 4 -4 -4 /
    \plot -4 -4 4 4 / 
    \setsolid
    \setlinear 
    \plot  -4 4 -2 -3  /
    \setquadratic 
    \plot -2 -3 1.3 -1.3 4 4 /
    \setdots <0.4mm> 
    \setlinear 
    \plot  -4 4 -2 -1  /
    \setquadratic 
    \plot -2 -1 1.3 0.5 4 4 /
    \endpicture
    \caption{\label{fig-I_n-t} 
   $f^{2^n}|I_{n,t}$ for $n$ odd (on the
      left) and $n$ even (on the right).  When $n\ge 2$ is even then
      $I_{n,t}\to \{0\}$ as  $t\downarrow u_n$ and for $t\in (u_n,v_n)$ the only fixed point of
      $f_t^{2^n}$ in the interior of $I_{n,t}$ lies to the right of
      $0$.}
  \end{figure}


  Analysing what bifurcations occur in
  the family $f_t$ analogous to the period doubling bifurcations which
  occur in the quadratic family, we will prove
  inductively that there exists a nested sequence of maximal parameter
  intervals described by the theorem.
  
  Slightly abusing notation we set $u_0=0$, $v_0=1$ and
  $I_{0,t}=[a_0,b_0]$. Clearly all the properties stated in the
  theorem are satisfied except one claiming that the critical point is
  fixed by  $f_0$. This does not affect the proof which follows. So
  assume by induction that such parameter interval $[u_n,v_n]$ exists
  for some integer $n$.  There are two possibilities.

  (i) $n$ is even. In this case for each $t\in [u_n,v_n]$,
  $f_t^{2^n}|I_{n,t}$ is of type $+-$ and $\alpha\beta$, i.e.,
  orientation preserving (resp. reversing) to the left (right) of $0$
  and the order of the critical point is of order $\alpha$ to the left
  of $0$ and of order $\beta$ to the right of $0$. We know that
  $f_{v_n}^{2^n}|I_{n,v_n}=I_{n,v_n}$, therefore there exists an
  orientation reversing fixed point $p_n>0$ of
  $f_{v_n}^{2^n}|I_{n,v_n}$. Note that this fixed point is repelling
  because the orbit of the critical point of $f_{v_n}^{2^n}$ belongs
  to the boundary of $I_{n,v_n}$. Since the multiplier of $p_n$ is not
  equal to one this fixed point persists when we change a parameter in
  a neighbourhood of $v_n$, that is there is a continuous function
  $p_{n,t}$ defined for $t$ in some    interval $W_n\ni v_n$ such that 
  $f^{2^n}_t(p_{n,t})=p_{n,t}$ and $p_{n,v_n}=p_n$. We will assume
  that $W_n$ is the maximal interval where such a function can be
  defined.
  Let $u_{n+1}< v_n$ be maximal such that
  $Df_{u_{n+1}}^{2^n}(p_{n,u_{n+1}})=-1$, that is $p_{n,u_{n+1}}$
  becomes a parabolic periodic point of $f$ with multiplier $-1$. 
  Such a point $u_{n+1}$ exists and $u_{n+1}>u_n$ because the multiplier of
  $p_{n,t}$ varies continuously with the parameter 
 $t\in W_n\cap (u_n,v_n]$,  since $Df^{2^n}_t(p_{n,t})<-1$
  for $t=v_n$ and since for any $t$ we have $\lim_{x\downarrow 0} Df^{2^n}_t(x)=0$
  while $f_{u_n}^{2^{n-1}}(0)=0$.

  For $t\in [u_{n+1}, v_n]$ let $\hat p_{n,t}<0$ denote a preimage of
  $p_{n,t}$ under $f_t^{2^n}|I_{n,t}$ and let
  $I_{n+1, t}=[\hat p_{n,t}, p_{n,t}]$. Since $f$ has negative
  Schwarzian derivative it follows that $p_{n,u_{n+1}}$ is a parabolic
  periodic point of $f_{u_{n+1}}$ and that the critical point belongs
  to the basin of attraction of $p_{n,u_{n+1}}$. This in turn implies
  that
  $f_{u_{n+1}}^{2^{n+1}}(I_{n+1,u_{n+1}}) \subset I_{n+1,u_{n+1}}$,
  i.e.,  $I_{n+1,u_{n+1}}$ is a restrictive interval of
  $f_{u_{n+1}}^{2^n}$ of period 2. Note that if $t$ is slightly larger
  than $u_{n+1}$, the interval $I_{n+1,t}$ is still a restrictive
  interval of period 2 of the corresponding map.  We know that
  $f_{v_n}^{2^n}(0)$ belongs to the boundary of $I_{n,v_n}$ and
  therefore $f_{v_{n}}^{2^{n+1}}(0) \not \in I_{n+1, v_n}$.  Define
  $v_{n+1}$ to be infimum of all parameters $t > u_{n+1}$ such that
  $f_{v_{n+1}}^{2^{n+1}}(0) \not \in I_{n+1, v_n}$, thus
  $f_{v_{n+1}}^{2^{n+1}}(0)$ belong to the boundary of
  $I_{n+1,v_{n+1}}$. It must be the left boundary point (that is
  $f_{v_{n+1}}^{2^{n+1}}(0) = \hat p_{n,v_{n+1}}$) because otherwise
  the condition $Df_t^{2^n}(p_{n,t}) \le -1$ for $t\in [u_{n+1},v_n]$
  would be broken.

  It is easy to see that the constructed points $u_{n+1}$, $v_{n+1}$
  and the intervals $I_{n+1, t}$ satisfy all the induction
  assumptions. Note that in this case the intervals $I_{n+1, t}$ are
  non degenerate for all $t \in [u_{n+1}, v_{n+1}]$.

  (ii) $n$ is odd.  In this case $ f_{u_n}^{2^n}|I_n$ is of type $-+$
  and $\alpha\beta$.  The construction will be very similar to the
  case of even $n$ with some modifications relating to the asymmetric
  period doubling bifurcation.

  Arguments similar to the case when $n$ is even show that there exists a
  maximal $u_{n+1} < v_n$ such that $f_{u_{n+1}}^{2^n}(0)=0$. Then for
  all $t\in [u_{n+1}, v_n]$ there exists an orientation reversing
  fixed point $p_{n,t} \in I_{n,t}$ of $f_t^{2^n}$. Note that $p_{n,t}$
  is negative (i.e. it is to the left of the critical point).
  Define $\hat p_{n,t}>0$ to be a preimage of $p_{n,t}$ under
  $f_t^{2^n}|I_{n,t}$ and let $I_{n+1, t}=[\hat p_{n,t}, p_{n,t}]$ for
  all $t\in [u_{n+1}, v_n]$ as before. Note that
  $p_{n,u_{n+1}}=\hat p_{n,u_{n+1}}=0$ and the interval
  $I_{n+1, u_{n+1}}$ degenerates to the critical point. For all other
  values of the parameters the intervals $I_{n+1, t}$ are non
  degenerate.  In Section~\ref{sec:bifurcation} it was explained that
  for values of parameters $t$ slightly larger than $u_{n+1}$ the
  interval $I_{n+1,t}$ is a restrictive interval of period 2 of the
  map $f_t^{2^n}$. As before define $v_{n+1}> u_{n+1}$ to be maximal
  such that $I_{n+1,t}$ is a restrictive interval of period 2 of the
  map $f_t^{2^n}$ for all $t\in (u_{n+1}, v_{n+1})$ and note that
  $v_{n+1} < v_n$. 

\end{proof}

In fact,  we have

\begin{thm} Any family $\{f_t\}$ as in Theorem~\ref{thm:existence} is a full family in  
the following sense. Take a quadratic interval map $Q$ without periodic attractors. Then there exists a parameter $t$ so that 
$f_t$ combinatorially equivalent to $Q$. 
\end{thm}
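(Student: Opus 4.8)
The plan is to show that the family $\{f_t\}$ is full by a monotone-kneading / intermediate value argument, exactly paralleling the proof of Theorem~\ref{thm:existence} but run for \emph{all} combinatorial types rather than just the $2^\infty$ one. First I would recall the two structural facts used in the proof of Theorem~\ref{thm:existence}: since each $f_t$ has negative Schwarzian derivative, whenever $f_t$ has an attracting periodic orbit the turning point $0$ lies in its immediate basin; and if $0$ is a topologically attracting periodic point of $f_{t_0}$ then nearby maps $f_t$ have an attracting orbit of the same period or twice the period. These are precisely the properties which make the standard bifurcation analysis of a full unimodal family work, and nothing in them uses that the critical exponents on the two sides of $0$ agree.

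Next I would set up the kneading-sequence framework. For a unimodal map $g$ let $\nu(g)$ denote its kneading invariant (the itinerary of the critical value under the sign-of-$(x-c)$ coding), and recall the standard fact from \cite{MT} (see also \cite{dMvS}) that kneading invariants are totally ordered, that $\nu(g)$ depends monotonically on $g$ along a monotone family, and that the set of realizable kneading sequences of quadratic maps $Q_t(x)=tx(1-x)$, $t\in[1,4]$, is exactly the set of admissible kneading sequences. The key point I must verify is that $t\mapsto \nu(f_t)$ is \emph{monotone} along the given family: this follows because $f_t$ is, for each fixed point $x$, monotone increasing in $t$ on each side of $0$ (by the $C^1$-continuity hypotheses on $t\mapsto f_t|[a_0,0]$ and $t\mapsto f_t|[0,b_0]$, together with the sign conditions defining $\mathcal A_{\alpha,\beta}$ and the normalization $f_t(a_0)=f_t(b_0)=a_0$), so the critical value and all its forward images move monotonically, hence the itinerary can only change in one direction. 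Combined with the endpoint conditions --- $f_0$ has a unique attracting fixed point, so $\nu(f_0)$ is the smallest admissible sequence, while $f_1$ is surjective, so $\nu(f_1)$ is the largest --- the image of $t\mapsto \nu(f_t)$ sweeps out, with the appropriate semicontinuity, the entire interval of admissible kneading sequences.

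Then, given a quadratic map $Q$ with no periodic attractor, its kneading invariant $\nu(Q)$ is an admissible sequence that is not eventually periodic in the ``attracting'' way; by the sweeping argument there is a parameter $t$ with $\nu(f_t)=\nu(Q)$. Finally I would upgrade ``same kneading invariant'' to ``combinatorially equivalent'': for maps with negative Schwarzian and no periodic attractors, equal kneading invariant forces the same combinatorics of the critical orbit, which is exactly the notion of combinatorial equivalence used here; the hypothesis that $Q$ has no periodic attractor is what rules out the ambiguous ``plateau'' cases where a kneading sequence can be shared by a map with an attracting cycle and one without. (That $f_t$ itself then has no periodic attractor is automatic from property~(1) above: if it did, $0$ would be in the immediate basin, forcing the kneading sequence to be of attracting type, contradicting $\nu(f_t)=\nu(Q)$.)

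The main obstacle I expect is making the monotonicity of $t\mapsto\nu(f_t)$ completely rigorous in the strongly asymmetric setting: one must be careful that the normalization $f_t(a_0)=f_t(b_0)=a_0$ is compatible with genuine monotone dependence on $t$, and that the ``asymmetric period doubling'' --- where an orbit can be repelling on one side of $0$ yet the bifurcation still proceeds --- does not cause the kneading sequence to skip values or jump non-monotonically. This is handled exactly as in the inductive construction in the proof of Theorem~\ref{thm:existence}: the only new phenomenon is that at certain bifurcation parameters $0$ becomes periodic rather than the multiplier reaching $-1$, and at such parameters the kneading sequence passes through the corresponding ``$*$-periodic'' value continuously, so no admissible type is missed. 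Everything else is a routine transcription of the classical full-family argument, so I would keep this proof short and refer to \cite{MT,dMvS} for the combinatorial background.
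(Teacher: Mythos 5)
Your overall framework (Milnor--Thurston kneading theory, an intermediate-value argument in the space of kneading sequences, and the observation that the only new phenomenon is the asymmetric period-doubling where $0$ becomes periodic instead of the multiplier reaching $-1$) is the right one and matches the paper's proof in outline. But there is a genuine gap at the step you yourself flag as ``the key point'': you claim that $t\mapsto\nu(f_t)$ is \emph{monotone}, and you justify this by saying that $f_t(x)$ is monotone increasing in $t$ for each fixed $x$ ``by the $C^1$-continuity hypotheses.'' Neither half of this works. The hypotheses of Theorem~\ref{thm:existence} only require $t\mapsto f_t$ to be continuous (plus conditions at the two endpoint parameters); nothing forces pointwise monotonicity in $t$. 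And even for families that \emph{are} pointwise monotone in $t$, such as (\ref{examplefamily}), monotonicity of the kneading invariant is a deep statement --- it is exactly the ``monotonicity of bifurcations'' problem that this paper explicitly lists as an open question for strongly asymmetric families, known in full generality only when $\alpha=\beta$ is an even integer (where complex-analytic methods apply). So your argument rests on a claim that is both unjustified from the hypotheses and, in the relevant generality, unknown.

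The fix is that the Milnor--Thurston full-family argument does not need monotonicity of $\nu(f_t)$ at all; this is how the paper (following \cite{MT}, see also \cite{JR}, and alternatively the Thurston-map approach of \cite{dMvS}) proceeds. One uses the lexicographic order and a topology on admissible kneading sequences and shows that the map $t\mapsto\nu(f_t)$ cannot skip any admissible value, via two facts: (1) $\nu(f_t)$ is continuous at every $t_0$ for which the critical point of $f_{t_0}$ is not periodic; (2) at a parameter $t_0$ where the critical point is periodic, the kneading sequences of nearby maps differ from $\nu(f_{t_0})$ only by a controlled operation (a star product), so the jump across $t_0$ misses no admissible sequence. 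The genuinely delicate point in the present setting is (2): the classical argument uses that for $\alpha,\beta>1$ nearby maps retain a periodic attractor, which fails when $\alpha=1$ (the critical orbit can be one-sided repelling). This is repaired not by monotonicity but by the bifurcation analysis of Theorem~\ref{thm:existence}, which shows that the kneading sequences of nearby maps still bifurcate in the same pattern as in the smooth case. Your final paragraph gestures at exactly this, so you have the right ingredient in hand --- you should promote it from a remark about ``the main obstacle'' to the actual engine of the proof, and delete the monotonicity claim entirely.
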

\begin{proof} 
   In \cite{MT}, see also \cite{JR}, 
 this result is shown  for families $f_t$ of unimodal maps with $\alpha,\beta>1$. 
Let us give an outline of that proof.  The main ingredients are
the notion of the {\em kneading invariant} $\nu(f)$ of a unimodal map $f$,  
the abstract notion of an {\em admissible kneading sequence} $\nu$, the 
lexicographical ordering on the space of kneadings, and a topology on this space.  The required result
follows by showing that for each admissible kneading sequence $\nu$ there exists $t$ so that 
$\nu=\nu(f_t)$.  Proving this relies on some kind of intermediate value in the space of kneadings, 
by analysing the discontinuities of the map $t\mapsto \nu(f_t)$ and using the following two observations:\\
(1) if $t_0$ is a parameter for which the critical point of $f_{t_0}$ is non-periodic, then the kneading
invariant $t\mapsto \nu(f_t)$ is continuous at $t=t_0$;\\
(2) if $t_0$ is a parameter for which the critical point of $f_{t_0}$ is periodic, then for
$t\approx t_0$ the map $f_t$ still has a periodic attractor (here it used that $\alpha,\beta>1$). 
This then makes it possible to show that for each $s,t\approx t_0$  the kneading sequences 
$\nu(f_t)$ and $\nu(f_s)$ are 
the same up to a simple operation (related to some star product). Thus one obtains
that there are no admissible kneading sequences that get skipped. 

 In our case, when $\alpha=1<\beta$ the first step still holds, but in the 2nd step
the map $f_t$ may not have a  periodic attractor when $t\approx t_0$. However, 
as is shown in the previous theorem, the kneading sequences for nearby maps
still bifurcate the same way as they do for nearby smooth maps. Thus the proof
in \cite{MT}, see also \cite{JR},  goes through. 

%
%
%

Another way of proving this theorem is by adapting
the proof given in  \cite[Theorem II.IV.1]{dMvS}. That proof follows a
Thurston mapping approach  and, contrary to the proof from \cite{MT}, 
also  applies to multimodal families.  To apply this proof in our setting, 
one needs to show that a certain map defined on some open symplex 
is {\lq}repelling{\rq} near the boundary of this simplex. We will not give the details
for the required modifications here. 
\end{proof}

\section{The smallest interval argument}

The usual smallest interval argument in the current setting gives a weaker statement
than in the {\lq}symmetric{\rq} case: 

\begin{lemma}\label{lem1} There exists $\tau>1$ so that the following holds. 
Consider $I=[a_n,b_n]$ and choose $x\notin I$.  Assume that there exists $k>0$ (minimal) so that $f^k(x)\subset I$.
Then there exists an interval $T\ni x$ so that $f^k|T$ is a diffeomorphism and $f^k(T)\supset [\tau a_n,\tau b_n]$. 
 \end{lemma}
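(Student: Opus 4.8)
The plan is to run the classical smallest-interval (shortest-interval) argument, but keeping careful track of the fact that in the strongly asymmetric setting we can only hope to recover Koebe space on \emph{one} side of $[a_n,b_n]$. First I would set up the pullback: let $I=[a_n,b_n]$ and let $k>0$ be minimal with $f^k(x)\in I$. Consider the maximal interval $T\ni x$ on which $f^k|T$ is a diffeomorphism onto its image $f^k(T)\supset I$. The boundary points of $T$ map under $f^k$ either to a point of $\partial I$ or to a critical value of some iterate, i.e.\ to a point of $\{c_1,\dots,c_{2^j},\dots\}=\overline{\cup_m f^m(0)}$; in any case $f^k(\partial T)$ consists of points whose orbits enter $I$ only at a definite combinatorial moment. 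The goal is to show $f^k(T)\supset[\tau a_n,\tau b_n]$ for a uniform $\tau>1$.

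Next I would exploit the $2^\infty$ combinatorial structure. The intervals $I,f(I),\dots,f^{2^n-1}(I)$ have pairwise disjoint interiors, and likewise for the slightly larger interval $I^+:=[a_{n-1},b_{n-1}]\supset I$ (or some fixed-ratio enlargement coming from the renormalization structure — here one uses that $[a_n,b_n]$ sits well inside $[a_{n-1},b_{n-1}]$ with definite space, which is part of the standard real-bounds input for $2^\infty$ maps and does not itself require Koebe space for the first entry map). Among the first $k$ iterates $f^j(T)$, pick the one of smallest length, say $f^{j_0}(T)=:M$. Disjointness of the relevant pullbacks of $I$ (and of $I^+$) forces $M$ to be short relative to the gap between $I$ and $\partial I^+$ on at least one side; pushing $M$ forward to time $k$ by the remaining iterates, and using that $f$ has negative Schwarzian so that the minimum principle controls how lengths of the $f^j(T)$ evolve, one gets that $f^k(T)$ must stick out of $I$ by a definite amount on at least one side. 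To get it on \emph{both} sides one uses that the combinatorics is $2^\infty$: the two boundary points of $[a_n,b_n]$ are $f$-related ($f(a_n)=f(b_n)$ by construction, cf.\ (\ref{akbk})), and a pullback that fails to have space on one side can be further extended, or one applies the argument to the renormalized map $f^{2^{n-1}}|[a_{n-1},b_{n-1}]$ which is again unimodal with negative Schwarzian, so the shortest-interval estimate there gives space on the side that was missing.

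The main obstacle — and the reason this lemma gives only $f^k(T)\supset[\tau a_n,\tau b_n]$ with $\tau>1$ rather than a $\tau$-scaled neighbourhood for arbitrarily large $\tau$ — is precisely the asymmetry flagged in the discussion before Theorem~\ref{thm:nousualbound}: the smallest-interval argument in the symmetric case yields Koebe space on both sides, but here one passage through (a neighbourhood of) the turning point kills the space on the side where the critical point sits very close to the endpoint $a_n$, since $|a_n-c|\ll|b_n-c|$. So the delicate point is to check that the shortest among $T,f(T),\dots,f^{k-1}(T)$ can be taken \emph{before} the last visit to a critical neighbourhood, equivalently that the chosen chain of pullbacks separating $M$ from time $k$ consists of monotone branches with controlled distortion (negative Schwarzian plus the definite space already available between consecutive renormalization intervals). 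This is why the statement is weaker: we obtain a uniform $\tau>1$, enough for the applications in the next sections, but genuinely not more, consistent with Theorem~\ref{thm:nousualbound}. Finally I would record that $\tau$ depends only on the a priori geometry of the renormalization nest (which is uniform over $\mathcal A(2^\infty)$ by the earlier real-bounds input), not on $n$ or on $x$.
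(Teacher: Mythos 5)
Your overall strategy --- smallest-interval argument followed by a final pullback through the turning point --- is the right one and is the route the paper takes, but three of your steps do not work as stated. First, the smallest-interval selection must be made over the periodic cycle $I_i=f^i(I)$, $i=3,\dots,2^n$, not over the chain $T,f(T),\dots,f^{k-1}(T)$: it is the pairwise disjointness of the $I_i$ that makes ``smallest'' useful (the smallest $I_m$ has neighbours from the cycle on \emph{both} sides within a bounded multiple of its length, so the interval $K_m$ spanned by those neighbours is a $\tau_0$-scaled neighbourhood of $I_m$ with $\tau_0$ independent of $n$). The images of $T$ are not disjoint, so picking the shortest of them yields nothing; and the correct argument gives definite Koebe space on \emph{both} sides of $I_m$ at once, so your ``space on at least one side, then fix the other side'' detour is not needed and, as written, is not a proof. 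Second, your parenthetical appeal to ``$[a_n,b_n]$ sits well inside $[a_{n-1},b_{n-1}]$ with definite space'' as standard real-bounds input is false in this setting: the left gap $|a_{n-1}|-|a_n|\approx K_0 b_{n-1}^\beta$ is negligible compared with $|I|\approx b_n$, and establishing \emph{any} definite space to the left of $a_n$ is precisely the hard content of Section~\ref{Sec:bigbounds}; Lemma~\ref{lem1} is designed so as not to need it.

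Third, and most importantly, the step that actually produces the conclusion in the stated form $f^k(T)\supset[\tau a_n,\tau b_n]$ is absent. After pulling $K_m$ back along the monotone branch to a $\tau_1$-scaled neighbourhood $K_1$ of $I_1=f(I)=[f(a_n),f(0)]$ (Koebe, Lemma~\ref{thm:koebe}), one must take one more preimage through the critical point: since $f(a_n)=f(b_n)$ and $f$ has local orders $\alpha$ on the left and $\beta$ on the right, the multiplicative space $\tau_1$ in the image becomes $\tau_1^{1/\alpha}$ to the left and $\tau_1^{1/\beta}$ to the right, i.e.\ $f^{-1}(K_1)\supset[\tau_1^{1/\alpha}a_n,\tau_1^{1/\beta}b_n]$. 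This is exactly why the lemma asserts containment of $[\tau a_n,\tau b_n]$ rather than a $\tau$-scaled neighbourhood of $[a_n,b_n]$ (the left margin $(\tau-1)|a_n|$ is minute compared with $|I|$, consistent with Theorem~\ref{thm:nousualbound}). You cite $f(a_n)=f(b_n)$ but never perform this computation, and without it your argument gives no control at all to the left of $a_n$. One also needs the observation that the pulled-back interval stays inside $[f^{i_0}(I),f^{i_1}(I)]\subset f^k(T)$, which follows because $K_m$ is bounded by intervals of the cycle.
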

\begin{proof} For completeness let us include the proof of this lemma. 
Let $T$ be the maximal interval  $T\ni x$ so that $f^k|T$ is a diffeomorphism.
 By maximality of $T$ and since $f^i(x)\notin I$ for all $i=0,\dots,k-1$
there exist integers $0<i_0,i_1<2^n$  so that  $f^k(T)\supset [f^{i_0}(I),f^{i_1}(I)]$ 
where $f^{i_0}(I)$ and $f^{i_1}(I)$ are to the left respectively to the right  of $I$. So it suffices to show that 
$[f^{i_0}(I),f^{i_1}(I)]\supset  [\tau a_n,\tau b_n]$ for some
universal choice of $\tau>0$. 

Write $I_i=f^i(I)$ and let $3\le m\le 2^n$ be so that $I_m$ is the smallest of the intervals 
$I_3,\dots,I_{2^n}$. Let $K_m$ be  the smallest interval containing the left and right neighbours of $I_m$
from the collection $I_1,\dots,I_{2^n}$ (such neighbouring intervals exist because $m\ge 3$). It follows that 
$K_m$ contains a $\tau_0$-scaled neighbourhood of $I_m$ where 
$\tau_0>0$ is independent on $n$  (here we use that $I_1,I_2$ are not much smaller than $I_3$). 
Let $K_1\supset I_1$ be the maximal interval on which $f^{i_0-1}|K_1$ is a diffeomorphism
with $f^{i_0-1}(K_1)\subset K_m$. By maximality, $f^{i_0-1}(K_1)= K_m$. By Koebe it follows that 
$K_1$ contains a $\tau_1$-scaled neighbourhood of $I_1$. Hence $K_0:=f^{-1}(K_1)$ contains 
$ [\tau_1' a_n,\tau_1'' b_n']$ where $\tau_1'=\tau_1^{1/\alpha}$ and $\tau_1''=\tau_1^{1/\beta}$. 
Note that because $|a_n|<<b_n$, this latter interval is no longer a definite interval around $[a_n,b_n]$. 
Note also that by the choice of $K_m$ the  interval $K_0$ is contained in any interval 
 of the form $[f^{i_0}(I),f^{i_1}(I)]$ where $f^{i_0}(I)$ and $f^{i_1}(I)$ are to the left respectively to the right  of $I$. 
 \end{proof}

\section{Big bounds} 
\label{Sec:bigbounds} 

Since $\alpha=1$, we can consider a semi-extension of $f$ of the  {\lq}linear{\rq} branch
and use the following strategy. First, using the standard smallest
interval argument we have already shown that there exists a definite space to the right
of the renormalization intervals. Next we will show that either there is definite
space to the left of the renormalization interval for the semi-extension or this space is at least as big as
the space on the previous level.  Considering several scenarios, this will 
imply that there is some definite space
on both sides of the renormalization intervals  (for the semi-extension). Once there is {\lq}space{\rq} on both sides
of the renormalization intervals we can repeat the argument used to obtain it
and get as much space as one may want. From this the rest follows.

\subsection{Using semi-extensions} \label{subsec:semiexten} 
Let $f^{2^{k}-1} \colon J_k \to [a_k , b_k ]$ be the branch of the first entry map to $[a_k , b_k ]$ for which
$c_1 := f (0) \in J_k$. Note that this is a surjective diffeomorphism.
Let $\hat T_k\supset J_k$ be the maximal interval around $f(0)$ so that  $f^{2^{k}-1}|\hat T_k$ is a diffeomorphism  and let
$[\hat A_k,\hat B_k]:=f^{2^{k}-1}(\hat T_k)$ where $\hat A_k<\hat B_k$. Note that $f^{2^{k}-1}|\hat T_k$  is orientation preserving (reversing)
when $k$ is even (odd). 
We also define an interval  $[A_k,B_k]\supset [\hat A_k,\hat B_k]$,
with $A_k<B_k$,
associated to 
the semi-extension as follows. 
Let  $E_k : T_k\to  [A_k , B_k ]$ be the maximal monotone surjective
{\em semi-extension} of  $f^{2^{k}-1}\colon J_k \to [a_k , b_k ]$ such that $A_k\le a_k < 0 < b_k \le B_k$. 
(In principle this extension depends on the choice of the extension
$f_1\colon [0,\epsilon)\to \R$ of $f\colon
[a_0,0]\to \R$.)

\subsection{Useful  dynamical and non-dynamical points $a'_k,b_k',d_k,e_k$.}
Let
$[a'_k , e_k ] = f_1 ^{-1}(T_k )$, $a_k'<a_k<0<e_k$, and therefore $E_k \circ f_1 : [a'_k , e_k ] \to [A_k , B_k ]$ is the maximal
monotone surjective semi-extension of $f^{2^k} : [a_k , 0] \to [a_k , b_k ]$. Also, define the point $b'_k > b_k$ as the right boundary point of the
interval $f_2^{-1}(T_k )$.  Furthermore, define $d_k \in [0, e_k ]$ such that $E_k \circ f_1 (d_k ) = b_k$
for even values of $k$. When $k$ is odd the point $d_k$ is not defined.
The properties of these points are made clear in Figure~\ref{fig:PD}
and the purpose of these points is expanded on in \S\ref{subsec:sketch} where
a  sketch of the proof of Theorem~\ref{thm:bounds} is given.

Since $E_k$ is orientation preserving (reversing) 
when $k$ is even (odd),  the following holds:
\begin{itemize}
\item  for even values of $k$
$$
\begin{array}{rl}
A_k &= E_k \circ f_1 (a'_k ) = E_k \circ f_2 (b'_k ), \\
B_k &= E_k \circ f_1 (e_k )\end{array} $$
\item and for odd $k$
$$\begin{array}{rl}
B_k &= E_k \circ f_1 (a'_k ) = E_k \circ f_2 (b'_k ),\\
A_k  &= E_k \circ f_1 (e_k ).
\end{array}$$
\end{itemize}
As we will show in Lemma~\ref{lem2},  $B_k=\hat B_k$ but  in general $A_k\ne \hat A_k$. 

\begin{figure}[h]
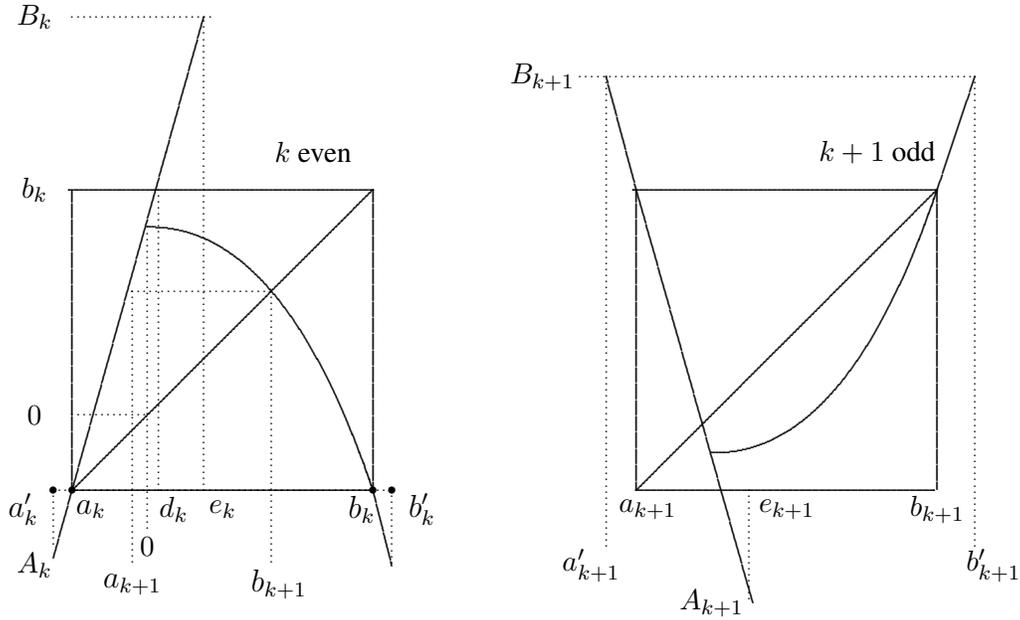
 \hfil
\beginpicture
\dimen0=0.5cm
\setcoordinatesystem units <\dimen0,\dimen0>  point at 30 0
\setplotarea x from -4 to 4, y from -3 to 3
\setlinear
\plot -4 -4 4 -4  4 4 -4 4 -4 -4 /
\plot -4 -4 4 4 / 
\setdots <0.8mm> 
\plot -4.5 -4 4.5 -4 / 
\plot 4.5 -4 4.5 -6 / 
\plot -4.5 -4 -4.5 -5.5 / 
 \plot 1.3 1.3 1.3 -6 / 
 \plot -2.4 1.3 1.3 1.3 / 
  \plot -2.4 1.3 -2.4 -6 / 
\setsolid
\put {$a_k$} at -3.5 -4.5 
\put {$b_k$} at 3.7 -4.5 
\put {$b_k'$} at 5.3 -4.5 
\put {$a_k'$} at -5.3 -4.5 
\put {$b_{k+1}$} at 1.5 -6.5 
\put {$a_{k+1}$} at -2.4 -6.5
\put {\tiny $\bullet$} at 4.5 -4  
\put {\tiny $\bullet$} at 4 -4
\put {\tiny $\bullet$} at -4 -4
\put {\tiny $\bullet$} at -4.5 -4
\put {\small  $0$} at -2 -5.5 
\put {\small $d_k$} at -1.3 -4.5 
\put {\small $e_k$} at -0 -4.5 
\put {\small $b_k$} at -5 4  
\put {\small $B_k$} at -5 8.6 
\put {\small $A_k$} at -5 -6  
\put {\small $0$} at -5 -2 
\plot -4.5 -5.8 -4 -4 -1.5 5    -0.5 8.6 /  
\setquadratic 
\plot -2 3 1.3 1.3 4 -4 /
\setlinear 
\plot 4 -4 4.5 -6 / 
\setdots  <0.8mm>  \setlinear
\plot -0.5 -4 -0.5 8.6 /  
\plot -1.7 -4 -1.7 4 / 
\plot -4 8.6 -0.2 8.6 / 
\plot -2 -5 -2 3 /
\plot -4 -2 -2 -2  /
\put {\small $k$ even } at  2.5 5 
\setcoordinatesystem units <\dimen0,\dimen0>  point at 15 0
\setplotarea x from -4 to 4, y from -6 to 6
\setlinear \setsolid
\plot -4 -4 4 -4  4 4 -4 4 -4 -4 /
\plot -4 -4 4 4 / 
\plot -4 4 -0.9 -7 / 
\plot -4.8 7 -4 4  / 
\plot 4 4 5 7 / 
\setquadratic 
\plot -2 -3 1.3 -1.3 4 4 /
\setdots <0.8mm>  
\setlinear 
\plot -4.8 7 -4.8 -5.5  /
\plot 5 7 5 -5.5 /
\plot -5.5 7 5 7 / 
\plot -1 -4 -1 -7 / 
\put {\small $a_{k+1}'$} at -5.2 -6 
\put {\small $b_{k+1}'$} at 5.5 -6 
\put {\small $a_{k+1}$} at -3.7 -4.5 
\put {\small $b_{k+1}$} at 4 -4.5 
\put {\small $B_{k+1}$} at -6.5 7 
\put {\small $A_{k+1}$} at -2 -7 
\put {\small $e_{k+1}$} at 0 -4.5 
\put {\small $k+1$ odd } at  2.5 5 
\endpicture
\caption{\label{fig:PD} $f^{2^k}|I_k$ and $f^{2^{k+1}}|I_{k+1}$ when $k$ is even and their semi-extensions.
Note that the points $d_k,e_k,a_k',b_k'$ are defined using the semi-extension rather than dynamically.}
\end{figure}

\begin{figure}[h]
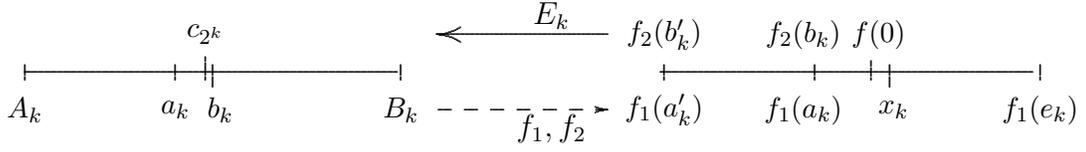
 \hfil
\beginpicture
\dimen0=0.5cm
\setcoordinatesystem units <\dimen0,\dimen0>  point at 0 0
\setplotarea x from -10 to 10, y from -1 to 1
\setlinear
\plot -12 0  -2 0 / 
\plot 5 0 15 0 / 
\put {\small $A_k$} at -12 -1 
\put {\small $a_k$} at -8 -1 
\put {\small $c_{2^k}$} at  -7.2 1 
\put {\small $b_k$} at -6.8 -1 
\put {\small $B_k$} at  -2 -1 
\plot -12 -0.2 -12 0.2 / 
\plot -8 -0.2 -8 0.2 / 
\plot -7 -0.4 -7 0.2 / 
\plot -7.2 -0.2 -7.2 0.4 / 
\plot -2 -0.2 -2 0.2 / 
\put {\small $f_1(a_k')$} at 5 -1 
\put {\small $f_2(b_k')$} at 5 1 
\put {\small $f_1(a_k)$} at 8.7 -1 
\put {\small $f_2(b_k)$} at 8.7 1 
\put {\small $f(0)$} at 10.7 1 
\put {\small $x_k$} at 11.1 -1 
\put {\small $f_1(e_k)$} at  15 -1 
\plot 5 -0.2 5 0.2 / 
\plot 9 -0.2 9 0.2 / 
\plot 10.5 -0.2 10.5 0.3 / 
\plot 11 -0.4 11 0.2 / 
\plot 15 -0.2 15 0.2 / 
\put {$E_k$} at 2 1.5 
\arrow <3mm> [0.2,0.67] from 3.5 1  to -1 1
\setdots <0.5mm>
\setdashes 
\arrow <3mm> [0.2,0.67] from -1 -1 to 3.5 -1  
\put {$f_1,f_2$} at 2 -1.5 
\endpicture
\caption{\label{fig:Jk}  When $k$ is even, $E_{k+1}=E_k\circ f_2 \circ E_k$ and $E_k$ is orientation preserving. 
Here $E_k(x_k)=b_k$. It is not clear where $b_k'$ and $a_k'$ are in relation to $B_k$ and $A_k$.} \end{figure}

\subsection{Sketch of the proof of Theorem~\ref{thm:bounds}\label{subsec:sketch}} 
Note that the interval $[A_k,B_k]$ is the range of the {\em semi-extension} of the first entry map 
$E_k$ (rather than  its diffeomorphic extension).
Therefore none of the points $A_k,B_k,a_k',b_k',e_k$  have a priori any dynamical interpretation. As it turns out $B_k=\hat B_k$, see Lemma~\ref{lem2} and therefore $B_k$ has a dynamical interpretation, but
 none of these other points do.

Our aim in this section is to show $[A_k,B_k]$ is much bigger than $[a_k,b_k]$ (for $k$ even and large). 
To do this, we will consider all the various positions of $a_k',b_k',e_k$ and show that 
each of these give some recursive information. Let us outline the argument. 

{\bf Step 1 (\S\ref{subsec:topproper})} consists in obtaining 
various topological properties, including 
that if $e_{k+1}<b_{k+1}$ then one can propagate the semi-extension of level $k+1$ to level $k+2$. 
More precisely,  
 for $k$ even \begin{equation} e_{k+1}<b_{k+1} \implies A_{k+2}=A_{k+1}.\label{eq:step1} \end{equation} \newline \indent 
{\bf Step 2 (\S\ref{subsec:firstrecursive})} consists in using some cross-ratio inequality and the strong-asymmetry 
of $f$ to  show that there exists a 
$C>0$ so that the following recursive inequality holds for all
  even $k$
 \begin{equation}d_k \le  C b_{k+1}^{\beta-1} b_k. \label{eq:step2} \end{equation}
\newline \indent 
{\bf Step 3  (\S\ref{subsec:dichotomy})} gives the following  dichotomy, see  Lemma~\ref{lem5},
\begin{equation}\mbox{ either } |A_k | > C b_{k+1}\mbox{ or }e_k < b_{k+1}. \label{eq:step3} \end{equation} 
\newline \indent 
{\bf Step 4  (\S\ref{subsec:condbound})}
shows that the assumption $|A_k|>C b_{k+1}$ implies some distortion control of the restriction of $f^{2^k}$
to $[b_{k+1},b_k]$, see Lemma~\ref{lem6}. 
\newline \indent 
{\bf Step 5   (\S\ref{subsec:somespace})} consists in showing that 
one has infinitely often space. 
 This means that we need to show that there exists $\tau>1$ so that 
$[A_k,B_k] \supset \tau  [a_k,b_k]$ for infinitely many $k$ even. 
From the smallest interval argument in Lemma~\ref{lem1} and the strong asymmetry 
we have that  $B_k>\tau b_k>>a_k$ for some $\tau>1$. So   it
suffices to show that there exists $C>0$ so that  $|A_k|>C b_{k}$ holds for infinitely many $k$.
From the dichotomy (\ref{eq:step3})  it follows that either from time to time  the  inequality $|A_k|>C b_{k+1}$  holds or 
 $e_k<b_{k+1}$ holds for all $k$ even and large. 
If the latter holds, then (\ref{eq:step1}) implies that $A_{k+2}=A_{k+1}$ for {\em all} $k$ even and large.
Using a further argument  using equation (\ref{eq:step2}), using Step 4,  we can then {\lq}replace{\rq} the inequality 
$|A_k|>Cb_{k+1}$ by the inequality $|A_k|>Cb_k$, and 
obtain in Lemma~\ref{lem7} that  
$$|A_k|>Cb_k \mbox{  holds only finitely often }  \implies   \exists k_0 \mbox{ with } A_{k_0}=A_{k_0+1}=A_{k_0+2}=\dots.$$ 
Of course the latter also  implies $|A_k|>Cb_k$ for $k$ large, thus concluding Step 5. 
\newline \indent 
{\bf Step 6  (\S\ref{subsec:improvingspace})}
consists in  showing that if  $|A_k|>C  b_k$  for some even $k$
 (or in other words if the space condition $[A_k,B_k] \supset \tau  [a_k,b_k]$ holds)
 then one gets {\em large} space in the next step. Thus we obtain an increasingly growing space. 
\newline \indent 
{\bf Step 7 (\S\ref{subsec:superexpspace})} In this final step we show that the space is growing 
superexponentially fast. This is done in Lemma~\ref{lem10}, and this then concludes the proof of Theorem~\ref{thm:bounds}.

 \subsection{Some topological properties of $a'_k,b_k',d_k,e_k$ \label{subsec:topproper}}

Let us list a number of more or less obvious relations between the points we defined.
For example, assertion (4) and (5) show that if some metric properties
hold for the non-dynamically defined points $b_k'$ and $e_k$
then the semi-extension from one level can be used to obtain a semi-extension
of the next level.

\begin{lemma}\label{lem2}
Let $k\ge 2$ be an even integer. Then
\begin{enumerate}
\item $B_{k+1} =  B_{k+2}=\hat B_{k+1}=\hat B_{k+2}=c_{2^k}$;
\item  $e_{k+2 }< d_k$;
\item $\hat A_{k}=\hat A_{k+1}=c_{2^{k-1}}$;
\item if $b'_k < B_k$, then $e_{k+1} < e_k$ and $A_{k+1}= A_k$.
\item if $e_{k+1} < b_{k+1}$, then $b'_{k+2} < b_{k+1}$ and $A_{k+2} = A_{k+1}$.
\end{enumerate}
\end{lemma}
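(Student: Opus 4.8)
My plan is to handle the five assertions in an order that lets later parts reuse earlier ones, using throughout the basic fact that for $f\in\mathcal A(2^\infty)$ the intervals $f^i[a_k,b_k]$, $i=0,\dots,2^k-1$, have pairwise disjoint interiors, together with the recursive description $E_{k+1}=E_k\circ f_2\circ E_k$ for $k$ even (Figure~\ref{fig:Jk}) and the analogous composition passing from odd to even levels. First I would prove (1): since $f^{2^k-1}\colon J_k\to[a_k,b_k]$ is the \emph{first} entry map of $c_1$ into $[a_k,b_k]$, the maximal diffeomorphic extension $\hat T_k$ cannot be enlarged, and its image $[\hat A_k,\hat B_k]$ has as a boundary point the nearest point of the forward orbit of the critical point that obstructs extension; one identifies this obstruction explicitly by noting that $f^{2^k-1}(c_1)=c_{2^k}\in[a_k,b_k]$ and tracing how $[a_{k+1},b_{k+1}]\subset[a_k,b_k]$ sits inside, so that the relevant obstructing point is $c_{2^k}$. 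The point is that for $k$ even the turning point $0$ lies to the right of $a_k$ but the critical value side forces the extension on the $B$-side to stop exactly at $c_{2^k}=f^{2^k}(0)$; since the semi-extension on that side coincides with the diffeomorphic extension (no left branch $f_1$ is being used past $0$ there), we get $B_k=\hat B_k$ and the chain of equalities in (1) by applying this at levels $k+1$ and $k+2$ and using that $c_{2^{k+1}},c_{2^{k+2}}$ both reduce, via renormalization, to the same marked point $c_{2^k}$ in the rescaled picture.

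Assertion (3) is the mirror statement on the $\hat A$-side, obtained the same way: the diffeomorphic (not semi-) extension to the left is obstructed by $c_{2^{k-1}}$, and this obstruction does not change when passing from level $k$ to level $k+1$ because the first entry maps to $[a_k,b_k]$ and $[a_{k+1},b_{k+1}]$ differ by a single renormalization block whose left end is anchored at the same point. For (2), $e_{k+2}<d_k$, I would argue by unwinding the definitions: $d_k\in[0,e_k]$ is the preimage under $E_k\circ f_1$ of $b_k$, while $e_{k+2}$ is the right end of $f_1^{-1}(T_{k+2})$; since $T_{k+2}\subset T_k$ (the first entry map to a deeper interval has a smaller domain) one gets $f_1^{-1}(T_{k+2})\subset f_1^{-1}(T_k)=[a_k',e_k]$, and then one must locate $b_k$ relative to $[a_{k+2},b_{k+2}]$ to see that the $E_{k+2}$-preimage of $[A_{k+2},B_{k+2}]$ cuts off before reaching the $E_k$-preimage of $b_k$. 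This is essentially a monotonicity-of-nested-domains computation once the compositional structure $E_{k+2}=E_{k+1}\circ f_?\circ E_{k+1}$ is written out.

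Finally (4) and (5) are the propagation statements and are the real content. For (4): assuming $b_k'<B_k$ means that $f_2(b_k')$, the right endpoint of the range obstruction coming from the right branch, lies \emph{inside} $[A_k,B_k]$, hence $E_k$ can be post-composed with $f_1$ (the \emph{linear} branch, whose domain $[a_0,\epsilon_0)$ extends past $0$) without the combined map's domain being cut on the left by the $A_k$-constraint — so the left endpoint $A_{k+1}$ of the new semi-extension is inherited, $A_{k+1}=A_k$, and correspondingly the domain shrinks on the $e$-side, $e_{k+1}<e_k$. Assertion (5) is the two-step version: $e_{k+1}<b_{k+1}$ says the domain $[a_{k+1}',e_{k+1}]$ of $E_{k+1}\circ f_1$ does not even reach $b_{k+1}$, which forces the right-branch obstruction at level $k+2$, namely $f_2(b_{k+2}')$, to land before $b_{k+1}$ (hence $b_{k+2}'<b_{k+1}$), and then applying (4)-type reasoning at level $k+1$ gives $A_{k+2}=A_{k+1}$. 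The main obstacle I expect is (5): one has to be careful that the composition realizing $E_{k+2}$ from $E_{k+1}$ is of the type to which the (4)-argument applies (i.e. that the intermediate branch used is $f_2$ and that the $f_1$ post-composition is what extends the domain), and to check that the hypothesis $e_{k+1}<b_{k+1}$ is exactly the inequality needed to guarantee $f_2(b_{k+2}')$ is reached before the obstruction — this is where the asymmetry ($|a_k|\ll b_k$) and the precise placement of the renormalization intervals inside one another must be used rather than just soft topology.
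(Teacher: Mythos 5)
Your overall strategy --- exploit the recursive structure of the semi-extensions and track where each composition is obstructed --- is exactly the paper's, but several of your individual steps do not go through as described. The most concrete gap is in part (2). Nestedness of domains gives only $T_{k+2}\subset T_{k+1}\subset T_k$, hence $e_{k+2}\le e_k$; but $d_k\le e_k$ by definition, so this is strictly weaker than the claim $e_{k+2}<d_k$, and no ``monotonicity-of-nested-domains computation'' can close that gap. What is actually needed is to evaluate the (orientation-reversing) outer map $E_{k+1}\circ f_1$ at both points: one checks $E_{k+1}\circ f_1(e_{k+2})=a'_{k+1}$ (because $B_{k+2}=B_{k+1}=E_{k+1}\circ f_1(a'_{k+1})$ and $E_{k+2}=E_{k+1}\circ f_1\circ E_{k+1}$) and $E_{k+1}\circ f_1(d_k)=E_k\circ f_2(b_k)=a_k$; then $a_k<a'_{k+1}$ together with orientation reversal gives $e_{k+2}<d_k$. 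Your sketch never produces these two identities.

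Second, you have the parity of the inserted branch backwards in places, and this matters for (4) and (5). The correct recursions are $E_{k+1}=E_k\circ f_2\circ E_k$ for $k$ even (the middle branch is the \emph{right} branch $f_2$, since $f^{2^k}[a_{k+1},b_{k+1}]\subset[0,b_k]$) and $E_{k+2}=E_{k+1}\circ f_1\circ E_{k+1}$ (the middle branch is $f_1$, since $f^{2^{k+1}}[a_{k+2},b_{k+2}]\subset[a_{k+1},0]$). In (4) you speak of post-composing $E_k$ with $f_1$ to pass to level $k+1$, and in (5) you say the intermediate branch realizing $E_{k+2}$ should be $f_2$; both are the wrong branch. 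The correct reading of the hypothesis in (4) is that $b'_k<B_k$ means the point $b'_k$ --- the endpoint of the domain of the outer map $E_k\circ f_2$, at which that map attains the value $A_k$ --- lies inside the range $[A_k,B_k]$ of the \emph{inner} $E_k$, so the composite attains $A_k$ and $A_{k+1}=A_k$; the inequality $e_{k+1}<e_k$ then follows from $b'_k=E_k\circ f_1(e_{k+1})<B_k=E_k\circ f_1(e_k)$ and monotonicity of $E_k\circ f_1$. Similarly, in (1) the equality $B_{k+2}=B_{k+1}$ is not a matter of ``the same marked point in the rescaled picture'': one must check that $a'_{k+1}$, where the outer map $E_{k+1}\circ f_1$ attains its maximum $B_{k+1}$, lies in $(a_k,a_{k+1})$ and hence inside the diffeomorphic range $[\hat A_{k+1},\hat B_{k+1})\supset[a_k,c_{2^k})$ of the inner $E_{k+1}$. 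These defects are fixable, but as written the proposal does not yet constitute a proof.
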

\begin{proof}
Since $f^{2^k}[a_{k+1},b_{k+1}]\subset [0,b_k]$, we have 
$E_{k+1}=E_k \circ f_2\circ E_k|T_{k+1}$, where $E_k$ is orientation preserving and $f_2$ is orientation reversing.  Since the diffeomorphic range of $E_k$
is $[\hat A_k,\hat B_k]\supset [a_k,b_k]\ni 0$ and $E_k\circ f_2$ maps $(0,b_k]$ diffeomorphically onto $[a_k,c_{2^k})$,
it follows that  $B_{k+1}=\hat B_{k+1}=E_k\circ f_2(0)=c_{2^k}$ and $A_{k+1}\le \hat A_{k+1}\le a_k$. 
Taking $a_{k+1}'$ to be the point in $(a_k,a_{k+1})$ for which $f^{2^k}(a_{k+1}')=E_k\circ f_1(a_{k+1}')=0$ one has $f^{2^{k+1}}(a'_{k+1})=E_{k+1}\circ f_1(a'_{k+1})=
E_k\circ f_2 \circ E_k\circ f_1(a_{k+1}')=E_k\circ f_2 \circ E_k(0)= B_{k+1}$.

Similarly,  since $f^{2^{k+1}}[a_{k+2},b_{k+2}]\subset [a_{k+1},0]$, $E_{k+2}=E_{k+1}\circ f_1\circ E_{k+1}|T_{k+2}$ where $E_{k+1}$ is orientation reversing and $f_1$ is orientation preserving. Since $a_k<a'_{k+1}<a_{k+1}<c_{2^{k+1}}=E_{k+1}(c_1)<0$, $E_{k+1}\circ f_1(a'_{k+1})=B_{k+1}=\hat B_{k+1}$
and since the diffeomorphic range of $E_{k+1}$ is $[\hat A_{k+1},\hat B_{k+1})\supset [a_k,c_{2^k}) \supset (a_{k+1}',0)$ it follows that $B_{k+2}=\hat B_{k+2}=B_{k+1}=\hat B_{k+1}=c_{2^k}$
and $\hat A_{k+2}=c_{2^{k+1}}$,  proving in particular statement (1). 

By definition $E_{k+2}\circ f_1(e_{k+2})=B_{k+2}$. Since $E_{k+1}\circ f_1(a'_{k+1})=B_{k+1}=B_{k+2}$ 
and  $E_{k+2}=E_{k+1}\circ f_1\circ E_{k+1}|T_{k+2}$
we have that  $E_{k+1}\circ f_1(e_{k+2})=a'_{k+1}$. Since $a'_{k+1}\in (a_k,a_{k+1})$,
$E_{k+1}\circ f_1(d_k)=E_k\circ f_2\circ E_k\circ f_1(d_k)=E_k\circ f_2 (b_k)=a_k$
and $E_{k+1}$ is orientation reversing, it follows that $e_{k+2}<d_k$, proving statement (2). 

Statement (3) follows as in statement (1).


To prove statement (4), assume  $b'_k < B_k$. Then $E_k$ has range  $[A_k,B_k]\supset [A_k,b'_k]$. Note that the left endpoint of the domain of $E_k$
is $f_2(b'_k)$ and $E_k\circ f_2(b'_k)=A_k$. Since 
$E_{k+1}=E_k\circ f_2 \circ E_k$ it follows that the range of $E_{k+1}$ is equal to $[A_k,B_{k+1}]$ and so $A_{k+1}=A_k$.
Moreover, $A_k=A_{k+1}=E_{k+1}\circ f_1(e_{k+1})=E_k\circ f_2 \circ E_k \circ f_1(e_{k+1})$ and $E_k\circ f_2(b'_k)=A_k$.
Since $E_{k+1}$ and $f_1,f_2$ are all injective,  $b'_k= E_k \circ f_1(e_{k+1})$.  Therefore, and since $B_k=E_k\circ f_1(e_k)$ and
$f_1,E_k$ are increasing,  $b'_k<B_k$ implies that $e_{k+1}<e_k$. 


Finally, to prove statement (5), note that $E_{k+1}|[f(a_{k+1}),f(0))$ maps diffeomorphically onto $(c_{2^{k+1}}, b_{k+1}]$ and if 
  $e_{k+1} < b_{k+1}$ then this last interval contains $(c_{2^{k+1}},e_{k+1}]$. 
Since  $E_{k+1}\circ f_1$ maps the latter interval diffeomorphically onto $[A_{k+1},c_{2^{k+2}})$ and since
$E_{k+2}=E_{k+1}\circ f_1 \circ E_{k+1}|T_{k+2}$ it follows that 
$A_{k+2}= A_{k+1}$ and  $b'_{k+2} = f^{2^{k+1}} |[0,b_{k+1}](e_{k+1}) < b_{k+1}$.
\end{proof}

\subsection{A first recursive inequality \label{subsec:firstrecursive}}

\begin{lemma}\label{lem3} There exists $C>0$ so that for all $k$ even
 \begin{equation}d_k \le  C b_{k+1}^{\beta-1} b_k. \label{eq5} \end{equation}
\end{lemma}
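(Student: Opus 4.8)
The plan is to use the defining relation $E_{k+1}\circ f_1(d_k)=a_k$ (established in the proof of Lemma~\ref{lem2}) together with the factorisation $E_{k+1}=E_k\circ f_2\circ E_k$ for $k$ even, and then estimate how much each of the three maps $E_k$, $f_2$, $E_k$ contracts or expands near the relevant points. Starting from $d_k\in[0,e_k]$, the point $w_k:=E_k\circ f_1(d_k)$ satisfies $E_k\circ f_2(w_k)=a_k$, so $w_k$ is the point of $(0,b_k]$ whose image under $E_k\circ f_2$ is the left endpoint $a_k$; since $E_k\circ f_2$ maps $[0,b_k]$ onto $[a_k,c_{2^k}]$ and $c_{2^k}\sim b_k$ while $|a_k|\sim K_0 b_k^\beta\ll b_k$, the point $w_k$ is \emph{very close to} $b_k$. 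Quantitatively, near $0$ the branch $f_2$ behaves like $f_2(x)=c_1-K_+x^\beta+o(x^\beta)$, and $E_k$ restricted to $[\hat A_k,\hat B_k]$ is (by the $k$-even part of Theorem~\ref{thm:bounds}, or more elementarily by Koebe applied to the diffeomorphic extension $\hat T_k$) essentially affine with derivative of order $|b_k|/|\hat B_k-\hat A_k|$; but here I only need crude two-sided bounds, so I would use the cross-ratio / Koebe estimates directly rather than the sharp asymptotics.

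The key computation is then: $b_k-w_k$ is controlled by pulling the length $|c_{2^k}-E_k\circ f_2(w_k)|=|c_{2^k}-a_k|\sim b_k$ back through $f_2$ near its critical point and through $E_k$. Pulling $b_k$ back through $f_2$ from the critical value $c_1$ costs a factor that makes $b_k'-w_k'$ (the $f_2$-preimages) comparable to $b_k^{1/\beta}$-type scales — more precisely, since $f_2(b_k)\approx c_1-K_+b_k^\beta$ and we need to move an $O(b_k)$ amount in the image, the preimage displacement is of order $(b_k/(K_+\beta b_k^{\beta-1}))=O(b_k^{2-\beta})$ when $b_k$ is small... — here I must be careful about which interval is being pulled back. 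Actually the clean way: $d_k=f_1^{-1}\!\big((E_k)^{-1}(w_k)\big)$, and $f_1$ is a diffeomorphism with derivative bounded away from $0$ and $\infty$ near $0$ (this is where $\alpha=1$ is used), so $d_k\approx (E_k)^{-1}(w_k)$, i.e. $d_k$ is comparable to the $E_k$-preimage of a point at distance $\approx|c_{2^k}-w_k|$ from $\hat B_k=c_{2^k}$. So $d_k\approx |b_k|\cdot \dfrac{|c_{2^k}-w_k|}{|c_{2^k}-a_k|}\cdot(\text{Koebe distortion})$, and I need to show $|c_{2^k}-w_k|\lesssim b_{k+1}^{\beta-1}b_k$. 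Since $w_k$ is the $(E_k\circ f_2)$-preimage of $a_k$, and $a_{k+1}$ is also expressible through the level-$(k{+}1)$ picture with $b_{k+1},a_{k+1}$ sitting inside $[a_k,0]$ at scales governed by $a_{k+1}\sim -K_0b_{k+1}^\beta$ and $b_{k+1}\sim\lambda b_k$, the distance $|c_{2^k}-w_k|$ should come out as $\asymp b_k\cdot b_{k+1}^{\beta-1}$ after unwinding $f_2$ near its power-law critical point (the exponent $\beta-1$ is exactly the "derivative of $x^\beta$" factor $\beta x^{\beta-1}$ evaluated at scale $b_{k+1}$).

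The step I expect to be the main obstacle is controlling the distortion of $E_k$ on the interval between $w_k$ and $b_k$ (equivalently on $[b_{k+1},b_k]$ after one more pullback), because a priori $E_k$ is only the \emph{semi}-extension and we do not yet have the large real bounds of Theorem~\ref{thm:bounds} available unconditionally at this stage of the argument — indeed Lemma~\ref{lem3} is one of the ingredients \emph{towards} that theorem (Step 2 in the sketch of \S\ref{subsec:sketch}). So I would avoid invoking Theorem~\ref{thm:bounds} and instead rely only on: (i) the Koebe space on the \emph{right} of $[a_k,b_k]$ coming from the smallest-interval argument (Lemma~\ref{lem1}), which gives definite space to the right and hence bounded distortion of the relevant monotone branch on $[b_{k+1},b_k]$ via the Corollary of Koebe (Lemma~\ref{cor:koebe}); (ii) a cross-ratio (Schwarzian $\le 0$) inequality to transfer this to the composition; and (iii) the explicit power-law form of $f_2$ near $0$ together with $\alpha=1$ for $f_1$. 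Assembling these gives $d_k\le C\,b_k\cdot b_{k+1}^{\beta-1}$ with $C$ depending only on $K_\pm,\beta$ and the Koebe constant $\tau$ from Lemma~\ref{lem1}, which is the claim.
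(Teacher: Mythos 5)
There is a genuine gap, and it is a circularity. Your route traces $d_k$ through the $E_k$-preimage of $[c_{2^k},b_k]$ (your $w_k$ is by definition exactly $b_k$, since $E_k\circ f_1(d_k)=b_k$; the factorisation $E_{k+1}=E_k\circ f_2\circ E_k$ and the relation $E_{k+1}\circ f_1(d_k)=a_k$ are not needed and only obscure this). To convert $|b_k-c_{2^k}|$ into a bound on $x_k-c_1\approx d_k$ you need distortion control of $E_k$ on $[c_1,x_k]$, i.e.\ Koebe space around $[c_{2^k},b_k]$ \emph{relative to its own length}. Lemma~\ref{lem1} gives space only to the right of $b_k$; on the left you would need $c_{2^k}-\hat A_k\gtrsim b_k-c_{2^k}$, which amounts to knowing $c_{2^k}\approx b_k$ (or $b_{k+1}\approx b_k$). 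Those facts are precisely equations (\ref{eq8}) and (\ref{eq10}) of Lemmas~\ref{lem6} and~\ref{lem8}, which are \emph{conditional} and sit downstream of Lemma~\ref{lem3} in the chain of Section~\ref{Sec:bigbounds}. Moreover, even granting bounded distortion, your computation yields $d_k\lesssim |J_k|\cdot|b_k-c_{2^k}|/|b_k-a_k|\lesssim b_k^{\beta-1}|b_k-c_{2^k}|\le C b_k^{\beta}$, which is strictly \emph{weaker} than the claimed $Cb_{k+1}^{\beta-1}b_k$ (recall $b_{k+1}<b_k$ and $\beta>1$) unless one already knows $b_{k+1}\gtrsim b_k$ — again unavailable here.

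The missing idea is where the factor $b_{k+1}^{\beta-1}$ actually comes from: $b_{k+1}$ is a \emph{repelling fixed point} of $f^{2^k}$, so $1<|Df^{2^k}(b_{k+1})|\approx b_{k+1}^{\beta-1}|DE_k(f(b_{k+1}))|$, forcing $|DE_k(f(b_{k+1}))|\gtrsim b_{k+1}^{1-\beta}$; since $f(a_{k+1})=f(b_{k+1})$ and $\alpha=1$, the same huge lower bound transfers to $Df^{2^k}(a_{k+1})=D(E_k\circ f_1)(a_{k+1})$ — this is (\ref{eq5a}), and it is the only place the strong asymmetry enters. One then applies the cross-ratio expansion not to $E_k$ on $[f(a_k),\hat x_k]$ but to $E_k\circ f_1$ on $[a_{k+1},e_k]$ with the four points $a_{k+1},a_{k+1}+h,d_k,e_k$ (images $b_{k+1},\,\cdot\,,b_k,B_k$) and lets $h\downarrow 0$: the degenerate cross-ratio produces exactly the factor $1/Df^{2^k}(a_{k+1})\le Cb_{k+1}^{\beta-1}$ multiplied by $(b_k-b_{k+1})\le b_k$ and by ratios that are bounded using only Lemma~\ref{lem1}. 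Your proposal gestures at the right ingredients (cross-ratio, $\alpha=1$, the heuristic $\beta x^{\beta-1}$ at scale $b_{k+1}$) but never extracts the fixed-point derivative estimate, and without it the argument cannot close.
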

\begin{proof}
For $k$ even,  $b_{k+1}$ is a repelling fixed point of $f^{2^k}$, so $|Df^{2^k}(b_{k+1})|>1$.
When $k$ is large this implies that 
$$b_{k+1}^{\beta-1}|DE_k(f(b_{k+1}))|\approx  |Df^{2^k}(b_{k+1})|>1.$$ Since  
 $|Df^{2^k}(a_{k+1})|\approx  |a_{k+1}|^{\alpha-1} |DE_k(f(a_{k+1}))|$ and $f(a_{k+1})=f(b_{k+1})$ it follows that
\begin{equation}
 Df^{2^k}(a_{k+1}) > C_. |a_{k+1}|^{\alpha-1} b_{k+1}^{1-\beta}\mbox{ and }|DE_k(f(b_{k+1}))|> C_. b_{k+1}^{1-\beta}.
\label{eq5a}\end{equation} 

Diffeomorphic branches of maps with negative Schwarzian derivative expand cross-ratios, see 
\cite[Chapter IV]{dMvS}.
 Applying this fact to the diffeomorphism 
$E_k\circ f_1 \colon [a_{k+1},e_k] \to [b_{k+1},B_k]$ and the four points
$a_{k+1},a_{k+1}^+,d_k,e_k$ (which map to $b_{k+1},b_{k+1}^+,b_k,B_k$) 
(where we take $a_{k+1}^+=a_{k+1}+h$ with $h>0$ close to $0$ and $b_{k+1}^+$ the image of this point)
we obtain the inequality 
$$\dfrac{(e_k-a_{k+1}^+)(d_k-a_{k+1})}{(a_{k+1}-a_{k+1}^+)(e_k-d_k)} \le
\dfrac{(B_k-b_{k+1}^+)(b_k-b_{k+1})}{(b_{k+1}-b_{k+1}^+)(B_k-b_k)} .
$$
Taking $h\downarrow 0$, we get
 \begin{equation}
 \begin{array}{rl}d_k & <d_k-a_{k+1}\le  \dfrac{(B_k-b_{k+1})}{(B_k-b_k)}(b_k-b_{k+1})\dfrac{(e_k-d_k)}{(e_k-a_{k+1})}  \dfrac{1}{Df^{2^{k}}(a_{k+1})} \\
&\le C b_{k+1}^{\beta-1} b_k.\end{array}
\label{eq5b} \end{equation}
Here we use that  the first factor in the long expression is bounded from above by Lemma~\ref{lem1},   the second by $b_k$,
the third factor by $1$ and in the final factor we use the bound from (\ref{eq5a}). 
\end{proof}


\begin{figure}[htp]
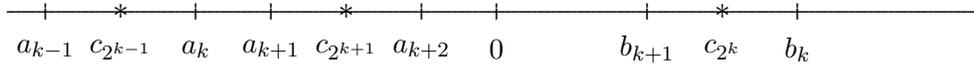
 \hfil
\beginpicture
\dimen0=0.5cm
\setcoordinatesystem units <\dimen0,\dimen0>  point at 0 0
\setplotarea x from -10 to 10, y from -1 to 1
\setlinear
\plot -13 0 13 0 / 
\put {\small $a_{k-1}$} at -12 -1
  \plot -12 -0.2 -12 0.2  / 
\put {\small $c_{2^{k-1}}$} at -10 -1
  \put {$*$} at -10 0 
\put {\small $a_k$} at -8 -1
  \plot -8 -0.2 -8 0.2  / 
 \put {\small $a_{k+1}$} at -6 -1 
  \plot -6 -0.2 -6 0.2  /  
  \put {\small $c_{2^{k+1}}$} at -4 -1 
  \put {$*$} at -4 0 
   \put {\small $a_{k+2}$} at -2 -1 
     \plot -2 -0.2 -2 0.2  /  
   \put {\small $0$} at -0 -1 
        \plot 0 -0.2 0 0.2  / 
 \put {\small $b_k$} at 8 -1
      \plot 8 -0.2 8 0.2  / 
 \put {\small $c_{2^k}$} at 6 -1 
   \put {$*$} at 6 0 
  \put {\small $b_{k+1}$} at 4 -1
       \plot 4 -0.2 4 0.2  /  

   \endpicture
\caption{\label{fig:order} The ordering of several dynamically relevant point; here $k$ is even.} 
\end{figure}

\subsection{Some dichotomies \label{subsec:dichotomy}}

\begin{lemma} \label{lem4} 
There exists a constant $C > 0$ such that for large even values of $k$,
$$|A_{k+2}| >   \min(C b_{k+1}, \frac{1}{2} |a_k |).$$
\end{lemma}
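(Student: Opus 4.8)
The goal is to show $|A_{k+2}| > \min(C b_{k+1}, \tfrac12|a_k|)$ for large even $k$. The plan is to play off the two mechanisms that govern how $A_{k+2}$ is produced against each other, and show that in every case one of the two lower bounds holds. Recall from Lemma~\ref{lem2}(5) that when $e_{k+1}<b_{k+1}$ we have $A_{k+2}=A_{k+1}$, and that $E_{k+2}=E_{k+1}\circ f_1\circ E_{k+1}$. So the first step is a case split on the position of $e_{k+1}$ relative to $b_{k+1}$.

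\emph{Case $e_{k+1}<b_{k+1}$.} Here $A_{k+2}=A_{k+1}$, so it suffices to bound $|A_{k+1}|$ from below. By Lemma~\ref{lem2}(1) the right end $B_{k+1}=c_{2^k}\sim b_k$, and $A_{k+1}=E_{k+1}\circ f_1(e_{k+1})=E_k\circ f_2\circ E_k\circ f_1(e_{k+1})$. The plan is to feed the recursive inequality of Lemma~\ref{lem3}, namely $d_k\le Cb_{k+1}^{\beta-1}b_k$, into a distortion estimate: since $E_k\circ f_1$ has negative Schwarzian and maps $[a_{k+1},e_k]$ onto $[b_{k+1},B_k]$, and $d_k$ is the point mapping to $b_k$, the assumption $e_{k+1}<b_{k+1}$ combined with $b_k'$-control from Lemma~\ref{lem2}(4)--(5) pins down where $e_{k+1}$ sits in $[a_{k+1},d_k]$. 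Running the cross-ratio inequality for $E_k\circ f_2\circ E_k\circ f_1$ on the four points $a_{k+1}',\,e_{k+1},\,\cdot\,,\,e_k$ and using the derivative bound $|DE_k(f(b_{k+1}))|>C_.\,b_{k+1}^{1-\beta}$ from (\ref{eq5a}), one obtains $|A_{k+1}|\geq C'b_{k+1}$, which is the first alternative. The arithmetic here parallels the proof of Lemma~\ref{lem3}, just pushed one level further.

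\emph{Case $e_{k+1}\geq b_{k+1}$.} Now the extension $E_{k+1}$ is ``too short on the right'' and $A_{k+2}$ is instead cut off by the left branch. Concretely $A_{k+2}=E_{k+2}\circ f_1(e_{k+2})$ while the diffeomorphic range already reaches $\hat A_{k+2}=c_{2^{k+1}}$ by Lemma~\ref{lem2}(1); and from $c_{2^{k+1}}\sim -\beta^{-(\beta+1)/(\beta-1)}K_0^{\beta/(\beta-1)}\lambda^{-\beta-1}b_{k+1}^{\beta+1}$ this is far to the left of $a_k\sim -K_0 b_k^\beta$ only once $b_{k+1}$ is not too small. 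The plan is to show that in this case $|A_{k+2}|$ is at least a definite fraction of $|a_k|$: using $E_{k+2}=E_{k+1}\circ f_1\circ E_{k+1}$, the point $e_{k+2}$ satisfies $E_{k+1}\circ f_1(e_{k+2})=a_{k+1}'$ (as in the proof of Lemma~\ref{lem2}(2)), and since $a_{k+1}'\in(a_k,a_{k+1})$, one more application of the cross-ratio/Koebe estimate — this time to $E_{k+1}\circ f_1$ on an interval whose image contains $[a_{k+1},c_{2^{k+1}}]$, using that the map is almost linear because its left factor is the linear branch $f_1$ — shows $A_{k+2}=E_{k+1}\circ f_1\circ E_{k+1}\circ f_1(e_{k+2})$ lands to the left of, say, $\tfrac12 a_k$. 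This is where the $\alpha=1$ hypothesis does the real work: linearity of $f_1$ is what makes the outer composition nearly affine, so the image of a point mapping to something comparable to $a_k$ is itself comparable to $a_k$.

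\emph{Main obstacle.} The delicate point is the second case: showing $|A_{k+2}|\geq\tfrac12|a_k|$ requires genuine (not merely order-of-magnitude) control, since the constant $\tfrac12$ is explicit. I expect to need the sharp form of the Koebe corollary (Lemma~\ref{cor:koebe}) together with the fact that $\tau_{k}\to\infty$ for even $k$ (Theorem~\ref{thm:bounds}) to say the relevant branch is as close to affine as we like for $k$ large, and then to track the linear model: the linear map sending the domain interval onto $[A_{k+1},B_{k+1}]$ sends the preimage of $a_k$ to a point within $o(1)\cdot|a_k|$ of $a_k$, hence to the left of $\tfrac12 a_k$ eventually. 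Assembling the two cases then gives the claimed $\min$, with $C=\min(C',C'')$ of the two constants produced above. The only subtlety in gluing them is checking the case boundary $e_{k+1}=b_{k+1}$ is harmless, which it is since the estimates in both cases degrade continuously there.
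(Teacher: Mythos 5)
Your proposal does not go through as written; the central problem is circularity. In your second case (and again in your ``main obstacle'' paragraph) you invoke the asymptotics $c_{2^{k+1}}\sim -\beta^{-(\beta+1)/(\beta-1)}K_0^{\beta/(\beta-1)}\lambda^{-\beta-1}b_{k+1}^{\beta+1}$ and the fact that $\tau_k\to\infty$ from Theorem~\ref{thm:bounds}. Both are downstream of Lemma~\ref{lem4}: the scaling laws of Section~\ref{sec:scaling} and Theorem~\ref{thm:bounds} itself are proved via the chain Lemmas~\ref{lem4}--\ref{lem10}, so neither may be used here. Likewise the assertion that $E_{k+1}\circ f_1$ is ``almost affine because of the linear branch $f_1$'' is unfounded: $f_1$ is only the innermost factor, and at this stage nothing controls the distortion of $E_{k+1}$ --- obtaining that control is precisely the point of the bootstrap. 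In your first case, the claimed intermediate bound $|A_{k+1}|\ge C'b_{k+1}$ is also not obtainable by a cross-ratio computation: a priori $A_{k+1}$ may coincide with $\hat A_{k+1}=c_{2^{k-1}}$, and (as the eventual scaling laws show) $|c_{2^{k-1}}|$ is in fact much smaller than $b_{k+1}$, so that claim is essentially as strong as the big-bounds statement you are trying to bootstrap. (In that case the lemma is in fact immediate for a different reason: $A_{k+2}=A_{k+1}\le\hat A_{k+1}\le a_k$, so $|A_{k+2}|\ge|a_k|$.)

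What is missing is the actual mechanism of the paper's argument. The paper splits on whether $d_k\le c_{2^k}$ or $d_k>c_{2^k}$, not on the position of $e_{k+1}$. In the first subcase one reads off directly that $A_{k+2}\le a_k$, giving $|A_{k+2}|\ge |a_k|$. In the second subcase one assumes, toward establishing the remaining alternative, that $|A_{k+2}|\le\frac12|a_k|$ and $|A_{k+2}|\le\frac12 b_{k+1}$; this very assumption places $[A_{k+2},a_{k+2}]$ well inside $[a_k,c_{2^k}]$, which lies inside the diffeomorphic range $[\hat A_{k+1},\hat B_{k+1}]=[c_{2^{k-1}},c_{2^k}]$ of $E_{k+1}$, so the Koebe Lemma gives bounded distortion of $E_{k+1}\circ f_1$ on $[a_{k+2},c_{2^k}]$ with no appeal to the big bounds. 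Combining this with $|Df^{2^{k+1}}(a_{k+2})|>1$ (a repelling fixed point) yields $|D(E_{k+1}\circ f_1)|>C_5$ on that interval, hence $|A_{k+2}|=|E_{k+1}\circ f_1(c_{2^k})|>|E_{k+1}\circ f_1(b_{k+1})|>C_5\,b_{k+1}$. This ``assume the bound fails in order to gain the Koebe space'' step is the key idea of the proof, and it is absent from your proposal.
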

\begin{proof}
Note that $E_{k+2}=E_{k+1}\circ f_1\circ E_{k+1}|T_{k+2}$ 
and that  $E_{k+1}$ maps  $\hat J_{k+1}\ni f(0)$ diffeomorphically onto $[\hat A_{k+1},\hat B_{k+1}]=
[c_{2^{k-1}},c_{2^k}] \supset [a_k,c_{2^k}]$.   

If $d_k\le c_{2^k}$ then the last interval contains $[a_k,d_k]$.
Moreover,  $E_k \circ f_1$ maps $[a_k,d_k]$ diffeomorphically  to $[a_k,b_k]\supset [0,b_k]$
and the latter interval is mapped diffeomorphically by $f^{2^k}$ to $[a_k,c_{2^k}]$. 
Since $E_{k+1}=f^{2^k}\circ E_k \circ f_1|T_{k+1}$,  it follows that $A_{k+2} \le a_k$ and since both numbers are negative we get $|A_{k+2}|\ge |a_k|$.

If  $d_k>c_{2^k}$ then the same consideration shows that $A_{k+2}=E_{k+1}\circ f_1(c_{2^k})$. If 
$|A_{k+2}|>\frac{1}{2}|a_k|$ or $|A_{k+2}|>\frac{1}{2}b_{k+1}$ there is nothing to prove.
So in the remainder of the proof of this lemma assume that $|A_{k+2}|\le \frac{1}{2}|a_k|$ and $|A_{k+2}|\le \frac{1}{2}b_{k+1}$.  
The interval $[A_{k+2},a_{k+2}]$ is well-inside the interval
$[a_k,c_{2^k}]$ as $c_{2^k}>b_{k+1}>2|A_{k+2}|$ and $|a_k|\ge 2|A_{k+2}|$. Moreover,  $[\hat A_{k+1},\hat B_{k+1}]=[c_{2^{k-1}},c_{2^k}]$
 is the diffeomorphic range of $E_{k+1}|\hat J_{k+1}$, 
$[c_{2^{k-1}},c_{2^k}]\supset [a_k,c_{2^k}]$ and $[f(a_{k+2}),f_1(c_{2^k})]\subset \hat J_{k+1}$. 
So $[A_{k+2},a_{k+2}]=E_{k+1} [f(a_{k+2}),f_1(c_{2^k})]$ is well-inside the diffeomorphic range
of $E_{k+1}|\hat J_{k+1}$ and so the distortion of $E_{k+1}$ restricted to $[f(a_{k+2}),f_1(c_{2^k})]$ is bounded. 

It follows that  the distortion of $E_{k+1}\circ f_1 |_{[a_{k+2},c_{2^k}]}$
is bounded. Since the derivative of $f^{2^{k+1}}$ at its fixed point $a_{k+2}$ is larger than one, 
this implies that  $|D(E_{k+1}\circ f_1)(x)|>C_5$ for all $x\in [a_{k+2},c_{2^k}]$. Since 
$a_{k+2}<0<b_{k+1}<c_{2^k}$, $E_{k+1}$ is orientation reversing and $E_{k+1}\circ f_1(0)=c_{2^{k+1}}<0$, 
\begin{align*} &|A_{k+2}|=|E_{k+1}\circ f_1(c_{2^k})|>|E_{k+1}\circ f_1(b_{k+1})|>C_5 b_{k+1}.\qedhere \end{align*}
\end{proof}


\begin{lemma} \label{lem5} 
There exists $C > 0$ such that the following holds. Let $k$ be a sufficiently large even integer. Then either
\begin{itemize}
\item  $|A_k | > C b_{k+1}$ or
\item $e_k < b_{k+1}$. 
\end{itemize}
\end{lemma}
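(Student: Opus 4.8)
The plan is to prove the statement in the contrapositive direction: for all sufficiently large even $k$, assuming the second alternative fails, i.e.\ $e_k\ge b_{k+1}$, I will produce $|A_k|>Cb_{k+1}$, arguing by contradiction with the extra hypothesis $|A_k|\le Cb_{k+1}$. The first step is to peel off an easy case using Lemma~\ref{lem4} at level $k-2$: in the proof of that lemma one is either in the case $d_{k-2}>c_{2^{k-2}}$, which already gives $|A_k|>Cb_{k-1}\ge Cb_{k+1}$ (a contradiction, provided the constant $C$ in the present lemma is chosen $\le$ the one there), or one is in the case $d_{k-2}\le c_{2^{k-2}}$, and then the same proof yields $A_k\le a_{k-2}$, hence $|A_k|\ge|a_{k-2}|$. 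So we may assume the latter case; since $|a_{k-2}|\sim K_0 b_{k-2}^\beta$ by~(\ref{akbk}), the contradiction hypothesis then forces $b_{k-2}^\beta\lesssim b_{k+1}$.

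The next step is to squeeze $e_k$. By part (2) of Lemma~\ref{lem2} at level $k-2$ we have $e_k<d_{k-2}$, and by Lemma~\ref{lem3} at level $k-2$ we have $d_{k-2}\le Cb_{k-1}^{\beta-1}b_{k-2}$; combined with the standing hypothesis $e_k\ge b_{k+1}$ this gives $b_{k+1}<Cb_{k-1}^{\beta-1}b_{k-2}$. Together with $b_{k-2}^\beta\lesssim b_{k+1}$ from the previous step this yields $b_{k-2}^\beta\lesssim b_{k-1}^{\beta-1}b_{k-2}$, i.e.\ $b_{k-2}\lesssim b_{k-1}$ up to a universal constant — two consecutive renormalization levels comparable. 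To derive a contradiction from this I would run a downward bootstrap on $A$: using parts (4) and (5) of Lemma~\ref{lem2} one propagates equalities $A_k=A_{k-1}=A_{k-2}$ one renormalization step at a time, as long as the hypotheses ``$b_j'<B_j$'' and ``$e_{j+1}<b_{j+1}$'' of those items hold, re-applying Lemma~\ref{lem4} at each even level along the way. The chain must terminate: either at a level $j\le k$ where Lemma~\ref{lem4} gives $|A_j|>Cb_{j-1}$, whence $|A_k|=|A_j|>Cb_{j-1}\ge Cb_{k+1}$, contradicting the hypothesis; or at a level where one of the two hypotheses fails, and there — using the constraint on the $b$'s just obtained and the explicit structure of the maximal semi-extension $E_j\circ f_1$ at that level — one must conclude $e_k<b_{k+1}$, again a contradiction.

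The hard part will be this last case, i.e.\ extracting $e_k<b_{k+1}$ from the failure of ``$e_{j+1}<b_{j+1}$'' somewhere along the chain. It cannot be handled by blunt length comparisons, precisely because the renormalization intervals may decay super-exponentially (so a bound like $d_{k-2}\le Cb_{k-1}^{\beta-1}b_{k-2}$ need not be anything like as small as $b_{k+1}$); one has to follow the semi-extensions and their range endpoints $A_j,B_j$ step by step, in the spirit of the proof of Lemma~\ref{lem2}. Throughout the argument I would use freely: the strong asymmetry in the form $|a_j|\sim K_0 b_j^\beta\ll b_j$; negative Schwarzian, for the cross-ratio and minimum-principle estimates on $E_j\circ f_1$; the large expansion $Df^{2^{k-2}}(a_{k-1})\gtrsim b_{k-1}^{1-\beta}$ established inside the proof of Lemma~\ref{lem3}; and the universal ``space on the right'' $B_j>\tau b_j$ coming from the smallest-interval Lemma~\ref{lem1} together with the asymmetry.
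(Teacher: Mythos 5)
You have assembled exactly the ingredients of the paper's proof --- Lemma~\ref{lem2}(2) and Lemma~\ref{lem3} applied at level $k-2$ to squeeze $e_k$, and the dichotomy of Lemma~\ref{lem4} for $|A_k|$ --- and your first case ($d_{k-2}>c_{2^{k-2}}$, giving $|A_k|>Cb_{k-1}>Cb_{k+1}$) is correct. The gap is in how you use the second case. There you have, on one hand, $|A_k|\ge \tfrac12|a_{k-2}|\sim \tfrac12 K_0\,b_{k-2}^{\beta}$ by (\ref{akbk}), and on the other hand, under the standing assumption $e_k\ge b_{k+1}$,
$$b_{k+1}\le e_k< d_{k-2}\le C_4\, b_{k-1}^{\beta-1}b_{k-2}\le C_4\, b_{k-2}^{\beta},$$
using $b_{k-1}<b_{k-2}$ and $\beta>1$. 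These two inequalities combine \emph{directly} to give $|A_k|\gtrsim b_{k-2}^{\beta}\gtrsim b_{k+1}$, i.e.\ the first alternative of the lemma with a universal constant --- and that is the paper's entire proof. By instead feeding in the extra hypothesis $|A_k|\le Cb_{k+1}$ you convert a finished two-line argument into the relation $b_{k-2}\lesssim b_{k-1}$ and then go hunting for a contradiction.

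That hunt cannot succeed as described, because $b_{k-2}\approx b_{k-1}$ is not contradictory: for $k-2$ even the paper later proves $b_{k-1}\sim\lambda b_{k-2}$, so consecutive even-to-odd levels genuinely \emph{are} comparable (it is the odd-to-even step that is super-exponential). Hence no downward bootstrap on the $A_j$'s can refute this relation by itself, and the terminal case of your chain --- ``extracting $e_k<b_{k+1}$ from the failure of the hypotheses of Lemma~\ref{lem2}(4),(5) somewhere along the way'' --- is left entirely unexecuted and is precisely where the argument would stall. The repair is simply to drop the contrapositive framing: in case two of Lemma~\ref{lem4}, combine $|A_k|\gtrsim b_{k-2}^{\beta}$ with the displayed chain above and conclude.
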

\begin{proof}
Suppose $e_k\ge b_{k+1}$. Then due to Lemma~\ref{lem2}(2) and  inequality  (\ref{eq5}) from Lemma~\ref{lem3},    we know that for $k$ large and even, 
\begin{equation}
b_{k+1}\le e_k < d_{k-2} < C_4 b_{k-1}^{\beta-1}b_{k-2}<b_{k-2}^\beta.
\label{eqlem5}
\end{equation}
From Lemma~\ref{lem4} we know that either $|A_k|>Cb_{k-1}$ or $|A_k|>\frac{1}{2} |a_{k-2}|$. 
In the first case we have nothing to do because $b_{k+1}<b_{k-1}$.
In the second case it follows from (\ref{eqlem5})  that $|A_k|>\frac{1}{2} |a_{k-2}|> C b_{k-2}^\beta>C_6 b_{k+1}$.
\end{proof}

\subsection{Conditional first universal bounds \label{subsec:condbound}}

\begin{lemma}\label{lem6}  For any $C > 0$ there exist $0 < \lambda_1 < \lambda_2 < 1$ such that the following
holds. Let $k$ be large even integer and $|A_k | > Cb_{k+1}$. Then
\begin{align}
 |Df^{2^k} |[b_{k+1},b_k ] |  &> \lambda_1 \,\,\, ,  \label{eq7} \\
 \lambda_1 b_k  < b_{k+1}  & < \lambda_2 b_k . \label{eq8}
\end{align} 
\end{lemma}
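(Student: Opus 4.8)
The plan is to use the hypothesis $|A_k|>Cb_{k+1}$ to run the Koebe Lemma (Lemma~\ref{thm:koebe}) on the monotone semi-extension $E_k$, which controls its range $[A_k,B_k]$ from the left while Lemma~\ref{lem2} controls it from the right (for $k$ even $B_k=c_{2^{k-2}}$, which is far larger than $b_k$). The relevant local picture: for $k$ even one has $f^{2^k}=f^{2^k-1}\circ f$ with $f^{2^k-1}$ agreeing with $E_k$ on the interval in question; on $[b_{k+1},b_k]\subset[0,b_k]$ the inner factor is the right branch $f_2$, and $f^{2^k}$ maps $[b_{k+1},b_k]$ monotonically onto $[a_k,b_{k+1}]$ with $b_{k+1}$ a repelling fixed point of $f^{2^k}$ and $f^{2^k}(b_k)=a_k$. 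Since $|a_k|\sim K_0 b_k^{\beta}$ by (\ref{akbk}) is small compared with $b_{k+1}$ (I would first dispose of the left inequality of (\ref{eq8}) with a cruder Koebe estimate and then feed it back), the interval $[a_k,b_{k+1}]$ sits inside $[A_k,B_k]$ with a $\tau'$-scaled neighbourhood free on each side, $\tau'=\tau'(C)>0$, so Lemma~\ref{thm:koebe} bounds the distortion of $f^{2^k-1}$ on $f_2([b_{k+1},b_k])$ by a constant $D_1=D_1(C)$.

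Next I would transfer this to $f^{2^k}$ by writing $Df^{2^k}=Df^{2^k-1}(f_2(\cdot))\,Df_2(\cdot)$ and using property~3 of $\mathcal A$ in the form $f_2(x)=c_1-K_+x^{\beta}(1+o(1))$: the distortion of $f^{2^k}$ on $[b_{k+1},b_k]$ is then at most $D_1(b_k/b_{k+1})^{\beta-1}(1+o(1))$, and evaluating the product at the endpoints together with the mean value theorem applied to $f^{2^k}\colon[b_{k+1},b_k]\to[a_k,b_{k+1}]$ gives, with $r:=b_{k+1}/b_k$,
\[
 |Df^{2^k}(b_{k+1})|\asymp\frac{\beta\,r^{\beta}}{1-r^{\beta}},\qquad |Df^{2^k}(b_k)|\asymp\frac{\beta\,r}{1-r^{\beta}},
\]
where $\asymp$ hides $D_1(C)$ and I used $f_2(b_{k+1})-f_2(b_k)=K_+b_k^{\beta}(1-r^{\beta})(1+o(1))$ and $|a_k|=o(b_{k+1})$. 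Since $b_{k+1}$ is repelling, $|Df^{2^k}(b_{k+1})|>1$, so the first estimate forces $(\beta+1)r^{\beta}\gtrsim1$, hence $r\ge\lambda_1^{(0)}(\beta,C)>0$, which is the left inequality of (\ref{eq8}). With $r$ bounded below, the distortion bound above becomes uniform, say $\le D_3=D_3(C)$, so $|Df^{2^k}(x)|\ge|Df^{2^k}(b_{k+1})|/D_3>1/D_3$ for every $x\in[b_{k+1},b_k]$; taking $\lambda_1:=\min(\lambda_1^{(0)},1/D_3)$ yields (\ref{eq7}).

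The hard part will be the upper bound $b_{k+1}<\lambda_2 b_k$: the Koebe and distortion information assembled so far is self-consistent as $r\uparrow1$ and so cannot by itself keep $b_{k+1}$ away from $b_k$. My plan is to compare the multiplier of $b_{k+1}$ with the behaviour of $f^{2^k}$ just to the \emph{left} of $b_{k+1}$, exploiting the extra input $c_{2^k}=f^{2^k}(0)\le b_k$ (which holds by disjointness of the renormalization orbit). Since $r\ge\lambda_1$, the interval $[\tfrac12\lambda_1b_k,\,b_{k+1}]$ is nondegenerate; $f_2$ has bounded distortion on it, and $f^{2^k-1}$ has bounded distortion on its $f_2$-image — again Lemma~\ref{thm:koebe}, the left space coming from $|A_k|>Cb_{k+1}$, the right space from Lemma~\ref{lem2}, and the relevant image lying in $[b_{k+1},c_{2^k}]\subset[0,b_k]$. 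Hence $f^{2^k}$ has bounded distortion on $[\tfrac12\lambda_1b_k,b_{k+1}]$, so $|Df^{2^k}(b_{k+1})|$ is comparable to the average of $|Df^{2^k}|$ there, which, using $f^{2^k}(\tfrac12\lambda_1b_k)<c_{2^k}\le b_k$, is at most $\tfrac{b_k-b_{k+1}}{(r-\tfrac12\lambda_1)b_k}=\tfrac{1-r}{r-\tfrac12\lambda_1}\to0$ as $r\uparrow1$.

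Comparing this with the lower bound $|Df^{2^k}(b_{k+1})|\gtrsim\beta r^{\beta}/(1-r^{\beta})\to\infty$ as $r\uparrow1$ gives a contradiction for $r$ sufficiently close to $1$, hence a constant $\lambda_2<1$ with $b_{k+1}<\lambda_2 b_k$, completing (\ref{eq8}). I would expect the bookkeeping of the various Koebe constants — in particular the verification that the preimage interval in the last step really is mapped with controlled distortion, which may force one to call on Lemmas~\ref{lem3}–\ref{lem4} at level $k-2$ to locate $c_{2^k}$, together with the preliminary check that $|a_k|=o(b_{k+1})$ — to be the main technical burden rather than the structure of the argument itself.
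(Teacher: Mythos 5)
Your proposal is correct and follows essentially the same route as the paper: Koebe space for the semi-extension around $[a_k,b_{k+1}]$ gives bounded distortion of $E_k$, the repelling fixed point $b_{k+1}$ yields (\ref{eq7}) and the lower bound in (\ref{eq8}), and the upper bound in (\ref{eq8}) comes from contrasting the expansion of $f^{2^k}$ on $[b_{k+1},b_k]$ (long image $[a_k,b_{k+1}]$) with its contraction on an interval just to the left of $b_{k+1}$ (short image inside $[b_{k+1},c_{2^k}]$). The one place the paper is tidier is the left-hand Koebe space: instead of your provisional $|a_k|=o(b_{k+1})$ (which risks circularity), it splits into the cases $|a_k|\lessgtr \tfrac12 Cb_{k+1}$ and in the second case uses Lemma~\ref{lem1}'s bound $|A_k|>K|a_k|$ with $K>1$, so no feedback step is needed.
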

\begin{proof} Consider two cases.

Case 1: $|a_k | < \frac12 Cb_{k+1}$.  Then 
$|b_{k+1} - a_k | < (1 + \frac12 C)b_{k+1}$. At the same time
$|A_k - a_k | > \frac{1}{2} C b_{k+1}$ and we see that $|A_k - a_k | > C_7 |b_{k+1} - a_k |$ for some $C_7 > 0$
which depends only on $C$.

Case 2: $|a_k | \ge \frac{1}{2} Cb_{k+1}$. Then $|b_{k+1} -a_k | \le  (1+ \frac{2}{C} )a_k$. According to Lemma~\ref{lem1}, 
$|A_k | > K|a_k |$ for some universal $K > 1$, therefore $|A_k - a_k | > (K - 1)|a_k |$ and
we again get $|A_k - a_k | > C_8 |b_{k+1} - a_k |$ for some $C_8 > 0$ which depends only on
$C$ and $K$.

From this and Lemma~\ref{lem1},  we get 
that the range of the map $E_k \colon [f (b_{k+1}), f (b_k )] \to [a_k , b_{k+1}]$ can be diffeomorphically 
semi-extended to a $C_9$-scaled neighbourhood
of the interval $[a_k , b_{k+1}]$, and therefore the distortion of the map $E_k |[f (b_{k+1}),f (b_k )]$
is bounded.

On the  interval $[b_{k+1}, b_k ]$ the absolute  value of $Df$  is increasing, hence 
$$|Df^{2^k} (x)| = |DE_k (f (x))| |Df (x)| > C_{10}  |Df^{2^k} (b_{k+1})|$$ 
 for all $x \in [b_{k+1} , b_k ]$ and some constant $C_{10} > 0$ which depends only on $C$.  Since
$b_{k+1}$ is a repelling fixed point of $f^{2^k}$, we get
$|Df^{2^k} (b_{k+1})| > 1$ and  $|Df^{2^k}|>C_{10}$ on $[b_{k+1} ,b_k ]$. This implies the existence of $\lambda_1>0$
as in  equations (\ref{eq7}) and (\ref{eq8}). 

To prove the existence of $\lambda_2<1$ in (\ref{eq8}), note that
by  Lemma~\ref{lem1} and Koebe that $E_k$ has bounded distortion on the range
$[b_k/2,b_k]$. Moreover, $f_2$ has bounded distortion on $[b_k/2,b_k]$.
By contradiction assume that $b_{k+1}/b_k\approx 1$.  Then  there exists a point $x\in [b_{k+1},b_k]$ for which $(E_k\circ f_2)(x)\in [b_k/2,b_{k+1}]$
and $|D(E_k\circ f_2)(x)|$ is large.  But since $(E_k\circ f_2)(y)\in [b_{k+1},b_k]$ for all $y\in  [b_{k}/2,b_{k+1}]$,
it follows that $|D(E_k\circ f_2)(y)|$ is also large  for all such $y$.  But this contradicts that 
$(E_k\circ f_2)$ maps $[b_k/2,b_{k+1}]$ into $[b_{k+1},b_k]$.
Thus the existence of $\lambda_2<1$ follows. 
\end{proof}

\subsection{Getting space some of the time}\label{subsec:somespace}

Now we are ready to combine the results from the previous subsection.

\begin{lemma} \label{lem7} There exists a constant $C > 0$ and an infinite sequence of even
integers $k_1 < k_2 < \dots$ such that
$$|A_{k_i} | > Cb_{k_i} ,$$
and therefore, the distortion of the maps $E_{k_i} |J_{k_i}$ is universally bounded.
\end{lemma}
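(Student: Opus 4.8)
The plan is to follow Step~5 of \S\ref{subsec:sketch}: feed the dichotomy of Lemma~\ref{lem5} into the conditional bounds of Lemma~\ref{lem6} in one branch and into the propagation properties of Lemma~\ref{lem2} in the other. Fix the constant $C_0>0$ of Lemma~\ref{lem5}, so that every sufficiently large even $k$ satisfies either $|A_k|>C_0 b_{k+1}$ or $e_k<b_{k+1}$. If the first alternative holds for infinitely many even $k$, call them $k_1<k_2<\cdots$; for each $k_i$, Lemma~\ref{lem6} applied with $C=C_0$ gives $b_{k_i+1}>\lambda_1 b_{k_i}$, hence $|A_{k_i}|>C_0 b_{k_i+1}>C_0\lambda_1 b_{k_i}$, and the conclusion holds with $C=C_0\lambda_1$.

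So assume instead that $e_k<b_{k+1}$ for all even $k$ beyond some level. I would then show that the sequence $(A_k)$ is eventually constant; granting this, the claim is immediate, since $A_k$ stabilises at some $A_\ast\le a_{k_0}<0$ while $b_k\to0$, so that $|A_k|=|A_\ast|>Cb_k$ for every $C$ once $k$ is large. To obtain stabilisation I would verify, for all large even $k$, the two hypotheses needed to run Lemma~\ref{lem2}(4)--(5): that $b'_k<B_k$ and that $e_{k+1}<b_{k+1}$. Here the metric input is essential: combining $e_{k+2}<d_k$ from Lemma~\ref{lem2}(2) with the recursive inequality $d_k\le Cb_{k+1}^{\beta-1}b_k$ of Lemma~\ref{lem3} confines the non-dynamically-defined points $d_k,e_k,b'_k$ well inside the intervals they must lie in; once $b'_k<B_k$ is in force, Lemma~\ref{lem2}(4) gives both $A_{k+1}=A_k$ and $e_{k+1}<e_k<b_{k+1}$, and then Lemma~\ref{lem2}(5) (equation~(\ref{eq:step1})) gives $A_{k+2}=A_{k+1}$. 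Chaining these yields $A_{k_0}=A_{k_0+1}=A_{k_0+2}=\cdots$.

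Once an infinite sequence $(k_i)$ of even integers with $|A_{k_i}|>Cb_{k_i}$ is produced, the distortion statement follows from Koebe. The smallest interval argument (Lemma~\ref{lem1}) together with the strong asymmetry give a universal $\tau>1$ with $B_{k_i}\ge\hat B_{k_i}\ge\tau b_{k_i}$, so $[A_{k_i},B_{k_i}]$ has a definite proportion of extra space to the right of $[a_{k_i},b_{k_i}]$; on the left, $|a_{k_i}|\sim K_0b_{k_i}^{\beta}$ by~(\ref{akbk}) is negligible next to $b_{k_i}$, while $|A_{k_i}|>Cb_{k_i}$, so there is a definite proportion of extra space on the left as well. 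Hence $[A_{k_i},B_{k_i}]$ is a $\delta$-scaled neighbourhood of $[a_{k_i},b_{k_i}]$ for some fixed $\delta>0$. Since $E_{k_i}$ is a composition of the monotone branches $f_1,f_2$, each with non-positive Schwarzian derivative, the Schwarzian derivative of $E_{k_i}$ is $\le0$, and the Koebe Lemma~\ref{thm:koebe} applied to $E_{k_i}\colon T_{k_i}\to[A_{k_i},B_{k_i}]$ with $E_{k_i}(J_{k_i})=[a_{k_i},b_{k_i}]$ bounds the distortion of $E_{k_i}|J_{k_i}$ in terms of $\delta$ only.

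The step I expect to be the main obstacle is the second case, namely converting the one-sided information ``$e_k<b_{k+1}$ for all large even $k$'' into the stabilisation of $(A_k)$. The purely topological relations of Lemma~\ref{lem2} do not suffice by themselves; one needs the metric estimate of Lemma~\ref{lem3} to keep the non-dynamically-defined points $d_k,e_k,b'_k$ inside the correct intervals so that the hypotheses $b'_k<B_k$ and $e_{k+1}<b_{k+1}$ remain valid at every level, and the even/odd index shifts in this bootstrap have to be tracked with care.
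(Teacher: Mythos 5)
Your overall architecture is exactly the paper's: split on the dichotomy of Lemma~\ref{lem5}, dispose of the first alternative with Lemma~\ref{lem6} (plus Lemma~\ref{lem1}), and in the second alternative show that $(A_k)$ stabilises, after which $|A_k|>Cb_k$ is automatic because $b_k\to 0$; the final Koebe/distortion step is also as in the paper. The first case and the distortion conclusion are fine.

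The gap is in how you propose to close the induction loop in the second case. You say the hypothesis $b'_k<B_k$ must be kept ``in force at every level'' by metric input, namely $e_{k+2}<d_k$ from Lemma~\ref{lem2}(2) together with $d_k\le Cb_{k+1}^{\beta-1}b_k$ from Lemma~\ref{lem3}. But those inequalities bound $d_k$ and $e_{k+2}$; they say nothing about the position of $b'_k$ relative to $B_k$, so this route does not produce the hypothesis of Lemma~\ref{lem2}(4), and moreover your chain has no base case (you never establish $b'_{k_0}<B_{k_0}$ for the initial even level). The correct closure is purely combinatorial and runs in the opposite order. From the case hypothesis $e_k<b_{k+1}$ and the monotonicity $e_{k+1}\le e_k$ (the domains $T_k$ are nested) one gets $e_{k+1}<b_{k+1}$, so Lemma~\ref{lem2}(5) yields \emph{both} $A_{k+2}=A_{k+1}$ \emph{and} $b'_{k+2}<b_{k+1}$ --- you only quoted the first conclusion, but the second is precisely the missing control on $b'$. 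Since $B_{k+2}=c_{2^k}>b_{k+1}$ by Lemma~\ref{lem2}(1) and the ordering of the postcritical points, this gives $b'_{k+2}<B_{k+2}$, and Lemma~\ref{lem2}(4) applied at level $k+2$ then gives $A_{k+3}=A_{k+2}$. Running this for every even $k\ge k_0$ stabilises the whole sequence $(A_k)$. In particular your diagnosis that ``the purely topological relations of Lemma~\ref{lem2} do not suffice'' is backwards: they do suffice here, and Lemma~\ref{lem3} enters only upstream, inside the proof of the dichotomy of Lemma~\ref{lem5} which you have already invoked.
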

\begin{proof} It follows from Lemma~\ref{lem5} that either there exist infinitely many even
integers $k_i$ such that $|A_{k_i} | > Cb_{k_i +1}$ or there exists an even integer $k_0$ such that
$e_k < b_{k+1}$ for all even $k \ge k_0$.

In the first case we are done because of Lemmas~\ref{lem1} and \ref{lem6}, so suppose that we are in the second case. Since $0 < e_{k+1} \le  e_k$, Lemma~\ref{lem2}(5)
implies $b'_{k+2}< b_{k+1}$ and $A_{k+2}= A_{k+1}$ for all even $k \ge  k_0$ . Notice that
$b_{k+1} < c_{2^k}=B_{k+2}$, and  therefore Lemma~\ref{lem2}(1) gives $b'_{k+2} < B_{k+2}$ . Then from Lemma~\ref{lem2}(4)
it follows that  $A_{k+3} = A_{k+2}$ . So, we see that  $A_k = A_{k_0 +1}$ for all $k > k_0$ and
since $b_k \to 0$ we get $|A_k | > b_k$ for all $k$ large enough.

The boundedness of the distortion of the maps $E_{k_i} |J_{k_i}$ follows from Lemma~\ref{lem1}
and  from $|A_{k_i} | > Cb_{k_i} $. 
\end{proof}

\subsection{Space for some $k$ gives improved  space for the next $k$\label{subsec:improvingspace}}

\begin{lemma}\label{lem8}  For every constant $C > 0$ there exists a constant $\tau_* > 0$ such that the following holds. Let $k$ be a large even integer and $|A_k| > Cb_k$. 
Then
\begin{align}
b_{k+2}  &< \tau_* b_k^{2-1/\beta} , \label{eq9} \\
b_k - c_{2^k}  &< \tau_* b_k^\beta \quad \,\,\,  , \label{eq10} \\
d_k & <  \tau_* b_k^{2\beta-1} \,\, . \label{eq11}
\end{align}
\end{lemma}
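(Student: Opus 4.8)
The plan is to start from the hypothesis $|A_k|>Cb_k$ and run the machinery already assembled in Lemmas~\ref{lem1}, \ref{lem2}, \ref{lem3} and \ref{lem6}. First, since $|A_k|>Cb_k\gg b_{k+1}$ (using $b_{k+1}<b_k$), Lemma~\ref{lem6} applies and gives $\lambda_1 b_k<b_{k+1}<\lambda_2 b_k$ together with the derivative bound $|Df^{2^k}|[b_{k+1},b_k]|>\lambda_1$. In particular $b_{k+1}\approx b_k$, so all of the recursive inequalities from the earlier lemmas can be re-expressed purely in terms of $b_k$. Concretely, $d_k\le Cb_{k+1}^{\beta-1}b_k\le \tau_* b_k^\beta$ from Lemma~\ref{lem3}, which is already better than (\ref{eq11}) will need once we iterate; I will come back to $d_k$ at the even level $k$ again below.

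Next I would prove (\ref{eq10}), the estimate $b_k-c_{2^k}<\tau_* b_k^\beta$. The point $c_{2^k}=f^{2^k}(0)$ is the critical value of the renormalization $f^{2^k}|[a_k,b_k]$, and $b_k$ is the repelling fixed point on the right branch; so $b_k-c_{2^k}$ measures the gap between the critical value and the endpoint of the renormalization interval. Using $|A_k|>Cb_k$, Lemma~\ref{lem1} and Koebe give bounded distortion of $E_k$ on a definite neighbourhood of $[a_k,b_k]$, hence $f^{2^k}=E_k\circ f$ near $0$ has its nonlinearity entirely governed by $f_2(x)=f(0)-K_+x^\beta+\dots$ together with the essentially-linear $f_1$. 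Writing $f^{2^k}(x)\approx c_{2^k}-t_k x^\beta$ on $[0,b_k]$ with $t_k\approx b_k^{1-\beta}$ (this is exactly the content that Theorem~\ref{thm:renormalizationlimit} will later formalize, but here we only need the crude version coming from the distortion bound plus $|Df^{2^k}(b_{k+1})|>1$ and $b_{k+1}\approx b_k$), one reads off $b_k-c_{2^k}\approx t_k b_k^\beta\approx b_k$, which is too weak; so instead one argues at the point $b_k$ itself: $c_{2^k}=f^{2^k}(0)$ and $b_k=f^{2^k}(b_k)$, and the mean value theorem on $[0,b_k]$ combined with $|Df^{2^k}|$ being comparable to $b_k^{1-\beta}|x|^{\beta-1}$ gives $b_k-c_{2^k}=\int_0^{b_k}|Df^{2^k}|\approx b_k^{1-\beta}\cdot b_k^{\beta}=b_k$ — still wrong. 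The correct route, which I would actually take, is: $b_k-c_{2^k}$ equals the image under $E_k$ (bounded distortion, affine up to constants) of $f_2(0)-f_2(b_k)=K_+b_k^\beta+o(b_k^\beta)$, so $b_k-c_{2^k}\approx |DE_k|\cdot b_k^\beta$, and since $|DE_k|$ on the relevant range is bounded by a constant (by Koebe, using the space $|A_k|>Cb_k$ on the left and the smallest-interval space on the right), we get $b_k-c_{2^k}<\tau_* b_k^\beta$.

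Then (\ref{eq9}) follows by chasing one more renormalization step. We have $b_{k+2}$ is the right endpoint of $[a_{k+2},b_{k+2}]$, the period-$2$ restrictive interval of $f^{2^k}|[a_k,b_k]$ after one further doubling; concretely $f^{2^{k+1}}|[a_{k+1},b_{k+1}]$ is unimodal of type $-+$ and $b_{k+2}$ is the $f^{2^{k+1}}$-preimage of the fixed point $a_{k+2}$. Since on $[0,b_{k+1}]$ the map $f^{2^k}$ is, up to bounded distortion, $c_{2^k}-t_kx^\beta$ with $t_k\approx b_k^{1-\beta}$, solving $f^{2^k}(b_{k+2})=$ (a point at distance $\approx b_{k+1}\approx b_k$ from $c_{2^k}$, namely near $a_{k+1}$ or $b_{k+1}$) forces $b_{k+2}^\beta\approx b_k^{1-\beta}\cdot(\text{something of size }b_k\cdot b_k^{?})$; the honest bookkeeping, using (\ref{eq10}) to control how close the relevant target is to the critical value, yields $b_{k+2}^\beta\lesssim b_k^{\beta-1}\cdot b_k = b_k^{\beta}$ — again I need to be careful. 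The clean statement is $b_{k+2}\approx (b_k - c_{2^k})^{1/\beta}\cdot(\text{distortion})\lesssim (b_k^\beta)^{1/\beta}\cdot\dots$; combining with (\ref{eq10}) and the linear-branch contribution $f_1$ which contributes a factor $b_k^{1-1/\beta}$ relative to naive scaling gives exactly $b_{k+2}<\tau_* b_k^{2-1/\beta}$. Finally (\ref{eq11}), $d_k<\tau_* b_k^{2\beta-1}$, comes from feeding (\ref{eq9}) (with $k$ replaced by $k-2$, i.e. $b_k<\tau_* b_{k-2}^{2-1/\beta}$, equivalently $b_{k+1}\approx b_k$) back into Lemma~\ref{lem3}'s inequality $d_k\le Cb_{k+1}^{\beta-1}b_k\approx b_k^{\beta-1}\cdot b_k$ — but that only gives $b_k^\beta$, not $b_k^{2\beta-1}$. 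The extra power comes from the improved bound on $b_{k+1}$ itself in terms of $b_{k}$ via (\ref{eq9}) at the previous step: $b_{k+1}\approx b_k$ is the wrong comparison here; rather $d_k\le Cb_{k+1}^{\beta-1}b_k$ and we substitute $b_k\lesssim b_{k+1}^{?}$ is not available either. I believe the correct mechanism is to apply Lemma~\ref{lem3} and then use $b_k-c_{2^k}<\tau_* b_k^\beta$ together with the definition $E_k\circ f_1(d_k)=b_k$ and $E_k\circ f_1(a_{k+1})=b_{k+1}$ directly, via bounded distortion of $E_k\circ f_1$ on $[a_{k+1},d_k]$ (legitimate once $|A_k|>Cb_k$), giving $d_k-a_{k+1}\approx (b_k-b_{k+1})/|D(E_k\circ f_1)|$ and then estimating $|D(E_k\circ f_1)|$ from below by $|Df^{2^{k+1}}(a_{k+2})|>1$ divided by the stretch across $[a_{k+1},a_{k+2}]$, which is of order $b_{k+1}^{1-\beta}=b_k^{1-\beta}$; this produces $d_k\lesssim (b_k-b_{k+1})\cdot b_k^{\beta-1}$, and (\ref{eq10}) bounds $b_k-b_{k+1}$ (or rather the relevant gap) by $b_k^\beta$, yielding $d_k\lesssim b_k^{\beta}\cdot b_k^{\beta-1}=b_k^{2\beta-1}$, as claimed.

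\medskip

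\textbf{Main obstacle.} The delicate point throughout is getting the \emph{exponents} right: every estimate must correctly track the interplay between the almost-linear left branch $f_1$ (which contributes linear factors) and the $\beta$-power right branch $f_2$, and the fact that $b_{k+1}\approx b_k$ while $b_{k+2}$ is genuinely super-linearly smaller. The cleanest bookkeeping device is to work consistently with the near-normal-form $f^{2^k}(x)\approx c_{2^k}-t_k x^\beta$ on $[0,b_k]$ (with $t_k\approx b_k^{1-\beta}$), which is legitimate here precisely because the hypothesis $|A_k|>Cb_k$ supplies the Koebe space needed to bound the distortion of $E_k$; all three displayed inequalities then reduce to elementary algebra with this normal form plus the earlier recursive inputs (\ref{eq5}) and Lemma~\ref{lem6}. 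I expect the hardest individual step to be (\ref{eq11}), since it is the one requiring the combination of the first recursive inequality with the freshly-derived (\ref{eq10}) and a lower bound on the derivative of the composed semi-extension $E_k\circ f_1$ across the newly created renormalization interval.
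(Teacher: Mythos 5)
Your overall plan (use $|A_k|>Cb_k$ to get bounded distortion of $E_k$, invoke Lemma~\ref{lem6}, and work with the normal form $f^{2^k}(x)\approx c_{2^k}-t_kx^\beta$, $t_k\approx b_k^{1-\beta}$) is the right one, but your derivation of (\ref{eq10}) rests on a false claim, and since your versions of (\ref{eq9}) and (\ref{eq11}) lean on (\ref{eq10}), the argument does not close. The false claim is that $|DE_k|$ is bounded by a constant on the relevant range. Koebe plus the space $|A_k|>Cb_k$ gives bounded \emph{distortion}, so $|DE_k|$ is comparable to $|E_k(J_k)|/|J_k|=|[a_k,b_k]|/|J_k|\approx b_k/(K_+b_k^\beta)\approx b_k^{1-\beta}$, which blows up (if $|DE_k|$ were bounded, $[a_k,b_k]$ would have length $O(b_k^\beta)$). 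Moreover the identification you use is wrong: $E_k(f_2(b_k))=f^{2^k}(b_k)=a_k$, not $b_k$, so $E_k$ carries $[f_2(b_k),f_2(0)]$ onto $[a_k,c_{2^k}]$, an interval of length $\approx b_k$, which says nothing about $b_k-c_{2^k}$. The actual proof of (\ref{eq10}) is softer and uses a derivative \emph{lower} bound: since $c_{2^k}\in[b_{k+1},b_k]$ and $f^{2^k}(c_{2^k})=c_{2^{k+1}}\in[a_{k+1},a_{k+2}]$, the decreasing branch $f^{2^k}|[0,b_k]$ maps $[c_{2^k},b_k]$ into $[a_k,0]$, whose length is $|a_k|\approx K_0b_k^\beta$; combined with $|Df^{2^k}|>\lambda_1$ on $[b_{k+1},b_k]$ from Lemma~\ref{lem6}, this gives $|b_k-c_{2^k}|<\lambda_1^{-1}|a_k|<\tau_*b_k^\beta$.

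Your (\ref{eq9}) is not yet a proof either: you twice land on the trivial bound $b_{k+2}\lesssim b_k$ and then insert an unexplained ``linear-branch factor $b_k^{1-1/\beta}$'' to reach the stated exponent. The missing mechanism — which does not use (\ref{eq10}) at all — is this: decompose $E_{k+1}|J_{k+1}=E_k|J_k\circ f|[b_{k+1},b_k]\circ E_k|J_{k+1}$; each factor has bounded distortion (the middle one because $b_{k+1}>\lambda_1b_k$), hence so does $f^{2^{k+1}}|[a_{k+1},0]$. Since $f^{2^{k+1}}(a_{k+1})=b_{k+1}$ and $f^{2^{k+1}}(0)=c_{2^{k+1}}\in[a_{k+1},a_{k+2}]$, it follows that $Df^{2^{k+1}}>C_{11}b_{k+1}/|a_{k+1}|$ on all of $[a_{k+1},0]$; applying this between the fixed point $a_{k+2}$ and $0$ yields $|a_{k+2}|<|a_{k+1}|^2/(C_{11}b_{k+1})\lesssim b_k^{2\beta-1}$, whence $b_{k+2}\approx|a_{k+2}|^{1/\beta}\lesssim b_k^{2-1/\beta}$. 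Your mechanism for (\ref{eq11}) is essentially correct, except that you vacillate between the gap $b_k-b_{k+1}$ (which is $\approx(1-\lambda)b_k$, far too large) and the correct gap $b_k-c_{2^k}$: one must use that $E_k\circ f_1$ maps $[0,d_k]$ onto $[c_{2^k},b_k]$ with derivative $\gtrsim b_{k+1}/|a_{k+1}|\approx b_k^{1-\beta}$, so that $d_k\lesssim|b_k-c_{2^k}|\cdot|a_{k+1}|/b_{k+1}\lesssim b_k^\beta\cdot b_k^{\beta-1}=b_k^{2\beta-1}$.
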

\begin{proof} Due to Lemma~\ref{lem1} we always have some space to the right of the renormalization interval, and since 
we assumed that $|A_k| > Cb_k$, therefore the distortion of the map $E_k|J_k$ 
is bounded by a constant depending only on $C$. The map $E_{k+1}|J_{k+1}$ can be decomposed as
$E_{k+1}|J_{k+1} = E_k|J_k \circ f|[b_{k+1},b_k] \circ E_k|J_{k+1}$.
Due to Lemma~\ref{lem6} we know that $b_{k+1} > \lambda_1b_k$, and hence, the distortion of the map $f|[b_{k+1},b_k]$ is bounded. Thus, the distortion of 
$E_{k+1}|J_{k+1}$ is bounded as a composition of three maps of bounded distortion. Then the distortion of the map 
$f^{2^{k+1}} |[a_{k+1},0]$ is bounded again. Combining this with $f^{2^{k+1}}(a_{k+1}) = b_{k+1}$
and $f^{2^{k+1}}(0) = c_{2^{k+1}} \in  [a_{k+1},a_{k+2}]$ we get   
\begin{equation} 
Df^{2^{k+1}} |[a_{k+1},0]  > C_{11}b_{k+1}/|a_{k+1}|. 
\label{eq6} \end{equation} This implies the following estimate on the position of $a_{k+2}$ and, therefore, of
$b_{k+2}$:
\begin{equation}
\begin{array}{rllrll}
|a_{k+2}| & < &  \frac{|a_{k+1}|^2}{C_{11}b_{k+1}} & < & C_{12}b_k^{2\beta-1}, \\ 
 |b_{k+2}| & & < & &   C_{13}b_k^{2-1/\beta} ,
\end{array}
\end{equation}
for some universal constants $C_{12} > 0$ and $C_{13} > 0$.

Since $k$ is even we know that $c_{2^k} \in  [b_{k+1},b_k]$ and $c_{2^{k+1}} \in  [a_{k+1},a_{k+2}]$ and so in particular 
$f^{2^k}[c_{2^k},b_k]\subset [a_k,0]$. Due
to Lemma~\ref{lem6} the derivative of $f^{2^k}|[b_{k+1},b_k]$ is bounded away from zero, hence
\begin{equation}|b_k - c_{2^k}|< \lambda_1^{-1} |a_k|<C_{14} b_k^\beta \ll  b_k \end{equation}
for some universal constant $C_{14}$. Combining this with equation (\ref{eq6}), and since
$f^{2^k}[0,d_k]=[c_{2^k},b_k]$,   this  gives us a much better estimate for $d_k$
(compared to inequality (5)): 
\begin{equation}d_k < C_{11}^{-1}|b_k - c_{2^k}| \cdot |a_{k+1}| / b_{k+1}  < C_{15}b_k^\beta |a_{k+1}| / b_{k+1}   < C_{15} b_k ^{2\beta-1}\end{equation}
for some  $C_{15} > 0$.
\end{proof}

\begin{lemma}\label{lem9} For every constant $C_0 > 0$ there exists a constant $\tau_* > 0$ such that the following holds. Let $k$ be a large even integer, $C$ be a constant greater that $C_0$, and $|A_k| > Cb_k$, $B_k > (1 + C)b_k$. Then
$$|A_{k+2}| > \tau_* \min(C, b_k^{1-\beta})b_k.$$
\end{lemma}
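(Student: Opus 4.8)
The plan is to iterate the semi-extension one more step, tracking exactly how the space to the left of the renormalization interval is created. Recall that for $k$ even $E_{k+1} = E_k \circ f_2 \circ E_k|T_{k+1}$ and $E_{k+2} = E_{k+1} \circ f_1 \circ E_{k+1}|T_{k+2}$, so $A_{k+2}$ is determined by pushing the far left endpoint of the domain of $E_{k+1}$ forward; concretely, as in the proof of Lemma~\ref{lem4}, either $d_k \le c_{2^k}$, in which case $A_{k+2} \le a_k$ and $|A_{k+2}| \ge |a_k| \approx b_k^\beta$, or $d_k > c_{2^k}$ and $A_{k+2} = E_{k+1}\circ f_1(c_{2^k})$. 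So the first move is to dispose of the easy case $d_k \le c_{2^k}$ (there $|A_{k+2}| \ge |a_k| \approx K_0 b_k^\beta$, which already beats $\tau_* b_k^{2-\beta}$... wait, one has to be careful with which of $b_k^{1-\beta}b_k = b_k^{2-\beta}$ versus $Cb_k$ dominates), and then concentrate on the case $d_k > c_{2^k}$.

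The key point in the main case is a distortion estimate. Under the hypotheses $|A_k| > Cb_k$ and $B_k > (1+C)b_k$ we have, by Lemma~\ref{lem1} and Koebe (Lemma~\ref{thm:koebe}), that $E_k|J_k$ has bounded distortion — this is the content of the conclusion of Lemma~\ref{lem7} applied in this conditional setting, and it is also what drives Lemma~\ref{lem6}, giving $\lambda_1 b_k < b_{k+1} < \lambda_2 b_k$ and $|Df^{2^k}|_{[b_{k+1},b_k]}| > \lambda_1$. The interval $[A_{k+2}, a_{k+2}]$ is the $E_{k+1}$-image of $[f(a_{k+2}), f_1(c_{2^k})] \subset \hat J_{k+1}$, and since $[A_{k+2}, a_{k+2}]$ is well inside the diffeomorphic range $[\hat A_{k+1}, \hat B_{k+1}] = [c_{2^{k-1}}, c_{2^k}]$ whenever $|A_{k+2}| \le \tfrac12\min(|a_k|, b_{k+1})$ (the trivial alternative being that $|A_{k+2}|$ already exceeds this, so there is nothing to prove), Koebe gives bounded distortion of $E_{k+1}|_{[f(a_{k+2}),f_1(c_{2^k})]}$. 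Composing with the bounded-distortion branch $f_1$ and using that $Df^{2^{k+1}}(a_{k+2}) $ is a repelling multiplier $>1$, we get $|D(E_{k+1}\circ f_1)| > C_5$ on $[a_{k+2}, c_{2^k}]$; evaluating and using $a_{k+2} < 0 < b_{k+1} < c_{2^k}$ with $E_{k+1}$ orientation reversing and $E_{k+1}\circ f_1(0) = c_{2^{k+1}} < 0$ yields $|A_{k+2}| = |E_{k+1}\circ f_1(c_{2^k})| > |E_{k+1}\circ f_1(b_{k+1})| > C_5\, b_{k+1} \ge C_5\lambda_1\, b_k$, i.e. the desired lower bound of order $b_k$. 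The refinement to $\tau_*\min(C, b_k^{1-\beta})b_k$ comes from keeping track of the constant: the gain factor is essentially $\min(C, c_{2^k}/b_k \cdot (\text{something}))$, and since $b_k - c_{2^k} < \tau_* b_k^\beta$ by Lemma~\ref{lem8}\,(\ref{eq10}) the term $c_{2^k}$ can be replaced by $b_k$, while the competition with $b_k^{1-\beta}b_k$ accounts for the other branch $|A_{k+2}| \gtrsim |a_k| \approx b_k^\beta = b_k^{1-\beta}\cdot b_k^{2\beta-1}$... no — it accounts for the case where $|A_{k+2}| \le \tfrac12|a_k|$ fails, i.e. $|A_{k+2}| \gtrsim |a_k| \approx K_0 b_k^\beta$, and $b_k^\beta = b_k^{1-\beta}\cdot b_k^{2\beta-1} \le b_k^{1-\beta} b_k$ for $k$ large, so that lower bound is subsumed in $\tau_*\, b_k^{1-\beta} b_k$.

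The main obstacle I expect is making the distortion bound for $E_{k+1}$ genuinely quantitative rather than merely "bounded": Lemma~\ref{lem9} asks for the improvement factor $\min(C, b_k^{1-\beta})$, so one cannot afford to absorb $C$ into an unspecified constant. Concretely, one needs that the Koebe space available for $E_{k+1}|_{[f(a_{k+2}),f_1(c_{2^k})]}$ is of definite size depending on $C$ (coming from $|A_k| > Cb_k$, $B_k > (1+C)b_k$ propagated through one step), so that the Koebe distortion factor is like $1 + O(1/C)$; this then lets the multiplier $|Df^{2^{k+1}}(a_{k+2})| > 1$ pass through with a loss of only $O(1/C)$, and the evaluation at $b_{k+1}$ versus $c_{2^k}$ loses only the factor $b_{k+1}/c_{2^k} \ge \lambda_1$, which is where the $\min(C,\cdot)$ truncation appears — the final bound cannot grow faster than $b_k^{1-\beta}b_k$ no matter how large $C$ is, because that is the intrinsic size scale of $|a_k|$ relative to $b_k$, beyond which the left branch simply runs out of room. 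Carefully bookkeeping these three loss factors (Koebe, the multiplier, the endpoint evaluation) against the two competing lower bounds (order $b_{k+1}$ from the expansion argument, order $|a_k|$ from the $d_k \le c_{2^k}$ branch) is the real work; everything else is an application of Lemmas~\ref{lem1},~\ref{lem6},~\ref{lem8} and Koebe.
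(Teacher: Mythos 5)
There is a genuine gap here, and it is the central one: your argument never produces the factor $\min(C,b_k^{1-\beta})$, which is the whole point of the lemma. Your ``main case'' reproduces the proof of Lemma~\ref{lem4} essentially verbatim (bounded distortion of $E_{k+1}$ on $[f(a_{k+2}),f_1(c_{2^k})]$, then the repelling multiplier at $a_{k+2}$, then evaluation at $b_{k+1}$) and therefore can only yield $|A_{k+2}|>C_5 b_{k+1}\approx b_k$, i.e.\ the conclusion of Lemma~\ref{lem4}, not of Lemma~\ref{lem9}. The promised upgrade by ``keeping track of the constant'' cannot work, because $b_k^{1-\beta}\to\infty$ is not a constant: the three loss factors you list (Koebe $1+O(1/C)$, the multiplier $>1$, the ratio $b_{k+1}/c_{2^k}\ge\lambda_1$) are all of order one. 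The mechanism the paper actually uses is different: the hypotheses give bounded distortion of $E_k\circ f_1$ on $[a_k,\tilde e_k]$ (with $\tilde e_k$ defined by $E_k\circ f_1(\tilde e_k)=(1+\tfrac12 C)b_k$), hence a \emph{pointwise} derivative bound $D(E_k\circ f_1)\gtrsim (b_k-a_k)/(d_k-a_k)\gtrsim b_k^{1-\beta}$ there; this huge derivative is then applied to the interval $[d_k,b_{k+1}]$, whose length is $\approx b_k$ because $d_k<\tau_* b_k^{2\beta-1}$ (Lemma~\ref{lem8}, inequality (\ref{eq11})) and $b_{k+1}>\lambda_1 b_k$ (Lemma~\ref{lem6}), producing an image of length $\gtrsim b_k^{2-\beta}$ beyond $b_k$, which the second branch $E_k\circ f_2$ (derivative bounded below) transports to the left of $a_k$. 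The truncation at $C b_k$ and the case analysis come from whether one exits the controlled region first ($\tilde e_k$ versus $b_{k+1}$, and $\tilde b_k$ versus the relevant image point), not from a dichotomy on $d_k$ versus $c_{2^k}$.

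Your treatment of the ``easy case'' is also not correct. First, under the hypotheses of Lemma~\ref{lem9} one has $d_k<\tau_* b_k^{2\beta-1}\ll c_{2^k}\sim b_k$ by Lemma~\ref{lem8}, so the alternative $d_k>c_{2^k}$ never occurs and your dichotomy collapses to the branch that gives only $|A_{k+2}|\ge|a_k|\approx K_0 b_k^{\beta}$. Second, since $\beta>1$ and $b_k\to 0$ we have $b_k^{\beta}\ll b_k^{2-\beta}=b_k^{1-\beta}\cdot b_k$, so the bound $|A_{k+2}|\gtrsim b_k^{\beta}$ is \emph{weaker} than the target $\tau_*\min(C,b_k^{1-\beta})b_k$ in every regime of $C>C_0$; saying it is ``subsumed in'' the target gets the implication backwards — a lower bound smaller than the required one proves nothing. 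So neither branch of your argument reaches the stated conclusion, and the missing ingredient is precisely the expansion estimate $D(E_k\circ f_1)\gtrsim b_k^{1-\beta}$ on the non-dynamical interval $[d_k,\tilde e_k]$ combined with $d_k\ll b_k$.
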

\begin{proof} 
 Set
 \begin{equation}
\begin{array}{rcl}
\tilde A_k &=& -\frac{1}{2}Cb_k \\
\tilde  B_k &=& (1+\frac{1}{2}C) b_k.
\end{array}\end{equation}
  Let $\tilde e_k, \tilde b_k$ be points such that $E_k \circ f_1(\tilde e_k) = \tilde B_k$ and 
$E_k \circ f_2(\tilde b_k) = \tilde A_k$. Arguing as before we see that the distortions of maps $E_k \circ  f_1|[a_k,\tilde e_k]$ 
and $E_k \circ f_2|[b_{k+1},\tilde b_k]$ are bounded by some constant depending on $C_0$. Therefore, for all $x\in  [a_k,\tilde e_k]$, 
 \begin{equation}
\begin{array}{rcl}
D(E_k \circ f_1)(x) & >&  C \dfrac{b_k-a_k}{ d_k - a_k} \\
&>& C_{17} b_k^{1-\beta}.
\end{array}\end{equation}
In the same way we get the estimate on the derivative of the other branch:
$$D(E_k \circ  f_2)(x) > C_{18} $$
for all $ x\in [b_{k+1},\tilde b_k]$.
Now consider the following cases. 

{\bf Case 1.a.}  Assume that $\tilde e_k < b_{k+1}$ and $\tilde B_k > \tilde b_k$. Then, arguing as in Lemma~\ref{lem2}(4,5)
we obtain that $|A_{k+2}| > |\tilde A_k|$ and we are done in this case.

{\bf Case 1.b.} Now suppose $\tilde e_k < b_{k+1}$ and $\tilde B_k \le  \tilde b_k$. Then
 \begin{equation}
\begin{array}{rcl}
 |E_{k+1} \circ  f_1([d_k, \tilde e_k])| &>& C_{18}|\tilde B_k - b_k|\\ 
&=& \dfrac{1}{2}C_{18}Cb_k.
\end{array}\end{equation}
Using an argument similar to prove Lemma 2(4) we get $|A_{k+2}| > \frac{1}{2} C_{18}C b_k$ and
this case is also done.

{\bf Case 2:} $\tilde e_k > b_{k+1}$. From the derivative estimate we know
 \begin{equation}
\begin{array}{rcl}
E_k \circ  f_1([d_k, b_{k+1}]) &>& C_{17}b_k^{1-\beta}|b_{k+1} - d_k| \\
&>& C_{19} b_k^{2-\beta} .
\end{array}\end{equation}
Here we used inequalities (\ref{eq8}) and (\ref{eq11}).

We finish by considering two subcases as in Case 1. If $E_k \circ f_1(b_{k+1}) > \tilde b_k$,
then as before $|A_{k+2}| > |\tilde A_k|$. Otherwise,
   \begin{align*}&|A_{k+2}| > C_{18}C_{19}b_k^{2-\beta}. \qquad \qquad \qedhere
      \end{align*}
\end{proof}

\subsection{The proof of the first part of Theorem~\ref{thm:bounds}: getting huge space all the time}\label{subsec:superexpspace}

The following lemma completes the proof of  the first part of the {\lq}Big Bounds{\rq} Theorem~\ref{thm:bounds}.
The actual bounds for the space that are claimed in that theorem 
will be only obtained in the improved bounds from  Lemma~\ref{lem10'}.

\begin{lemma}[Koebe Space for the semi-extension] \label{lem10}  There exists $\hat \lambda>0$ so that 
as $k$ even  and $k\to \infty$,
 \begin{equation}
\dfrac{|b_{k+2}-a_{k+2}|}{|a_{k+2} -A_{k+2}|}  = O(b_{k}^{1-1/\beta}), \dfrac{|b_{k+2}-a_{k+2}|}{|B_{k+2} -b_{k+2}|}  = O(b_{k}^{1-1/\beta}) 
\label{eq:22}\end{equation}
and 
\begin{equation}\dfrac{|b_{k+1}-a_{k+1}|}{|a_{k+1} -A_{k+1}|} = O(b_{k-2}^{1-1/\beta}) , \dfrac{|b_{k+1}-a_{k+1}|}{ |B_{k+1} - b_{k+1}|}  \ge \hat \lambda .
\label{eq:23}
\end{equation}
In particular, 
the range of the map $E_k|J_k$ can be monotonically semi-extended to a $\tau_k$ scaled neighbourhood of
 $[a_k,b_k]$ where $\tau_k \approx O(b_{k-2}^{1-1/\beta})$ for $k$ even 
 and $\tau_k \approx 1$ for $k$ odd. 
 
 Moreover, $O(b_{k}^{1-1/\beta})$ converges super-exponentially to zero: 
$\log(b_{k})$ converges exponentially to zero. 
\end{lemma}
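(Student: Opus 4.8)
The plan is to upgrade the ``space infinitely often'' statement of Lemma~\ref{lem7} to ``huge space always'' by iterating the amplification step of Lemma~\ref{lem9}, and then to read off the four ratios, the scaled--neighbourhood claim and the super-exponential statement from Lemmas~\ref{lem2}, \ref{lem6} and \ref{lem8}. The mechanism that makes this work is that, by Lemma~\ref{lem8}, $b_{k+2}\lesssim b_k^{\,2-1/\beta}$ with $2-1/\beta>1$, so $b_k$ decays super-exponentially; hence a merely \emph{multiplicative} gain of relative space per two steps compounds into unbounded, in fact super-exponential, space.

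\emph{Running the induction.} By Lemma~\ref{lem7} there are arbitrarily large even $k$ with $|A_k|>Cb_k$ for a fixed $C>0$. Starting from such a $k$, I would check that Lemma~\ref{lem9} applies at every even level $\ge k$: its hypothesis $B_k>(1+|A_k|/b_k)b_k$ holds because $B_k=c_{2^{k-2}}$ (Lemma~\ref{lem2}(1)) is comparable to $b_{k-2}$ by Lemma~\ref{lem6} and Lemma~\ref{lem8}, so $B_k\gg b_k$, while an a priori bound keeps $|A_k|$ from overtaking $B_k$. Writing $C_k:=|A_k|/b_k$, Lemma~\ref{lem9} gives $C_{k+2}\gtrsim \min(C_k,b_k^{1-\beta})\,(b_k/b_{k+2})\gtrsim \min(C_k,b_k^{1-\beta})\,b_k^{1/\beta-1}$; since $b_k^{1/\beta-1}\to\infty$ this forces $C_k\to\infty$ super-exponentially once $C_k\ge C_0$. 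In particular, since $b_{k-2}^{1-\beta}\ge1$, the same inequality gives the cruder but sufficient bound $|A_k|\gtrsim b_{k-2}$ for all large even $k$.

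\emph{Reading off the estimates.} By (\ref{akbk}), $|a_m|\sim K_0 b_m^\beta\ll b_m$, so $|b_m-a_m|\sim b_m$ at every level. For even $m=k+2$: the left gap is $|a_{k+2}-A_{k+2}|\sim|A_{k+2}|\gtrsim b_k$; the right gap is $|B_{k+2}-b_{k+2}|=c_{2^k}-b_{k+2}\sim b_k$ since $B_{k+2}=c_{2^k}$ (Lemma~\ref{lem2}(1)) and $b_k-c_{2^k}\lesssim b_k^\beta$ (Lemma~\ref{lem8}, (\ref{eq10})); dividing by $|b_{k+2}-a_{k+2}|\sim b_{k+2}\lesssim b_k^{2-1/\beta}$ (Lemma~\ref{lem8}, (\ref{eq9})) gives both bounds in (\ref{eq:22}), and the same computation at level $k$ shows that $[A_k,B_k]$ is a $\tau_k$-scaled neighbourhood of $[a_k,b_k]$ with $\tau_k^{-1}=O(b_{k-2}^{1-1/\beta})\to0$. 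For the odd level $k+1$: $b_{k+1}\approx b_k$ by Lemma~\ref{lem6}, so $|b_{k+1}-a_{k+1}|\approx b_k$; the right gap is $c_{2^k}-b_{k+1}$, which is $\approx b_k$ (from $\lambda_1 b_k\le b_{k+1}\le c_{2^k}\le b_k$), giving the uniform lower bound $\hat\lambda$ in (\ref{eq:23}); and since space is ample one has $b_k'<B_k=c_{2^{k-2}}$, so Lemma~\ref{lem2}(4) gives $A_{k+1}=A_k$, whence $|a_{k+1}-A_{k+1}|\sim|A_k|\gtrsim b_{k-2}$ and the first ratio in (\ref{eq:23}) is $\lesssim b_{k-2}^{1-1/\beta}$, so $\tau_{k+1}\approx1$. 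Finally, iterating $b_{k+2}\lesssim b_k^{2-1/\beta}$ yields $\log(1/b_{2k})\gtrsim(2-1/\beta)^k$, so $b_k$ --- hence each error term $O(b_k^{1-1/\beta})$, $O(b_{k-2}^{1-1/\beta})$ --- tends to $0$ super-exponentially. Together with Lemma~\ref{lem1} (which supplies $B_k>\tau b_k$ for all $k$) this completes the first part of Theorem~\ref{thm:bounds}, the precise constant $\lambda$ and exponent $\tfrac12$ being obtained in the refined bounds afterwards.

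\emph{Main obstacle.} I expect the hardest part to be keeping the induction alive and transferring even-level space to odd levels: verifying that the hypothesis $B_k>(1+C)b_k$ of Lemma~\ref{lem9} genuinely survives at every even level (an a priori comparison of $|A_k|$ with $B_k=c_{2^{k-2}}$ is needed), and that the non-dynamical points $b_k',e_k$ are positioned so that Lemma~\ref{lem2}(4)--(5) applies to give $A_{k+1}=A_k$ and $b_k'<B_k$. By contrast the amplification itself and the arithmetic of the super-exponential rate are routine once Lemmas~\ref{lem6}, \ref{lem8} and \ref{lem9} are in hand.
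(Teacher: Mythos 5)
Your architecture is the paper's: seed the induction with Lemma~\ref{lem7}, amplify the relative space via Lemma~\ref{lem9} using $b_{k+2}\lesssim b_k^{2-1/\beta}$ from Lemma~\ref{lem8}, and then read (\ref{eq:22}), the second half of (\ref{eq:23}) and the super-exponential decay off Lemma~\ref{lem2}(1), (\ref{eq9})--(\ref{eq10}) and Lemma~\ref{lem6}. Those parts, and your bookkeeping of the hypothesis $B_k>(1+C)b_k$ in the induction, are in order.

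The genuine gap is in the first estimate of (\ref{eq:23}). You obtain $A_{k+1}=A_k$ from Lemma~\ref{lem2}(4) by asserting that ``since space is ample one has $b_k'<B_k$''. But $b_k'$ is a non-dynamical point (the $f_2$-preimage of the left endpoint of $T_k$), and none of the ample-space estimates locate it relative to $B_k$; the paper explicitly flags this as unknown and, in fact, never proves $b_k'<B_k$. What it proves instead is a dichotomy. If $B_k\le b_k'$ then $A_{k+1}=E_k\circ f_2(B_k)$, and one must still show this has absolute value $\gtrsim|A_k|$. The paper does so by comparing two ratios: on one hand $|f(0)-f_2(b_k)|/|f(0)-f_2(B_k)|\approx b_k^\beta/B_k^\beta=O(b_{k-2}^{\beta-1})$ from the lower bound on $B_k$; on the other hand, (\ref{eq:22}) together with Koebe gives $|f(0)-f_2(b_k)|/|f_2(0)-f_2(b_k')|=O(b_{k-2}^{1-1/\beta})$. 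Since $b_{k-2}^{1-1/\beta}\gg b_{k-2}^{\beta-1}$, comparing the two shows that either $B_k>b_k'$ after all, or $f_2(B_k)$ lies so close to the left end of $T_k$ that Koebe forces $|E_k\circ f_2(B_k)|\ge\tfrac12|A_k|$; either alternative yields the first bound in (\ref{eq:23}). This case analysis is not a routine positioning check --- you correctly named it as the main obstacle, but it has to be carried out rather than assumed, since the rest of your argument (in particular $|a_{k+1}-A_{k+1}|\gtrsim|A_k|$) hinges on it.
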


\begin{proof} This lemma is a consequence of the previous two lemmas. Let $k$ be a large (even) 
integer from the sequence given by Lemma~\ref{lem7}. Then, from Lemmas~\ref{lem8}  and \ref{lem9}  it follows that
 \begin{equation}
\begin{array}{rcl}
 |A_{k+2}| &>& C_{20}b_k^{\frac{1}{\beta}-1} b_{k+2}, \\
|B_{k+2}| &>& C_{20}b_k^{\frac{1}{\beta}-1} b_{k+2},
\end{array}
\label{eq:24}
\end{equation}
for some universal constant $C_{20} > 0$. Since $\beta > 1$ we see that if k is large 
enough, we get huge improvement on the relative size of extension interval $[A_{k+2},B_{k+2}]$ compared to the renormalization interval $[a_{k+2},b_{k+2}]$. 
From this point the argument can be applied inductively and  (\ref{eq:22}) follows. 


Lemma~\ref{lem6} gives  $|a_{k+1}-b_{k+1}|\approx |a_{k}-b_{k}|$. 
By the proof of Lemma~\ref{lem2}(4) either $A_{k+1}=A_k$ (if $b_k'<B_k$) or $A_{k+1}=E_k\circ  f_2(B_k)$
(if $B_k\le b_k'$). In the former case we use (\ref{eq:22}) and get $\dfrac{|b_{k+1}-a_{k+1}|}{|a_{k+1} -A_{k+1}|}\approx \dfrac{|b_{k}-a_{k}|}{|a_{k} -A_{k}|} =O(b_{k-2}^{1-1/\beta})$.  So let us check what happens when $B_k\le b_k'$. Using (\ref{eq:24}) we obtain
(*) $\dfrac{|f(0)-f_2(b_k)|}{|f(0)-f(B_k)|}\approx b_{k}^\beta / B_{k}^\beta = O(b_{k-2}^{\beta-1})$. 
On the other hand, the expression in (\ref{eq:22}) and Koebe imply 
$\dfrac{|x-f_2(b_{k})|}{|f_2(b_{k})-f_2(b_{k}')|}   = O(b_{k-2}^{1-1/\beta})$
where $x$ is so that $E_{k}(x)=b_{k}$, see Figure~\ref{fig:Jk}. Since $c_{2^k}\sim b_k$ we have
$|x-f(a_{k}|\approx |f(a_k)-f(0)|$ this implies (**) $\dfrac{|f(0)-f_2(b_{k})|}{|f_2(0)-f_2(b_{k}')|}   = O(b_{k-2}^{1-1/\beta})$. 
Since $b_{k-2}^{1-1/\beta} >> b_{k-2}^{\beta-1}$ and comparing (*) and (**) we can conclude that 
either $B_k>b_k'$ or (by Koebe) $E_k\circ f_2(B_k)|\ge (1/2)|A_k|$. In either case (\ref{eq:23}) holds. 

Since $B_{k+1}=c_{2^k} \sim b_k$, we have by  (\ref{eq8}) that there exist universal constants $0<\lambda_1'<\lambda_2'<1$ so that 
$\dfrac{|b_{k+1}-a_{k+1}|}{ |B_{k+1} - b_{k+1}|} \sim \dfrac{|b_{k+1}|}{ |b_k - b_{k+1}|}\in (\lambda_1',\lambda_2')$.
Which proves the second expression in (\ref{eq:23}) and that this expression cannot be improved. 

The final statement follows from inequality (\ref{eq9}). 
\end{proof}

\section{Scaling laws, renormalization limits and universality} \label{sec:scaling} 

\paragraph{A first error bound for the map $f^{2^k}$ on $[a_k,b_k]$ when $k$ is even.}  Let $k$ be even and $x_k$ be so that $E_k(x_k)=b_k$, see Figure~\ref{fig:Jk}. Then  $E_k\colon [f(a_k),x_k]\to [a_k,b_k]$ is the first entry map 
and  $\tau_k$ be the Koebe space of $E_k|[f(a_k),x_k]$. Let $L_k$ be the affine map which agrees with $E_k$ on the boundary points of 
 $ [f(a_k),f(0)]$. 
By the Corollary of  Koebe, Lemma~\ref{cor:koebe}, we obtain for all $x\in  [f(a_k),f(0)]$
\begin{equation} 
E_k(x)  =L_kx+O(b_k/\tau_k)\mbox{ and }  DE_k(x)=DL_k(1+O(1/\tau_k)).\label{eq:distortionE_k} \end{equation} 
By Lemma~\ref{lem10}
 $\tau_k\approx b_{k-2}^{1/\beta-1}\to \infty$.  
In particular it follows that $O(b_k/\tau_k)=o(b_k)$.  Obviously $DL_k\approx b_k/|a_k|\approx b_k^{1-\beta}$.
Hence
\begin{equation} E_k(x)= L_kx+o(b_k) \mbox{ and }
DE_k(x) \sim DL_k, \label{eq:distortionE_k2} \end{equation} 
for all $x\in  [f(a_k),f(0)]$.   Later on,    we will improve the error bound in this  expression. 
Hence
  \begin{equation}
f^{2^k} (x) = 
\begin{cases}
c_{2^k} - s_k |x| +o(b_k) & \mbox{ when }x\in [a_k,0],\\ 
c_{2^k} - t_k x^\beta +o(b_k) & \mbox{ when }x\in [0,b_k], 
\end{cases}
\label{eq:f2k}
\end{equation}
where $s_k>0$ is so that $c_{2^k}-s_k|a_k| + o(|b_k|)=-|a_k|$ and $t_k>0$ is so that $c_{2^k}-t_k b_k^{\beta}+o(|b_k|)=-|a_k|$.
By (\ref{eq10}) we have  $c_{2^k}=b_k+O(b_k^\beta)\sim b_k$ and since $a_k\sim -K_0 b_k^{\beta}$, this implies 
\begin{equation}s_k\sim  \dfrac{b_k^{1-\beta}}{K_0}\mbox{ and }
t_k\sim b_k^{1-\beta}.\label{eq:sk} \end{equation}
Equation (\ref{eq:distortionE_k2}) also gives
  \begin{equation}
Df^{2^k} (x) \sim 
\begin{cases} s_k&\mbox{ when } x\in [a_k,0),\\ 
-t_k \beta x^{\beta-1}  & \mbox{ when }x\in (0,b_k].
\end{cases}
\label{eq:f2kder} 
\end{equation} 
For simplicity we will write 
$$f_{l,k}:=f^{2^k}|[a_k,0]\mbox{ and }f_{r,k}:=f^{2^k}|[0,b_k].$$
To avoid an overload of notation we usually write
$$f_l=f_{l,k} \mbox{ and } f_r = f_{r,k}$$
if it clear from the context which $k$ is used.

\paragraph{The scaling law from $b_k$ to $b_{k+1}$ when $k$ is even.} 
Write $b_{k+1}=\lambda_{k} b_k$. 
Then   (\ref{eq:f2k}) implies  
\begin{equation}
c_{2^k}-t_k \lambda_{k}^\beta b_k^\beta +o(b_k) =f^{2^k}(b_{k+1}) = b_{k+1}=\lambda_k b_k . 
\label{eqabc} 
\end{equation}
By (\ref{eq10}) $$c_{2^k}=b_k+O(b_k^\beta)$$ 
and combining this with (\ref{eq:sk}) and (\ref{eqabc}) implies 
$$1-\lambda_{k}^\beta+o(1)=\lambda_k .$$
So taking $\lambda\in (0,1)$ be the root of $1-\lambda^\beta=\lambda$ this gives
$\lambda_k=\lambda+o(1)$ and 
$$b_{k+1}=\lambda b_k + o(b_k).$$ 
Later on we will improve on this statement, see  (\ref{eq:lkimp}).

\paragraph{The approximate scaling law from $b_k$ to $b_{k+2}$ when $k$ is even.}
Fix some $\delta>0$ and let $C_k$ be so that   $c_{2^{k+1}}= - C_k b_k^{\delta}$.
Below we will determine $\delta$ and $C_k$. 
Note that 
$$a_{k+1}<c_{2^{k+1}}<0<c_{2^{k+2}}<b_{k+2}<b_{k+1}<c_{3\cdot 2^k}<c_{2^k}<b_k.$$ 
Then using (\ref{eq:sk}) and  (\ref{eq:f2kder}) 
\begin{equation} 
c_{2^k}-c_{3\cdot 2^{k}}=f^{2^k}(0)-f^{2^k}(c_{2^{k+1}})=f_l(0)- f_l(c_{2^{k+1}})\sim \dfrac{C_k}{K_0} b_k^{\delta} b_k^{1-\beta}.
\label{k3k} \end{equation} 
Since $f_r$ has bounded distortion and bounded derivative on $[b_{k+1},b_k]$ this implies 
\begin{equation}
c_{2^{k+2}}-c_{2^{k+1}} = f_r\circ f_l (c_{2^{k+1}}) - f_r(c_{2^k})  =
f_r(c_{3\cdot 2^k})-f_r(c_{2^k}) \approx C_k b_k^{\delta} b_k^{1-\beta}
.\label{eq:33} \end{equation} 
In fact, 
\begin{equation} |c_{2^k}-c_{3\cdot 2^k}| \approx   |c_{2^{k+2}}-c_{2^{k+1}}| <   |b_{k+2}-a_{k+1}|  <o(b_k)\label{eq:c32k} \end{equation} 
where $\approx$ follows from the fact that $Df_r$ is bounded from above and below on $[b_{k+1},b_k]$, where the first 
$<$ follows from the ordering of the points and where $<o(b_k)$ follows from equation (\ref{eq9}) and $|a_{k+1}|\approx b_{k+1}^\beta$. 
Combining this with 
$c_{2^k}\sim b_k$,  equations (\ref{eq:f2kder}) and  (\ref{eq:sk}) give 
 $f_r'(b_k)\sim -\beta$ and $f_r'(x)\sim -\beta$ for all $x\in [c_{3\cdot 2^{k}},c_{2^k}]$. 
 Hence (\ref{eq:33}) in fact improves to 
\begin{equation} c_{2^{k+2}}-c_{2^{k+1}}  \sim \dfrac{\beta C_k}{K_0} b_k^{\delta} b_k^{1-\beta} .\label{eq:33b} 
\end{equation} 
Since $|c_{2^{k+1}}|=C_kb_k^\delta <<  \dfrac{\beta C_k}{K_0} b_k^{\delta} b_k^{1-\beta}$ and using that $b_{k+2}\sim c_{2^{k+2}}$, 
equation (\ref{eq:33b}) gives 
\begin{equation} 
b_{k+2}\sim c_{2^{k+2}}\sim \dfrac{\beta C_k}{K_0} b_k^{\delta} b_k^{1-\beta}  \mbox{ and }
a_{k+2} \sim -K_0 [\dfrac{\beta C_k}{K_0} b_k^{\delta}
b_k^{1-\beta}]^\beta.
\label{eq:deltabeta'} 
\end{equation}

Next note that $f^{2^{k+1}}(a_{k+2})=f_r\circ f_l(a_{k+2})$. 
Using that $f_l|[a_k,0]$ 
has derivative everywhere $\sim \frac{1}{K_0} b_k^{1-\beta}$ and
equation (\ref{eq9}) we have that $|a_{k+2}|\le K_0
|b_{k+2}|^\beta < C |b_k|^{2\beta-1}$ and therefore 
equation (\ref{eq:deltabeta'}) implies 
$$f_l(a_{k+2})-f_l(0) \le  C b_k^{2\beta-1} b_k ^{1-\beta}= C
b_k^\beta.$$
Therefore $f_l(a_{k+2})\sim b_k$ and so  equation  (\ref{eq:f2kder}) implies
\begin{equation} f_r'(x)\sim -\beta\mbox{ for all }x\in [f_l(a_{k+2}),b_k].
\label{eq:derboundary}
\end{equation}
%
Since, by (\ref{eq:deltabeta'}),  
$$f_l(a_{k+2})-f_l(0) \sim  \dfrac{b_k^{1-\beta}}{K_0} K_0 [\dfrac{\beta C_k}{K_0} b_k^{\delta} b_k^{1-\beta} ]^\beta = 
\left[ \dfrac{\beta C_k}{K_0}\right]^{\beta}
b_k ^{\beta \delta+1-\beta^2}.$$ Hence (\ref{eq:derboundary}) implies
\begin{equation}
f^{2^{k+1}}(a_{k+2})-c_{2^{k+1}}=f_r\circ f_l(a_{k+2})-f_r(f_l(0))\sim  \beta \left[ \dfrac{\beta C_k}{K_0}\right] ^{\beta} 
b_k ^{\beta \delta+1-\beta^2}.\label{eq:compare'} \end{equation} 
By (\ref{eq:deltabeta'}),
 $f^{2^{k+1}}(a_{k+2})=a_{k+2}\approx  -C_k^\beta [b_k^{\delta} b_k^{1-\beta}]^\beta = -C_k^\beta b_k^{\beta\delta+\beta-\beta^2}$
is orders smaller than the right hand side of (\ref{eq:compare'}),  and thus it follows that
\begin{equation} c_{2^{k+1}} \sim -
\beta \left[\dfrac{\beta C_k}{K_0}\right]^{\beta} b_k ^{\beta \delta+1-\beta^2}. \label{eq:orders'} 
\end{equation} 
Using $c_{2^{k+1}}= - C_k b_k^{\delta}$ we obtain as a natural choice
\begin{equation} \delta=\beta\delta+1-\beta^2\mbox{  which gives } \delta=\beta+1 \label{eq:delta}\end{equation} 
and 
 \begin{equation} C_k\sim \beta \left[ \dfrac{\beta C_k}{K_0}\right] ^{\beta} \mbox{ and therefore }  
 C_k\sim \left[ \dfrac {K_0^\beta} {\beta^{\beta+1}}  \right] ^{1/(\beta-1)}  .\label{eq:Ck} \end{equation} 
Hence from (\ref{eq:deltabeta'}), $b_{k+2}\sim c_{2^{k+2}}$ and $c_{2^{k+1}}= - C_k b_k^{\delta}$ we obtain 
\begin{equation} 
b_{k+2} \sim \dfrac{\beta}{K_0} \left[ \dfrac {K_0^\beta} {\beta^{\beta+1}}  \right] ^{1/(\beta-1)} 
 b_k^2 =  \beta^{-2/(\beta-1)}K_0^{1/(\beta-1)} b_k^2 
 \label{bkquadratic} \end{equation} 
and 
\begin{equation} c_{2^{k+1}}\sim  -\left[ \dfrac {K_0^\beta} {\beta^{\beta+1}}  \right] ^{1/(\beta-1)} b_k^{\beta+1}.  
\end{equation} 
 Since $b_{k+1}\sim \lambda b_k$ this gives 
\begin{equation} 
b_{k+2} \sim 
 \beta^{\frac{-2}{\beta-1}} K_0^{\frac{1}{\beta-1}}  \lambda^{-2} 
 b_{k+1}^2 \label{eq:recurrenceeq} 
 \end{equation}
 and 
 \begin{equation} 
c_{2^{k+1}}\sim 
 -\beta^{-\frac{\beta+1}{\beta-1}} K_0^{\frac{\beta}{\beta-1}}  \lambda^{-\beta-1} 
 b_{k+1}^{\beta+1} 
 \label{bkquadratic''}\end{equation}

\paragraph{The usual Koebe space does not hold and the proof of Theorem~\ref{thm:nousualbound}}

Let $T\ni f(0)$ be the maximal interval on which $f^{2^k -1}|T$  is diffeomorphic.
Then by Lemma~\ref{lem2} we have that 
$f^{2^k-1}=[\hat A_k,\hat B_k]\supset [a_k,b_k]$  where 
$$\hat A_k=c_{2^{k-1}}, \hat B_k=c_{2^{k-2}}\mbox{ when }k \mbox{ is even}$$
$$\hat A_k=c_{2^{k-2}},  \hat B_k=c_{2^{k-1}}\mbox{ when }k\mbox{  is odd}.$$ 
When $k$ is even then 
$$\hat A_k=c_{2^{k-1}}\approx b_{k-1}^{\beta+1} \approx b_k^{(\beta+1)/2}=o(b_k)$$  
and when $k$ is odd then 
$$\hat A_k=c_{2^{k-2}}\approx b_{k-2}^{\beta+1} \approx b_k^{(\beta+1)/2}=o(b_k).$$
So in either case there exists no $\tau>0$ so that $[\hat A_k,\hat B_k]$ is a $\tau$-scaled neighbourhood
of $[a_k,b_k]$ for $k$ large. In other words,  there is no Koebe space (on the left) for the diffeomorphic extension of the first entry map
into $[a_k,b_k]$. 

\paragraph{Improved Koebe Space for the semi-extension and the proof of Theorem 3 (Big Bounds).}

We can now prove Theorem 3 and an  improved version of Lemma~\ref{lem10}: 

\begin{lemma}[Improved Koebe Space] \label{lem10'}  The range of the map $E_k|J_k$ can be monotonically semi-extended to a $\tau_k$ scaled neighbourhood of
 $[a_k,b_k]$ where $\tau_k \approx b_{k-2}/b_{k} \approx b_k^{-1/2}$ when $k$ is even and  $\tau_k \approx 1$ for $k$ odd. 
 \end{lemma}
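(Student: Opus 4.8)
The plan is to upgrade the crude bounds of Lemma~\ref{lem10} by feeding in the now-established precise scaling laws, in particular the quadratic recursion $b_{k+2}\sim \beta^{-2/(\beta-1)}K_0^{1/(\beta-1)}b_k^2$ from (\ref{bkquadratic}) and the control $c_{2^k}\sim b_k$, $c_{2^{k-1}}\sim -C_{k-1}b_{k-1}^{\beta+1}$ from (\ref{eq:deltabeta'})--(\ref{eq:Ck}). Lemma~\ref{lem10} already gives that for $k$ even the space $\tau_k$ to the left of $[a_k,b_k]$ for the semi-extension satisfies $\tau_k\approx b_{k-2}^{1-1/\beta}$, and in particular $\tau_k\to\infty$ super-exponentially; the point of the improvement is to identify the exponent sharply, namely $\tau_k\approx b_{k-2}/b_k\approx b_k^{-1/2}$.

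First I would trace through where the interval $[A_k,B_k]$ for the semi-extension comes from. For $k$ even we have $E_k=E_{k-1}\circ f_1\circ E_{k-1}|T_k$ with $E_{k-1}$ orientation-reversing, and $B_k=c_{2^{k-2}}$ (Lemma~\ref{lem2}(1), applied at the previous even level), while $A_k=E_{k-1}\circ f_1(c_{2^{k-2}})$ in the generic case analysed in Lemma~\ref{lem4}. The key observation is that once $|A_{k}|\gg b_{k}$ (which holds by Lemma~\ref{lem7}/\ref{lem10}), the distortion of $E_{k-1}$ on the relevant domain is bounded, so $|A_k|$ is comparable to $|DE_{k-1}|$ times the length of $f_1([\,\cdot\,,c_{2^{k-2}}])$, i.e. comparable to the width of $[\hat A_{k-1},\hat B_{k-1}]=[c_{2^{k-2}},c_{2^{k-3}}]$ scaled by the (bounded-distortion) derivative of $E_{k-1}$. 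Using $c_{2^{k-2}}\sim b_{k-2}$ and the derivative estimate $D(E_{k-1}\circ f_1)\sim$ (something $\approx b_{k-2}^{1-\beta}$) — the same estimate as in Lemma~\ref{lem9} — together with the quadratic law $b_{k+2}\sim c_k b_k^2$, one gets $|A_k|\approx b_{k-2}^{1-\beta}\cdot b_{k-2}=b_{k-2}^{2-\beta}$, and then compares this with $b_k\approx b_{k-2}^2$: this yields $|A_k|/b_k\approx b_{k-2}^{2-\beta}/b_{k-2}^{2}=b_{k-2}^{-\beta}$. One must then reconcile this with the claimed $\tau_k\approx b_{k-2}/b_k=b_{k-2}^{-1}$; the resolution is that the \emph{maximal} semi-extension is cut off not by the left branch but by the right branch constraint $b_k'$, so the effective space is governed by $B_k\le b_k'$ versus $B_k>b_k'$ as in Lemma~\ref{lem2}(4) and the end of the proof of Lemma~\ref{lem10}, where $|a_k-A_k|$ gets replaced by $|a_k - E_k\circ f_2(B_k)|$. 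I would carefully redo that comparison (the ``(*) versus (**)'' step of Lemma~\ref{lem10}) with the sharp exponents $\delta=\beta+1$ and $b_{k+2}\sim \mathrm{const}\cdot b_k^2$ in hand, which pins down the exponent on the nose to $1/2$, i.e. $\tau_{2k}\approx b_{2k-2}/b_{2k}\approx b_{2k}^{-1/2}$ since $b_{2k}\approx b_{2k-2}^2$. For $k$ odd, Lemma~\ref{lem6} gives $|a_{k+1}-b_{k+1}|\approx|a_k-b_k|$ and $B_{k+1}=c_{2^{k-1}}\sim b_{k-1}$ so $|B_{k+1}-b_{k+1}|\approx b_{k+1}\approx b_k$, giving $\tau_k\approx 1$ exactly as in the second part of (\ref{eq:23}).

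The main obstacle I expect is the bookkeeping of which scenario of Lemma~\ref{lem2}(4)/(5) actually occurs at the relevant level and propagating the sharp constant through the composition $E_{k}=E_{k-1}\circ f_1\circ E_{k-1}$ without losing a power of $b$ — the crude Lemma~\ref{lem10} lost the discrepancy between exponent $1-1/\beta$ and exponent $1$, and recovering the exact exponent requires that the distortion estimates on $E_{k-1}$ and on $f_1$ be not merely bounded but asymptotically sharp, which is exactly what (\ref{eq:distortionE_k2}) and (\ref{eq:f2kder}) now provide. So the proof is: (i) invoke (\ref{bkquadratic}), (\ref{eq:distortionE_k2}), (\ref{eq:f2kder}) and $c_{2^k}\sim b_k$; (ii) rerun the final paragraph of the proof of Lemma~\ref{lem10} keeping track of exact exponents rather than just the $1-1/\beta$ bound; (iii) conclude $\tau_{2k}\approx b_{2k-2}/b_{2k}$, and use $b_{2k}\approx b_{2k-2}^{2}$ to rewrite this as $b_{2k}^{-1/2}$; (iv) for odd $k$ quote Lemma~\ref{lem6} directly. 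This also completes Theorem~\ref{thm:bounds}, since the two bullet points there now read off as $\lim\tau_{2k-1}=\lambda$ (from the odd-$k$ statement together with the scaling $b_{k+1}\sim\lambda b_k$ refining $\approx 1$ to the precise constant $\lambda$) and $\tau_{2k}\approx b_{2k}^{-1/2}$ with $\log\tau_{2k}\sim\frac12\Theta\,2^k$ by (\ref{eq:defTheta}).
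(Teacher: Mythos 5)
Your proposal lands on the same core argument as the paper: for $k$ even the right endpoint of the semi-extension is $B_k=c_{2^{k-2}}\approx b_{k-2}$ (Lemma~\ref{lem2}), the left gap $|A_k-a_k|$ is at least of order $b_{k-2}$ (Lemmas~\ref{lem9} and \ref{lem10}), and $b_k\approx b_{k-2}^2$ by (\ref{bkquadratic}), whence $\tau_k\approx b_{k-2}/b_k\approx b_k^{-1/2}$; for $k$ odd one reads off $\tau_k\approx 1$ from $B_k=c_{2^{k-1}}\sim b_{k-1}$ and (\ref{eq:23}). That is exactly the paper's (very short) proof.

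However, the detour through the asymptotic $|A_k|\approx b_{k-2}^{2-\beta}$ is wrong and should be cut. For $\beta\ge 2$ this quantity is bounded away from $0$ (and tends to $\infty$ for $\beta>2$), which is absurd since $A_k$ lies in $[a_0,b_0]$; the underlying error is that the derivative estimate $D(E_{k-1}\circ f_1)\approx b_{k-2}^{1-\beta}$ is the linearization of $f_{l,k-2}$ valid on $[a_{k-2},0]$ (an interval of length $\approx b_{k-2}^\beta$) and cannot be applied out to the point $c_{2^{k-2}}\approx b_{k-2}$; moreover by (\ref{eq11}) one has $d_{k-2}\ll c_{2^{k-2}}$ for large $k$, so the case $A_k=E_{k-1}\circ f_1(c_{2^{k-2}})$ of Lemma~\ref{lem4} that you analyse does not even occur. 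Your subsequent ``reconciliation'' via the $B_k\le b_k'$ dichotomy of Lemma~\ref{lem2}(4) also misidentifies the mechanism: $\tau_k$ is by definition the \emph{minimum} of the two gaps divided by $|b_k-a_k|$, and the binding side is simply the right gap $B_k-b_k=c_{2^{k-2}}-b_k\approx b_{k-2}$, which gives the upper bound $\tau_k\lesssim b_{k-2}/b_k$ directly; no precise asymptotics for $A_k$ are needed, only the one-sided bound $|A_k|\gtrsim b_{k-2}$ already supplied by Lemmas~\ref{lem9} and \ref{lem10}. With the detour deleted, your argument reduces to the paper's.
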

 \begin{proof} The  map $E_k|J_k$ can be monotonically semi-extended onto  $[A_k,B_k]$. 
 As we saw in Lemmas~\ref{lem9} and \ref{lem10} we have $|A_k|\ge b_{k-2}$ for $k$ even.
By Lemma~\ref{lem2} and the previous bounds, we have for $k$ even 
  $B_k=c_{2^{k-2}}\approx b_{k-2}$. 
 It follows from this and  (\ref{bkquadratic})  that $\tau_k\approx b_{k-2}/b_k \approx b_k^{-1/2}$. 
 Note that for $k$ odd, $B_k=b_{k-1}$ and so  $\tau_k = b_k/B_k = b_k/b_{k-1}\to \lambda$
 as $k\to \infty$ and $k$ odd.
  \end{proof}

\paragraph{Proof of Theorems~\ref{thm:renormalizationlimit} and \ref{thm:renormalizationlimit2} (Renormalization limits of $R^k$):}  
Given the previous lemma, we obtain that the Koebe space is of the order 
 $\tau_k\approx b_k^{-\frac{1}{2}}$. It follows that $O(b_k/\tau_k)=O(b_k^{\frac{3}{2}})$  and so  (\ref{eq:distortionE_k}) gives
 \begin{equation}
f^{2^k} (x) = 
\begin{cases}
c_{2^k} - s_k |x| +O(b_k^{\frac{3}{2}}) & \mbox{ when }x\in [a_k,0]\\ 
c_{2^k} - t_k x^\beta +O(b_k^{\frac{3}{2}}) & \mbox{ when }x\in [0,b_k] 
\end{cases}
\label{eq:f2kbis}
\end{equation}
with 
\begin{equation}s_k\sim  \dfrac{b_k^{1-\beta}}{K_0}\mbox{ and }
t_k\sim b_k^{1-\beta}.\label{eq:sk''} \end{equation}

\medskip 

The proof of Theorem~\ref{thm:renormalizationlimit2} follows the above and  an explicit calculation. For example, 
$$\lim_{k\to \infty}  (R^{2k+1} f ) \circ  m_{2k+1}$$ is  composition of the asymptotically linear left branch 
of $R^{2k}f$ and of the part of the right branch of $R^{2k}f$ corresponding to  $[b_{k+1},c_{2^k}]$ where 
$c_{2^k}\sim b_k$.

\paragraph{Improved scaling law from $b_k$ to $b_{k+1}$ when $k$ is even.} 
Arguing as in (\ref{eqabc}) and below we have 
\begin{equation}
c_{2^k}-t_k \lambda_{k}^\beta b_k^\beta = \lambda_k b_k + O(b_k^{\frac{3}{2}})
\label{eqabc-bis} 
\end{equation}
and therefore 
$$
b_k- \lambda_{k}^\beta b_k + O(b_k^\beta) = \lambda_k b_k + O(b_k^{\frac{3}{2}})
$$
This means 
 $$
b_k- \lambda_{k}^\beta b_k  = \lambda_k b_k + O(b_k^{\frac{3}{2}}) + O(b_k^{\beta})
$$
and so 
\begin{equation} \lambda_k = \lambda + O(b_k^{\frac{1}{2}})+O(b_k^{\beta-1})\label{eq:lkimp} \end{equation} 
where as before $\lambda\in (0,1)$ is the root of $1-\lambda^\beta=\lambda$. 
In the same way, we obtain that the $\sim$ expressions in this Section~\ref{sec:scaling}  
 are in fact equalities with a multiplicative error of the form  $1+O(b_k^\epsilon)$ for some $\epsilon>0$. 

 One can similarly also obtain exponential convergence for the constants in the scaling  for $b_{k+1}$ to $b_{k+2}$.

\paragraph{The growth rate of $\log b_k$ and the completion of  the proof of Theorem~\ref{thm:scalings}.}  
Let $\mu_{k}=\log (1/ b_{2k})$.
As we saw $\mu_{k}\to \infty$. Let us give a sharper estimate here.  
According to (\ref{bkquadratic}) 
$\mu_{k+1}=2\mu_{k} + D_{k}$ for all $k\ge 0$ where 
\begin{equation}
D_{k}\sim D :=\log ( \beta^{\frac{2}{\beta-1}} K_0^{\frac{-1}{\beta-1}}) .
\label{eq:Dk}\end{equation} 
It follows that $\mu_{k}/2^k=(\mu_0 + D_{k-1}/2^{k} + \dots + D_0/2)$
and therefore there exists $\Theta>0$ so that $\dfrac{\mu_{k}}{2^k}\to \Theta$.
Moreover, 
$$\Theta - \mu_{k}/2^k = \sum_{i\ge k}  D_{i}/2^{i+1}= \sum_{i\ge k}  D/2^{i+1} + 
\sum_{i\ge k}  (D_{i}-D) /2^{i+1}=D /2^{k} +o(1)/2^k.$$
Hence
\begin{equation} 
 \log(1/b_{2k+1}) \sim \log(1/b_{2k})  = \mu_k = 2^k \Theta - D +  o(1)  \label{bklog} \end{equation} 
and  so using (\ref{eq:Dk}) 
 \begin{equation}  1/b_{2k} =  \beta^{-\frac{2}{\beta-1}} K_0^{\frac{1}{\beta-1}}   \exp(2^k \Theta + o(1)).
\label{bkasumpt}  \end{equation}

\paragraph{Necessary and sufficient invariants for $h\colon \{c_{2^k}\}_{k\ge 0} \to \{\tilde c_{2^k}\}_{k\ge 0}$ to be
 Lipschitz.}
 Assume that $h\colon \{c_{2^k}\}_{k\ge 0} \to \{\tilde c_{2^k}\}_{k\ge 0}$ is a conjugacy between $f$ and $\tilde f$
 and  is Lipschitz at $0$.  
This implies 
\begin{equation} \tilde c_{2^{2k}} \approx  c_{2^{2k}}, \tilde c_{2^{2k+1}} \approx c_{2^{2k+1}}.\label{eq:Lip} \end{equation} 
Since  $b_{2k+1}  \sim  \lambda b_{2k}$ , $ c_{2^{2k}} \sim  b_{2k}$
 where $\lambda\in (0,1)$ is the root of the equation $\lambda^\beta +\lambda=1$,
(\ref{eq:Lip})  implies 
\begin{equation}
 \tilde b_{2k}\approx  b_{2k} \mbox{ and }   \tilde \lambda^{-1} \tilde b_{2k+1}\approx  \lambda^{-1} b_{2k+1}\label{eq-btbLip} \end{equation} 
By Theorem  \ref{thm:scalings} and (\ref{eq:Lip})  we also have  
\begin{equation}
  -\tilde \beta^{-\frac{\tilde \beta+1}{\tilde \beta-1}} \tilde K_0^{\frac{\tilde \beta}{\tilde \beta-1}} \tilde  \lambda^{-\tilde \beta-1} 
\tilde  b_{2k+1}^{\tilde \beta+1}  \sim \tilde   c_{2^{2k+1}} \approx c_{2^{2k+1}} \approx
  - \beta^{-\frac{ \beta+1}{ \beta-1}} K_0^{\frac{ \beta}{\beta-1}}   \lambda^{- \beta-1} 
 b_{2k+1}^{ \beta+1} . 
 \label{eq3bisLip}\end{equation}
This,  the 2nd expression in (\ref{eq-btbLip}) and $b_{2k+1}\to 0$ imply that
\begin{equation} 
\beta = \tilde \beta\mbox{ and therefore  } \lambda=\tilde \lambda . 
\end{equation}
  Finally   (\ref{bkasumpt}) and (\ref{eq:Lip})  imply that 
\begin{equation} 
1\approx  \tilde c_{2^k} / c_{2^k} \sim 
\tilde b_{2k} / b_{2k} =   
\left[\dfrac{K_0}{\tilde K_0}\right]^{\frac{-1}{\beta-1}} 
 \exp(2^k (\Theta - \tilde \Theta)  + o(1) \label{finalinvar} )\end{equation} 
Hence 
\begin{equation} \Theta = \tilde \Theta .\label{Thetainvar} \end{equation}   
Thus we have shown that the existence of a Lipschitz conjugacy implies 
\begin{equation} \beta=\tilde \beta\mbox{ and } \Theta=\tilde \Theta  .\label{eq:bothinvariants} \end{equation} 

\paragraph{Necessary and sufficient invariants for  $h\colon \{c_{2^k}\}_{k\ge 0}\to  \{\tilde c_{2^k}\}_{k\ge 0}$ to be
differentiable at $0$.} By the previous paragraph,  (\ref{eq:bothinvariants}) are necessary conditions 
for $h$ to be differentiable at $0$. Let us show that these conditions are also sufficient. So 
assume that (\ref{eq:bothinvariants}) holds. 
This and (\ref{bkasumpt}) imply
\begin{equation} \dfrac{\tilde c_{2^{2k}}}{c_{2^{2k}}} \sim 
\dfrac{\tilde b_{2k}}{b_{2k}} \sim 
\dfrac{\beta^{\frac{-2}{\beta-1}} K_0^{\frac{1}{\beta-1}}}{\tilde \beta^{\frac{-2}{\tilde \beta-1}} \tilde K_0^{\frac{1}{\tilde \beta-1}}} 
   \exp(2^k (\Theta-\tilde \Theta) + o(1)) \sim \left(\dfrac{K_0}{\tilde K_0}\right)^{\frac{1}{\beta-1}}:=\rho.
   \label{b2ktwice}
\end{equation} 
By  Theorem  \ref{thm:scalings},  $\tilde \beta=\beta$, $\tilde \lambda=\lambda$ and $b_{2k+1}\sim \lambda b_{2k}$,  $\tilde b_{2k+1}\sim \tilde \lambda b_{2k}$ and the previous expression (and $\rho:=[K_0/\tilde K_0]^{\frac{1}{\beta-1}}$) 
we get 
\begin{equation}
\begin{array}{rl}  
\dfrac{\tilde   c_{2^{2k+1}}}{c_{2^{2k+1}} } &\sim \dfrac{  -\tilde \beta^{-\frac{\tilde \beta+1}{\tilde \beta-1}} \tilde K_0^{\frac{\tilde \beta}{\tilde \beta-1}} \tilde  \lambda^{-\tilde \beta-1} 
\tilde  b_{2k+1}^{\tilde \beta+1} }{-  \beta^{-\frac{ \beta+1}{ \beta-1}} K_0^{\frac{ \beta}{\beta-1}}   \lambda^{- \beta-1} 
 b_{2k+1}^{ \beta+1}}  =\left[\dfrac{\tilde K_0}{K_0}\right]^{\frac{\beta}{\beta-1}}  \left[\dfrac{\tilde b_{2k+1}}{b_{2k+1}}\right]^{\beta+1}\sim  \\
  &  \sim \left[\dfrac{\tilde K_0}{K_0}\right]^{\frac{\beta}{\beta-1}}  \left[\dfrac{\tilde b_{2k}}{b_{2k}}\right]^{\beta+1} 
  \sim  \left[\dfrac{\tilde K_0}{K_0}\right]^{\frac{\beta}{\beta-1}}
   \rho^{\beta+1} = \rho^{-\beta}  \rho^{\beta+1} = \rho.
%
%
 \end{array} 
 \label{eq3bis}\end{equation}
 
\paragraph{Another ratio.}  Even though we shall not use this, let
us calculate another ratio. 
Writing as before $c_{2^{2k+1}}= - C_{2k} b_{2k}^{\delta}$ we have 
according to (\ref{eq:delta}) and (\ref{eq:Ck}) we have 
$\delta=\beta+1$ and
 $C_{2k}\sim \left[ \dfrac {K_0^\beta} {\beta^{\beta+1}}  \right] ^{1/(\beta-1)}$. 
 
Hence,  using (\ref{k3k}), we obtain
 \begin{equation} 
c_{2^{2k}}-c_{3\cdot 2^{2k}}\sim \dfrac{C_{2k}}{K_0} b_{2k}^2 \sim  \dfrac{K_0^{1/(\beta-1)}}
 {\beta^{(\beta+1)/(\beta-1)}}
b^2_{2k} 
\label{k3k'} \end{equation} 
So assuming that (\ref{eq:bothinvariants}) holds we have using    (\ref{b2ktwice})
$$\dfrac{\tilde c_{2^{2k}}-\tilde c_{3\cdot 2^{2k}}}{c_{2^{2k}}-c_{3\cdot 2^{2k}}}\sim 
 \dfrac{\tilde K_0^{1/(\beta-1)}}{K_0^{1/(\beta-1)}} \dfrac{\tilde b_{2k}^2}{b_{2k}^2} 
 \sim  \dfrac{\tilde K_0^{1/(\beta-1)}}{K_0^{1/(\beta-1)}} \rho^2 = \rho
$$

%


\paragraph{The invariants (\ref{eq:bothinvariants}) are sufficient for the  conjugacy $h\colon \Lambda\to \tilde \Lambda$  to be differentiable at $0$, where  $ \Lambda$ is the attracting Cantor set $\overline{\cup_{n\ge 0} f^n(0)}$.} 
Regardless whether or not  (\ref{eq:bothinvariants}) holds, there exists a topological conjugacy 
$h\colon \Lambda\to \tilde \Lambda$ between $f$ and $\tilde f$; in fact, in the next section we will show that 
$f,\tilde f$ do not have wandering intervals, and then we will also know that
there exists a topological conjugacy $h$ on the entire space. Let us show now
that the conjugacy $h\colon \Lambda\to \tilde \Lambda$ is necessarily 
differentiable on $\Lambda$ when (\ref{eq:bothinvariants}) is satisfied. 

To do this, note that when $k$ is even that $\Lambda\cap [a_k,b_k]$ is contained in 
the union of following intervals $U_k,V_k,W_k,X_k$
where  $U_k=[x_k,c_{4\cdot 2^{k}}]$ where $x_k<0$ is chosen so that $f(x_k)=f(c_{4\cdot 2^{k}})$ and let
$U_k^-=[x_k,0]$, $U_k^+=[0,c_{4\cdot 2^{k}}]$,  $V_k=f_l(U_k^-)$, $W_k=f_r(V_k)$ and $X_k=f_l(W_k)$. 
For simplicity also define $R_k:=[X_k,V_k]$, $L_k=[W_k,U_k]$ and  $(U_k,X_k):=[c_{4\cdot 2^k},c_{3\cdot 2^k}]$. 

\begin{figure}[htp]
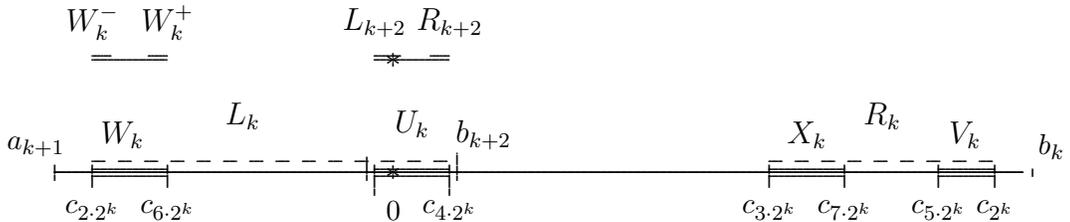
 \hfil
\beginpicture
\dimen0=0.5cm
\setcoordinatesystem units <\dimen0,\dimen0>  point at 0 0
\setplotarea x from -10 to 10, y from -1 to 1
\setlinear
\plot -13 0 13 0 / 
\put {\small $a_{k+1}$} at -13.5 0.7
  \plot -13 -0.2 -13 0.2  / 
\put {\small $c_{2\cdot 2^{k}}$} at -12 -1
  \plot -12 -0.4 -12 0.2  / 
\put {\small $c_{6\cdot 2^{k}}$} at -10 -1
  \plot -10 -0.4 -10 0.2  / 
  \plot -12 0.1 -10 0.1 / 
    \plot -12 -0.1 -10 -0.1 / 
  \put {$W_k$} at -11.2 1 
  \plot -12 3 -10 3 /  
  \plot -12 3.1 -11.5 3.1 / 
  \plot -10.5 3.1 -10 3.1 / 
    \put {$W_k^-$} at -12 4 
       \put {$W_{k}^+$} at -10 4 
  \setdashes    \plot -12 0.3 -2.5 0.3 / 
  \setsolid 
  \put {$L_k$} at -8 1.5 
 
\plot -4.7 -0.2 -4.7 0.4  / 
  \plot -4.5 -0.6 -4.5 0.2  /  
  \put {\small $0$} at -4 -1 
  \put {$*$} at -4 0 
  \put {$U_k$} at -3.5 1.3 
   \put {\small $c_{4\cdot 2^k}$} at -2.5 -1 
\plot -2.3 -0.2 -2.3 0.5  /  
\put {$b_{k+2}$} at -1.6 1
   \plot -4.5 3 -2.5 3 /  
   \plot -4.5 3.1 -3.8 3.1 / 
   \plot -3 3.1 -2.5 3.1 / 
   \put {$*$} at -4 3  
    \put {$L_{k+2}$} at -4.5 4 
       \put {$R_{k+2}$} at -2.5 4 
     \plot -2.5 -0.6 -2.5 0.2  /  
     \plot -4.5 0.1 -2.5 0.1 / 
         \plot -4.5 -0.1 -2.5 -0.1 / 
 \put {\small $c_{7\cdot 2^k}$} at 8 -1
      \plot 8 -0.4 8 0.2  / 
 \put {\small $c_{3\cdot 2^k}$} at 6 -1 
       \plot 6 -0.4 6 0.2  / 
         \put {$X_k$} at 7 1 
              \plot 6 0.1 8 0.1 / 
                 \plot 6 -0.1 8 -0.1 / 
        \put {\small $c_{5\cdot 2^k}$} at 10.5 -1
      \plot 10.5 -0.4 10.5 0.2  / 
 \put {\small $c_{2^k}$} at 12 -1 
       \plot 12 -0.4 12 0.2  / 
       \plot 10.5 0.1 12 0.1 / 
            \plot 10.5 -0.1 12 -0.1 / 
         \put {$V_k$} at 11.2 1 
         \plot -13 0 13 0 / 
         \plot 13 -0.1 13 0.1 / 
          \setdashes    \plot 6 0.3 12 0.3 / 
  \setsolid 
    \put {$R_k$} at 9 1.5 
\put {\small $b_{k}$} at 13.5 0.7
   \endpicture
\caption{\label{fig:order2} These four intervals contain the postcritical set in $[a_k,b_k]$. We will pull back the analogue of the 
dashed intervals for level $k+2$ inside $W_k$.}
\end{figure}

\begin{lemma} 
\begin{equation} \liminf \dfrac{|W_k|}{|L_k|} >0. \label{eq:wklk} \end{equation} 
and 
\begin{equation}
\dfrac{|R_k|}{|(U_k,X_k)|} \to 0 \mbox{ and } \dfrac{|L_k|}{|(U_k,X_k)|}\to 0\mbox{ as }k\to \infty .
\label{eq:rklk} 
\end{equation} 
\end{lemma}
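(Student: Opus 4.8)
The plan is to reduce the lemma to a comparison of powers of $b_k$. All seven intervals in question have their endpoints among the postcritical points $c_{j\cdot 2^k}$, together with the single auxiliary point $x_k$; so once these endpoints are identified, their lengths can be read off from the scaling laws of Theorem~\ref{thm:scalings} and the derivative estimate~(\ref{eq:f2kder}), and then~(\ref{eq:wklk})--(\ref{eq:rklk}) become arithmetic.

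\emph{Step~1: identifying the endpoints.} Since $f(x_k)=f(c_{4\cdot 2^k})$, the two endpoints of $U_k=[x_k,c_{4\cdot 2^k}]$ have the same image under $f^n$ for every $n\ge 1$. Hence the maps $f^{2^k}|U_k$, $f^{2^{k+1}}|U_k=f_r\circ(f^{2^k}|U_k)$ and $f^{3\cdot 2^k}|U_k=f_l\circ(f^{2^{k+1}}|U_k)$ are each unimodal with turning point $0$ --- the first with a maximum there, the latter two with a minimum, since $f_r$ reverses and $f_l$ preserves orientation. Evaluating at $0$ and at $\partial U_k$ gives $V_k=f_l(U_k^-)=[c_{5\cdot 2^k},c_{2^k}]$, $W_k=f_r(V_k)=[c_{2\cdot 2^k},c_{6\cdot 2^k}]$ and $X_k=f_l(W_k)=[c_{3\cdot 2^k},c_{7\cdot 2^k}]$, so that $L_k=[c_{2\cdot 2^k},c_{4\cdot 2^k}]$, $R_k=[c_{3\cdot 2^k},c_{2^k}]$ and $(U_k,X_k)=[c_{4\cdot 2^k},c_{3\cdot 2^k}]$. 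Along the way one records the ordering of these points --- in particular $c_{4\cdot 2^k},c_{7\cdot 2^k}>0$, $c_{2\cdot 2^k}<0$, $c_{6\cdot 2^k}\le 0$ (so that the expression $X_k=f_l(W_k)$ is legitimate), and $c_{3\cdot 2^k},c_{5\cdot 2^k},c_{7\cdot 2^k}$ sitting just to the left of $c_{2^k}$.

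\emph{Step~2: computing the lengths.} Theorem~\ref{thm:scalings} gives the positions $c_{2^k}\sim b_k$, $c_{2\cdot 2^k}=c_{2^{k+1}}$, $c_{4\cdot 2^k}=c_{2^{k+2}}$ and the spacing $c_{2^k}-c_{3\cdot 2^k}$ up to $\sim$, while $|x_k|=K_0(c_{2^{k+2}})^\beta$. The remaining, much smaller, spacings $c_{2^k}-c_{5\cdot 2^k}$, $c_{6\cdot 2^k}-c_{2\cdot 2^k}$ and $c_{7\cdot 2^k}-c_{3\cdot 2^k}$ I would obtain by integrating~(\ref{eq:f2kder}): since $c_{5\cdot 2^k}=f^{2^k}(c_{2^{k+2}})$, one has $c_{2^k}-c_{5\cdot 2^k}=\int_0^{c_{2^{k+2}}}|Df^{2^k}|\sim t_k(c_{2^{k+2}})^\beta$, and using the chain rule $Df^{2^{k+1}}=(Df^{2^k}\circ f^{2^k})\cdot Df^{2^k}$ together with $Df^{2^k}(c_{2^k})\sim-\beta$ one gets $c_{6\cdot 2^k}-c_{2^{k+1}}=\int_0^{c_{2^{k+2}}}|Df^{2^{k+1}}|\sim\beta\,t_k(c_{2^{k+2}})^\beta$, and likewise for the third spacing. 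Substituting $t_k\sim b_k^{1-\beta}$ and the explicit constants of Theorem~\ref{thm:scalings} (which satisfy the algebraic identities --- e.g.\ $\beta^{-2/(\beta-1)}K_0^{1/(\beta-1)}=\tfrac{\beta}{K_0}\,\beta^{-(\beta+1)/(\beta-1)}K_0^{\beta/(\beta-1)}$ --- needed for the leading terms to match) fixes the order in $b_k$ of each of the seven lengths, and~(\ref{eq:wklk})--(\ref{eq:rklk}) follow by comparing powers of $b_k$.

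The hard part will be the precision that is needed. Quantities such as $|V_k|=c_{2^k}-c_{5\cdot 2^k}$ are already smaller than the error term $O(b_k^{3/2})$ in the additive normal form~(\ref{eq:f2kbis}) (automatically, once $\beta>\tfrac12$), and some postcritical points live at a scale produced by an exact cancellation: $c_{2^{k+1}}=f_r(c_{2^k})=c_{2^k}-t_k(c_{2^k})^\beta+O(b_k^{3/2})$ is a difference of two quantities that are both $\sim b_k$, yet by Theorem~\ref{thm:scalings} it is of order $b_k^{\beta+1}$. So~(\ref{eq:f2kbis}) is useless for this purpose, and the whole computation has to be run through the multiplicatively sharp derivative bound~(\ref{eq:f2kder}) (which is insensitive to the additive error) together with the sharp scaling laws, with these cancellations tracked by hand.
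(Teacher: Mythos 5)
Your plan is the same as the paper's: identify the endpoints of $U_k,V_k,W_k,X_k$ as postcritical points, compute each length through the multiplicatively sharp derivative estimates (\ref{eq:f2kder}), (\ref{eq:derboundary}) rather than through the additive normal form (whose $O(b_k^{3/2})$ error would indeed swamp $|V_k|$ and $|W_k|$), and then compare powers of $b_k$. Your endpoint identifications and length computations agree with the paper's: $|x_k|\approx b_k^{2\beta}$, $|V_k|=c_{2^k}-c_{5\cdot 2^k}\approx s_k|x_k|\approx b_k^{1+\beta}$, $|W_k|\approx \beta|V_k|\approx b_k^{1+\beta}$, $|U_k|\sim c_{2^{k+2}}\approx b_k^2$, $|R_k|=c_{2^k}-c_{3\cdot 2^k}\approx b_k^2$ by (\ref{k3k}), and $|(U_k,X_k)|\sim b_k$. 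This gives (\ref{eq:rklk}) exactly as you say.

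The gap is the final assertion that (\ref{eq:wklk}) also ``follows by comparing powers of $b_k$'': the comparison gives the opposite. Since $L_k=[W_k,U_k]=[c_{2^{k+1}},c_{2^{k+2}}]$, one has $|L_k|\approx b_k^{\beta+1}+b_k^2\approx b_k^2$ (the $U_k$ side dominates because $\beta>1$), hence $|W_k|/|L_k|\approx b_k^{\beta-1}\to 0$ and the liminf in (\ref{eq:wklk}) is $0$, not positive. The display (\ref{eq:wklk}) is in fact inconsistent with the very lengths you compute; what the paper's proof actually establishes, and what is used afterwards, is that $|W_k|\approx |c_{2^{k+1}}|\approx \mathrm{dist}(W_k,0)$ --- i.e.\ $W_k$ is \emph{not} small compared with its distance to the critical point, so two points of $W_k$ are only comparable ($\approx$) rather than asymptotically equal ($\sim$), which is precisely why the next lemma must pass to the much smaller subintervals $W_k^{\pm}$. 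The denominator in (\ref{eq:wklk}) should therefore be read as the distance from $W_k$ to $0$, not as $|L_k|$. A blind attempt needs to detect this discrepancy and prove the corrected statement (via $|W_k|\approx\beta|V_k|$ and (\ref{bkquadratic''})), rather than asserting that the arithmetic closes for (\ref{eq:wklk}) as printed.
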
 
\begin{proof} 
Note that $|U_k^-|=|x_k|\approx |c_{4\cdot 2^k}|^\beta \sim b_{k+2}^\beta \approx b_{k}^{2\beta}$,
$$|V_k|=|c_{2^k} - c_{5\cdot 2^k} | = |f_l(U_k^-)| \approx s_k |U_k^-| \approx b_k^{1-\beta} b_k^{2\beta}  =b_k^{1+\beta}$$
and by (\ref{eq:derboundary}), 
$$|W_k|=|f_r(V_k)| \approx \beta b_k^{1+\beta} \approx |c_{2^{k+1}}-0|$$
where in the last $\approx$ we used  (\ref{bkquadratic''}). This implies that the size of $W_k$ is comparable
to its distance to $0$; in other words  for any two points $u_k,v_k\in W_k$ we merely have $u_k\approx v_k$, 
showing (\ref{eq:wklk}). 
To prove (\ref{eq:rklk}), note that 
$$|U_k| \sim |U_k^+| = |c_{2^{k+2}}|\sim b_{k+2} \approx b_k^2$$
and therefore 
$$|L_k|=|[W_k,U_k]| \approx b_k^{1+\beta} + b_k^2 \approx b_k^2.$$
Similarly, by (\ref{k3k}) and $\delta=1+\beta$ we have 
\begin{equation} |R_k|=|[X_k,V_k]| = |c_{2^k}-c_{3\cdot 2^k}| \approx b_k^2.
\label{eq:rkbk} \end{equation} 
These two statements imply $|(U_k,X_k)| \sim |[0,c_{2^k}]|\sim b_k$ and therefore (\ref{eq:rklk}). 
\end{proof} 

It follows from (\ref{eq:rkbk}) that when $u_k\in R_k$ arbitrarily then $u_k\sim b_k$ as $k\to \infty$ and therefore we will  be able to 
use $R_k$ instead of the intervals $X_k$ and $V_k$. 
Equation (\ref{eq:wklk})  will require us to 
choose  much smaller intervals inside $W_k$.

\begin{lemma} Let $W_k^-$ and $W_k^+$  in $W_k$ which are mapped  by $f_r \circ f_l$ 
onto $R_{k+2}$ resp. $L_{k+2}$, where we take $W_k^-$ is to the left of $W_k^+$.
Then  
 \begin{equation} \dfrac{|W_k^-|}{|W_k|},   \dfrac{|W_k^+|}{|W_k|} \to 0. \label{eq:wkpm} 
 \end{equation}
 \end{lemma}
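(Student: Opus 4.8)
The plan is to realise $W_k^{\pm}$ as pull-backs of $R_{k+2}$, $L_{k+2}$ under the diffeomorphism $\phi_k:=(f_r\circ f_l)|W_k=f^{2^{k+1}}|W_k$ (recall $W_k\subset[a_{k+1},0]$), to observe that $\phi_k|W_k$ has distortion bounded independently of $k$, and then to compare lengths: $\phi_k(W_k)$ will have length $\approx b_{k+2}$ whereas $R_{k+2}$ and $L_{k+2}$ have length $\approx b_{k+2}^2$, so bounded distortion forces $|W_k^{\pm}|/|W_k|\approx b_{k+2}\to 0$.

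For the image $\phi_k(W_k)$ I would argue as follows. Since $W_k=f_r(V_k)=f_r\circ f_l(U_k^-)=f^{2^{k+1}}(U_k^-)$ with $U_k^-=[x_k,0]$, applying $f_r\circ f_l$ once more shows that $\phi_k(W_k)$ is the interval with endpoints $f^{2^{k+2}}(0)=c_{2^{k+2}}$ and $f^{2^{k+2}}(x_k)=f^{2^{k+2}}(c_{4\cdot 2^k})=c_{2^{k+3}}$ (using $f(x_k)=f(c_{4\cdot 2^k})$ and $c_{4\cdot 2^k}=c_{2^{k+2}}$). As $\phi_k$ is the orientation-reversing branch $f^{2^{k+1}}|[a_{k+1},0]$ ($k+1$ odd), this gives $\phi_k(W_k)=[c_{2^{k+3}},c_{2^{k+2}}]$, and by Theorem~\ref{thm:scalings} one has $c_{2^{k+2}}\sim b_{k+2}$ while $|c_{2^{k+3}}|\ll b_{k+2}$, so $|\phi_k(W_k)|\sim b_{k+2}$. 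A check using the combinatorial structure (Figure~\ref{fig:order2}) identifies $R_{k+2}=[c_{3\cdot 2^{k+2}},c_{2^{k+2}}]$ and $L_{k+2}=[c_{2^{k+3}},c_{2^{k+4}}]$ and shows both are contained in $\phi_k(W_k)=[c_{2^{k+3}},c_{2^{k+2}}]$, the first abutting its right endpoint and the second its left endpoint; hence $W_k^-:=\phi_k^{-1}(R_{k+2})$ and $W_k^+:=\phi_k^{-1}(L_{k+2})$ are well-defined subintervals of $W_k$, with $W_k^-$ to the left of $W_k^+$ since $\phi_k$ reverses orientation. Applying the previous lemma at level $k+2$ (replacing $k$ by $k+2$ in (\ref{eq:rkbk}) and in the estimate $|L_k|\approx b_k^2$) records $|R_{k+2}|\approx b_{k+2}^2$ and $|L_{k+2}|\approx b_{k+2}^2$.

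The remaining input is that $\phi_k|W_k$ has distortion bounded uniformly in $k$. Since $W_k\subset[a_{k+1},0]$, this follows from the fact that $f^{2^{k+1}}|[a_{k+1},0]$ has uniformly bounded distortion --- which is precisely what is extracted in the proof of Lemma~\ref{lem8} from the big bounds, the hypothesis $|A_k|>Cb_k$ needed there holding for all large even $k$ by Lemma~\ref{lem7} together with the bootstrap of Section~\ref{Sec:bigbounds}. This is the point where the hard-won bounds of Theorem~\ref{thm:bounds} (Lemma~\ref{lem10'}) really enter, and where one must make sure that $W_k$ --- and its image $X_k=f_l(W_k)\subset[b_{k+1},b_k]$ --- sit in the region where those bounds are available; I expect this to be the main point to handle carefully.

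Granting the distortion bound, say $\phi_k|W_k$ has distortion $\le M$, the conclusion is immediate:
\[
\frac{|W_k^-|}{|W_k|}\le M\,\frac{|R_{k+2}|}{|\phi_k(W_k)|}\approx\frac{b_{k+2}^2}{b_{k+2}}=b_{k+2},\qquad
\frac{|W_k^+|}{|W_k|}\le M\,\frac{|L_{k+2}|}{|\phi_k(W_k)|}\approx\frac{b_{k+2}^2}{b_{k+2}}=b_{k+2},
\]
and $b_{k+2}\to 0$ as $k\to\infty$ through even integers, which is the assertion.
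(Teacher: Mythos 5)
Your proposal is correct and follows essentially the same route as the paper: pull back $R_{k+2}$ and $L_{k+2}$ under the orientation-reversing, bounded-distortion map $f_r\circ f_l|W_k$ (a restriction of $f^{2^{k+1}}|[a_{k+1},0]$, whose bounded distortion comes from the proof of Lemma~\ref{lem8}) and compare $|R_{k+2}|,|L_{k+2}|\approx b_{k+2}^2$ with the image length $\approx b_{k+2}$. The paper's version is terser (it simply says the image is $U_k$ and invokes the smallness of $L_{k+2},R_{k+2}$ relative to the gap from (\ref{eq:rklk}) at level $k+2$), whereas you identify the image interval $[c_{2^{k+3}},c_{2^{k+2}}]$ explicitly, which is a harmless refinement.
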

Note that 
\begin{equation} \Lambda \cap [a_k,b_k] \subset  W_k^- \cup W_k^+ \cup U_k \cup X_k \cup V_k. \end{equation} 
 
\begin{proof} 
Since (\ref{eq:rklk}) also holds for $k+2$ replaced by $k$, there exists four intervals
in $U_k$ (with two in $L_{k+2}$ and two in $R_{k+2}$)  so that the gap between $L_{k+2}$ and $R_{k+2}$ 
is huge compared to the size of these two intervals. 
Now consider the orientation reversing map $f_r \circ f_l\colon W_k\to U_k$. 
Since this map has bounded distortion (\ref{eq:wkpm})  holds. 
 \end{proof} 


Note that for each $x\in \Lambda  \cap  [a_k,b_k]$
either $x\in [a_{k+2},b_{k+2}]$  or $x$ is contained in one of the sets $X_k$, $V_k$, $W_k^+$ or $W_k^-$. 
Moreover, as we have shown, if $u_k,v_k\in Q_k$ and $u_k\to 0$ where $Q_k$ is either $R_k=[X_k,V_k]$, $W_k^+$ or $W_k^-$ then  $u_k \sim v_k$.

It remains to obtain asymptotic expressions for at least one point in each these intervals. 
Let us start with $W_k^+$. This interval  contains a point $z_k$ so that $f_r\circ f_l(z_k)=0$.
It follows that 
 $$|c_{2^{k+1}}-0|=|f_r(f_l(0))-f_r(f_l(z_k))| \sim \beta |f_l(0)-f_l(z_k)| \sim \beta |z_k|s_k$$
Since $s_k\sim  \dfrac{b_k^{1-\beta}}{K_0}$ and 
$c_{2^{k+1}}\sim  -\left[ \dfrac {K_0^\beta} {\beta^{\beta+1}}  \right] ^{1/(\beta-1)} b_k^{\beta+1}$
it follows that 
\begin{equation}
   z_k\sim - \dfrac{1}{\beta} \left[ \dfrac {K_0^\beta} {\beta^{\beta+1}}  \right] ^{1/(\beta-1)} b_k^{\beta+1}  \dfrac{K_0}{b_k^{1-\beta}}
=  - \left[ \dfrac {K_0^{2\beta-1}} {\beta^{2\beta}}  \right] ^{1/(\beta-1)} b_k^{2\beta} .
\label{eq:eq:wk+} 
 \end{equation} 
Similarly,  $c_{2^{k+1}} \in W_k^-$ and 
according to  (\ref{bkquadratic''})
 \begin{equation} 
 c_{2^{k+1}}\sim  -\left[ \dfrac {K_0^\beta} {\beta^{\beta+1}}  \right] ^{1/(\beta-1)} b_k^{\beta+1} . \label{eq:wk-}\end{equation}
 Finally, $c_{3\cdot 2^k}, c_{2^k}\in R_k$, 
 by (\ref{eq:c32k}) 
\begin{equation}  c_{3\cdot 2^k}\sim c_{2^k} \sim b_k  . 
\label{eq:xvk} \end{equation} 

Let us now take the homeomorphism $h$ between $\Lambda$ and $\tilde \Lambda$
defined so that $h(f^n(0))=\tilde f^n(0)$ and   show that $h$ is differentiable at $0$, provided
that  $\beta=\tilde \beta$, $\Theta=\tilde \Theta$ and $K_0=\tilde K_0$. 
Because of these assumptions, equation    (\ref{bkasumpt})  gives that for $k\to \infty$ even, 
\begin{equation} \dfrac{\tilde b_k}{b_k} \to \rho := \left[ \dfrac{K_0}{\tilde K_0}\right]  ^{\frac{1}{\beta-1}} =1. \end{equation} 
Let $u_k\in \Lambda$ and take $\tilde u_k=h(u_k)$. 
By renumbering if necessary we may assume that  $u_k\in W_k^-\cup W_k^+\cup X_k\cup V_k$. 
From (\ref{eq:wk-})  follows that for $u_k\in W_k^-$, $\tilde u_k\in \tilde W_k^-$,
$$\tilde u_k/u_k \to [\tilde K_0/K_0 ] ^{(2\beta-1)/(\beta-1)} (\tilde b_k/b_k)^{2\beta} \sim \rho^{1-2\beta} \rho^{2\beta}=\rho. 
 $$
 From  (\ref{eq:eq:wk+}), $u_k\in W_k^+$, $\tilde u_k\in \tilde W_k^+$,  
 $$\tilde u_k/u_k \to [\tilde K_0/K_0]^{\beta/(\beta-1)}  (\tilde b_k/b_k)^{\beta+1} \sim \rho^{-\beta} \rho^{1+\beta}=\rho.$$ 
Finally from (\ref{eq:xvk})  we have $\tilde u_k/u_k\to \rho$ for $u_k\in X_k\cup V_k$ and $\tilde u_k\in \tilde X_k\cup \tilde V_k$.
It follows that $h\colon \Lambda\to \tilde \Lambda$ is differentiable at $0$. 

\paragraph{The invariants (\ref{eq:bothinvariants}) are sufficient for the  conjugacy $h\colon \Lambda\to \tilde \Lambda$  to be differentiable along $\Lambda$, where  $ \Lambda=\overline{\cup_{n\ge 0} f^n(0)}$.} 
\def\N{\mathbb N} 
Let $\Delta_{k,0}=[a_k,b_k]$,  $\Delta_{k,i}=f^i(\Delta_k^0)$, $i=1,\dots,2^k-1$ and $\Delta_k=\cup_{0\le i\le 2^k-1} \Delta_{k,i}$.
Note that $\Lambda=\cap_k \Delta_k$. 
Moreover,  let $\tilde \Delta_{k,i},\tilde \Delta_k$ be the corresponding the sets for $\tilde f$. 
As in \cite[Section VI.9]{MS}, define $\Omega=\{0,1\}^\N$ and a continuous map 
$\phi\colon \Omega \to \Lambda$ defined by
associating to $\omega \in  \Omega=\{0,1\}^\N$ the point  $\cap_k \Delta^{j(k,\omega)}$ where
$j(k,\omega)=\sum_{i=0}^{k-1} \omega(i) 2^j$. Denote the interval $\Delta_{k,j(k,\omega)}$
by $[\omega(0),\dots,\omega(k-1)]_k$ and let  $ [\omega(0),\dots,\omega(k-1)]_{k,\sim}$
be the corresponding interval for $\tilde f$.  Because $f$ has the period doubling
combinatorics,   $$[\omega(0),\dots,\omega(k-1)]_k\subset  [\omega(0),\dots,\omega(k-2)]_{k-1}.$$
Let $\Omega^*$  be the dual Cantor set consisting of all left infinite words 
$$\left\{\omega=\left(\dots,\omega(k),\dots,\omega(1),\omega(0)\right), \omega(i)\in \{0,1\}\right\}$$
with the product topology. 
From the scaling law (\ref{bkasumpt}) we obtain that 
 $$\dfrac{[0,\dots,0,0,0]_{k+2}}{[0,\dots,0,0]_{k}}= (1+\epsilon_k) \exp(2^k(\Theta-4\Theta)).$$
From the calculation in (\ref{eq:Dk})- (\ref{bkasumpt}) it follows that $\prod_{n\ge k}(1+\epsilon_n)$ goes to one as $k\to \infty$.  (In fact, one can show that $\epsilon_n$ tends exponentially fast to zero.)
From the above consideration  we also have that for $j_1,j_2\in \{0,1\}$ 
$$\dfrac{[0,\dots,0,j_1,j_2]_{k+2}}{[0,\dots,0,0]_{k}}
= (1+\epsilon_k) \kappa(\beta,j_1,j_2) \exp(-2^k\Psi(\Theta,\beta,j_1,j_2))$$ 
where $\kappa(\beta, j_1,j_2)>0$ and $\Psi(\Theta,\beta,j_1,j_2)$ are constants which can be computed explicitly as above (and which only depend on 
$\beta,\Theta,j_1,j_2$). Using the fact that the Koebe space 
of the semi-extension of the first entry map from  $\Delta_k^i$ into $\Delta_{k,2^k}\subset \Delta_{k,0}$
tends exponentially fast to infinity, and therefore the non-linearity of the first entry map tends exponentially fast to 
zero, we obtain 
$$\dfrac{[\omega(k+1),\dots,\omega(2),j_1,j_2]_{k+2}}{[\omega(k+1),\dots,\omega(2)]_{k}}  =  (1+\epsilon_k) \kappa(\beta,j_1,j_2) \exp(-2^k\Psi(\Theta,\beta,j_1,j_2)).$$
Hence, as in \cite[Proof of Theorems VI.9.3 and VI.9.1]{MS},  using the property that  $\prod_{n\ge k}(1+\epsilon_n)$ converges to $1$ as $k\to \infty$
and assuming that  (\ref{eq:bothinvariants})  holds we obtain that for each sequence $\omega\in \Omega^*$
$$\dfrac{[\omega(k-1),\dots,\omega(0)]_{k,\sim}}{[\omega(k-1),\dots,\omega(0)]_{k}}$$ converges and the value of the limit depends continuously on $\omega\in \Omega^*$. From this it follows that the conjugacy is differentiable along $\Lambda$.

\section{The Hausdorff dimension of the attracting  Cantor set is zero}
\label{sec:estim-hausd-dimens}

Recall that for every $k>0$ and $i=0,\ldots, 2^k-1$ we have defined
$\Delta_{k,i}:= f^i([a_k,b_k])$.

Let us make a few observations on locations of certain intervals
$\Delta$ inside their parents. In what follows $k$ is assumed to be
even. First, observe that the both intervals $\Delta_{k+2, 2^k}$ and
$\Delta_{k+2, 3\cdot 2^k}$ belong to $[c_{3\cdot
  2^k},c_{2^k}]$. Secondly,
$\Delta_{k+2, 2\cdot 2^k} \subset [c_{2\cdot 2^k}, c_{4\cdot
  2^k}]$. Also note that all 4 mentioned intervals belong to
$\Delta_{k,0}$.

Using formulas (\ref{k3k}), (\ref{eq:33}) and (\ref{bkquadratic}) we
see that $|\Delta| < C|\Delta_{k,2^k}|^2$ for
$\Delta=\Delta_{k+2, 2^k}$, $\Delta_{k+2, 2\cdot 2^k}$,
$\Delta_{k+2, 3\cdot 2^k}$, $\Delta_{k+2, 4\cdot 2^k}$, where $C$ is
some universal constant.

Fix some integer $1\le i \le 2^k-1$. The distortion of the map
$f^{2^k-i}: \Delta_{k,i} \to \Delta_{k,0}$ is asymptotically small due to
Theorem~\ref{thm:bounds} and Lemma~\ref{thm:koebe} ($k$ is still
assumed even). We know that
$f^{2^k-i}(\Delta_{k,i}) = [a_k, c_{2^k}]$  and this interval is very
close to $\Delta_{k,0}:=[a_k,b_k]$ due to formula (\ref{eq3}).
Hence, if $\Delta \subset \Delta_{k,i}$ is one of four intervals
of the form $\Delta_{k+2, m}$, then $|\Delta| < C|\Delta_{k, 0}||\Delta_{k,i}|$, where
$C$ is another universal constant.  This estimate implies that for any
$\gamma>0$ there exists $k_0$ (depending on $f$) such that if $k>k_0$
and $k$ is even, $|\Delta|^\gamma < \frac 14 |\Delta_{k, i}|^\gamma$. Therefore,
$$
\sum_{i=0}^{4\cdot 2^k-1} |\Delta_{k+2, i}|^\gamma < \sum_{i=0}^{2^k-1}
|\Delta_{k, i}|^\gamma.
$$
Thus we have shown that the Hausdorff dimension of $\Lambda$ is zero.

\section{Absence of any Koebe space for general first entry maps}\label{sec:nokoebe} 

Define $R_k$ to be the first return map to $[a_k,b_k]$. 

\begin{thm}[Theorem \ref{thm:absencekoebe}  - Absence of Koebe space]
For each $\tau>0$ there exists $x$ and $k$ 
so that the maximal semi-extension of the first entry map from $x$ into $[a_k,b_k]$
does {\bf not} contain a $\tau$-scaled neighbourhood of $[a_k,b_k]$. 
\end{thm}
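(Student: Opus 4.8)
The plan is to contrast the branch of the first entry map through the critical value with a branch through a suitably chosen ordinary point, exploiting the one feature that made Theorem~\ref{thm:bounds} work: a monotone semi-extension can push an intermediate iterate across the critical point $0$ only when that iterate abuts $0$ \emph{from the left}, so that the next factor in the composition is the extended branch $f_1$ (whose domain $[a_0,\epsilon_0]$ reaches to the right of $0$); if instead some intermediate image $f^i(J)$ of the branch $f^m\colon J\to[a_k,b_k]$ lies to the right of $0$, then its enlargement $f^i(T)$ must stay in $[0,b_0]=\mathrm{dom}(f_2)$ and is pinned at $0$ from the left. Once pinned, that endpoint subsequently runs along the critical orbit $c_1,c_2,\dots$, and — this is the point — because $f_2$ is $\beta$-flat at $0$, the very first of these steps contracts the tiny bit of room that was gained to its $\beta$-th power, after which the surviving iterates distort this sliver only by a bounded factor (now that Lemma~\ref{lem10'} together with Lemma~\ref{thm:koebe} gives bounded distortion for the surviving branches). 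Hence the Koebe space of the semi-extension of $f^m|J$, on the side coming from such a pin, is, up to a bounded factor, $\delta^{\beta}/|b_k-a_k|$, where $\delta$ is the distance from $0$ at which the pinning image $f^i(J)$ sits on its side of $[a_k,b_k]$. For the branch through $f(0)$ the only pinning image sits at the coarse scale $\delta\approx b_{k-2}$ (this is exactly why $B_k=c_{2^{k-2}}$ and $|A_k|\approx b_{k-2}$ in Lemma~\ref{lem10'}), and the other side is rescued by $f_1$; the plan is to produce a different branch where neither rescue is available and $\delta$ is only of order $b_k$.

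Concretely, given $\tau>0$ I would pick a large even $k$ and take for $x$ a point whose first-entry itinerary into $[a_k,b_k]$ is forced, at some step $i$ with $m-i$ equal to $2^{k-1}$ (or $2^k$), to have its branch image $f^i(J)$ pressed against the boundary of $[a_k,b_k]$ on the right — for instance a suitable preimage, under a high iterate of the first return map $R_{k-2}$, of a point just outside $[a_k,b_k]$, chosen so that the orbit re-enters the deep window $[a_{k-1},b_{k-1}]$ through its sibling. Because $[a_k,b_k]$ straddles $0$ with right radius $b_k$, this pinning image lies at distance $\delta\approx b_k$ from $0$ on the $f_2$-side; the pin therefore occurs at scale $b_k$ and cannot be relieved by $f_1$, and one endpoint of the resulting semi-extension range $[A,B]$ is transported by $f^{m-i}$ to a point (essentially $c_{2^{k-1}}$, which lies a distance $\approx b_k^{(\beta+1)/2}$ outside $[a_k,b_k]$) that is within $o(|b_k-a_k|)$ — quantitatively $O(b_k^{\theta})\,|b_k-a_k|$ for some $\theta=\theta(\beta)>0$ — of the corresponding endpoint of $[a_k,b_k]$. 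Choosing $k$ large enough that $b_k^{\theta}<\tau$, the range $[A,B]$ cannot contain a $\tau$-scaled neighbourhood of $[a_k,b_k]$; letting $k\to\infty$ disposes of every $\tau>0$ at once.

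The main obstacle is making the middle step rigorous: one must exhibit the point $x$ explicitly, verify that its first entry time is the claimed one — no earlier entry into $[a_k,b_k]$, which follows from the fact that $c_j\in[a_k,b_k]$ forces $2^k\mid j$ — and, above all, check that the pin is genuinely \emph{unimprovable}, i.e. that the combinatorics of this branch really does put a right-lying intermediate image against $0$ at scale $b_k$ and that no later step restores the lost room. This is precisely the kind of bookkeeping carried out in Section~\ref{Sec:bigbounds}: tracking which of the intervals $f^i(J)$ lie to the left or right of $0$ and how close, run in parallel with the semi-extension construction, and using the ordering of the dynamically relevant points (Figure~\ref{fig:order}), the identities of Lemma~\ref{lem2} and the scaling laws — except that now the conclusion is failure rather than success. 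One then reads off, exactly as in the computations following Theorem~\ref{thm:nousualbound}, that the surviving side of $[A,B]$ is only $O(b_k^{\theta})$ wide relative to $[a_k,b_k]$, and since this tends to $0$ the theorem follows.
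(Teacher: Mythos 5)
Your proposal is essentially the paper's proof: both rest on the observation that a first-entry branch which is forced through the right-hand branch $f_2$ at distance $\approx b_{2i-1}$ from $0$ has its semi-extension pinned at $c_{2^{2i-1}}$, which by the scaling laws lies only $\approx b_{2i-1}^{\beta+1}=o(|b_{2i}-a_{2i}|)$ to the left of $[a_{2i},b_{2i}]$. The ``middle step'' you flag as the main obstacle is handled in the paper with no bookkeeping at all: one takes $x\in[b_{2i},b_{2i-1}]$ whose first return $R_{2i-1}(x)$ lands in $[a_{2i},b_{2i}]$, so that the entry map factors as $f^m=R_{2i-1}\circ f^n$ and its maximal semi-extension is dominated by that of $R_{2i-1}$, whose range is exactly $[c_{2^{2i-1}},B_{2i-1}]$ (your intermediate heuristic ``Koebe space $\approx\delta^{\beta}/|b_k-a_k|$'' is not quite right as stated --- it omits the derivative of the remaining entry map --- but you do not use it in the final estimate, which is correct).
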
 
\begin{proof} 
Assume that $x\in I$ and $n$ is so that $y=f^n(x)$ is a first entry to $[a_{2i-1},b_{2i-1}]$  and 
that in fact $y\in [b_{2i},b_{2i-1}]$.  Moreover, assume that $y'=R_{2i-1}(y)\in [a_{2i},b_{2i}]$. Write $y'=f^m(x)$ so $y'$ is a first entry of $x$ into $[a_{2i},b_{2i}]$  under $f^m$. 
Since $f^m=R_{2i-1}\circ f^n$, the maximal diffeomorphic extension (or even semi-extension) of $f^m$
is at most that of  $R_{2i-1}$. The  diffeomorphic range of the latter map is $[c_{2^{2i-1}},B_{2i-1}]$.
By  Theorem~\ref{thm:scalings} we have $c_{2^{2i-1}}\approx -b_{2i-1}^{\beta+1}$. 

The length of $[a_{2i},b_{2i}]$ is   $\sim b_{2i}\approx b_{2i+1} \approx b_{2i-1}^2$, and 
since  $\beta>1$, therefore the space $[c_{2^{2i-1}},a_{2i}]$ is minute compared to the
size of the interval $[a_{2i},b_{2i}]$ when $i$ large. 
It follows that when $i$ is large, there exists no $\tau>1$ so that the range of the extension 
$[c_{2^{2i-1}},B_{2i-1}]$  contains a $\tau$-scaled neighbourhood of $[a_{2i},b_{2i}]$.
In fact, the range of the extension is also not a $\tau$-scaled neighbourhood of 
 $[a_{2i+1},b_{2i+1}]$ for the same reason. 
\end{proof} 

\begin{figure}[h]
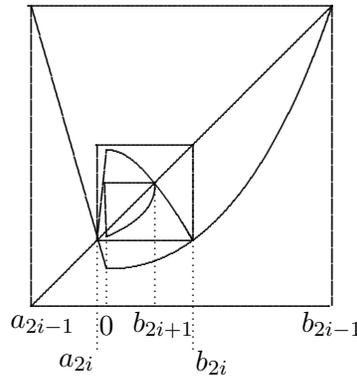
 \hfil
\beginpicture
\dimen0=0.5cm
\setcoordinatesystem units <\dimen0,\dimen0>  point at 15 0
\setplotarea x from -4 to 4, y from -4 to 3
\setlinear \setsolid
\plot -4 -4 4 -4  4 4 -4 4 -4 -4 /
\plot -4 -4 4 4 / 
\plot -4 4 -2 -3 / 
\plot -2.25 -2.25 0.3 -2.25 /   
\plot 0.3 -2.25 0.3 0.3 -2.25 0.3 -2.25 -2.25 / 
\plot -2.25 -2.25 -2 0.15 / 
\plot -0.71  -0.71 -2.05 -0.71 / 
\plot -2.05 -0.71 -2 -2.15 / 
\setquadratic 
\plot -2 -3 1.3 -1.3 4 4 /
\plot -2 0.15 -1 -0.4 0.3 -2.25 / 
\plot -2 -2.15  -1 -1.5  -0.71 -0.71 /
\setdots <0.8mm>  
\setlinear 
\plot -0.71 -0.71 -0.71 -4 / 
\plot -2 -2.15  -2 -4 / 
\plot  0.3 0.3 0.3 -5.2 / 
\plot -2.25 -2.25 -2.25 -5.2 / 
\put {\small $a_{2i-1}$} at -3.7 -4.5 
\put {\small $b_{2i-1}$} at 4 -4.5 
\put {\small $a_{2i}$} at -2.8 -5.5  
\put {\small $b_{2i}$} at 0.8 -5.5  
\put {\small $b_{2i+1}$} at -0.49 -4.5 
\put {\small $0$} at -2 -4.5 
\endpicture
\caption{\label{fig:PD2} The return maps $R_j$ to $[a_j,b_j]$ for $j=2i-1,2i,2i+1$. } 
\end{figure}

\section{Absence of wandering intervals}\label{sec:absencewandering}

\begin{lemma}[The orbit of a potential wandering interval] \label{lem:wand1}
If $f$ has a wandering interval $W$, then 
\begin{enumerate}
\item $W_k:=f^k(W)$ accumulates onto $0$, so for some sequence of $k_j$'s tending to infinity
$W_{k_j}\to 0$;
\item there exists $i_0$ so that if $W_k\subset [a_{2i_0-1},b_{2i_0-1}]$ for some $k$ then 
$W_k \subset  \bigcup_{i\ge i_0}  [b_{2i},b_{2i-1}]$;
\item if $W_k\subset [b_{2i},b_{2i-1}]$ then $W_k\subset [b_{2i},\eta_ib_{2i-1}]$ 
where $\eta_i \to 0$ as $i\to \infty$. 
\end{enumerate}
\end{lemma}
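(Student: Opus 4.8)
The plan is to establish the three claims of Lemma~\ref{lem:wand1} in order, using the combinatorial structure of the $2^\infty$ map together with the scaling laws from Theorem~\ref{thm:scalings} and the distortion bounds from Theorem~\ref{thm:bounds} (in the form of Lemma~\ref{lem10'}).

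\textbf{Step 1: $W_k\to 0$.} First I would observe that since $W$ is wandering, all its forward images $W_k=f^k(W)$ are pairwise disjoint, so $\sum_k |W_k|<\infty$ and in particular $|W_k|\to 0$. The $\omega$-limit set of any point under an infinitely renormalizable map of $2^\infty$ combinatorics is the attracting Cantor set $\Lambda=\bigcap_k \Delta_k$ where $\Delta_k=\bigcup_{0\le i<2^k}[a_k,b_k]$-images; concretely, for each $k$ the iterates $f^j(W)$ eventually enter and stay in $\Delta_k$ (this is the standard fact that there are no other recurrent pieces — no periodic attractors by $Sf\le 0$ and the hypothesis that $W$ is not in a basin, and the complement of $\Delta_k$ is swept into $\Delta_k$). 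Since the critical point $0$ is the unique point of $\Lambda$ not eventually mapped strictly inside a smaller renormalization interval without first passing near it — more precisely, $0\in\omega(W)$ because the renormalization intervals $[a_k,b_k]$ shrink to $\{0\}$ and $W_k$ must visit each $[a_k,b_k]$ — we get a subsequence $W_{k_j}\to 0$. The cleanest way to see $0\in\omega(W)$: if $W_k$ accumulated only on points of $\Lambda$ bounded away from $0$, those points would have neighbourhoods on which some first-return map to a renormalization interval acts, and iterating expands $W$, eventually forcing it close to $0$; alternatively $\omega(W)$ is a closed forward-invariant subset of the minimal Cantor set $\Lambda$, hence equals $\Lambda\ni 0$.

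\textbf{Step 2: trapping on the right side.} Here I would use the combinatorics of the renormalization intervals. For $k=2i-1$ odd, the first-return map $f^{2^{2i-1}}$ to $[a_{2i-1},b_{2i-1}]$ is of type $-+$: it maps $[a_{2i-1},0]$ over itself orientation-reversingly with fixed point $b_{2i}>0$ on the \emph{other} side. The key point is that once $W_k\subset[a_{2i_0-1},b_{2i_0-1}]$, the itinerary of $W$ through the pieces $\Delta_{2i_0-1,j}$ is forced by the $2^\infty$ combinatorics, and the piece containing $0$ at the next renormalization level sits inside $[b_{2i},b_{2i-1}]$ rather than straddling $0$. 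I would make this precise by tracking that $c_{2^{2i-1}}<0<c_{2^{2i}}<b_{2i+1}<b_{2i}$ (from Figure~\ref{fig:order} and the ordering statements preceding Theorem~\ref{thm:scalings}), so the part of $\Lambda$ in $[a_{2i-1},b_{2i-1}]$ that can be approached by $W_k$ when $W_k$ is "near $0$ at even levels" is exactly $\bigcup_{i\ge i_0}[b_{2i},b_{2i-1}]$; a wandering interval $W_k\subset[a_{2i-1},b_{2i-1}]$ cannot contain $0$ (else $0$ would be in a wandering interval, contradicting that $0$ is recurrent, its orbit being dense in $\Lambda$), hence it lies in one of the complementary intervals, and the combinatorial pigeonhole singles out the right-hand stack $[b_{2i},b_{2i-1}]$.

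\textbf{Step 3: the refined bound $W_k\subset[b_{2i},\eta_i b_{2i-1}]$.} This is where the quantitative scaling enters and is the main obstacle. I would argue by contradiction: suppose $W_k$ reaches up to a definite fraction of $b_{2i-1}$, i.e. $W_k\cap [\eta b_{2i-1}, b_{2i-1}]\ne\emptyset$ for a fixed $\eta>0$ and infinitely many such $i$. Apply the first-return map $R_{2i-1}=f^{2^{2i-1}}$ to $[a_{2i-1},b_{2i-1}]$: by Theorem~\ref{thm:renormalizationlimit} (equation (\ref{eq:f2k-thm})) this map on $[b_{2i},b_{2i-1}]$ has derivative comparable to $t_k\beta x^{\beta-1}$ with $x\asymp b_{2i-1}$ and $t_k\sim b_{2i-1}^{1-\beta}$, hence derivative bounded below by a definite constant (cf. Lemma~\ref{lem6}, (\ref{eq7})) and in fact uniformly bounded distortion on $[b_{2i},b_{2i-1}]$ by Lemma~\ref{lem10'}. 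So $R_{2i-1}(W_k)$ is an interval inside $[a_{2i-1},b_{2i-1}]$ whose length is comparable to $|W_k|$ and which reaches a definite fraction of the \emph{left} endpoint side — but then after one or two more applications of return maps, using that the asymmetry forces $|a_k|\ll b_k$ and that these return maps have large derivative near the left end, $W$ would be expanded to a definite proportion of $[a_{2i-1},b_{2i-1}]$, and pushing one more level down it would have to contain $0$ or escape the nested structure, contradicting wandering. Quantitatively, the bound $\eta_i\to 0$ comes out of the super-exponential decay: $b_{2i}\approx b_{2i-1}^2$ (from (\ref{eq3}) combined with (\ref{eq3"})), so the interval $[b_{2i},b_{2i-1}]$ has its left portion $[b_{2i},\eta_i b_{2i-1}]$ with $\eta_i = b_{2i}/b_{2i-1}\cdot (\text{something}) \to 0$, and any piece of $W_k$ above that level would, under the return map whose derivative is $\gtrsim 1$ there, be carried to an interval of definite relative size at the next stage — I expect the precise $\eta_i$ to be read off as $\eta_i\asymp b_{2i-1}$ or $b_{2i}/b_{2i-1}$, chased through the scaling relations. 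The delicate part is ensuring the expansion estimate is genuinely uniform — that requires invoking Lemma~\ref{lem10'} to know the semi-extension has Koebe space $\tau_k\approx b_k^{-1/2}\to\infty$ for $k$ even, giving asymptotically no distortion, and being careful that $W_k$ sits in the part of $[a_{2i-1},b_{2i-1}]$ where this control applies (the right branch near $b_{2i-1}$), which is exactly why the statement is phrased in terms of $[b_{2i},b_{2i-1}]$.
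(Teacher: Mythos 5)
Your Step 1 is essentially sound and close in spirit to the paper's argument (the paper applies Ma\~n\'e's theorem directly: if the orbit of $W$ eventually avoided a neighbourhood of $0$, the derivative along it would grow exponentially and the pairwise disjoint intervals $f^n(W)$ could not all fit in $[a_0,b_0]$; your route via $\omega(W)\subset\Lambda$ and minimality needs that same input, but is acceptable). Steps 2 and 3, however, contain genuine gaps. For part (2) you invoke a ``combinatorial pigeonhole'' asserting that the part of $\Lambda$ in $[a_{2i_0-1},b_{2i_0-1}]$ accessible to $W_k$ is exactly $\bigcup_{i\ge i_0}[b_{2i},b_{2i-1}]$. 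This is false: $\Lambda$ has points in the left annuli, e.g.\ $c_{2^{2i+1}}\in[a_{2i+1},a_{2i+2}]$ (see Figure~\ref{fig:order} and the decomposition of $\Lambda\cap[a_k,b_k]$ into $W_k^{\pm}, U_k, X_k, V_k$), and nothing combinatorial prevents a wandering interval from sitting in a left annulus $[a_j,a_{j+1}]$ or in a right annulus $[b_{j+1},b_j]$ with $j$ \emph{even}. The exclusion is metric, and the engine you are missing is the maximality-plus-Koebe pull-back: take $W$ maximal, let $n(k)$ be the first entry time of $W$ into $[a_k,b_k]$, use the smallest interval argument (Lemma~\ref{lem1}) to get a monotone branch $f^{n(k)}\colon T_k\to[\tau a_k,\tau b_k]$ with $\tau>1$, note that $[\tau a_k,\tau b_k]$ is a scaled neighbourhood of $[a_k,0]$ (because $|a_k|\ll b_k$ by the strong asymmetry) and of $[b_{k+1},b_k]$ for $k$ even (because $b_{k+1}\sim\lambda b_k$), and conclude by Koebe that $T_k$ contains a definite enlargement of $W$ on which all iterates are monotone, contradicting maximality. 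Your proposal never uses the maximality of $W$, so it has no mechanism for producing a contradiction.

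For part (3) your expansion heuristic does not work as stated: on $[0,b_{2i-1}]$ the return map satisfies $R_{2i-1}(x)\approx\gamma x^\beta$ with $\gamma\approx b_{2i-1}^{1-\beta}$, so its derivative is $\approx\beta(x/b_{2i-1})^{\beta-1}$, which is \emph{contracting} on the left part of $[b_{2i},b_{2i-1}]$; iterating $R_{2i-1}$ pushes $W$ down into $[a_{2i},b_{2i}]$ rather than blowing it up to a definite proportion of $[a_{2i-1},b_{2i-1}]$. The paper's proof of (3) is again the Koebe/maximality pull-back, supplemented by one observation you are missing: since the $R_{2i-1}$-iterates of $W_{n(k)}$ are pairwise disjoint and $R_{2i-1}$ is increasing with $R_{2i-1}(x)<x$ on $[0,b_{2i-1}]$, the interval $W_{n(k)}=(u,v)$ must satisfy $R_{2i-1}(v)\le u$, i.e.\ it lies in a single fundamental domain $[R_{2i-1}(v),v]$ of the fixed point $b_{2i-1}$. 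If $v\ge\eta b_{2i-1}$ for a fixed $\eta>0$, this fundamental domain stays a distance $\gtrsim\eta^{\beta}b_{2i-1}$ from $0$, so $[\tau a_{2i-1},\tau b_{2i-1}]$ is a scaled neighbourhood of it and the same contradiction with maximality applies; letting $\eta\downarrow 0$ yields $\eta_i\to 0$.
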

\begin{proof}
The sequence of intervals 
$W_i:=f^i(W)$ must accumulate to $0$ for some subsequence $i_j\to \infty$.
Indeed, otherwise there exists a small neighbourhood $U_0$ of $0$  and 
$n_0\ge 0$ so that $f^n(W)\cap U_0= \emptyset$ for all $n\ge n_0$. 
But a theorem of Ma\~n\'e, see \cite{dMvS}[Theorem III.5.1] 
implies that there exists $K>0, \lambda>1$ so that $|Df^n(x)|\ge K\lambda^n$ for all $x\in [a_0,b_0]$
so that $f^i(x)\notin U_0$ for $i=0,\dots,n-1$. Hence the length of the disjoint intervals $f^n(W)$
is growing exponentially with $n$, which of course is  a contradiction.
It follows that $W_i\not\ni 0$ for all $i\ge 0$. 
So for any $k$ there exists  a {\em minimal} $n(k)\ge 0$ so that $W_{n(k)}\subset I_k=[a_k,b_k]$
where $n(k)\to \infty$ as $k\to \infty$.
 Since all iterates of $W$ are disjoint, $W_i\cap \{a_k,b_k\}=\emptyset$ for all $i\ge 0, k\ge 0$. 


By minimality of $n(k)$,  $W_i\cap [a_k,b_k]=\emptyset$ for all $i<n(k)$. Hence if we take 
$T_k\supset W$ to be the maximal  interval so that  $f^{n(k)}|T_k$ is a diffeomorphism then by Lemma~\ref{lem1} there exists $\tau>1$
so that  $f^{n(k)}(T_k)$ contains $[\tau a_k,\tau b_k]$.

(1) Let us first show that $W_{n(k)}$ lies to the right of $0$ for all $k$ large.  Indeed, assume by 
contradiction that there exists infinitely many $k$'s so that  $W_{n(k)}\subset [a_k,0]$. 
For each such $k$, $f^{n(k)}(T_k)\supset [\tau a_k,\tau b_k]$ is a scaled-neighbourhood of $W_{n(k)}$. By Koebe it follows that $T_k$ 
also contains a  $\tau'$-scaled neighbourhood of $W$ where $\tau'>0$ is the same for infinitely many $k$'s. This 
shows that there exists an interval $W'\supset W$
which strictly contains $W$ on which all iterates of $f$ are diffeomorphic, contradicting the maximality of $W$. 

(2)  Let us now show that there exists $k_0$ so that if $k\ge k_0$ is even then $W_{n(k)}$ cannot be contained in $[b_{k+1},b_k]$. Indeed, when $k$
is even  then by Theorem~\ref{thm:scalings}, $[\tau a_k,\tau b_k]$ is a scaled neighbourhood of $[b_{k+1},b_k]$ and so as in the previous case
we obtain a contradiction. 

From (1) and (2) it follows that for all $k$ large, $W_{n(k)}$ is contained in $\bigcup_i [b_{2i},b_{2i-1}]$.
Similarly to (2), we have that if $W_{n(k)}$ is contained in $[b_{2i},b_{2i-1}]$ then 
in fact it is contained in $[b_{2i},\eta b_{2i-1}]$ where $\eta\in (0,1)$ is small when  $i$ is large. 
Here we use that $W_{n(k)}$ must be contained in a fundamental domain of the fixed point
$b_{2i-1}$ of $R_{2i-1}$. 
\end{proof}

As above let $n(k)\ge 0$ be minimal so that $W_{n(k)}\subset I_k=[a_k,b_k]$. 
From the previous lemma it follows that   $W_{n(k)}$ is contained in $[b_{2i},b_{2i-1}]$ for some $2i-1\ge k$
and therefore $n(2i-1)=n(k)$. 
The first return map $R_{2i-1}$ to $[a_{2i-1},b_{2i-1}]$
is drawn in Figure~\ref{fig:PD2}  on page~\pageref{fig:PD2} and satisfies $R_{2i-1}(x)<x$ for $x\in [0,b_{2i-1}]$.
It follows that there exists $m_k\ge 1$ so that   
\begin{equation}
R_{2i-1}^j(W_{n(k)}) \subset [b_{2i},b_{2i-1}]\mbox{ for all }0\le j < m_k \label{eq:iterw1} \end{equation}
and then for some $i'>i$, 
\begin{equation}
R_{2i-1}^{m_k}(W_{n(k)})\subset [b_{2i'},b_{2i'-1}]. \label{eq:iterw2} \end{equation}
In other words, the next first entry into $[a_{2i},b_{2i}]$ is in fact into $ [b_{2i'},b_{2i'-1}]$
and in particular 
$n(2i-1)<n(2i)=\dots=n(2i'-1)$. 

\bigskip

\begin{lemma} $f$ does not wandering intervals.
\end{lemma}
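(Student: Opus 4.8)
The plan is to use the detailed scaling information already established — especially Lemma~\ref{lem:wand1}, the super-exponential scaling from Theorem~\ref{thm:scalings}, and the ``Big Bounds'' Theorem~\ref{thm:bounds} with the improved Koebe space of Lemma~\ref{lem10'} — to derive a contradiction from the existence of a wandering interval $W$. By Lemma~\ref{lem:wand1} the iterates $W_{n(k)}$ land deep inside the intervals $[b_{2i},b_{2i-1}]$, and they march down the fundamental domains of the repelling fixed point $b_{2i-1}$ of the first return map $R_{2i-1}$, as recorded in (\ref{eq:iterw1}) and (\ref{eq:iterw2}). The strategy is to track how the relative position of $W$ inside the relevant renormalization intervals evolves under these first-entry and return maps, and to show that on net $W$ must be surrounded by a definite (indeed growing) amount of ``Koebe space'' infinitely often; the usual Koebe/minimal-interval argument then produces an interval $W'\supsetneq W$ all of whose iterates are diffeomorphic, contradicting maximality of $W$.

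First I would set up the right notion of ``space'': for each $k$ with $W_{n(k)}\subset [a_k,b_k]$, let $T_k\supset W$ be the maximal interval on which $f^{n(k)}$ is a diffeomorphism; Lemma~\ref{lem1} already gives $f^{n(k)}(T_k)\supset[\tau a_k,\tau b_k]$, so there is a definite amount of space to the \emph{right} of $W_{n(k)}$ inside $[a_k,b_k]$ — but because $|a_k|\ll b_k$ this is not yet a two-sided scaled neighbourhood. The point is that $W_{n(k)}$ lies in a fundamental domain $[b_{2i},\eta_i b_{2i-1}]$ with $\eta_i\to 0$ (Lemma~\ref{lem:wand1}(3)), i.e.\ $W_{n(k)}$ is already far to the left inside $[b_{2i},b_{2i-1}]$, hence it is well inside $[a_{2i-1},b_{2i-1}]$ on the side where $[a_{2i-1},b_{2i-1}]$ is short. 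I would then compose: the first entry of $W_{n(k)}$ into $[a_{2i},b_{2i}]$ is governed by $R_{2i-1}^{m_k}$, and $R_{2i-1}$ restricted to $[b_{2i},b_{2i-1}]$ has a monotone branch to which I can apply the semi-extension bounds. Using that the semi-extension of the first entry map into $[a_{2i},b_{2i}]$ — for $2i$ even — has Koebe space $\tau_{2i}\approx b_{2i}^{-1/2}\to\infty$ (Lemma~\ref{lem10'}), and using that the diffeomorphic extension $[c_{2^{2i-1}},B_{2i-1}]$ of $R_{2i-1}$ controls the \emph{left} side only up to $c_{2^{2i-1}}\approx -b_{2i-1}^{\beta+1}$ (which is tiny compared to $|a_{2i}|\approx b_{2i-1}^{2\beta}$ when $\beta<1$, but note $\beta>1$, so in fact $|a_{2i}|\gg |c_{2^{2i-1}}|$ is \emph{false} — here one must be careful, and this is exactly the subtlety flagged in Remark~\ref{remark1} and Theorem~\ref{thm:absencekoebe}), I would argue that nevertheless, because $W_{n(k)}$ sits in the part $[b_{2i},b_{2i-1}]$ and not near $a_{2i-1}$, the relevant semi-extension of $f$ from $W$ acquires definite space on \emph{both} sides: the left space comes from the almost-linear branch $f_1$ together with the huge Koebe space $\tau_{2i}$, and the right space from Lemma~\ref{lem1}. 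Once two-sided space of a fixed ratio $\tau'>0$ is available for infinitely many $k$, Koebe's Lemma~\ref{thm:koebe} pulls this back to a $\tau''$-scaled neighbourhood of $W$ with $\tau''$ independent of $k$, giving the forbidden $W'\supsetneq W$.

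The main obstacle — and the reason this proof is ``quite different from anything in the literature'', as the introduction promises — is precisely that the naive Koebe space does \emph{not} exist (Theorems~\ref{thm:nousualbound} and \ref{thm:absencekoebe}): the diffeomorphic extension of a general first entry map has essentially no room on the left. So the argument cannot simply invoke a first-entry-map distortion bound; it must exploit that a \emph{wandering} interval is a very rigid object whose orbit is pinned into the fundamental domains $[b_{2i},\eta_i b_{2i-1}]$ with $\eta_i\to 0$, and combine this with the one-sided semi-extension bounds and the super-exponential smallness $b_{2i}\approx b_{2i-1}^2$ to manufacture two-sided space out of the interplay between the linear branch $f_1$ and the monotone branch of $R_{2i-1}$. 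Concretely I expect the hard step to be showing that when $W_{n(k)}$ travels from $[b_{2i},b_{2i-1}]$ down to $[b_{2i'},b_{2i'-1}]$ under $R_{2i-1}^{m_k}$ and then enters $[a_{2i},b_{2i}]$, the total nonlinearity accumulated is bounded \emph{and} genuine space appears on the left of $W$; this requires carefully pairing the gain of left-space from the $\tau_{2i}\to\infty$ semi-extension against the possible loss near the point $c_{2^{2i-1}}$, and then iterating the ``space gives improved space'' mechanism of Lemma~\ref{lem8}--Lemma~\ref{lem9} to conclude that once a little two-sided space is present it can be amplified, yielding the contradiction with maximality of $W$.
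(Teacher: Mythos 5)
Your proposal takes the classical route: surround $W$ by a definite two\hyp{}sided scaled neighbourhood on which the relevant iterate is a diffeomorphism (or admits a controlled semi\hyp{}extension), apply Koebe, and contradict maximality of $W$. The paper's proof is entirely different, and the difference is forced: the two\hyp{}sided space you need at the key step does not exist, and the paper proves as much. The big bounds of Theorem~\ref{thm:bounds} and Lemma~\ref{lem10'} apply \emph{only} to the branch of the first entry map containing the critical value $f(0)$ --- this is exactly the caveat of Remark~\ref{remark1} --- whereas a wandering interval enters $[a_k,b_k]$ along a different branch. For that branch, Section~\ref{sec:nokoebe} shows the maximal semi-extension of the entry map reaches on the left only to $c_{2^{2i-1}}\approx -b_{2i-1}^{\beta+1}$, which (since $\beta+1>2$) is negligible compared with $b_{2i}\approx b_{2i-1}^2$. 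Concretely, $W_{n(k)}$ sits in $[b_{2i},\eta_i b_{2i-1}]$ and may have length comparable to $\eta_i b_{2i-1}$ while the available left space is only of order $b_{2i}+|a_{2i-1}|\approx b_{2i-1}^{\min(2,\beta)}\ll\eta_i b_{2i-1}$; so no $\tau'$-scaled neighbourhood exists, and the amplification mechanism of Lemmas~\ref{lem8}--\ref{lem9} cannot be invoked because it too concerns only $E_k|J_k$. You flag this subtlety yourself but then assert that ``nevertheless\dots definite space on both sides'' appears, without supplying a mechanism; that assertion is the whole difficulty, and as stated it is false.

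What the paper actually does is abandon Koebe space around $W$ altogether. It writes the return map as $R_{2i-1}=\phi(x^{\beta})$ on $[0,b_{2i-1}]$ with $\phi'\approx\gamma\approx b_{2i-1}^{1-\beta}$, passes to double-logarithmic coordinates $l_2(x)=\log\log(1/x)$, and makes two observations: (i) by the scaling law $\log(1/b_{2k})\sim\Theta\cdot 2^{k}$, each fundamental annulus $[b_{2i},b_{2i-1}]$ has \emph{uniformly bounded} length in $l_2$-coordinates; (ii) a direct computation shows $D(l_2\circ R_{2i-1}\circ l_2^{-1})\ge \frac{2\beta}{1+\beta}-o(\epsilon)>1$ on $[b_{2i},\eta b_{2i-1}]$, i.e.\ the return map is uniformly \emph{expanding} there in $l_2$-coordinates. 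Hence the successive images of $W$ described in (\ref{eq:iterw1})--(\ref{eq:iterw2}) have $l_2$-lengths growing geometrically, which is incompatible with their confinement to $\bigcup_i[b_{2i},b_{2i-1}]$. If you want to complete your write-up you should replace the ``manufacture two-sided space'' step with this expansion argument (or an equivalent substitute); as it stands your proof has a genuine gap at its central step.
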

\begin{proof} 
Let us write $R_{2i-1}=\phi_{2i-1}(x^\beta)$ on $[0,b_{2i-1}]$ where $\phi_{2i-1}$ is an orientation preserving 
diffeomorphism. 
For convenience we will write
$\phi$ rather than $\phi_{2i-1}$.  Let us first  obtain an estimate for $\phi$. 
It follows from Lemma~\ref{lem10} and part (3) of Lemma~\ref{lem:wand1}
$|\phi'(x)/\phi'(\hat x)-1|\le \epsilon$
for all $x,\hat x\in  [b_{2i}^\beta,\eta b_{2i-1}^\beta]$ 
where $\epsilon>0$ is small when $\eta$ is small and $i$ is large.  
It follows that there exists $\gamma>0$ so that 
\begin{equation} - \gamma \epsilon\le \phi'(x)-\gamma \le \gamma \epsilon.
\label{ineq:phi'} \end{equation} Since $\phi(0) =c_{2^{2i-1}}<0$ it follows that 
\begin{equation} \phi(0) + (1-\epsilon)\gamma x \le  \phi(x)\le \phi(0)+ (1+\epsilon)\gamma x \le (1+\epsilon)\gamma x.\label{ineq:phi} \end{equation} 
Note that $|c_{2^{2i-1}}|\approx | b_{2i-1}^{\beta+1}|<<|b_{2i-1}|$ and therefore 
 $R_{2i-1}(b_{2i-1})=b_{2i-1}$ implies that $\gamma \approx b_{2i-1}^{1-\beta}$.


From (\ref{bklog}) we have 
$\log (1/b_{2i-1}) \approx 2^i$,  $\log (1/b_{2i}) \approx 2^{i+1}$, 
 and therefore  $\log(\log(1/b_{2i-1})) \approx i \log 2 + O(1)$, $\log(\log(1/b_{2i})) \approx (i+1) \log 2 + O(1)$
   and so  
the length of the intervals $[b_{2i},b_{2i-1}]$ is bounded in double logarithmic coordinates. 

Let us show that  $R_{2i-1}$ is expanding in double logarithmic coordinates. 
So define $l_2(x)=\log(\log (1/x))$ where we assume $x\in [b_{2i},\eta b_{2i-1}]$. Then 
$$Dl_2(x)=\dfrac{-1}{x\log(1/x)}\mbox{ and }x=l_2^{-1}(y)=e^{-e^y}.$$ 
Moreover,
$$D(l_2 \circ R_{2i-1}\circ l_2^{-1})(y)= D(l_2 \circ  \phi\circ f \circ l_2^{-1}  )(y)= \dfrac{\phi'(e^{-\beta e^y}) (\beta e^y) e^{-\beta e^y}}{\phi(e^{-\beta e^y})\log (1/\phi (e^{-\beta e^y}))}$$
Since $x=l_2^{-1}(y)=e^{-e^y}$, $\log x=-e^y$ and $\log (1/x^\beta)=\beta e^y$  this is equal to 
$$
\dfrac{\phi'(x^\beta) x^\beta \log (1/x^\beta)} {\phi(x^\beta)  \log (1/\phi (x^\beta)) }   \ge 
(1-\epsilon) \gamma  \dfrac{x^\beta \log (1/x^\beta)} {\phi(x^\beta)  \log (1/\phi (x^\beta)) }
$$
where in the inequality we used (\ref{ineq:phi'}). 
Since $t\mapsto t \log( 1/t)$ is increasing for $t>0$  small and because of (\ref{ineq:phi}) the latter expression
is bounded below by 
$$\ge (1-\epsilon)\gamma  \dfrac{x^\beta \log (1/x^\beta)}{(1+\epsilon)\gamma x^\beta \log(1/((1+\epsilon)\gamma x^\beta))} 
= \dfrac{ (1-\epsilon)}{(1+\epsilon)} \dfrac{\log (1/x^\beta)}{\log(1/((1+\epsilon)\gamma x^\beta))} 
.$$
Since 
$\gamma \approx  b_{2i-1}^{1-\beta}$,
there exists $C_0>0$ so that this is bounded below by 
$$ \ge \dfrac{1-\epsilon}{1+\epsilon} \,\, \dfrac{\log(1/x^\beta)}{\log(1/x^\beta) +  (1-\beta)\log(1/b_{2i-1})+ \log(C_0)}.
 $$
 Since the latter expression is increasing in $x$ for $x\in [0,b_{2i-1}]$ and since $x\in [b_{2i},b_{2i-1}]$
 this is bounded from below by 
$$ \dfrac{1-\epsilon}{1+\epsilon} \,\, \dfrac{\beta\log(1/b_{2i})}{\beta \log(1/b_{2i}) + (1-\beta)\log(1/b_{2i-1}) + \log(C_0)} .
$$
Since $b_{2i}\approx b_{2i-1}^2$ this is bounded from below by 
$$ \dfrac{1-\epsilon}{1+\epsilon} \,\, \dfrac{2\beta\log(1/b_{2i-1}) + \log(C_0'')}{2\beta \log(1/b_{2i-1}) + (1-\beta)\log(1/b_{2i-1}) + \log(C_0')} \ge \dfrac{2\beta}{1+\beta}- o(\epsilon)>1
$$
 provided $i$ is large and  $\epsilon>0$ is small. 
It follows that in double-logarithmic coordinates $R_{2i-1}$ is expanding on $[b_{2i},\eta b_{2i-1}]$. 

It follows that if $W$ is a wandering interval above, then in double-logarithmic coordinates
the iterates described in (\ref{eq:iterw1}) and (\ref{eq:iterw2}) increase each step 
in length by a factor $(\beta+1)/2$. So their length tends to infinity.  But this violates
that all iterates are contained in $\cup_{i\ge i_0}  [b_{2i},b_{2i-1}]$
because, as we saw,  in double-logarathmic coordinates the length of the intervals $[b_{2i},b_{2i-1}]$
is uniformly bounded from above. \end{proof}

\end{document}